\newcommand{\rmv}[1]{}
\newcommand{\eH}{\ensuremath{\mathscr{H}}}
\newcommand{\SL}{\script{SL}}
\newcommand{\RB}{\script{RB}}
\newcommand{\Ba}{\script{B}}
\newcommand{\LC}{\script{LC}}
\newcommand{\RC}{\script{RC}}
\newcommand{\Co}{\script{C}}
\newcommand{\RZ}{\script{RZ}}
\newcommand{\LZ}{\script{LZ}}
\newcommand{\U}{\script{U}}
\newcommand{\simple}{\smallcaps{Simple}}
\newcommand{\rank}{\smallcaps{Rank}}
\newcommand{\Triv}{{\ensuremath{\{\pv 1\}}}}
\newcommand{\Null}{{\script{NULL}}}
\newcommand{\Dk}{{\ensuremath{\script{D}_k}}}
\newcommand{\Nilk}{{\ensuremath{\script{N}_k}}}
\newcommand{\Commn}{{\ensuremath{\script{COM}_{(m,n)}}}}
\newcommand{\Zp}{{\ensuremath{\langle \mathbb{Z}_p \rangle}}}
\newcommand{\rRh}{{\ensuremath{\wedge \mathscr R}}}
\newcommand{\Rh}{{\ensuremath{\wedge \mathscr L}}}
\newcommand{\KR}{{\ensuremath{\wedge \ell\mathbf 1^{op}}}}
\newcommand{\rKR}{{\ensuremath{\wedge \ell\mathbf 1}}}
\newcommand{\BR}{{\ensuremath{BR}}}
\newcommand{\Mac}{{\ensuremath{M^c}}}
\newcommand{\V}{\script{V}}
\newcommand{\ex}{{\ensuremath{E}}}
\newcommand{\exa}{{\ensuremath{E}}}
\newcommand{\exb}{{\ensuremath{F}}}
\newcommand{\str}{\smallcaps{Str}}
\newcommand{\script}[1]{\ensuremath{\mathcal{#1}}}
\newcommand{\smallcaps}[1]{\mathsf {#1}}
\newcommand{\category}[1]{\ensuremath{\mathbf{#1}}}
\newcommand{\field}[1]{\ensuremath{\mathbb{#1}}}
\newcommand{\N}{\field{N}}
\newcommand{\Z}{\field{Z}}
\newcommand{\SG}{\category{Sgp}}
\newcommand{\FS}{\category{FSgp}}
\newcommand{\FJ}{\category{FJSgp}}
\newcommand{\SGA}{\category{Sgp_A}}
\newcommand{\FSA}{\category{FSgp_A}}
\newcommand{\FJA}{\category{FJSgp_A}}
\newcommand{\A}{\script{A}}
\newcommand{\burnside}{\script{B}}
\newcommand{\sch}{\mathsf{Sch}}
\newcommand{\cut}{\mathrm{cut}}
\newcommand{\lp}{\mathsf{lp}}
\newcommand{\slp}{\mathsf{slp}}
\newcommand{\Cay}{\mathsf {Cay}}
\newcommand{\red}{\mathrm{Red}}
\newcommand{\wh}{\widehat}
\newcommand{\pv}[1]{\ensuremath{{\bf#1}}}
\newcommand{\inv}{^{-1}}
\newcommand{\p}{\varphi}
\newcommand{\pinv}{{\p \inv}}
\newcommand{\J}{\mathrel{\mathscr J}} 
\newcommand{\R}{\mathrel{\mathscr R}} 
\newcommand{\eL}{\mathrel{\mathscr L}} 
\newcommand{\HH}{\mathrel{\mathscr H}}
\newcommand{\ov}[1]{\ensuremath{\overline {#1}}}
\newcommand{\til}[1]{\ensuremath{\widetilde {#1}}}
\newcommand{\NN}{\ensuremath{\mathbb N}}
\newcommand{\malce}{\mathbin{\hbox{$\bigcirc$\rlap{\kern-8.3pt\raise0,50pt\hbox{$\mathtt{m}$}}}}}
\newcommand{\Fr}[1]{\mathrm{Fr}(#1)}
\newcommand{\Simple}{\mathsf{Simple}}
\newcommand{\Thmname}{Theorem}
\newcommand{\Propname}{Proposition}
\newcommand{\Lemmaname}{Lemma}
\newcommand{\Definitionname}{Definition}
\newtheorem{Thm}{\Thmname}[section]
\newtheorem{Prop}[Thm]{\Propname}
\newtheorem{Lemma}[Thm]{\Lemmaname}
{\theoremstyle{definition}
}
{\theoremstyle{definition}
\newtheorem{Def}[Thm]{\Definitionname}}
{\theoremstyle{remark}
\newtheorem{Rmk}[Thm]{Remark}}
{\theoremstyle{remark}
\newtheorem{exmp}[Thm]{Example}}
\newtheorem{Cor}[Thm]{Corollary}
\newtheorem{Example}[Thm]{Example}
\newtheorem{Conjecture}{Conjecture}
\newtheorem{thm}[Thm]{Theorem}
\newtheorem{lem}[Thm]{Lemma}
\newtheorem{cor}[Thm]{Corollary}
{\theoremstyle{remark}
}
{\theoremstyle{remark}
}
{\theoremstyle{remark}
\newtheorem*{Claim*}{Claim}}
\numberwithin{equation}{section}
\numberwithin{figure}{section}
\title[Geometric Semigroup Theory]
    {Geometric Semigroup Theory}
\author[J.~M{c}Cammond]{Jon M{c}Cammond}
      \address{Dept. of Mathematics\\
               University of California \\
               Santa Barbara, CA 93106}
      \email{mccammon@math.ucsb.edu}
\author[J.~Rhodes]{John Rhodes}
      \address{Dept. of Mathematics\\
               University of California\\
               Berkeley, CA 94720}
      \email{rhodes@math.berkeley.edu}
\author[B.~Steinberg]{Benjamin Steinberg}
      \address{School of Mathematics and Statistics\\
               Carleton University\\
               Ottawa, ON K1S 5B6\\ Canada}
      \email{bsteinbg@math.carleton.ca}
\thanks{The first author was partially supported by the National Science Foundation.  The third author gratefully acknowledges the support of NSERC and the DFG}
\date{\today}
\begin{document}

\begin{abstract}
Geometric semigroup theory is the systematic investigation of
finitely-generated semigroups using the topology and geometry of their
associated automata.  In this article we show how a number of
easily-defined expansions on finite semigroups and automata lead to simplifications
of the graphs on which the corresponding finite semigroups act.  We
show in particular that every finite semigroup can be finitely
expanded so that the expansion acts on a labeled directed graph which
resembles the right Cayley graph of a free Burnside semigroup in many
respects.
\end{abstract}
\maketitle
\tableofcontents

\section{Introduction}
Geometric semigroup theory is the systematic investigation of
finitely-generated semigroups using the topology and geometry of their
associated automata. An early example of this approach is the article
\cite{Mc91} where the first author proved that the Burnside semigroups
\[\burnside(m,n) = \langle A\mid x^m=x^{m+n}\rangle\] (for fixed $m\geq
6$ and $n\geq 1$) are finite $\J$-above, have a decidable word
problem, and their maximal subgroups are cyclic.  In addition, and
perhaps most importantly, the Brzozowski conjecture --- that the
equivalence classes of elements form regular languages --- was verified
in this range.
Independent proofs of these results were obtained by A.~de Luca and
S.~Varricchio~\cite{deLuVa92} at about the same time, and shortly
thereafter additional cases were covered by A.~do Lago~\cite{La92} and
V.~Guba~\cite{Gu93}.  The techniques used in these other papers,
however, were predominantly combinatorial in nature.  In the present
article we wish to generalize the geometric nature of the arguments
used in~\cite{Mc91}.  In particular, we will
prove the following:

\renewcommand{\thethm}{\Alph{thm}}
\begin{thm}[Rough statement]\label{thm:main}
If $S$ is a finite $A$-semigroup, then there is a finite expansion of
$S$ which acts faithfully on a labeled directed graph which has many
of the nice geometric properties possessed by the right Cayley graph
of a Burnside semigroup $\burnside(m,n)$, $m\geq 6$.
\end{thm}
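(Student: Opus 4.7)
The plan is to obtain the desired expansion as the composition of a short sequence of finiteness--preserving expansions of $A$-semigroups, each correcting one specific defect of the right Cayley graph of $S$ relative to the target geometry of $\burnside(m,n)$. Before starting, I would fix the list of geometric features to be achieved: every directed cycle in the labeled graph is contained within a single $\R$-class; the induced digraph on $\R$-classes is acyclic, so there is a well-defined $\R$-height function; and the subgraph spanned by each $\R$-class has uniformly controlled loop structure, mimicking the identity $x^m = x^{m+n}$ on torsion data.

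The construction I would carry out proceeds in three stages. First, a Rhodes-type (cut--point) expansion would be applied so that edges crossing $\R$-classes in $S$ become visible as genuine $\R$-class transitions in the expansion; after this step the $\R$-partition is fully readable from the Cayley graph. Second, a Karnofsky--Rhodes type expansion would be applied to cut the bridges between distinct $\R$-classes, forcing the induced digraph on $\R$-classes to be acyclic and supplying the required tree-like backbone above a controlled base. Third, a Birget--Rhodes or Malcev-style expansion with respect to a pseudovariety of groups of suitably bounded exponent would normalize the local loop/torsion data at each vertex, producing the Burnside-like cell structure within each $\R$-class. Finiteness at each stage is classical for the first two steps, and for the third it follows from the fact that by the time the group expansion is invoked the maximal subgroups of the intermediate semigroup are already bounded, so the relative expansion stays finite.

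The main obstacle I anticipate is compatibility between the stages: each later expansion must be shown to refine, rather than destroy, the geometric structure imposed by the earlier ones. This amounts to a functoriality calculation describing how the Cayley graph transforms under each expansion, together with a verification that the cut-point and bridge-cutting properties are preserved after the group expansion is applied on top. This is where the bulk of the technical work will sit, because the natural homomorphism collapsing the final expansion back to $S$ must preserve $\R$-transitions in both directions. Once this compatibility is established, the faithful action of the final expansion on its right Cayley graph (with a minor relabelling to record $\R$-height and torsion class) realizes the listed Burnside-like features, yielding Theorem~\ref{thm:main}.
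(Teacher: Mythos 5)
There is a genuine gap: your three stages reproduce only the ``frame'' part of the target geometry and omit the paper's central construction, the McCammond expansion. The Rhodes and Karnofsky--Rhodes steps (which the paper obtains in one stroke by showing that the Mal'cev expansion $S^\RB$ by rectangular bands is stable under all of them) do make every straightline automaton linear, i.e.\ they control the transition edges between Sch\"utzenberger graphs; but they say nothing about the internal geometry of each strongly connected component, and that is where the resemblance to $\burnside(m,n)$ actually lives. The defining feature of the Burnside automata is the unique simple path property (``a rooted tree falling back on itself''), and no amount of cutting transition edges produces it: one must pass to the universal simple cover $\Cay(S,A)^\Mac$, whose vertices are the simple paths from the root, and then prove that the transition semigroup $S^\Mac$ of this cover is still finite. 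That finiteness is the hard point of the paper --- it is established by showing that the $\Mac$-expansion of a right zero semigroup is a band and then bounding the Mal'cev kernel of $S^\Mac\to S$ inside the locally finite aperiodic variety $\LC\malce\Ba$ via a wreath product embedding. Nothing in your proposal supplies this step or a substitute for it.

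Two smaller points. First, the induced digraph on $\R$-classes of a Cayley graph is automatically acyclic (the accessibility preorder descends to a partial order on strong components), so the stated goal of your second stage is vacuous; what the Karnofsky--Rhodes expansion actually buys is linearity, i.e.\ $\str^S(w)=\Cay^S(w)$ with the transition edges forming a chain. Second, your stage-three finiteness claim is unsupported: a Mal'cev expansion by a variety of groups of bounded exponent preserves finiteness only if that variety is locally finite (Brown's theorem), which is essentially the Burnside problem in general; ``the maximal subgroups of the intermediate semigroup are bounded'' does not control the Mal'cev kernel of the expansion. The paper instead uses the locally finite variety $\Zp$ (to make right stabilizers idempotent $\eL$-chains, after Le Saec--Pin--Weil) together with aperiodic varieties, never a general torsion group expansion. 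The composite it ultimately uses has the shape $S^{\RB.\Zp.\RB}$ followed by the McCammond expansion, equipped with a geometric rank function chosen compatibly with the algebraic rank function.
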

\renewcommand{\thethm}{\thesection.\arabic{thm}}

In the course of the article, we will explicate the particular
expansions involved and the precise nature of the resemblance.  

The proofs in~\cite{Mc91} involved a detailed examination of the
automata which recognize the equivalence classes of words in $A^+$
under the relations defining a Burnside semigroup.  In particular, for
each word $w \in A^+$, the equivalence class $[w]$ of words equal to
$w$ in $\burnside(m,n)$ was described as the language accepted by a
non-deterministic finite-state automaton with a fractal-like
structure.  The deterministic version of this automaton is a full
subautomaton of the right Cayley graph of $\burnside(m,n)$, and its
geometric properties were described in some detail by the second
author in~\cite{flows}.  The main theorem will follow immediately
once we have shown that every finite automata can be finitely
expanded, so that the expanded automata closely resembles these
``McCammond automata'' as described in~\cite{Mc91} and
\cite{flows}.

\section{The topology of directed graphs}\label{sec: graphs}
We view here automata from two angles: as labeled directed graphs and as universal algebras.  The former viewpoint is geometric, whereas the latter is algebraic.   This section establishes the essential properties of the topology of directed graphs that we shall need.  Probably nothing in the first two subsections is original although maybe our slant is different.  To some extent it follows~\cite{geoams}.

\subsection{Directed graphs}
We begin with the definition of a directed graph.

\begin{Def}[Graph]
A (directed) \emph{graph} $\Gamma$ consists of a set $V(\Gamma)$ of vertices, $E(\Gamma)$ of edges and two maps $\iota,\tau\colon E(\Gamma)\to V(\Gamma)$ selecting the initial, respectively, terminal vertices of an edge $e$. Often we write $e\colon v\to w$ to indicate $\iota(e)=v$ and $\tau(e)=w$.
\end{Def}

Directed and undirected paths in a graph are defined in the usual way.  If $p$ is an undirected path, then $\iota(p),\tau(p)$ will denote the initial and terminal vertices of $p$, respectively and we shall write $p\colon \iota(p)\to \tau(p)$.   We admit an empty path at each vertex.  When we say ``path'' without any modifier, we mean a directed path, although we may include the word ``directed'' for emphasis. A directed or undirected path is called (vertex) \emph{simple} if it visits no vertex twice; empty paths are considered simple.  By a \emph{circuit} we mean a non-empty closed path $p$ (i.e., $\iota(p)=\tau(p)$).  A circuit is called \emph{simple} if the only repetition in the vertices it visits is when it returns to its origin.  An undirected path is called \emph{reduced} if it contains no backtracking (i.e., no subpath of length $2$ using an edge first in one direction and then in the other).  The inverse of an undirected path is defined in the usual way.

A graph is \emph{connected} if there is an undirected path from any vertex to any other.  A connected graph is called a \emph{tree} if it contains no reduced undirected circuits.  By an \emph{induced} or \emph{full} subgraph we mean a subgraph obtained by considering some subset of vertices and \emph{all} edges between them.

There is a natural preorder on the vertices of any directed graph.

\begin{Def}[Accessibility order]
Let $\Gamma$ be a graph. Define a preorder on $V(\Gamma)$ by $v\prec w$ if there is a path from $w$ to $v$.  A graph is termed \emph{acyclic} if $\prec$ is a partial order (equivalently, there are no directed circuits in $\Gamma$). As usual, an equivalence relation can be obtained form $\prec$ by setting $v\sim w$ if $v\prec w$ and $w\prec v$.   A \emph{strong component} of a graph is an induced (or full) subgraph of $\Gamma$ obtained by considering all vertices in a  $\sim$-equivalence class.

A graph $\Gamma$ is \emph{strongly connected} if it has a unique strong component.  In general, the set of strong components of $\Gamma$ is partially ordered by putting $C\geq C'$ if there is a path from a vertex in $C$ to a vertex in $C'$.
\end{Def}

It is convenient to divide strong components into two sorts: trivial and non-trivial.

\begin{Def}[Trivial strong components]
A strong component can contain no edge.  In this case, it consists of a single vertex and we shall call it \emph{trivial}.  Hence by a \emph{non-trivial} strong component, we mean a strong component with at least one edge.  In particular, a strong component with a single vertex and one or more loops edges is considered non-trivial.
\end{Def}

\begin{Rmk}
Notice that if $v\prec w$, then there is a simple path from $w$ to $v$.  Indeed, let $p\colon w\to v$ be a minimum length path.  If $p$ is not simple, we may factor it $p=uts$ with $t$ a directed circuit (in particular $t$ is non-empty).  Then $us\colon w\to v$ is shorter than $p$, a contradiction.  So $p$ must be simple.
\end{Rmk}

Other authors order vertices via the opposite convention.  Our choice was made to be compatible with Green's relations in semigroups.   That is, the accesibility order on the right Cayley graph of a semigroup (to be defined shortly) corresponds to the $\leq_{\R}$ ordering on the semigroup.

\subsubsection{Preordered sets}
It is convenient here to introduce some terminology from the theory of preordered sets.  If $(P,\leq)$ is a preordered set, then a \emph{downset} is a subset $X\subseteq P$ such that $x\in X$ and $y\leq x$ implies $y\in X$.  Downsets are also called \emph{order ideals} by some authors.  If $Y\subseteq P$, then $Y^{\downarrow}$ denotes the downset generated by $Y$.  A downset of the form $p^{\downarrow}$ with $p\in P$ is called \emph{principal}.  One can define \emph{upsets} (called \emph{filters} by some authors) dually.  The upset generated by $Y$ will be denote $Y^{\uparrow}$.   A subset $X$ of a preordered set $P$ is said to be \emph{convex} if $x\prec y\prec z$ and $x,z\in X$ implies $y\in X$.  The \emph{equivalence classes} of $P$ are defined by $p\sim q$ if $p\leq q$ and $q\leq p$.  A preordered set $P$ is a \emph{chain} if any two elements of $P$ are comparable.

A \emph{principal series} for $P$  is an unrefinable chain
\begin{equation}\label{principalseries}
P=P_0\supset P_1\supset \cdots \supset P_n
\end{equation}
of principal downsets.  Every finite preordered set has a principal series.  A principal series for a poset amounts to a topological ordering of the poset.  In general, one can verify that $P_i\setminus P_{i+1}$ is always an equivalence class of $P$ and that every equivalence class must arise this way.

Suppose now that $V$ is the vertex set of a directed graph $\Gamma$ and order $V$ by the accessibility order.  Then a downset in $V$ is a subset $X$ of vertices with the property that if the initial vertex of an edge belongs $X$, then so does the terminal vertex.  The equivalence classes of the accessibility ordering are the strong components and a principal series amounts to the same thing as a topological ordering on the strong components.  Therefore, when we assign indices to the strong components, we use notation consistent with \eqref{principalseries}.

\subsubsection{Transition edges}

An important role in the theory is played by those edges that go between strongly connected components. One facet of geometric semigroup theory is to simplify the structure of these edges.

\begin{Def}[Transition edge]
An edge $e$ of a graph $\Gamma$ is called a \emph{transition edge} if $\tau(e)\nsim \iota(e)$ (or equivalently, there is no directed path from $\tau(e)$ to $\iota(e)$). The \emph{frame} $\Fr \Gamma$ of $\Gamma$ is the graph with vertex set the strong components of $\Gamma$ and edge set the transition edges of $\Gamma$ (i.e., we contract each strong component to a point).  If $e$ is a transition edge, then $e$ starts at the strong component of its initial vertex and ends at the strong component of its terminal component.  Evidently, $\Fr \Gamma$ is an acyclic graph.
\end{Def}

The edge set $E$ of a directed graph $\Gamma$ is also a preordered set.  One can define $e\prec f$ if either $e=f$, or there is a directed path in $\Gamma$ of the form $epf$ where $p$ is some path.  Notice that distinct edges $e,f$ are equivalent if and only if they belong to the same strong component.  Transition edges are precisely those edges belonging to a singleton equivalence class.  In particular, the transition edges form a poset.

It is particularly important in geometric semigroup theory when the strong components form a chain.

\begin{Def}[Quasilinearity]\label{def:quasilinear}
If $\Gamma$ is a directed graph we say that $\Gamma$ is
\emph{quasilinear} if the natural partial order on its strong
components is a total ordering.  Vertices in the top-most strong
component of a quasilinear directed graph (if one exists) will be called
\emph{top-most vertices}.  In a finite graph, when the strong components are linearly
ordered in this fashion we will number them starting with $0$ and
beginning with the top-most component to be consistent with \eqref{principalseries}.
\end{Def}

Of course, a quasilinear graph is connected and every strongly connected graph is quasilinear.
The class of quasilinear graphs for which the transition edges form a chain plays a salient role in geometric semigroup theory.  In this case $\Fr \Gamma$ looks like a line, whence the following terminology.

\begin{Def}[Linearity]
A quasilinear graph $\Gamma$ is said to be \emph{linear} if its transition edges form a chain.
\end{Def}

\begin{Rmk}
If $\Gamma$ is a finite linear graph with
exactly $k+1$ strong components (numbered $0,1,\ldots,k$), then it has exactly
$k$ edges which do not belong to strong components, (which then
necessarily connect the $(i-1)^{st}$ strong component to the $i^{th}$
strong component, $i=1,\ldots,k$).
\end{Rmk}

\begin{Def}[Entry and exit points]\label{def:exits}
If $\Gamma$ is a finite linear graph, the unique
transition edge connecting the $(i-1)^{st}$ strong component to the
$i^{th}$ will be called the \emph{$i^{th}$ transition edge}.  Its start
point will be denoted $q_{i-1}$ and its end point will be denoted
$p_i$.  Notice that the subscripts indicates the strong component
which contains the vertex.  Since the vertices $p_i$ and $q_i$ are the
places where transition edges enter and exit the $i^{th}$ strong
component, we will sometimes refer to these vertices as the
\emph{entry and exit points} of the $i^{th}$ component.  Note that $p_i=q_i$ is possible.

If $p=p_0$ is a specified vertex in the $0^{th}$ strong component and
$q=q_k$ is a specified vertex in the $k^{th}$ strong component, then
any simple directed path in $\Gamma$ from $p$ to $q$ will be called a
\emph{quasi-base} for $\Gamma$.  Notice that every quasi-base for $\Gamma$ will
contain the transition edges plus simple paths in each strong component
connecting $p_i$ to $q_i$, and that conversely, any choice of simple
paths connecting $p_i$ to $q_i$, $i=0,\ldots,k$ can be strung together
with the transition edges to form a quasi-base.  In the case that there is a unique simple path from $p$ to $q$ (i.e., there is a unique simple path from $p_i$ to $q_i$ for each $i$) we shall call the corresponding quasi-base a \emph{base}.
\end{Def}

\begin{exmp}
An example of a finite linear graph has been
schematically drawn in Figure~\ref{fig:quasi-base}.  The shaded areas
are meant to represent non-trivial strong components.  In this
example, there are $6$ strong components and there are $5$ transition
edges, and their numbering has been illustrated.  The first strong
component is trivial (i.e., has no edges).  The well-defined entry and
exit points for each strong component have also highlighted.  Notice
that in this example $p_3$ and $q_3$ are identical, so that any base
must use the trivial path to connect $p_3$ to $q_3$.
\end{exmp}

\begin{figure}[ht]
\psfrag{1}{$1$}
\psfrag{2}{$2$}
\psfrag{3}{$3$}
\psfrag{4}{$4$}
\psfrag{5}{$5$}
\includegraphics{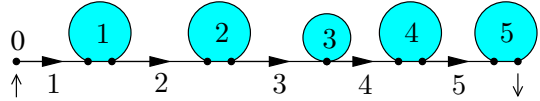}
\caption{A directed graph with a quasi-base.}\label{fig:quasi-base}
\end{figure}

\subsection{Morphisms of directed graphs}
A \emph{morphism} of graphs $\p\colon \Gamma\to \Gamma'$ consists of a pair $(\p_V,\p_E)$ of maps $\p_V\colon V(\Gamma)\to V(\Gamma')$, $\p_E\colon E(\Gamma)\to E(\Gamma')$ so that $\p_V(\iota(e)) = \iota(\p_E(e))$ and $\p_V(\tau(e)) = \tau(\p_E(e))$ for all $e\in E(\Gamma)$.  Normally we use $\p$ to denote both maps.  There is an obvious way to extend $\p$ from edges to paths.

Two especially important classes of morphisms are \emph{directed coverings} and \emph{directed immersions}.  If $v\in V(\Gamma)$, then the \emph{star} of $v$ is $St(v) = \iota^{-1}(v)$.

\begin{Def}[Directed coverings and immersions]
A graph morphism $\p\colon \Gamma\to \Gamma'$ is called a \emph{directed covering} if it is surjective on vertices and, for each vertex $v\in V(\Gamma)$, the induced map $\p\colon St(v)\to St(\p(v))$ is a bijection.  If $\p$ merely injective on stars, the $\p$ is called a \emph{directed immersion}.  We do note require directed immersions to be surjective on vertices.
\end{Def}

\begin{Rmk}\label{directedimmersion}
Notice that if $\p$ is a directed immersion, then $\p\psi$ is a directed immersion if and only if $\psi$ is a directed immersion.
\end{Rmk}

The following topological proposition is proved by straightforward induction on the length of a path.

\begin{Prop}[Path lifting]\label{pathlift}
Let $\p\colon \Gamma\to \Gamma'$ be a graph morphism.  Then $\p$ is a directed immersion if and only if, for each (directed) path $p$ at $v'\in V(\Gamma')$ and each $v\in \pinv(v')$, there is at most one path $q$ at $v$ with $\p(q)=p$.  The map $\p$ is a directed covering if and only if it is surjective on vertices and,
for each (directed) path $p$ at $v'\in V(\Gamma')$ and each $v\in \pinv(v')$, there is a unique path $q$ at $v$ with $\p(q)=p$.
\end{Prop}

Notice that there is a bijection between sets and graphs with a single vertex.  Hence we will frequently identify an alphabet $A$ with the ``bouquet'' graph $\mathscr B_A$ consisting of a single vertex with edge set $A$.  A \emph{labeling} of $\Gamma$ by an alphabet $A$ is then a graph morphism  $\ell\colon \Gamma\to A$. We can now define automata using our topological language cf.~\cite{geoams}.  See~\cite{EilenbergA,Eilenberg,Lawsonaut} for background on automata theory.

\begin{Def}[Automaton]
A non-deterministic \emph{automaton} over the alphabet $A$ is a pair $\mathscr A=(\Gamma,\ell)$ where $\Gamma$ is a graph and $\ell\colon \Gamma\to \mathscr B_A$ is a  labeling.  A morphism of automata $\p\colon (\Gamma_1,\ell_1)\to (\Gamma_2,\ell_2)$ is a graph morphism \mbox{$\p\colon \Gamma_1\to \Gamma_2$} so that
\[\xymatrix{\Gamma_1\ar[rr]^{\p}\ar[rd]_{\ell_1}&&\Gamma_2\ar[ld]^{\ell_2}\\ &\mathscr B_A&}\] commutes.

An automaton $(\Gamma,\ell)$ is called \emph{deterministic} if $\ell$ is a directed covering; it is termed a \emph{partial deterministic} automaton if $\ell$ is a directed immersion.  By an automaton or $A$-automaton, we shall mean a partial deterministic automaton.  In the context of automata, vertices are often called states and edges are termed transitions.  If $\mathscr A=(\Gamma,\ell)$ is an $A$-automaton with vertex set $Q$, we sometimes abusively write $\mathscr A=(Q,A)$.
\end{Def}

By Remark~\ref{directedimmersion} any morphism of $A$-automata is a directed immersion.  If $\mathscr A=(\Gamma,\ell)$ is an $A$-automaton, then we can associate to it its transition monoid $M(\mathscr A)$.  Namely, we can define a ``monodromy'' action of the free monoid $A^*$ generated by $A$ on $V(\Gamma)$ via path lifting.  If $q\in V(\Gamma)$ is a vertex and $w\in A^*$ then we can view $w$ as a path $p$ in $\mathscr B_A$.  This path has at most one lift $\til p$ with $\iota(\til p) = q$ by Proposition~\ref{pathlift}.  Define $qw = \tau(\til p)$ if $\til p$ exists, and leave it undefined otherwise.  One easily checks that this defines an action of $A^*$ on $V(\Gamma)$ by partial functions; the associated faithful partial transformation monoid is denoted $M(\mathscr A)$ and is called the \emph{transition monoid} of $A$.  Note that the action is by total functions if and only if $\mathscr A$ is deterministic.  We denote by $\eta_{\mathscr A}$ the \emph{transition morphism} $\eta_{\mathscr A}\colon A^*\to M(\mathscr A)$.  Notice that $p\prec q$ if and only if $p\in q\cdot M(\mathscr A)$.  Thus the accessibility order on $V(\Gamma)$ corresponds to the inclusion ordering on cyclic $M(\mathscr A)$-invariant subsets.  The subsemigroup of $M(\mathscr A)$ generated by $A$ is called the \emph{transition semigroup} and is denoted $S(\mathscr A)$.

\subsection{Semigroups and automata}
An important example of a deterministic automaton is the Cayley graph of an $A$-semigroup.

\begin{Def}[$A$-semigroup]
An \emph{$A$-semigroup} is a pair $(S,\p)$ where $\p\colon A^+\to S$ is a surjective homomorphism, where $A^+$ denotes the free semigroup on $A$.  To avoid reference to $\p$, we put $[w]_S=\p(w)$ for $w\in A^+$.
\end{Def}

If $S$ is a semigroup, then $S^I$ denotes $S$ with an adjoined identity $I$ (even if $S$ was already a monoid).  If $S$ is an $A$-semigroup, it is convenient to consider the empty word as mapping to $I$.

\begin{Def}[Cayley graph]
Let $S$ be an $A$-semigroup.  The \emph{right Cayley graph $\Cay(S,A)$ of $S$ with
  respect to $A$} is the deterministic automaton with vertex set $S^I$ and edge set $S^I\times A$.  Here $\iota(s,a)=s$ and $\tau(s,a) =sa$.  The edge $(s,a)$ is usually drawn $s\xrightarrow{\, a\,} sa$. The advantage of this ``monoid'' Cayley graph is that the non-trivial
paths from $I$ correspond exactly to the words in $A^+$ and two such
paths have the same endpoints if and only if the words corresponding
to these paths represent the same element in $S$.
\end{Def}

The strong components of these graphs are also of interest.

\begin{Def}[Sch\"utzenberger graphs]
If $S$ is an $A$-semigroup then the strong components of
$\Cay(S,A)$ are the \emph{Sch\"utzenberger graphs} of the
$\R$-classes of $S^I$.  In other words, the vertex set of a strong
component contains exactly those vertices which represent the elements
in an $\R$-class, and the strong component itself is the full subgraph
of $\Cay(S,A)$ on this vertex set.  For each word $w \in A^+$, we
will denote the strong component of $\Cay(S,A)$ containing the
vertex labeled $[w]_S$ by $\sch^S(w)$ (where $\sch$ stands for
Sch\"utzenberger).  Suppose $[w]_S = s$. Since $\sch^S(w)$ only depends
on the element $s$ and not on the word $w$, we sometimes write
$\sch^S(s)$ instead.  The study of the way in which $S$ acts on its
Sch\"utzenberger graphs has been dubbed the \emph{semilocal
  theory}. See Chapter~\cite[Chapter 8]{Arbib} and~\cite[Chapter 4]{qtheor}.
\end{Def}

Directed coverings correspond to transformation semigroup homomorphisms.  In this paper, a \emph{partial transformation semigroup} is a pair $(X,S)$ where $S$ is a semigroup acting faithfully on the right of $X$ by partial transformations.  Recall that if $(X,S)$ and $(Y,T)$ are partial transformation semigroups, then a \emph{morphism} is a pair $(\p,\psi)$ where $\p\colon X\to Y$ is a function and $\psi\colon S\to T$ is a homomorphism such that $\p(xs)=\p(x)\psi(s)$ for all $x\in X$ and $s\in S$, where equality means that either both sides are undefined or both are defined and equal. If $S$ and $T$ are $A$-generated, we shall call $(\p,\psi)$ a morphism of $A$-partial transformation semigroups if $\psi$ is a homomorphism of $A$-semigroups.

The following lemma is standard~\cite{Eilenberg}.

\begin{Lemma}\label{morphismoftransformation}
Let $(X,S)$ and $(Y,T)$ be partial transformation semigroups and $\p\colon X\to Y$ a surjective function so that, for all $s\in S$, there exists $\wh s\in T$ such that $\p(xs)=\p(x)\wh s$ for all $x\in X$ and $s\in S$ (interpreting equality as above).  Then there is a unique homomorphism $\psi\colon S\to T$ so that $(\p,\psi)$ is a morphism.
\end{Lemma}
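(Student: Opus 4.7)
The plan is to define $\psi(s)=\wh s$ using the element whose existence is hypothesized, and then verify uniqueness, well-definedness, and the homomorphism property via faithfulness of the action of $T$ on $Y$.

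First I would observe that $\wh s$ is uniquely determined by $s$. Indeed, if $\wh s,\wh s'\in T$ both satisfy $\p(xs)=\p(x)\wh s=\p(x)\wh s'$ for every $x\in X$ (with the usual convention on undefined values), then since $\p$ is surjective the elements $\wh s$ and $\wh s'$ induce the same partial function on $Y$; because $T$ acts faithfully on $Y$, this forces $\wh s=\wh s'$. This simultaneously gives uniqueness of the required $\psi$, since any morphism $(\p,\psi)$ must satisfy $\p(xs)=\p(x)\psi(s)$, so $\psi(s)$ must equal $\wh s$.

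Next I would check that $s\mapsto \wh s$ is a semigroup homomorphism. For $s,t\in S$ and $x\in X$, the calculation
\[
\p\bigl(x(st)\bigr)=\p\bigl((xs)t\bigr)=\p(xs)\wh t=\bigl(\p(x)\wh s\bigr)\wh t=\p(x)(\wh s\,\wh t)
\]
shows that $\wh s\,\wh t$ satisfies the defining equation for $\wh{st}$, so by the uniqueness established above, $\wh{st}=\wh s\,\wh t$. Setting $\psi(s)=\wh s$ gives the required homomorphism, and the identity $\p(xs)=\p(x)\psi(s)$ is built into the definition, so $(\p,\psi)$ is a morphism of partial transformation semigroups.

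The only subtle point, and the thing to be careful about, is the partial nature of the actions: the equation $\p(xs)=\p(x)\wh s$ has to be read with the convention that both sides are simultaneously defined or undefined. In verifying the homomorphism identity I would check both cases, that if $xs$ is undefined then $\p(x)\wh s$ is undefined, and then iterate to $(xs)t$ and $\p(x)(\wh s\wh t)$, so that the chain of equalities above holds on the nose in the partial setting. Once that bookkeeping is done, faithfulness of $T$ on $Y$ closes the argument.
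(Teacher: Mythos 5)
Your proposal is correct and follows essentially the same route as the paper: uniqueness of $\wh s$ via surjectivity of $\p$ and faithfulness of $T$ on $Y$, then the associativity computation $\p(x(st))=\p(x)(\wh s\,\wh t)$ combined with that uniqueness to get $\psi(st)=\psi(s)\psi(t)$, with the same care about simultaneous definedness of both sides. No gaps.
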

\begin{proof}
First we show that $\wh s$ is unique.  Suppose $t,t'\in T$ satisfy $\p(xs)=\p(x)t=\p(x)t'$ for all $x\in X$, $s\in S$.  Let $y\in Y$  with $yt$ defined and choose $x\in \pinv(y)$.  Then $\p(x)t$ is defined and hence $\p(xs)$ is defined and so $\p(x)t'$ is defined.   Moreover, $yt=\p(xs)=yt'$.   Similarly, $yt'$ defined implies $yt$ is defined and $yt'=yt$.  Thus by faithfulness $t=t'$.  Hence the element $\wh s$ in the hypothesis is unique.  Define $\psi\colon S\to T$ by $\psi(s)=\wh s$.  Notice for $s,s'\in S$ that $xss'$ is defined if and only if $\p(xs)\psi(s')$ is defined, if and only if $\p(x)\psi(s)\psi(s')$ is defined and that $\p(xss')=\p(x)\psi(s)\psi(s')$.  The uniqueness then implies $\psi(ss') = \psi(s)\psi(s')$.  Finally the uniqueness of $\psi$ follows from the uniqueness of $\wh s$.
\end{proof}

We now verify that, for partial deterministic automata, a morphism is a directed covering if and only if it is surjective on vertices and induces a morphism of partial transformation semigroups.

\begin{Prop}\label{covershom}
Let $\mathscr A$ and $\mathscr A'$ be partial deterministic $A$-automata.  Then the following are equivalent:
\begin{enumerate}
\item A directed covering of $A$-automata $\p\colon \mathscr A\to \mathscr A'$;
\item A surjective morphism $(\p,\psi)\colon (V(\mathscr A),S(\mathscr A))\to (V(\mathscr A'),S(\mathscr A'))$ where $\psi$ is a morphism of $A$-semigroups.
\end{enumerate}
 \end{Prop}
\begin{proof}
Suppose first that $\p$ is a directed covering.  We claim that for all vertices $q$ of $\mathscr A$ and $w\in A^+$, one has $\p(qw) = \p(q)w$ (with the usual meaning).  Indeed, the image of a path labeled $w$ must be labeled $w$. So $qw$ defined means $\p(q)w$ is defined and $\p(qw)=\p(q)w$.  Conversely, if $\p(q)w$ is defined, then Proposition~\ref{pathlift} implies that there is a lift of $w$ starting at $q$, which must also have label $w$.  So $qw$ is defined and $\p(qw)=\p(q)w$.  Hence we can define $\wh{[w]}_{S(\mathscr A)} = [w]_{S(\mathscr A')}$ in Lemma~\ref{morphismoftransformation} to obtain $\psi$.

Conversely, suppose $(\p,\psi)$ is well defined. Define $\p\colon  \mathscr A\to \mathscr A'$ to agree with $\p$ on vertices.  If $e\colon p\to q$ is an edge of $\mathscr A$ with label $a$, then $pa=q$ and so $\p(q)=\p(pa)=\p(p)a$. Thus there is a (unique by determinism) edge labeled by $a$ from $\p(p)$ to $\p(q)$, which we define to be $\p(e)$.  Clearly $\p$ is a morphism of $A$-automata. Let us check that it is a directed covering.  Let $q$ be a vertex of $\mathscr A$ and let $e\in \iota^{-1}(\p(q))$ be an edge labeled by $a$.  Then $\p(q)a$ is defined, so $qa$ must be defined and $\p(qa)=\p(q)a$.  Since the automaton is deterministic, there is a unique edge labeled by $a$ emanating from $q$ and it must map to $e$ under $\p$ (again by determinism).
\end{proof}

\subsection{Rooted graphs}
In this article, we shall mostly be interested in rooted graphs.

\begin{Def}[Rooted graph]
A \emph{rooted graph} is a pair $(\Gamma,v)$ where $\Gamma$ is a graph and $v$ is a vertex of $\Gamma$ such that every vertex of $\Gamma$ can be reached from $v$ by a directed path (i.e., the strong component of $v$ is the unique maximum component in the ordering on strong components).
\end{Def}

There is an analogous definition for automata.

\begin{Def}[Pointed automaton]\label{pointedauto}
By a \emph{pointed} (or \emph{initial}) automaton, we mean an $A$-automaton $\mathscr A=(\Gamma,\ell)$ with a distinguished vertex $I$ so that $(\Gamma,I)$ is a rooted graph, that is, $I\cdot M(\mathscr A)=V(\Gamma)$.  We denote the pointed automaton by $(\mathscr A,I)$.  More generally, we say that a subset $\mathsf I$ is a \emph{generating} or \emph{initial} set for $\mathscr A$ if $\mathsf I\cdot M(\mathscr A) = V(\Gamma)$, that is, the downset generated by $\mathsf I$ is $V(\Gamma)$.  In this case, we indicate the generating set by writing $(\mathscr A,\mathsf I)$.
\end{Def}

Often it is important to endow an automaton with initial and terminal states in order to accept a language.

\begin{Def}[Acceptor]
A \emph{non-deterministic acceptor} over an alphabet $A$ is a non-deterministic $A$-automaton $\mathscr A$ equipped with a distinguished initial state $I$ and a set $T$ of \emph{terminal states}.   The language of the acceptor consists of all words in $A^+$ labeling a path from $I$ to an element of $T$.   If we use the word acceptor unmodified, then the underlying automaton is assumed partial deterministic.  By a deterministic acceptor, we mean one in which the underlying automaton is deterministic.  The languages accepted by finite acceptors are the so-called \emph{regular} or \emph{rational} languages.

An acceptor $(\mathscr A,I,T)$ is said to be \emph{trim}
if every vertex is contained in a directed path from the initial state
to some terminal state.  In particular, trim acceptors are rooted at $I$.
\end{Def}

For example, the Sch\"utzenberger graph $\sch^S(s)$ can be turned into
an acceptor by specifying the vertex labeled $s$ as both the
initial state and its only terminal state. Whenever we refer to $\sch^S(s)$
as an acceptor without specifying initial and terminal states, this is what
we intend.

\begin{Def}[Reading words]\label{def:reading}
Let $\mathscr A$ be an $A$-automaton.  Examples include the Cayley graph $\Cay(S,A)$ and $\sch^S(s)$.  A word $w\in A^+$ is said to be \emph{readable on $\mathscr A$}
if there exists a directed path in $\mathscr A$ where the concatenation of
labels is the word $w$.  Similarly, if $v$ is a vertex $\mathscr A$ then
being \emph{readable starting at $v$} or \emph{readable ending at $v$}
has the obvious meaning.
\end{Def}

Using this language, we can restate the advantage of the Cayley graph
$\Cay(S,A)$ as follows.  Every word $w\in A^+$ is readable starting
at $I$, and because $\Cay(S,A)$ is deterministic, $w$ is readable
starting at $I$ in exactly one way.

An important fact is that the Cayley graph of the transition semigroup of a complete deterministic pointed automaton is always a directed cover of the automaton.

\begin{Prop}\label{transitionsemigroupcovers}
Let $\mathscr A=(\Gamma,v)$ be a pointed deterministic $A$-automaton.  Then there is a directed covering $\rho\colon \Cay(S(\mathscr A),A)\to \mathscr A$ of $A$-automata given by $s\mapsto vs$ on vertices.
\end{Prop}
\begin{proof}
The map $\rho$ is clearly is surjective and satisfies $\rho(ts)=vts=\rho(t)s$.  Proposition~\ref{covershom} yields the desired result.
\end{proof}

A rooted graph $(\Gamma,v)$ is called a \emph{directed rooted tree} if $\Gamma$ is a tree.   Directed rooted trees are characterized by having a unique directed path from the root to any vertex.

\begin{Prop}\label{rootedtrees}
A rooted graph $(\Gamma,v)$ is a directed rooted tree if and only if, for each vertex $w\in V(\Gamma)$, there is a unique directed path from $v$ to $w$.  This path is necessarily simple.
\end{Prop}
\begin{proof}
Suppose first that $\Gamma$ is a rooted directed tree and that there are two directed paths $p,q\colon v\to w$.  Then by considering the longest common initial and terminal segments of $p,q$ we can write $p=urs$ and $q=uts$ so that $r,t$ begin and end with different edges.  Then $rt\inv$ is a reduced undirected circuit in $\Gamma$ and so $\Gamma$ is not a tree.  Suppose conversely, that $\Gamma$ is not a tree.  Consider a reduced undirected circuit $p$ in $\Gamma$. Let $w=\iota(p)$ and let $q\colon v\to w$ be a directed path. Replacing $p$ by a cyclic conjugate of its reverse circuit if necessary, we may assume the first edge of $p$ is traversed in the positive direction.   If $p$ is a directed circuit, then $q,qp$ are two directed paths from $v$ to $w$ and we are done. Otherwise, we may factor $p=ue^{-1}s$ where $u$ is the longest directed initial segment of $p$ and $e^{-1}$ indicates that $e$ is traversed backwards.
Let $e\colon a\to b$.  Then $b=\tau(u)$.  Let $r\colon v\to a$ be a directed path.   See Figure~\ref{fig:whataretrees}.
\begin{figure}
\[\SelectTips{cm}{}
\xymatrix{v\ar@/^1pc/[r]^q\ar@/_2pc/[rrr]_r&w\ar[r]^u &b&a\ar[l]_e\ar@/_2pc/[ll]_s}
\]
\caption{\label{fig:whataretrees}}
\end{figure}Then $re$ and $qu$ are two directed paths from $v$ to $b$.  We claim they are distinct. Indeed, if the last edge of $u$ were $e$, then $p=ue^{-1}s$ would not be reduced.  This completes the proof.

If $(\Gamma,v)$ is a directed rooted tree and $p\colon v\to w$ is the unique directed path, then obviously $p$ is simple.  For if we could write $p=qur$ where $u$ is a non-empty path with $\iota(u)=\tau(u)$, then $u$ is a reduced circuit in $\Gamma$, contradicting that $\Gamma$ is a tree.
\end{proof}

The unique directed path from $v$ to $w$ is denoted $[v,w]$ and called the \emph{geodesic} from $v$ to $w$.  More generally, if $T$ is a rooted directed tree and $w\leq u$, then there is a unique (directed) simple path from $u$ to $w$, which we denote $[u,w]$ and call the geodesic from $u$ to $w$.

Let $(\Gamma,v)$ be a rooted graph.  By a \emph{directed spanning tree} $T$ for $(\Gamma,v)$ we mean a directed rooted subtree $(T,v)$ containing all the vertices of $\Gamma$.  The next proposition shows that every rooted graph admits a directed spanning tree.  One should think of a directed spanning tree as a collection of normal forms for each vertex of $\Gamma$.

\begin{Prop}\label{hasspanningtree}
Let $(\Gamma,v)$ be a rooted graph. Then $(\Gamma,v)$ admits a directed spanning tree.
\end{Prop}
\begin{proof}
Let $\mathscr C$ be the collection of all directed rooted subtrees $(T,v)$ of $(\Gamma,v)$ ordered by inclusion.  It is non-empty since it contains $(\{v\},v)$.  Next observe that if $\{(T_{\alpha},v)\mid \alpha\in A\}$ is a chain in $\mathscr C$, then $(\bigcup_{\alpha\in A} T_{\alpha},v)\in \mathscr C$ since any reduced undirected circuit in this union must belong to some $T_{\alpha}$.  Thus by Zorn's Lemma, $\mathscr C$ contains a maximal element $(T,v)$.  We claim that this is the desired directed spanning tree.  For suppose $V(T)\subsetneq V(\Gamma)$.  We claim that there is an edge $e\in E(\Gamma)$ with $\iota(e)\in V(T)$ and $\tau(e)\notin V(T)$.  For if this is not the case, then $V(T)$ is a downset in the preorder $\prec$ on $V(\Gamma)$.  But $v\in V(T)$ and $\Gamma$ is rooted at $v$.  Thus $V(T)=V(\Gamma)$.  So let $e$ be such an edge.  One easily verifies that the graph $T'$ obtained by adjoining $e$ to $T$, so  $V(T') = V(T)\cup \{\tau(e)\}$ and $E(T') = E(T)\cup \{e\}$, is a tree. Indeed, if $w\in V(T)$, then the unique directed path in $T'$ from $v$ to $w$ is the geodesic in $T$, whereas the unique directed path from $v$ to $\tau(e)$ is $pe$ where $p$ is the geodesic $[v,\iota(e)]$ in $T$.  Thus $T'$ is a directed rooted tree by Proposition~\ref{rootedtrees}.
Clearly $(T',v)$ is a larger element of $\mathscr C$ than $T$.  This contradiction completes the proof.
\end{proof}

\begin{Rmk}\label{treecompletion}
The proof of Proposition~\ref{hasspanningtree} can easily be adapted to prove that if $(T_0,v)$ is any directed rooted subtree of a rooted graph $(\Gamma,v)$, then there is a directed spanning tree $(T,v)$ containing $(T_0,v)$.
\end{Rmk}

It turns out that every rooted directed graph $(\Gamma,v)$ has a unique directed cover that is a tree.  Moreover, this tree is a directed cover of all directed covers of $(\Gamma,v)$ and hence is called the \emph{universal directed cover} of $(\Gamma,v)$.

\begin{Thm}
Let $(\Gamma,v)$ be a rooted graph.  Then there is a rooted tree $(\til \Gamma,\til v)$ and a directed covering $\pi\colon (\til \Gamma,\til v)\to (\Gamma,v)$.  Moreover, given a directed covering $\p\colon (\Gamma',v')\to (\Gamma,v)$, there is a unique morphism (necessarily a directed covering) $\psi\colon (\til \Gamma,\til v)\to (\Gamma',v')$ so that \[\xymatrix{(\til \Gamma,\til v)\ar[rr]^{\psi}\ar[dr]_{\pi} & & (\Gamma',v')\ar[dl]^{\p}\\ & (\Gamma,v) &}\] commutes.
\end{Thm}
\begin{proof}
The proof is so similar to the classical undirected case that we just give the construction of $\til \Gamma$ and leave the remaining details to the reader.  The vertices of $\til \Gamma$ are the directed path starting at $v$.  One takes $\til v$ to be the empty path at $v$.  The edges of $\til \Gamma$ consist of pairs $(p,e)$ where $p$ is a path from $v$ and $\tau(p)=\iota(e)$.  The incidence functions are given by $\iota(p,e)=p$ and $\tau(p,e)=pe$.  The directed covering $\pi$ is given by $\pi(p)=\tau(p)$ on vertices and $\pi(p,e) = e$ on edges.
\end{proof}

For example, they Cayley graph of $A^*$ is the universal directed cover of the bouquet $\mathscr B_A$, as well as of any other deterministic $A$-automaton.

\subsubsection{The fundamental monoid of a rooted graph}
If $(\Gamma,v)$ is a rooted graph and $v\in V(\Gamma)$, then one can define the \emph{fundamental monoid} $\Gamma^*(v)$ of $(\Gamma,v)$ to be the monoid of all loops at $v$ with the concatenation product.    For instance, the fundamental monoid of the bouquet $\mathscr B_A$ at its unique vertex is the free monoid $A^*$.  Notice that the fundamental monoid ignores all of $\Gamma$ except the strong component of $v$ and moreover, it depends on the root vertex even for strongly connected graphs. Thus  one should really work with the free category $\Gamma^*$ on the graph $\Gamma$~\cite{Mac-CWM}. Analogously to the case of fundamental groups, the fundamental monoid of a rooted graph is free.

\begin{Prop}\label{fundisfreemonoid}
 Let $(\Gamma,v)$ be a rooted graph.  Then $\Gamma^*(v)$ is free on the set $P_v$ of non-empty paths $p\colon v\to v$ that do not visit $v$ except for at the beginning and the end.
\end{Prop}
\begin{proof}
Indeed, if $q\colon v\to v$ is any non-empty loop, then it has a unique factorization $q=p_1\cdots p_n$ with $p_1,\ldots, p_n\in P$ by partitioning $q$ according to each time it visits $v$.  Thus $\Gamma^*(v)$ is free on $P$.
\end{proof}

The next example shows that the generating set $P_v$ need not be finite even when the graph $\Gamma$ is finite and also exhibits the dependence on the generating set.

\begin{exmp}
Consider the strongly connected graph $\Gamma$
in Figure~\ref{infinitebasis}.
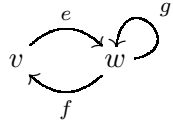
\begin{figure}
\[\SelectTips{cm}{}
\xymatrix{v\ar@/^1pc/[r]^e &w\ar@/^1pc/[l]^f\ar@(r,u)_g}
\]
\caption{An infinitely generated fundamental monoid\label{infinitebasis}}
\end{figure}
Then $\Gamma^*(v)$ is freely generated by the infinite set $\{eg^nf\mid n\geq 0\}$, whereas $(\Gamma,w)^*$ is freely generated by $fe,g$.
\end{exmp}

The next result generalizes the situation for free monoids that can be found, for instance, in~\cite{berstelperrinreutenauer}.
A submonoid $N$ of a monoid $M$ is called \emph{right unitary} if $u,uv\in N$ implies $v\in N$ for $u,v\in M$.

\begin{Prop}\label{coveringtheory}
Let $\p\colon (\til \Gamma,\til v)\to (\Gamma,v)$ be a directed covering of rooted graphs.  Then the induced map $\p\colon \til \Gamma^*(\til v)\to \Gamma^*(v)$ is injective and the image is right unitary.  Conversely, every right unitary submonoid of $\Gamma^*(v)$ is of this form.
\end{Prop}
\begin{proof}
Since $\p$ is a directed covering, any loop at $v$ has at most one lift starting at $\til v$ and so $\p$ is an injective homomorphism.  Suppose that $p,pq\in \p(\til \Gamma^*(\til v))$ with $q\in \Gamma^*(v)$.  Choose lifts $\til p,\til q$ of $p$ and $q$ respectively starting at $\til v$.  Then $\til p$ must be a loop at $\til v$ by uniqueness of lifts.  Also, since $\til p\til q$ lifts $pq$, it too must be a loop at $\til v$.  It follows now that $\til q$ is a loop at $\til v$ and so $q=\p(\til q)\in \p(\til \Gamma^*(\til v))$, as required.

Now suppose that $N$ is a right unitary submonoid of $\Gamma^*(v)$.   Let $X$ be the set of directed paths in $\Gamma$ starting at $v$.  Define an equivalence relation on $X$ by $p\equiv q$ if $\tau(p)=\tau(q)$ and $pu\in N$ if and only if $qu\in N$ for all $u\colon \tau(p)\to v$.  Let $V$ be the quotient of $X$ by $\equiv$ and let $E$ consist of all pairs $([p],e)$ with $e\in E(\Gamma)$ and $\tau(p)=\iota(e)$.  We define $\til \Gamma$ to have vertex set $V$ and edge set $E$ where $([p],e)$ goes from $[p]$ to $[pe]$. Notice that $[p]=[q]$ implies $peu\in N$ if and only if $qeu\in N$ and so the incidence functions are well defined.  The reader can easily verify that the maps $[p]\mapsto \tau(p)$ and $([p],e)\mapsto e$ yield a directed covering.  Surjectivity on vertices requires that $(\Gamma,v)$ is rooted.  Let $\til v = [1_v]$.   One can verify directly that if $q$ is a loop at $v$, then the lift of $q$ to $\til v$ ends at $[q]$.  Now $[q]=\til v$, if and only if $q\equiv 1_v$.  Let us show that this is equivalent to $q\in N$.  If $q\equiv 1_v$, then since $1_v\in N$, it follows that $q=q1_v\in N$.  Conversely, if $q\in N$, then the very definition of right unitary implies $qu\in N$ if and only if $u\in N$, if and only if $1_vu\in N$.  Thus $q\equiv 1_v$, completing the proof that $\til \Gamma^*(\til v)$ maps onto $N$.
\end{proof}

There can be multiple directed coverings corresponding to a given right unitary submonoid of $\Gamma^*(v)$, but the construction given in Proposition~\ref{coveringtheory} is the unique minimal one in the sense that all others cover it.  The above proof also shows that directed immersions induce injective maps on fundamental monoids.

Notice that if $\p\colon (\til \Gamma,\til v)\to (\Gamma,v)$ is a directed covering of rooted graphs, then $\Gamma^*(v)$ has a monodromy action on $\p\inv(v)$ given on $w\in \pinv(v)$ by $wp =w'$ where $w'$ is the end point of the unique lift of $p$ starting at $w$.  This generalizes the action of the transition monoid of a deterministic automaton.

\subsection{The unique simple path property}

Perhaps the most important notion in geometric theory is that of a rooted graph with the unique simple path property.  We give several equivalent properties that will define this notion.  Recall that path unmodified means directed path.

\begin{Prop}\label{defineuspp}
Let $(\Gamma,v)$ be a rooted graph.  Then the following are equivalent:
\begin{enumerate}
\item For each vertex $w$, there is a unique simple path from $v$ to $w$;
\item $(\Gamma,v)$ admits a unique directed spanning tree;
\item $(\Gamma,v)$ admits a directed spanning tree $(T,v)$ such that, for each edge $e\in E(\Gamma)\setminus E(T)$ one has $[v,\tau(e)]$ is an initial segment of $[v,\iota(e)]$ (i.e., $\tau(e)$ is visited by the geodesic $[v,\iota(e)]$).
\end{enumerate}
\end{Prop}
\begin{proof}
To see that (1) implies (2), suppose that $(T,v)$ and $(T',v)$ are distinct spanning trees.  Then there is an edge $e$ that belongs to, say, $T$ and not $T'$.  Set $w=\tau(e)$. Let $p\colon v\to \iota(e)$ and $q\colon v\to w$ be the geodesics in $T$ and $T'$ respectively.  Then $pe\colon v\to w$ is a directed path in $T$ and hence simple by Proposition~\ref{rootedtrees}.  Since $e\notin T'$, clearly $q\neq pe$.  This contradicts (1).

For (2) implies (3), let $T$ be the unique directed spanning tree for $\Gamma$ and suppose that $e\in E(\Gamma)\setminus E(T)$.  Suppose that $\tau(e)\notin [v,\iota(e)]$. The path $[v,\iota(e)]e$ is then simple and so its support $T'$ is a directed tree rooted from $v$.  By Remark~\ref{treecompletion} we may complete $T'$ to a directed spanning tree that evidently is different from $T$.  This contradicts the uniqueness of $T$.

Finally, we prove (3) implies (1).  Let $T$ be the spanning tree provided by (3). Let $p=[v,w]$ be the geodesic in $T$. Then $p$ is a simple path from $v$ to $w$.  Suppose that $q\colon v\to w$ is another simple path; it cannot be contained in $T$ so let $e$ be the first edge used by $q$ that does not belong to $T$.  Then we can factor $q=ret$ where $e\in E(\Gamma)\setminus T$ and $r=[v,\iota(e)]$.  But then by assumption $\tau(e)\in [v,\iota(e)]$.  This contradicts that $q$ is simple.  We conclude that $p$ is the unique simple path from $v$ to $w$.
\end{proof}

The third condition says that $\Gamma$ is a ``rooted tree falling back on itself.''
Here is the key definition in geometric semigroup theory.

\begin{Def}[Unique simple path property]
A rooted graph $(\Gamma,v)$ is said to have the \emph{unique simple path property} if it satisfies the equivalent conditions of Proposition~\ref{defineuspp}.  If $(\Gamma,v)$ has the unique simple path property, then we denote the unique simple path from $v$ to $w$ by $[v,w]$ and call it the \emph{geodesic}, just as we did for directed rooted trees.
\end{Def}

\begin{Rmk}
Having the unique simple path property depends not just on the graph $\Gamma$, but also on the choice of the root.  For example the graph in Figure~\ref{nosimplepathfromall}
\begin{figure}
\[\SelectTips{cm}{}
\xymatrix{v\ar[rr] &&\ar@/^2pc/[ll] \ar@/_2pc/[ll]w}
\]
\caption{A graph with the unique simple path property at $v$ but not $w$.\label{nosimplepathfromall}}
\end{figure}
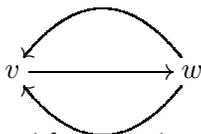
has the unique simple path property from $v$ but not from $w$.
\end{Rmk}

\begin{Rmk}[Non-planar]\label{rem:non-planar}
Also, having the unique simple path property does not force the graph
to be planar.  For example, the complete bipartite graph $K_{3,3}$ is
non-planar but as shown in Figure~\ref{fig:k33} it can be oriented to
have the unique simple path property from one of its vertices ($p_1$
in this case).
\end{Rmk}

\psfrag{r1}{$p_1$}
\psfrag{r2}{$p_2$}
\psfrag{r3}{$p_3$}
\psfrag{s1}{$q_1$}
\psfrag{s2}{$q_2$}
\psfrag{s3}{$q_3$}
\begin{figure}[ht]
\includegraphics{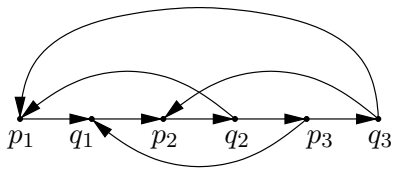}
\caption{A non-planar graph with the unique simple path
property.\label{fig:k33}}
\end{figure}

\begin{Rmk}
 Notice that if $(\Gamma,v)$ has the unique simple path property, then each transition edge of $\Gamma$ belongs to the directed spanning tree by the third item of Proposition~\ref{defineuspp}.
\end{Rmk}

It is easy to see that if $(\Gamma,v)$ has the unique simple path property and $X$ is a convex set of vertices of $\Gamma$ containing $v$, then $(\Delta(X),v)$ has the unique simple path property where $\Delta(X)$ is the subgraph induced by $X$.

The frame of a rooted graph with the unique simple path property is a directed rooted tree.  Moreover, each strong component has a unique transition edge entering it, called its \emph{entry edge}.  The endpoint of the entry edge is termed the \emph{entrance} of the strong component.  Most importantly, if $C$ is a strong component with entrance $w$, then $(C,w)$ has the unique simple path property as does $(C^{\downarrow},w)$ where $C^{\downarrow}$ is the subgraph of $\Gamma$ induced by the downset in $\prec$ generated by the vertices of $C$ (or equivalently by $w$).  This is summarized in the following proposition.

\begin{Prop}\label{entrances}
Let $(\Gamma,v)$ have the unique simple path property.  Then $(\Fr \Gamma,C_v)$ is a directed rooted tree where $C_w$ denotes the strong component of a vertex $w$.   Each strong component has a unique transition edge entering it, called its entry edge.  If $C$ is a strong component with entry edge $e$ and entrance $w=\tau(e)$, then $(C^{\downarrow},w)$ has the unique simple path property and hence $(C,w)$ has the unique simple path property (being convex in $(C^{\downarrow},w)$).
\end{Prop}
\begin{proof}
We show that there is a unique directed path from $C_v$ to $C$ in $\Fr \Gamma$ for any strong component $C$.  Proposition~\ref{rootedtrees} then implies that $(\Fr \Gamma,C_v)$ is a directed rooted tree.   It is immediate from the definition of $\Fr \Gamma$ that there is a path $C_x\to C_y$ if and only if there is a path from $x$ to $y$.  Thus there is some path from $C_v$ to $C$.  Suppose that $p=e_1\cdots e_n$ and $q=f_1\cdots f_m$ are two distinct such paths; they are necessarily simple since $\Fr \Gamma$ is acyclic.  The $e_i$ and $f_j$ are transition edges of $\Gamma$.  Since $e_n$ and $f_m$ end in $C$, we can find a simple path in $r\colon \tau(e_n)\to \tau (f_m)$ in $C$.  Also, for each $1\leq i\leq n-1$ we can find a simple path $p_i$ contained in the strong component of $\tau(e_i)$ from $\tau(e_i)$ to $\iota(e_{i+1})$.  Let $t\colon v\to \iota(e_1)$ be a simple path contained in $C_v$.  Then $p'=te_1p_2\cdots e_{n-1}p_{n-1}e_nr$ is a simple path from $v$ to $\tau(f_m)$ whose transition edges are precisely $e_1,\ldots,e_n$.  Similarly, we can construct a simple path $q'=t'f_1q_1\cdots f_{m-1}q_{m-1}f_m$ from $v$ to $\tau(f_m)$ whose transition edges are precisely $f_1,\ldots, f_m$.  Since $p'\neq q'$, this contradicts the unique simple path property.  We conclude $(\Fr \Gamma,C_v)$ is a directed rooted tree.

Suppose that $e,e'$ are distinct transition edges entering $C$.  Say $e\colon C_1\to C$ and $e'\colon C_2\to C$ in $\Fr \Gamma$.  Let $p,q$ be the geodesics in $\Fr \Gamma$ from $C_v$ to $C_1,C_2$ respectively.  Then $pe,qe'\colon C_v\to C$ are distinct directed paths, a contradiction to $\Fr \Gamma$ being a rooted tree.

Finally, let $C$ be a strong component with entry edge $e$ and entrance $w=\tau(e)$.  Let $z$ be any vertex with $z\prec w$.  Suppose $p,q\colon w\to z$ are simple paths.   Since $e$ is a transition edge, $p$ and $q$ cannot use the edge $e$. It follows that $[v,\iota(e)]ep$ and $[v,\iota(e)]eq$ are simple paths from $v$ to $z$.  We conclude $p=q$, as required.
\end{proof}

The following corollary is immediate.

\begin{Cor}\label{uniquepathquasi}
If $(\Gamma,v)$ has the unique simple path property and is quasilinear, then $\Gamma$ is linear.
\end{Cor}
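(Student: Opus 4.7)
The plan is to exploit Proposition~\ref{entrances}, which (using USPP) guarantees that each non-root strong component has a unique transition edge entering it, and to combine this with the linear ordering of strong components provided by quasilinearity. Enumerate the strong components $C_0, C_1, \ldots, C_k$ in the order coming from quasilinearity, with $C_0$ the topmost component (which contains $v$). For each $i \geq 1$, let $e_i$ denote the unique entry edge of $C_i$. The goal is to show first that $e_i$ starts in $C_{i-1}$, and then that the $e_i$ are linearly ordered by the edge preorder $\prec$.

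For the first claim, fix $i \geq 1$, pick any $u \in C_{i-1}$ and any $w \in C_i$. By quasilinearity there is a directed path $p \colon u \to w$. The last transition edge traversed by $p$ enters $C_i$, hence (by uniqueness from Proposition~\ref{entrances}) equals $e_i$. Writing $e_i \colon C_m \to C_i$, the initial portion of $p$ is a directed path from $u \in C_{i-1}$ to $\iota(e_i) \in C_m$, forcing $C_m \succeq C_{i-1}$. Since $m < i$ we have $m \leq i-1$, and if $m < i-1$ then $C_m$ would lie strictly above $C_{i-1}$ in the accessibility order, making such a path impossible. Therefore $m = i-1$, so $e_i \colon C_{i-1} \to C_i$.

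For the second claim, both $\tau(e_i)$ (the entrance of $C_i$) and $\iota(e_{i+1})$ (the start of the entry edge of $C_{i+1}$, which by the first claim lies in $C_i$) are vertices of the strong component $C_i$. So there is a directed path $p$ in $C_i$ from $\tau(e_i)$ to $\iota(e_{i+1})$, whence $e_i\, p\, e_{i+1}$ is a directed path in $\Gamma$ and $e_i \prec e_{i+1}$. Hence $e_1 \prec e_2 \prec \cdots \prec e_k$ is a chain, so $\Gamma$ is linear. (The degenerate cases $k = 0$ and the absence of transition edges are immediate since an empty or one-element set of edges is trivially a chain.)

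The only real obstacle is the first claim, namely ruling out the possibility that the unique entry edge of $C_i$ could start in some $C_m$ with $m < i - 1$; everything else is a direct consequence of quasilinearity together with Proposition~\ref{entrances}.
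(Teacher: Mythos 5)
Your proof is correct and is exactly the intended argument: the paper declares this corollary ``immediate'' from Proposition~\ref{entrances}, and what you have written is the natural fleshing-out of that immediacy (unique entry edges plus the total order on strong components force the frame to be a directed path, and strong connectivity of each component then chains consecutive entry edges in the edge preorder). The only point left tacit is that \emph{every} transition edge is the entry edge of some $C_i$ with $i\geq 1$ (no transition edge can enter the topmost component $C_0$, since $C_0$ is the maximum in the accessibility order), so the set of all transition edges really is $\{e_1,\ldots,e_k\}$; this is a one-line observation and does not affect correctness.
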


Let us fix for the moment a rooted graph $(\Gamma,I)$ with the unique simple path property and denote by $T$ its unique directed spanning tree.  Let us use $\leq_T$ to denote the accessibility order in $T$.  Notice that $u\leq_T v$ implies $u\leq v$, but not conversely.   For example, $\leq$ is trivial on strongly connected components of $\Gamma$, but $\leq_T$ is still non-trivial.  If $v$ is a vertex of $\Gamma$, denote by $v^{\Downarrow}$ the set of vertices $u\in V(\Gamma)$ with $u\leq_T v$.  Abusively, we shall also denote the induced subgraph with vertex set $v^{\Downarrow}$ by the same notation.  Let us denote by $C_{v^{\Downarrow}}$ the strong component of $v$ in $v^{\Downarrow}$.

\begin{Prop}\label{treedown}
Let $(\Gamma,I)$ be a graph with the unique simple path property and suppose $v\in V(\Gamma)$.  The rooted graph $(v^{\Downarrow},v)$ has the unique simple path property.  Consequently, $(C_{v^{\Downarrow}},v)$ has the unique simple path property.
\end{Prop}
\begin{proof}
If $u\in v^{\Downarrow}$, then there is a simple path in $T$ from $v$ to $u$ by definition, which we denote $[v,u]$.  Then $[I,u]=[I,v][v,u]$.   Suppose that $p\colon v\to u$ is a simple path in $v^{\Downarrow}$ and consider $[I,v]p$.  If this path is simple, $[v,u]=p$ and we are done.  If not, then there is a vertex $r$ visited twice by $[I,v]p$.  Since $[I,v]$ and $p$ are simple, it follows that $r\neq v$ and is visited by both $[I,v]$ and $p$.  But this is impossible since this means that $v<_T r$ and $r\leq_T v$.
\end{proof}

Sometimes one wants to extract a quasilinear subgraph from a graph with the unique simple path property. If $w$ is a vertex of $\Gamma$, then by abuse of notation, $w^{\uparrow}$ will also denote the full subgraph of $\Gamma$ with vertex set $w^{\uparrow} = \{q\in V(\Gamma)\mid q\succ w\}$.
\begin{Prop}\label{straightlineinuspp}
Suppose that $(\Gamma,v)$ has the unique simple path property and $w\in V(\Gamma)$.   Then $(w^\uparrow,v)$ has the unique simple path property and is quasilinear.  In particular, $w^\uparrow$ is linear.
\end{Prop}
\begin{proof}
Since $w^\uparrow$ is convex, it has the unique simple path property from $v$.  To see that it is quasilinear, let $C,C'$ be strong components of $w^\uparrow$.  Then since $\Fr \Gamma$ is a tree and $C,C'$ are between the strong component of $v$ and the strong component of $w$, it follows that $C$ and $C'$ are comparable in the accessibility order.  Thus $w^\uparrow$ is quasilinear and hence is linear by Corollary~\ref{uniquepathquasi}.
\end{proof}

We now define the ``bold'' arrows of a rooted graph with the unique simple path property.

\begin{Def}[Bold arrows]
Let $(\Gamma,I)$ have the unique simple path property with directed spanning tree $T$.  Then $\pv E(\Gamma) = E(\Gamma)\setminus E(T)$ is called the set of \emph{bold arrows} of $\Gamma$. (Notice the bold font is used to denote the set of bold arrows.)
\end{Def}

The set $\pv E(\Gamma)$ of bold arrows is in bijection with a generating set for $\pi_1(\Gamma,I)$ (or equivalent $H_1(\Gamma)$) and so the number of bold arrows is just the first Betti number of $\Gamma$.  Notice that if $\Gamma'$ is any subgraph of $\Gamma$ containing the spanning tree $T$, then $(\Gamma',I)$ still has the unique simple path property.  Thus removing bold arrows does not cause one to lose the unique simple path property.  Induction on the number of bold arrows is a key idea in geometric semigroup theory.

An important notion in geometric semigroup theory is that of a sloop.

\begin{Def}[Sloop]\label{definesloop}
If $e\in \pv E(\Gamma)$ is a bold arrows, then the sloop (think ``simple loop'') is the path $\slp(e)=[I,\iota(e)]e$.  In pictures, we have
\[\SelectTips{cm}{}
\xymatrix{ & &\iota(e)\ar@/_1pc/@2[dl]_e &\\ I\ar[r]_{[I,\tau(e)]}&\tau(e)\ar `^u[rr]_{[\tau(e),\iota(e)]} `^l[rru] [ru] &&}\]
where the bold arrow is drawn doubled.
\end{Def}

If $e$ is a bold arrow, then the corresponding generator of $\pi_1(\Gamma,I)$ is $\slp(e)[I,\tau(e)]\inv$.  Next we define the loop of a sloop.

\begin{Def}[Loop of a sloop]
If $e\in \pv E(\Gamma)$ is a bold arrow, define the loop $\lp(e)$ of the sloop $\slp(e)$ to be the simple circuit $[\tau(e),\iota(e)]e$.  So the loop $\lp(e)$ of the sloop $\slp(e)$ in the picture from Definition~\ref{definesloop} is
\[\SelectTips{cm}{}
\xymatrix{  &\iota(e)\ar@/_1pc/@2[dl]_e &\\ \tau(e)\ar `^u[rr]_{[\tau(e),\iota(e)]} `^l[rru] [ru] &&}\]
Note that $\slp(e)=[I,\tau(e)]\lp(e)$.
\end{Def}

In a graph with the unique simple path property, each non-trivial strong component is a union of loops of sloops.

\begin{Prop}\label{describestronglyconnected}
Let $(\Gamma,v)$ have the unique simple path property and let $C$ be a strong component of $\Gamma$ containing at least one edge.  Then $C=\bigcup_{e\in \pv E(\Gamma)\cap C} \lp(e)$.
\end{Prop}
\begin{proof}
Obviously, $\lp(e)\subseteq C$ for any $e\in \pv E(\Gamma)\cap C$ since $\lp(e)$ is strongly connected. For the converse, it suffices to show that each edge of $C$ belongs to $\lp(e)$ for some $e\in \pv E(\Gamma)$ since every vertex of $C$ must be on some edge.  Let $u$ be the entrance of $C$ and suppose $f\in E(C)$. If $f\in \pv E(\Gamma)$, there is nothing to prove.  So suppose that $f\in E(T)$; say $f\colon v\to w$.   Choose a simple path $p\colon w\to u$.  Then $q=[u,v]fp$ is not simple and so we can factor $q=[u,\iota(e)]eq'$ where $e$ is the first bold arrow used by $q$. Evidentally, $[u,v]f$ is an initial segment of $[u,\iota(e)]$ and so $\iota(e)\leq_T w$.  On the other hand, since $p$ is simple, we must have $v\leq_T \tau(e)$ and so $\iota(e)\leq_T w\leq_T v\leq _T\tau(e)$.  See Figure~\ref{fig:describestronglyconnected}.
\begin{figure}
\[\SelectTips{cm}{}
\xymatrix{&& & \iota(e)\ar@/_1pc/[ld]_e& &\\ u\ar@/_2pc/[rr]_{[u,\tau(e)]}&&\tau(e)\ar@/_2pc/[ll]_{q'}\ar@/_1pc/[rd]_{[\tau(e),v]}&&w\ar@/_1pc/[lu]_{[w,\iota(e)]}\\ & & & v\ar@/_1pc/[ru]_f & &}\]
\caption{\label{fig:describestronglyconnected}}
\end{figure}
Thus $f$ is an edge of $\lp(e) = [\tau(e),\iota(e)]e$.
\end{proof}

Next we wish to define the notion of a geometric rank function.  Later on, when we deal with automata, we shall impose some algebraic restrictions on our rank functions, but here we define things in complete generality.  From now on we assume that all graphs have finite out-degree.

\begin{Def}[Geometric rank function]
Let $(\Gamma,I)$ have the unique simple path property.  Then a \emph{geometric rank function} is a mapping $r\colon \pv E(\Gamma)\to \NN$ so that, for each vertex $v$, $r$ maps $\tau^{-1}(v)\cap \pv E(\Gamma)$ bijectively to the interval $[0,|\tau^{-1}(v)\cap \pv E(\Gamma)|-1]$.  We then totally order  $\tau^{-1}(v)\cap \pv E(\Gamma)$ by putting $e\leq_r f$ if and only if $r(e)\leq r(f)$.
\end{Def}

Said differently, a geometric rank function is a way of totally ordering the bold arrows at each vertex.  Given a geometric rank function, we can now define a partial order on the bold arrows as follows.  Recall that $C_v$ denotes the strong component of a vertex $v$.

\begin{Def}[Order on bold arrows]
Fix a geometric rank function $r$.  Then define a partial order on $\pv E(\Gamma)$ by putting $e\ll f$ for $e\neq f\in \pv E(\Gamma)$ if:
\begin{enumerate}
\item $C_{\tau(f)}<C_{\tau(e)}$; or
\item $C_{\tau(e)}=C_{\tau(f)}$ and $\tau(e)<_T \tau(f)$; or
\item $\tau(e)=\tau(f)$ and $r(e)< r(f)$.
\end{enumerate}
\end{Def}

\begin{Example}
Consider the graph
\begin{center}
\[\SelectTips{cm}{}
\xymatrix{        &                               &           & \bullet\ar@/_1pc/@2[dl]_{e_1}\ar[r] &\bullet\ar@/_1pc/[r]&\bullet\ar@/_1pc/@2[l]_{e_3}\ar@(rd,ru)@2_{e_4}\\
            I\ar[r] & \bullet\ar@/_1pc/[rrd]\ar[r]  & \bullet\ar@/_1pc/[ru]   &               &\bullet\ar@/_1pc/@2[ld]_{e_2}&\\
                    &                               &  &\bullet\ar@/_1pc/[ru]  &&}\]
\end{center}
The bold arrows are $e_1,e_2,e_3, e_4$.  One has $e_1\ll e_4\ll e_3$, whereas $e_2$ is incomparable to the other arrows.
\end{Example}

\subsection{Cutting sloops}
A major idea in geometric semigroup is to work by induction on the number of bold arrows.  To do this, we need to cut sloops.  First we talk about cutting arbitrary rooted graphs.

\begin{Def}[Cutting a graph]
Let $(\Delta,v)$ be a rooted graph.  Define $\cut(\Delta,v)$ to be the graph with vertex set $V(\Delta)\cup (\{v\}\times \tau^{-1}(v))$ and edge set $E(\Delta)$.  The initial vertex function $\iota$ is as before.  The new terminal vertex function $\tau'$ is given by \[\tau'(e) = \begin{cases} \tau(e) & \tau(e)\neq v \\ (v,e) &\tau(e)=v.\end{cases}\]
\end{Def}

Geometrically, $\cut(\Delta,v)$ is obtained from $\Delta$ by taking each edge ending at $v$ and moving its end away from $v$.  Notice the map $\cut(\Delta,v)\to \Delta$ which is the inclusion on $V(\Delta)\cup E(\Delta)$ and which sends $\{v\}\times \tau\inv (v)$ to $v$ is a directed graph immersion.

\begin{Example}
Suppose $(\Delta,v)$ is the rooted graph in Figure~\ref{fi:tocut}.
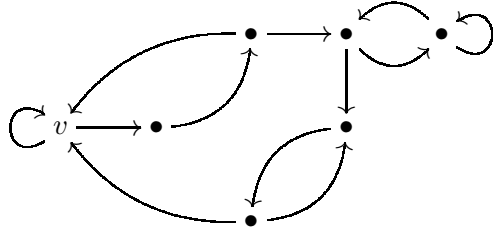
\begin{figure}[tbhp]
\begin{center}
\[\SelectTips{cm}{}
\xymatrix{                                       &           & \bullet\ar@/_1pc/[dll]\ar[r] &\bullet\ar[d]\ar@/_1pc/[r]&\bullet\ar@/_1pc/[l]\ar@(rd,ru)\\
            v\ar[r]\ar@(ld,lu)  & \bullet\ar@/_1pc/[ru]   &               &\bullet\ar@/_1pc/[ld]&\\
                               &  &\bullet\ar@/_1pc/[ru]\ar@/^1pc/[llu]  &&}\]
\end{center}
\caption{The graph $(\Delta,v)$ be\label{fi:tocut}}
\end{figure}
Then $\cut(\Delta,v)$ is the graph in Figure~\ref{fig:cut}.
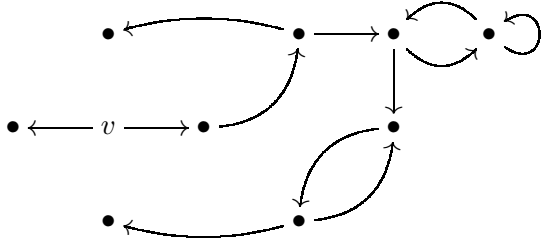
\begin{figure}[bthp]
\begin{center}
\[\SelectTips{cm}{}
\xymatrix{            &              \bullet            &           & \bullet\ar@/_/[ll]\ar[r] &\bullet\ar[d]\ar@/_1pc/[r]&\bullet\ar@/_1pc/[l]\ar@(rd,ru)\\
        \bullet &    v\ar[r]\ar[l]  & \bullet\ar@/_1pc/[ru]   &               &\bullet\ar@/_1pc/[ld]&\\ &
                        \bullet       &  &\bullet\ar@/_1pc/[ru]\ar@/^/[ll]  &&}\]
\end{center}
\caption{The graph $\cut(\Delta,v)$\label{fig:cut}}
\end{figure}
\end{Example}

The next proposition establishes some basic properties of cut graphs.

\begin{Prop}\label{cutkeepsuspp}
The graph $\cut(\Delta,v)$ is rooted at $v$.  Moreover,  the rooted graph $(\cut(\Delta,v),v)$ has the unique simple path property if and only if $(\Delta,v)$ does. If $(\Delta,v)$ has the unique simple path property, then the bold arrows of $(\cut(\Delta,v),v)$ are the bold arrows of $\Delta$ that do not end at $v$.
\end{Prop}
\begin{proof}
We shall repeatedly use the following fundamental observation:
if $w\in V(\Delta)$ with $w\neq v$, then the simple paths between $v$ and $w$ in $\Delta$ and $\cut(\Delta,v)$ are one in the same.  This is because on the one hand, a simple path from $v$ never uses an edge of $\tau\inv (v)$; on the other hand, a path in $\cut(\Delta,v)$ using an edge $e\in \tau\inv(v)$ can only use that edge as its last edge and therefore does not end in $V(\Delta)$.

It follows that if $w\in V(\Delta)$, then there is a simple path from $v$ to $w$ in $\cut(\Delta,v)$.
On the other hand, if $e\in \tau\inv(v)$ and $p$ is a simple path from $v$ to $\iota(e)$ in $\Delta$, then the fundamental observation implies $pe$ is a simple path in $\cut(\Delta,v)$ from $v$ to $(v,e)$.

From the fundamental observation, it is immediate that if $(\cut(\Delta,v),v)$ has the unique simple path property, then so does $(\Delta,v)$.  Suppose now that $(\Delta,v)$ has the unique simple path property.  If $w\in V(\Delta)$, then the fundamental observation shows that there is a unique simple path form $v$ to $w$ in $\cut(\Delta,v)$.  Now any simple path from $v$ to $(v,e)$ for $e\in \tau\inv(v)$ must end in the edge $e$ since it is the only edge ending at $(v,e)$.  Thus the unique simple path in $\cut(\Delta,v)$ from $v$ to $(v,e)$ is $pe$ where $p$ is the unique simple path in $\Delta$ from $v$ to $\iota(e)$.  It follows from this discussion that the bold arrows of $(\cut(\Delta,v),v)$ are those bold arrows of $\Delta$ that do not end at $v$.
\end{proof}

Next we need to define a notion of a closed subgraph with respect to a base point.

\begin{Def}[Closed subgraph with respect to a point]\label{closedsub}
Let $(\Gamma,I)$ have the unique simple path property and let $T$ be its unique directed spanning tree.  Fix $u\in V(\Gamma)$.  A subgraph $\Delta$ of $\Gamma$ is said to be \emph{closed with respect to $u$}, or the pair $(\Delta,u)$ is said to be \emph{closed} if:
\begin{enumerate}
\item $u\in\Delta\subseteq u^{\Downarrow}$;
\item For all $v\in V(\Delta)$, one has $[u,v]\subseteq \Delta$;
\item If $e\in \mathbf E(\Gamma)$ and $u\neq \tau(e)\in \Delta$, then $e\in \Delta$.
\end{enumerate}
\end{Def}

Let us show that the set of all closed subgraphs with respect to $u$ is a complete lattice.  Throughout $(\Gamma,I)$ is a fixed graph with the unique simple path property.

\begin{Prop}\label{closediscompletelattice}
The collection $L_u$ of all closed subgraphs of $\Gamma$ with respect to $u$ has maximum element $u^{\Downarrow}$  and is closed under non-empty intersections.  Thus it is a complete lattice.  The bottom of $L_u$ is $\{u\}$ and its join is determined by the meet.
\end{Prop}
\begin{proof}
First note that the graph $u^{\Downarrow}$ is closed with respect to $u$.  The first two axioms are obvious.  The last one follows since in a graph with the unique simple path property $\iota(e)\leq_T \tau(e)$ for any bold arrow $e$.  Clearly it is the largest element of $L_u$ by (1).  It is trivial that the set of elements satisfying (1)--(3) is closed under non-empty intersections.
\end{proof}

As a consequence we can define the closure of a subgraph.

\begin{Def}[Closure of a pointed subgraph]
If $\Delta\subseteq u^{\Downarrow}$, define $\ov{(\Delta,u)}=(\ov \Delta,u)$ where $\ov \Delta$ is the meet of all elements of $L_u$ containing $\Delta$.  This definition makes sense since the top of $L_u$ is $u^{\Downarrow}$.
\end{Def}

Let us establish some basic properties of $\ov{(\Delta,u)}$.

\begin{Prop}\label{closedhasuspp}
Let $(\Delta,u)$ be closed.  Then $(\Delta,u)$ has the unique simple path property.
\end{Prop}
\begin{proof}
Proposition~\ref{treedown} shows that $(u^{\Downarrow},u)$ has the unique simple path property.  Since $u\in \Delta\subseteq u^{\Downarrow}$, it suffices to show that $(\Delta,u)$ is rooted at $u$.  But this is immediate from Definition~\ref{closedsub}(2).
\end{proof}

Consequently, $\ov{(\Delta,u)}$ has the unique simple path property for any $\Delta\subseteq u^{\Downarrow}$.

\begin{Prop}\label{loopin}
Suppose that $(\Delta,u)$ is closed and $e\in \pv E(\Gamma)$ with $\tau(e)\in \Delta$ and $u\neq \tau(e)$.  Then $\lp(e)\in \Delta$.
\end{Prop}
\begin{proof}
It follows from the third axiom in the definition of closed that $e\in \Delta$.  Therefore, $[u,\iota(e)]\subseteq \Delta$ by the second axiom and hence $\lp(e) = [\tau(e),\iota(e)]e\subseteq \Delta$ as $\iota(e)\leq_T \tau(e)\leq_T u$.
\end{proof}

Next we show that taking the closure preserves strong connectivity.

\begin{Prop}\label{closurepreservessc}
Suppose that $\Delta\subseteq u^{\Downarrow}$ is strongly connected and $u\in \Delta$.  Then $\ov{(\Delta,u)}$ is strongly connected.
\end{Prop}
\begin{proof}
Let $C$ be the strong component of $u$ in $\ov{(\Delta,u)}$.  Then $\Delta\subseteq C$ since $u\in \Delta$ and $\Delta$ is strongly connected.  Thus it suffices to show that $(C,u)$ is closed.  The first axiom is clear.  If $v\in C$, choose a path $p\colon v\to u$ in $C$. Then $[u,v]\subseteq \ov{(\Delta,u)}$  and so the existence of the circuit $[u,v]p$ shows that $[u,v]\subseteq C$.  Suppose that $e\in \pv E(\Gamma)$ with $\tau(e)\in C$ and $u\neq\tau(e)$.  Then $e\in \ov{(\Delta,u)}$.  Therefore, $\lp(e)\subseteq \ov{(\Delta,u)}$ by Proposition~\ref{loopin} and hence $\lp(e)\subseteq C$.  We conclude $e\in C$.  This completes the proof that $(C,u)$ is closed.  It now follows that $\ov{\Delta}=C$.
\end{proof}

In order to show that certain automata are trim, we need to show that closure preserves the property that some vertex is reachable from all vertices.

\begin{Prop}\label{closurepreservesfilters}
Suppose that $(\Delta,u)\subseteq u^{\Downarrow}$ and $w$ is a vertex of $\Delta$ which is accessible from every vertex of $\Delta$ by a path in $\Delta$.  Then there is a directed path in $\ov \Delta$ from every vertex of $\ov \Delta$ to $w$.
\end{Prop}
\begin{proof}
Let $\Lambda$ be the full subgraph of $\ov{\Delta}$ containing all vertices $v$ of $\ov{\Delta}$ such that there is a path from $v$ to $w$ in $\ov{\Delta}$.  We claim that $(\Lambda,u)$ is a closed subgraph containing $\Delta$.  It will then follow that $\Lambda=\ov \Delta$, as required.  Indeed, by hypothesis $u\in \Delta\subseteq \Lambda\subseteq \ov \Delta\subseteq u^{\Downarrow}$.  In particular, the first condition in the definition of a closed subgraph is satisfied.  If $v\in \Lambda$, then there is a directed path from $v$ to $w$ in $\ov{\Delta}$.  Since $[u,v]\subseteq \ov{\Delta}$, it follows immediately that $[u,v]\subseteq \Lambda$.  Finally if $e\in \mathbf E(\Gamma)$ and $u\neq \tau(e)\in \Lambda$, then $e\in \ov{\Delta}$ since $\ov{\Delta}$ is closed.  Hence if $p$ is a path in $\ov \Delta$ from $\tau(e)$ to $w$, then $ep$ is a directed path in $\ov{\Delta}$ from $\iota(e)$ to $w$.  Thus $\iota(e)\in \Lambda$ and so $e\in \Lambda$.  This concludes the proof that $(\Lambda,u)$ is closed.
\end{proof}

We can now define the cut sloop of a sloop.

\begin{Def}[Cut sloop]
If $e\in \pv E(\Gamma)$, then we define the \emph{cut sloop} of $e$, denoted $\cut(e)$, to be $\cut(\ov{(\lp(e),\tau(e))})$.
\end{Def}

This definition makes sense because $\lp(e)$ is a subgraph of $\tau(e)^{\Downarrow}$.  Putting together Propositions~\ref{cutkeepsuspp} and~\ref{closedhasuspp}, we conclude that $(\cut(e),\tau(e))$ has the unique simple path property.

\begin{Prop}\label{cuthasuspp}
Let $e\in \pv E(\Gamma)$.  Then $(\cut(e),\tau(e))$ has the unique simple path property.
\end{Prop}

It turns out that $\cut(e)$ can be described more explicitly.

\begin{Prop}\label{allboldsarebigger}
Let $e\in \pv E(\Gamma)$ and set $u=\tau(e)$.  Let $K_u$ be the strong component of $u$ in the graph $\Gamma'$ obtained from $u^{\Downarrow}$ by removing the bold arrows ending at $u$ other than $e$ (so $K_u\subseteq C_{u^\Downarrow}$).  Then:
\begin{enumerate}
\item $\ov{(\lp(e),u)}\subseteq K_u$;
\item $e$ is the unique edge of $\ov{(\lp(e),u)}$ ending at $u$;
\item If $f\in \pv E(\Gamma)$ and $f$ belongs to $\ov{(\lp(e),u)}$, then $f\ll e$;
\item If $f\in \pv E(\Gamma)$ and $\tau(f)\in \ov{(\lp(e),u)}\setminus \{u\}$, then $\lp(f)\subseteq \ov{(\lp(e),u)}$;
\item Suppose that $f\in \pv E(\Gamma)\cap K_u$. 
Then $f\in \ov{(\lp(e),u)}$;
\item $K_u=\ov{(\lp(e),u)}$.
\end{enumerate}
\end{Prop}
\begin{proof}
To prove (1), clearly $\lp(e)\subseteq K_u$.  It therefore suffices to show that $(K_u,u)$ is closed.  The first two axioms are clear.  Suppose that $f\in \pv E(\Gamma)$ and $u\neq \tau(f)\in K_u$.  Then $\iota(f)\leq_T \tau(f)<_T u$ where the first inequality holds since $(\Gamma,v)$ has the unique simple path property, whereas the second follows by definition of $u^{\Downarrow}$.  Thus $f\in \Gamma'$.  Also $f\in K_u$ since bold arrows are never transition edges in $\Gamma'$ and $\tau(f)\in K_u$.  This completes the proof of (1).

We immediately deduce (2) from (1) since $\Gamma'$ contains only the edge $e$ coming into $u$ as any edge of $u^{\Downarrow}$ going into $u$ must necessarily be a bold arrow.

By (1) and (2), necessary conditions for a bold arrow $f$ to belong to $\ov{(\lp(e),u)}$ are that $C_{\tau(f)}=C_u$ and $\tau(f)\neq u$.  Also by definition of a closed set, we must have that $\tau(f)<_T u$.  But this implies $f\ll e$.

Statement (4) is just a restatement of Proposition~\ref{loopin} in this special case.

For (5), if $f=e$, there is nothing to prove. So suppose $f\neq e$.  By definition of $K_u$ we must then have $\tau(f)<_T u$.  We prove that $f\in \ov{(\lp(e),u)}$ by induction on the number of vertices of the form $\tau(f')$ with $f'\in \pv E(\Gamma)$ and $\tau(f)<_T \tau(f')<_T u$.  Suppose first that there is no vertex $w$ with $\tau(f)<_T w<_T u$ that is the tip of a bold arrow. We claim that $\tau(f)\in \lp(e)$.  Indeed, choose a simple path $p$ from $\tau(f)$ to $\iota(e)$ in $\Gamma'$ (we can do this since $f\in K_u$).  Consider the path $[u,\tau(f)]p$.  If this path is contained in $T$, then it equals $[u,\iota(e)]$ and so $\tau(f)\in \lp(e)$.  Otherwise, there is a first bold arrow $f'$ used by $p$.  Since $p$ is simple, we must have $\tau(f)<_T\tau(f')\leq_T u$.  See Figure~\ref{Fig:allboldsarebigger}.
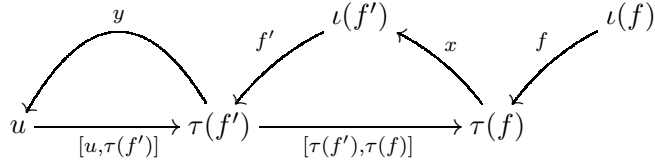
\begin{figure}
\[\SelectTips{cm}{}
\xymatrix{&&&\iota(f')\ar@/_/[ld]_{f'}  &&\iota(f)\ar@/_/[ld]_{f}\\
          u\ar[rr]_{[u,\tau(f')]} && \tau(f')\ar@/_3pc/[ll]_y\ar[rr]_{[\tau(f'),\tau(f)]}&& \tau(f)\ar@/_/[ul]_x &}\]
\caption{Factoring $p=xf'y$\label{Fig:allboldsarebigger}}
\end{figure}
Now $u\neq \tau(f')$ by definition of $\Gamma'$.  This contradicts our assumption on $f$, so $\tau(f)\in \lp(e)$ and hence $f\in \ov{(\lp(e),u)}$ by definition of a closed subgraph with respect to $u$.

Now we proceed by induction.  Choose a simple path $p$ from $\tau(f)$ to $\iota(e)$ in $\Gamma'$.  If the path $[u,\tau(f)]p$ is contained in $T$, then it equals $[u,\iota(e)]$ and so $\tau(f)\in \lp(e)$ and so $f\in \ov{(\lp(e),u)}$ by definition of a closed subgraph with respect to $u$.  Otherwise, there is a first bold arrow $f'$ used by $p$.  Again, because $p$ is simple, we must have $\tau(f)<_T \tau(f')\leq_T u$; see Figure~\ref{Fig:allboldsarebigger}.   Also the definition of $\Gamma'$ implies $u\neq \tau(f')$.  Thus $\tau(f)<_T\tau(f')<_T u$ and so by induction $f'\in \ov{(\lp(e),u)}$.  Therefore $\lp(f')\subseteq \ov{(\lp(e),u)}$ by (4).  Suppose that $p=xf'y$.  Then $\lp(f')=[\tau(f'),\tau(f)]xf'$ and so $\tau(f)\in \lp(f')\subseteq \ov{(\lp(e),u)}$.  We conclude that $f\in \ov{(\lp(e),u)}$ by the definition of a closed subgraph with respect to $u$.

To establish the final statement, (1) implies $\ov{(\lp(e),u)}\subseteq K_u$.  On the other hand,
Proposition~\ref{describestronglyconnected} shows that $K_u=\bigcup_{f\in \pv E(\Gamma')\cap K_u}\lp(f)$.  But (4) and (5) imply that $\lp(f)\subseteq \ov{(\lp(e),u)}$ for all $f\in \pv E(\Gamma')\cap K_u$.  This completes the proof.
\end{proof}

It follows from the proposition that when forming $\cut(\ov{(\lp(e),u)})$ we adjoin a single new vertex $(u,e)$, which we normally denote simply by $u'$.

\begin{Prop}\label{cutislinear}
Every edge of $\cut(e)$ is on a path from $u$ to $u'$.  Consequently, $(\cut(e),u)$ is linear.
\end{Prop}
\begin{proof}
The previous proposition implies $\ov{(\lp(e),u)}$ is strongly connected and hence every edge $f$ of $\ov{(\lp(e),u)}$ is on a circuit $q$ starting at $u$.  Since $e$ is the only edge entering $u$, the last edge of $q$ must be $e$.  We may assume without loss of generality that $q$ uses the edge $e$ only once.  Then $q$ corresponds to a path in $\cut(\ov{(\lp(e),u)})$ from $u$ to $u'$ using $f$.

By Proposition~\ref{cuthasuspp} $(\cut(e),\tau(e))$ has the unique simple path property.  It now follows from the first statement and Proposition~\ref{straightlineinuspp} that $\cut(e)$ is linear.
\end{proof}

A key property of $(\cut(e),u)$ is that it has strictly fewer bold arrows than $(\Gamma,I)$ since $e$ no longer is a bold arrow.  This is important for inductive constructions.

If $u,v$ are vertices with $v\leq_T u$, then $[u,v]\subseteq u^{\Downarrow}$ and so we can define $\ov{([u,v],u)}$.  This will be used later.

\subsection{The McCammond cover of a graph}
In this subsection, we present a construction, due to the first author, which the second author terms the ``McCammond'' expansion in the context of semigroups.   In the setting of graphs, the term ``cover'' seems more appropriate and so we will adhere to this terminology in this section, whereas in the context of semigroups and automata we shall use the term ``expansion.''

If $(\Gamma,I)$ is a rooted graph, let $\Simple(\Gamma,I)$ be the set of simple paths of $\Gamma$ starting at $I$ (including the empty path).  If $p$ is a simple path in a graph and $v$ is a vertex visited by $p$, denote by $p[v]$ the unique initial segment of $p$ ending at $v$.  Suppose that $\p\colon (\Gamma',I')\to (\Gamma,I)$ is a directed covering of rooted graphs.  Then we obtain an injective map $\til \p\colon \Simple(\Gamma,I)\to \Simple(\Gamma',I')$ by $\til \p(p) = \til p$ where $\til p$ is the unique lift of $p$ starting at $I'$.  Indeed, the lift of a simple path must be simple.  It is then natural to focus our attention on directed covers $\p$ where $\til \p$ is bijection.

\begin{Prop}\label{bijectsimple}
Let $\p\colon (\Gamma',I')\to (\Gamma,I)$ be a directed cover.  Then $\til \p\colon \Simple(\Gamma,I)\to \Simple(\Gamma',I')$ is a bijection if and only if $\p$ takes simple paths at $I'$ to simple paths at $I$.
\end{Prop}
\begin{proof}
Suppose first that $\til \p$ is a bijection and let $q$ be a simple path at $I'$.  Then $q=\til \p(p)$ for some path $p\in \Simple(\Gamma,I)$.  But then $\p(q)=p$ is simple.  Conversely, suppose $\p$ takes simple paths at $I'$ to simple paths at $I$.  Let $q\in \Simple(\Gamma',I')$; then $\p(q)\in \Simple(\Gamma,I)$ and $q=\til \p(\p(q))$.  This completes the proof.
\end{proof}

\begin{Def}[Simple covering]
A morphism $\p\colon (\Gamma',I')\to (\Gamma,I)$ of rooted graphs will be called a \emph{simple covering} if it is a directed covering satisfying the equivalent conditions of Proposition~\ref{bijectsimple}.
\end{Def}

It turns out that each rooted graph $(\Gamma,I)$ has a universal simple cover, which is characterized by having the unique simple path property.  This cover is called the \emph{McCammond cover} of $(\Gamma,I)$.  First we establish its universal property (and hence uniqueness).  Then we provide its construction.

\begin{Prop}\label{universalsimplecover}
Let $\alpha\colon (\til \Gamma,\til I)\to (\Gamma,I)$ be a simple covering such that $(\til \Gamma,\til I)$ has the unique simple path property.  Then given any simple covering $\p\colon (\Gamma',I')\to (\Gamma,I)$, there is a unique morphism (necessarily a simple covering) $\psi \colon (\til \Gamma,\til I)\to (\Gamma',I')$ so that the diagram
\[\xymatrix{(\til \Gamma,\til I)\ar[rr]^{\psi}\ar[dr]_{\alpha} & & (\Gamma',I')\ar[dl]^{\p}\\ & (\Gamma,I) &}\]
commutes.
\end{Prop}
\begin{proof}
Define $\psi$ as follows.  On vertices, set $\psi(v) = \tau(\til \p(\alpha([\til I,v])))$; this makes sense since $\alpha$ is a simple covering.  For $e\in E(\til \Gamma)$ with $e\colon v\to w$, define $\psi(e)$ to be the unique lift $\til e$ of $\alpha(e)$ starting at $\psi(v)$.  The uniqueness and existence of such an edge follows since $\p$ is a directed cover and $\p(\tau(\til \p(\alpha([\til I,v]))) = \tau(\alpha ([\til I,v])) = \alpha(v)$.  We need to check that $\p$ is a graph morphism.  By construction $\psi(v) = \iota(\psi(e))$.  The issue is to prove $\psi(w)=\tau(\psi(e)) = \tau(\til e)$.

The proof divides into two cases.  Suppose first that $[\til I,v]e$ is a simple path (and hence is $[\til I,w]$).  Then $\psi(w) = \tau(\til \p(\alpha([\til I,w]))) = \tau(\til \p(\alpha([\til I,v])\til e)) = \tau(\psi(e))$ since $\til \p(\alpha([\til I,v])\til e$ lifts $\alpha([\til I,v]e) = \alpha([\til I,w])$.

Next suppose that $[\til I,v]e$ is not simple. Then $[\til I,w]$ is an initial segment of $[\til I,v]$.  It follows that $\til \p(\alpha([\til I,w]))$ is an initial segment of $\til \p(\alpha([\til I,v]))$ since path lifting preserves initial segments.
Next observe that $\til \p(\alpha[\til I,v])\til e$ is not simple as $\p$ is a simple covering and $\p(\til \p(\alpha[\til I,v])\til e) = \alpha([\til I,v]e)$, which is not simple.  Hence $\psi(w) = \tau(\til e)$ is a vertex $w'$ of $\til \p(\alpha([\til I,v]))$ with $\p(w')=\alpha(w)$.  But since $\til \p(\alpha([\til I,v]))$ is simple and $\p$ is a simple covering, there is exactly one vertex $w'$ of $\til \p(\alpha[\til I,v])$ lying over $\alpha(w)$.  Since $\til \p(\alpha([\til I,w]))$ is an initial segment of $\til \p(\alpha[\til I,v])$, this $w'$ must be $\tau(\til \p(\alpha([\til I,w])))=\psi(w)$.  This completes the proof that $\psi$ is a graph morphism.  Since the above diagram commutes, $\psi$ must automatically be a simple covering.

If $\beta$ is another morphism making the diagram commute, then it to must be a simple covering.  Thus $\beta([\til I,v])$ is a simple path from $I'$ to $\beta(v)$ and its image under $\p$ is the simple path $\alpha([\til I,v])$.  Thus $\til \p(\alpha([\til I,v])) = \beta([\til I,v])$ and so $\beta(v)=\psi(v)$.  If $e$ is an edge at $v$, then $\beta(e)$ must be the unique lift of $\alpha(e)$ at $\beta(v)=\psi(v)$.  This proves uniqueness.
\end{proof}

The existence of the McCammond cover (or universal simple cover) is established via the same schema as the construction of the universal covering space in algebraic topology.  Abusively, in what follows we shall also use $I$ to denote the empty path at $I$.

\begin{Def}[McCammond cover]
Let $(\Gamma,I)$ be a rooted graph. Define $(\Gamma,I)^{\Mac} = (\Gamma^{\Mac},I)$ to be the graph given by:
\begin{align*}
V(\Gamma^{\Mac}) &= \Simple(\Gamma,I);\\
E(\gamma^{\Mac}) &= \{(p,e)\mid p\in \Simple(\Gamma,I), e\in E(\Gamma), \tau(p)=\iota(e)\}
\end{align*}
where the incidence functions are defined by $\iota(p,e)=p$ and \[\tau(p,e) = \begin{cases} pe & pe\in \Simple(\Gamma,I)\\ p[\tau(e)] & \text{else.}\end{cases}\]
Define $\eta\colon (\Gamma^{\Mac},I)\to (\Gamma,I)$ by $\eta(p)=\tau(p)$ on vertices and $\eta(p,e)=e$ on edges.  The map $\eta$ is called the \emph{McCammond covering} of $\Gamma$.  The graph $\Gamma^{\Mac}$ is termed the \emph{McCammond expansion} or \emph{McCammond cover} of $\Gamma$.
\end{Def}

Since any directed cover of an $A$-automaton is an $A$-automaton, it follows that if $(\mathscr A,I)$ is a pointed $A$-automaton, then $(\mathscr A,I)^{\Mac}$ is a pointed $A$-automaton.

\begin{Prop}\label{Mcisuniversal}
The map $\eta\colon (\Gamma^{\Mac},I)\to (\Gamma,I)$ is a simple covering and $(\Gamma^{\Mac},I)$ has the unique simple path property.  Hence it is the universal simple covering of $(\Gamma,I)$.
\end{Prop}
\begin{proof}
Let us verify that the map $\eta$ is a graph morphism. First we check that $\iota(\eta(p,e)) = \iota(e) = \tau(p) = \eta(\iota(p,e))$.  Second, observe that $\tau(\tau(p,e))$ is a simple path ending at $\tau(e)$ in either of the two cases in the definition of $\tau$.  Thus $\eta(\tau(p,e)) = \tau(\tau(p,e)) = \tau(e) = \tau \eta(p,e)$.  Thus $\eta$ is a graph morphism.  It is a directed covering since if $e\in E(\Gamma)$ is an edge with  $\iota(e)=\eta(p) = \tau(p)$, then $(p,e)$ is the unique edge at $p$ mapping to $e$ under $\eta$.

The unique lift of the empty path at $I$ to $\Gamma^{\Mac}$ is the empty path at $I$.  If $p\in \Simple(\Gamma,I)$ is non-empty, say $p=e_1\cdots e_n$, then \[\til \eta(p) = (I,e_1)(e_1,e_1e_2)\cdots (e_1\cdots e_{n-1},e_n),\] which is a simple path in $(\Gamma^{\Mac}, I)$ from $I$ to $p$.    Hence $(\Gamma^{\Mac},I)$ is rooted at $I$.  To prove that $\eta$ is simple and $(\Gamma^{\Mac},I)$ has the unique simple path property it therefore suffices to prove that the paths $\til \eta(p)$ with $p\in \Simple(\Gamma,I)$ are the only simple paths at $I$ in $\Gamma^{\Mac}$.

Let $q\in \Simple(\Gamma^{\Mac},I)$.  We prove by induction on $|q|$ that $q=\til\eta(p)$ for some $p\in \Simple(\Gamma,I)$.  If $|q|=0$, then $q=\til \eta(I)$.  Suppose now that it is true for paths of smaller length than $|q|$.  Then $q=\til \eta(p)(p,e)$ for some simple path $p\in \Simple(\Gamma,I)$ by induction.  Suppose that $pe\notin \Simple(\Gamma,I)$.  Then $\tau(p,e)=p[\tau(e)]$.  But every initial segment of $p$ is visited by $\til \eta(p)$, contradicting that $q$ is simple.  We conclude that $\tau(p,e)=pe\in \Simple(\Gamma,I)$ and so $q=\til\eta(pe)$.
\end{proof}

As immediate consequences of the above result and the universal property we obtain the following corollaries.

\begin{Cor}
$(\Gamma^{\Mac},I)$ is the unique simple cover of $(\Gamma,I)$ with the unique simple path property.
\end{Cor}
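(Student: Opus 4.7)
The plan is to deduce this corollary as a standard universal-property argument from the two preceding results: Proposition~\ref{Mcisuniversal}, which establishes that $\eta\colon(\Gamma^{\Mac},I)\to(\Gamma,I)$ is a simple covering and that $(\Gamma^{\Mac},I)$ has the unique simple path property, and Proposition~\ref{universalsimplecover}, which gives the universal factorization property through any simple cover of $(\Gamma,I)$ whose domain has the unique simple path property.

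First I would suppose that $\alpha\colon(\Gamma',I')\to(\Gamma,I)$ is any simple covering such that $(\Gamma',I')$ has the unique simple path property. The goal is to exhibit a unique isomorphism $(\Gamma',I')\cong(\Gamma^{\Mac},I)$ over $(\Gamma,I)$. Applying Proposition~\ref{universalsimplecover} with $(\til\Gamma,\til I)=(\Gamma^{\Mac},I)$ (the hypothesis of that proposition is verified by Proposition~\ref{Mcisuniversal}) and with the simple covering $\p=\alpha$ yields a unique morphism $\psi\colon(\Gamma^{\Mac},I)\to(\Gamma',I')$ over $(\Gamma,I)$, and this $\psi$ is itself a simple covering. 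Swapping roles and applying Proposition~\ref{universalsimplecover} again, now with $(\til\Gamma,\til I)=(\Gamma',I')$ and with $\p=\eta$, yields a unique simple covering $\psi'\colon(\Gamma',I')\to(\Gamma^{\Mac},I)$ over $(\Gamma,I)$.

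The final step is to observe that $\psi'\psi\colon(\Gamma^{\Mac},I)\to(\Gamma^{\Mac},I)$ is a morphism over $(\Gamma,I)$, as is the identity. By the uniqueness clause of Proposition~\ref{universalsimplecover} (applied with $(\til\Gamma,\til I)=(\Gamma',I')=(\Gamma^{\Mac},I)$) these must coincide, so $\psi'\psi=\mathrm{id}$; symmetrically $\psi\psi'=\mathrm{id}$. Thus $\psi$ is an isomorphism of rooted graphs over $(\Gamma,I)$, which is exactly the assertion that the McCammond cover is unique among simple covers of $(\Gamma,I)$ having the unique simple path property.

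I do not expect any real obstacle: the corollary is a formal consequence of a universal property together with an existence result, both of which have already been established. The only thing to be slightly careful about is verifying that the hypotheses of Proposition~\ref{universalsimplecover} are met on both sides (which they are, since each of $(\Gamma^{\Mac},I)$ and $(\Gamma',I')$ is assumed to be a simple cover of $(\Gamma,I)$ whose total space has the unique simple path property), and that morphisms constructed via that proposition are automatically simple coverings, as is explicitly stated there.
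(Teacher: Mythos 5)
Your argument is correct and is exactly the standard ``universal objects are unique up to isomorphism'' deduction that the paper intends when it calls this corollary an immediate consequence of Proposition~\ref{Mcisuniversal} and the universal property in Proposition~\ref{universalsimplecover}. Nothing further is needed.
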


\begin{Cor}
If $\p\colon (\Gamma,I)\to (\Gamma',I')$ is a simple covering, then $(\Gamma,I)^{\Mac}\cong (\Gamma',I')^{\Mac}$.
\end{Cor}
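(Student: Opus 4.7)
The plan is to build an explicit isomorphism $\Phi\colon (\Gamma',I')^{\Mac}\to (\Gamma,I)^{\Mac}$ directly from the definitional data.  Since $\p\colon (\Gamma,I)\to (\Gamma',I')$ is a simple covering, Proposition~\ref{bijectsimple} supplies a bijection $\til\p\colon \Simple(\Gamma',I')\to \Simple(\Gamma,I)$, and because the vertex set of the McCammond cover is literally the set of simple paths from the basepoint, this already provides the required bijection on vertex sets.  On edges, I would send $(p,e')\in E((\Gamma')^{\Mac})$ to $(\til\p(p),e)$, where $e$ is the unique edge of $\Gamma$ above $e'$ with $\iota(e)=\tau(\til\p(p))$ supplied by the directed covering property of $\p$.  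Since $\p$ induces bijections on stars, this edge assignment is a bijection as well.

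The main verification is that $\Phi$ respects the terminal vertex function, and the argument splits into two cases according to whether $pe'$ is simple in $\Gamma'$.  The key observation is that $pe'\in \Simple(\Gamma',I')$ if and only if $\til\p(p)e\in \Simple(\Gamma,I)$: lifts of simple paths are automatically simple (applying $\p$ to a non-simple lift would produce a non-simple image), while conversely $\p$ takes simple paths to simple paths since it is a simple covering.  In the simple case one has $\til\p(pe')=\til\p(p)e$ by uniqueness of lifts, and both $\tau(\Phi(p,e'))$ and $\Phi(\tau(p,e'))$ reduce to this path; in the non-simple case both sides reduce to $\til\p(p)[\tau(e)]$, using that $\til\p$ carries the initial segment $p[\tau(e')]$ to the initial segment $\til\p(p)[\tau(e)]$ (again immediate from uniqueness of lifts).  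Finally, $\Phi$ sends the empty path at $I'$ to the empty path at $I$, so it is an isomorphism of rooted graphs.

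I expect the only real subtlety is careful bookkeeping in the non-simple case --- one must verify that taking the initial segment ending at $\tau(e)$ commutes with $\til\p$.  As an alternative route, one could argue purely abstractly: $\p\eta$ is a composition of simple coverings and hence itself a simple covering onto $(\Gamma',I')$, so Proposition~\ref{universalsimplecover} applied to the universal simple cover $\eta'$ produces a morphism $(\Gamma',I')^{\Mac}\to (\Gamma,I)^{\Mac}$ over $(\Gamma',I')$; lifting $\eta'$ through $\p$ first via the universal property of $\eta'$ would symmetrically produce a morphism in the other direction, and uniqueness in the universal property would force these to be mutually inverse.  I would prefer the explicit construction, however, as it makes the isomorphism transparent.
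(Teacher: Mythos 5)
Your argument is correct, but it takes the opposite route from the paper's. The paper derives this corollary in one line from the universal property: the composite $\p\circ\eta\colon (\Gamma,I)^{\Mac}\to(\Gamma,I)\to(\Gamma',I')$ is again a simple covering (compositions of simple coverings are simple coverings), its source has the unique simple path property by Proposition~\ref{Mcisuniversal}, and so by Proposition~\ref{universalsimplecover} it is \emph{the} universal simple cover of $(\Gamma',I')$, forcing $(\Gamma,I)^{\Mac}\cong(\Gamma',I')^{\Mac}$ --- precisely the ``abstract alternative'' you relegate to the end. Your preferred explicit construction is a legitimate independent proof and does go through; its value is that it exhibits the isomorphism concretely (vertices correspond under the lifting bijection $\til{\p}$ of Proposition~\ref{bijectsimple}, edges under star-bijectivity), at the cost of the two-case check of the terminal-vertex function. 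In that check the one point deserving more than ``immediate from uniqueness of lifts'' is the non-simple case: the lift of $p[\tau(e')]$ is certainly an initial segment of $\til{\p}(p)$ ending over $\tau(e')$, but to conclude it ends at $\tau(e)$ you must use that $\p$ maps the simple path $\til{\p}(p)$ to the simple path $p$, hence is injective on its vertex set, so $\til{\p}(p)$ meets the fibre over $\tau(e')$ exactly once --- this is exactly where simplicity of the covering (and not mere directed covering) is used a second time. You flagged this as the subtlety, and it does close; but the universal-property argument avoids all such bookkeeping and is what the paper intends by calling the corollary immediate.
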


\subsection{McCammond expansion of automata}
As mentioned earlier, the McCammond cover of an $A$-automaton is again an $A$-automaton.  In this context we call it the \emph{McCammond expansion} of the automaton.  However, the reader is cautioned that the construction is not functorial.
Let's make the McCammond expansion explicit from an automaton point-of-view.

\begin{Rmk}[Words and simple paths]
Notice that when $\mathscr A$ is a pointed $A$-automaton,
the words readable starting at a vertex $p$ are in one-to-one correspondence
with the paths starting at $p$.  Thus, if we let $\simple(\mathscr A)$ be
the words readable along simple paths starting at the root of
$\mathscr A$, then $\mathscr A$ has the unique simple path property from its
root if and only if the paths corresponding to these words all end at
distinct vertices of $\mathscr A$.
\end{Rmk}

Next, we interpret the action of $A$ on the simple paths starting at a vertex.

\begin{Def}[Acting on simple paths]\label{def:simple-act}
Let $\mathscr A$ be a pointed $A$-au\-to\-ma\-ton with root $p$.  If $u$ is a word readable as a simple path in $\mathscr A$
from $p$ to $q$ (i.e., $u\in \simple(\mathscr A)$) and $a$ is the label on
an edge in $\mathscr A$ which starts at $q$, then there is a well-defined
word in $\simple(\mathscr A)$ that we denote $u\cdot a$.  There are two
possibilities.  If the word  $ua$ is readable as a simple path
starting at $p$, then
$u\cdot a$ is defined to be $ua$;  otherwise, the path starting at
$p$ reading $ua$ ends at a vertex $q'$ already traversed by the path
reading $u$.  In that case, $u\cdot a$ is defined to be the initial
portion of $u$ which is read from $p$ to $q'$.  The two possibilities
are illustrated in Figure~\ref{fig:simple}.   When $u\cdot a = ua$ we
call this an \emph{extension}, and when $u\cdot a$ is an initial
subsegment of $u$ we call it a \emph{reduction}.    Combining these results
gives a partial function from $\simple(\mathscr A) \times A$ to
$\simple(\mathscr A)$.  Notice that this partial function is a function if
and only if $\mathscr A$ is a complete automaton.
\end{Def}

\begin{figure}[ht]
\psfrag{p}{$p$}
\psfrag{q}{$q$}
\psfrag{r}{$q'$}
\psfrag{e}{$a$}
\psfrag{e'}{$a$}
\includegraphics{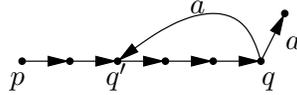}
\caption{Multiplying a simple path by an edge.\label{fig:simple}}
\end{figure}

We are now ready to rephrase the McCammond cover for automata.

\begin{Def}[Expanding automata]\label{def:mac}
Let $\mathscr A$ be an $A$-automaton rooted at
$I$. Then the $A$-automaton $\mathscr A^\Mac$ is called the \emph{McCammmond expansion} of $\mathscr A$.
\end{Def}

 Here is an example of the McCammond expansion.

\begin{figure}[ht]
\psfrag{1}{$1$}
\psfrag{x}{$a$}
\psfrag{y}{$b$}
\psfrag{xy}{$ab=ba$}
\includegraphics{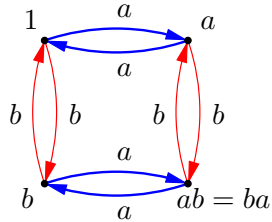}
\caption{The Cayley graph of the Klein four-group.\label{fig:klein}}
\end{figure}

\begin{exmp}[Klein four-group]\label{exmp:klein}
Let $G=\Z_2 \times \Z_2$ be the Klein four-group with the standard
presentation $\langle a,b \mid a^2=b^2=[a,b]=1 \rangle$.  The Cayley
graph of $G$ with respect to $A = \{a,b\}$ is shown in
Figure~\ref{fig:klein}.  In Figure~\ref{fig:klein-mc}, the expanded
graph $\Cay(G,A)^\Mac$ is shown on the left and its directed spanning tree is shown
on the right.  Notice that the expansion has the unique simple path
property from $1$ but not from the state $aba$ since $aba \cdot b= aba
\cdot aba$.  Notice that all edges not in the spanning tree connect
a vertex $p$ to some vertex on the unique simple path in the tree from
$1$ to $p$.
\end{exmp}

\begin{figure}[ht]
\begin{tabular}{ccc}
\psfrag{1}{$1$}
\psfrag{x}{$a$}
\psfrag{y}{$b$}
\psfrag{xy}{$ab$}
\psfrag{yx}{$ba$}
\psfrag{xyx}{$aba$}
\psfrag{yxy}{$bab$}
\includegraphics{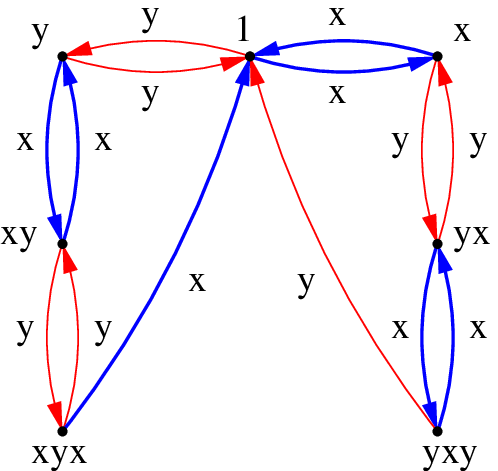} &
\hspace{10mm} &\
\psfrag{1}{$1$}
\psfrag{x}{$a$}
\psfrag{y}{$b$}
\psfrag{xy}{$ab$}
\psfrag{yx}{$ba$}
\psfrag{xyx}{$aba$}
\psfrag{yxy}{$bab$}
\includegraphics{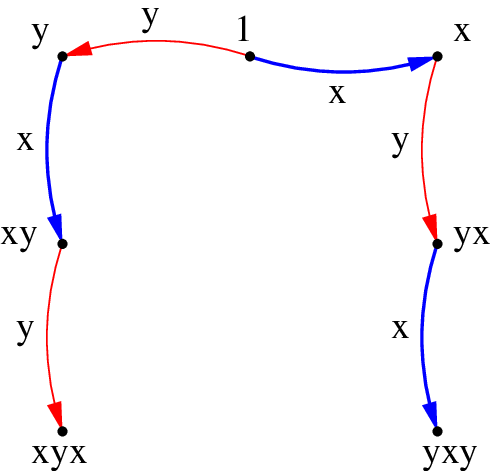}
\end{tabular}
\caption{The expansion of the graph in Figure~\ref{fig:klein} and its
  underlying tree.\label{fig:klein-mc}}
\end{figure}

Our next example shows that the McCammond expansion is not functorial.

\begin{exmp}[The McCammond expansion is not functorial]
We need to include an example.
\end{exmp}

We conclude this section with a conjecture that has a long history
dating back to the 1940s.  We mention it here since its assertion can
be reformulated in terms of depth of the tree underlying the $\Mac$
expansion to the Cayley graph of a finite group.

\begin{Conjecture}[Hamiltonian path]
If $G$ is a finite group with at least $3$ elements and it is
generated as a monoid by $A$, then there exists a simple path
in the Cayley graph of $G$ with respect to $A$ which passes through
every vertex.  In other words, there is a word $w \in A^+$ so that
every element of $G$ is represented uniquely by an initial segment of
$w$.
\end{Conjecture}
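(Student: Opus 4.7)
The plan is to first recast the conjecture in the language of the McCammond expansion developed in this section, and then attempt induction. A word $w \in A^+$ has the property stated in the conjecture precisely when every initial segment of $w$ (including the empty one) represents a distinct element of $G$, that is, when $w$ labels a simple path of length $|G|-1$ in $\Cay(G,A)$ starting at $1$. By construction, the vertices of $\Cay(G,A)^{\Mac}$ are the elements of $\simple(\Cay(G,A))$, and such a vertex $w$ sits at depth exactly $|w|$ in the directed spanning tree $T$ of $\Cay(G,A)^{\Mac}$ (its geodesic from the root is $\til\eta(w)$, which has $|w|$ edges). Thus the conjecture is equivalent to the assertion that $T$ attains its maximum possible depth $|G|-1$.

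Having made this reduction, the natural first step is induction on $|G|$. If $N \trianglelefteq G$ is a proper non-trivial normal subgroup, a Hamiltonian path in $\Cay(G/N,\overline{A})$, where $\overline{A}$ is the image of $A$ in $G/N$, provides an ordering $g_1N, g_2N, \dots, g_kN$ of the cosets in which consecutive cosets differ by a single generator. One would then try to interpolate, inside each coset $g_iN$, a Hamiltonian path in the induced subgraph (a Cayley-like graph of $N$ with respect to some conjugate generating set) whose entry and exit letters are compatible with the prescribed transitions, and string these coset-paths together. This is essentially the Rapaport--Strasser strategy that Witte used to settle the $p$-group case, and it also underlies the Gray-code proof for abelian groups.

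The main obstacle is that this conjecture is the Cayley-graph case of the 1969 Lov\'asz problem on Hamiltonian paths in vertex-transitive graphs, one of the oldest and most studied open problems in algebraic graph theory. The inductive step sketched above fails in general: inside a coset one needs a Hamiltonian path whose first and last letters are prescribed, such a constrained path need not exist, and the induction provides no mechanism for matching one coset's exit letter to the next coset's entry letter. Moreover, if $G$ is simple then there is no subgroup $N$ to induct on at all, and a completely different argument is required. Within the geometric framework of this paper the natural alternative is induction on the number of bold arrows of $\Cay(G,A)^{\Mac}$ via the sloop-cutting construction of the previous subsection, trying to show that cutting a carefully chosen sloop does not decrease the depth of the spanning tree $T$. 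However, converting the combinatorics of bold arrows into a lower bound on the depth of $T$ requires invariants substantially finer than any of those developed here, and it is at this step that any genuine attempt at the conjecture would have to make its real contribution.
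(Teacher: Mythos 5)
This statement is presented in the paper as an open \emph{conjecture} --- the classical Hamiltonian path problem for Cayley graphs of finite groups, which the authors note dates back to the 1940s --- and the paper offers no proof of it. Your proposal, to its credit, does not actually claim to prove it either: you correctly identify it as the Cayley-graph case of the Lov\'asz problem, and your opening reformulation (the conjecture holds if and only if the directed spanning tree of $\Cay(G,A)^{\Mac}$ attains the maximal depth $|G|-1$) is precisely the reformulation the authors allude to in the sentence preceding the conjecture. The subsequent discussion of the coset-interpolation strategy and its failure modes is accurate as a survey of why the problem is hard, but it is not a proof, and no proof should be expected here. So there is no gap to compare against: both you and the paper leave the statement open, and your translation into the language of the $\Mac$-expansion agrees with the authors' intent.
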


\section{Automata from a universal algebra point-of-view}
There is another viewpoint on $A$-automata that takes them to be universal algebras~\cite{universalalgebra}. In what follows we write unary operations on the right of their arguments.  First let's consider deterministic $A$-automata.  Deterministic $A$-automata are exactly universal algebras with signature (or type) consisting of a collection of unary operations indexed by $A$.  Given an $A$-automaton $(\Gamma,\ell)$ we can create an algebra with underlying set $V(\Gamma)$; the unary operation associated to $a\in A$ is $q\mapsto qa$.  Conversely, if $Q$ is a universal algebra of this type, we can form its \emph{Cayley graph} with vertex set $Q$ and set of edges $Q\times A$.  Here $\iota(q,a)=q$ and $\tau(q,a)=qa$.  The labeling sends $(q,a)\to a$.  Therefore, one has universal algebraic notions of generators, relations and homomorphisms for automata.  We shall see shortly how they relate to our topological notions.

One can similarly deal with partial deterministic automata by formally adjoining a sink.  Let us now consider universal algebras with unary operations indexed by $A$ and a distinguished constant $\square$.  We also demand that they satisfy the identity $\square a=\square$ for all $a\in A$.  Let $\mathfrak V$ be the variety of algebras of this type satisfying these identities.  Then, given an $A$-automaton $(\Gamma,\ell)$ we can define an algebra in this type by taking $V(\Gamma)\cup \{\square\}$ as the underlying set, defining $\square a=\square$ for all $a\in A$ and setting \[qa= \begin{cases} qa & qa\ \text{defined}\\ \square & \text{otherwise.}\end{cases}\] Conversely, given an algebra $Q\in \mathfrak V$, we can define its \emph{Cayley graph} by taking the vertex set to be $Q\setminus \{\square\}$ and the edge set to be $\{(q,a)\in Q\times A\mid qa\neq \square\}$.  Of course, $\iota(q,a)=q$, $\tau(q,a)=qa$ and $\ell(q,a)=a$.  Thus we can also talk about generators, relations and homomorphisms for partial automata.

Let us reinterpret these notions topologically.  We say that a morphism $\p$ of $\mathfrak V$-algebras is \emph{$\square$-restricted} if $\p\inv(\square)=\{\square\}$.

\begin{Prop}\label{monoidmorphism}
{}\
\begin{enumerate}
\item The category of algebras in $\mathfrak V$ with surjective $\square$-restricted morphisms is equivalent to the category of $A$-automata with directed covers as morphisms.
\item An $A$-automaton $(\Gamma,\ell)$ is generated by $\mathsf I$ in the sense of universal algebra if and only if $\mathsf I$ is a generating set in the sense of Definition~\ref{pointedauto}. In particular, rooted automata correspond to one-gen\-er\-a\-ted $\mathfrak V$-algebras.
\item The free algebra on a set $\mathsf I$ is $(\mathsf I\times A^*)\cup \{\square\}$ with the obvious $\mathfrak V$-algebra structure.
\end{enumerate}
\end{Prop}
\begin{proof}
To prove (1),  suppose first that $\p\colon \mathscr A_1\to \mathscr A_2$ is a directed covering of $A$-automata and  define $\psi\colon V(\mathscr A_1)\cup\{\square\}\to V(\mathscr A_2)\cup \{\square\}$ by $\psi(\square)=\square$ and  $\psi(v)=\p(v)$ for  $v\in V(\mathscr A_1)$. Clearly, $\psi$ is surjective and $\square$-restricted.  Let us check that $\psi$ is a homomorphism.  Clearly $\psi(\square\cdot a) = \psi(\square)=\square=\square\cdot a$ for all $a\in A$.  Next suppose $q\in V(\mathscr A_1)$ and $a\in A$.  Then by definition of a directed covering, we have that $\p(qa)=\p(q)a$ in the sense that either both sides are undefined or both are defined and agreed.  If both sides are undefined, then $\psi(qa)=\square=\psi(q)a$.  Otherwise, $\psi(qa)=\psi(q)a$.

Conversely, suppose that $\p\colon Q\cup \{\square\}\to Q'\cup \{\square\}$ is a surjective $\square$-restricted morphism of $\mathfrak V$-algebras.  Then we claim that $\p$ induces a directed cover defined by $\p$ on vertices and by $(q,a)\mapsto (\p(q),a)$, for $qa\neq \square$, on edges.  Notice this is well defined since $qa\neq \square$ implies $\p(q)a=\p(qa)\neq \square$ as $\p$ is $\square$-restricted.  Evidentally, $\p$ gives a morphism of $A$-automata.  Surjectivity on vertices is immediate.  Suppose $\p(q)a\neq \square$.  Then $\p(qa)=\p(q)a\neq \square$ and hence $qa\neq \square$ since $\p$ is a homomorphism.  Thus the edge $(\p(q),a)$ is a lift $(q,a)$ (and is the unique one).

The second item is trivial.  To verify (3), if $Q$ is a $\mathfrak V$-algebra generated by $\mathsf I$, then the map defined by $(p,w)\mapsto pw$ for $(p,w)\in \mathsf I\times A^*$ and $\square\mapsto \square$ is the unique morphism from $\mathsf I\times A^*$ to $Q$ extending the map $(p,1)\mapsto p$ for $p\in \mathsf I$.
\end{proof}

We leave to the reader to formulate and prove the analogous result for deterministic automata.


\subsection{Presentations and rewriting}
It now makes sense to talk about presentations of $A$-automata.  Since we are only interested in pointed automata, we restrict to the case of one-generated algebras from $\mathfrak V$ and so we need only give the relations.
It follows now that if $(\mathscr A,I)$ is a pointed automaton, then it is one-generated and can be defined by relations of the form $u=v$  and $u=\square$ where $u,v\in A^*$.  The relation $u=v$ forces $Iu=Iv$ in $\mathscr A$, while the relation $u=\square$ forces $Iu$ to be undefined.  If $\langle R\rangle$ is a presentation of $\mathscr A$, the underlying $\mathfrak V$-algebra of $\mathscr A$ is the quotient of $A^*\cup \{\square\}$ (viewed as a right $A^*$-set) by the right congruence generated by the relations in $R$ and the relations $\square a=a$.  Formally, let us define the one-step derivation relation $\Rightarrow$ by putting $\square a\Rightarrow \square$ for all $a\in A$ and $uw\Rightarrow vw$ if $u=v\in R$ where $u,v\in A^*\cup \{\square\}$, $w\in A^*$. As usual, $\Rightarrow^*$ denotes the reflexive transitive closure of the relation $\Rightarrow$.  We write $u\sim v$ for $u,v\in A^*\cup \{\square\}$ if there is a sequence $u=u_0,u_1\cdots, u_n=v$ such that,  for each $i=0,\ldots,n-1$, either $u_i\Rightarrow u_{i+1}$ or $u_{i+1}\Rightarrow u_i$.  Then the state set of $\mathscr A$ is $(A^*\cup \{\square\})/{\sim}$.  The root is the class $I$ of the empty word.  We remark that we are essentially dealing with what is known as prefix rewriting systems~\cite{ChurchRosser}: relations can only be applied to prefixes of words.

We state an obvious result.

\begin{Prop}\label{relationsforcayley}
Let $S=\langle A\mid R\rangle$ be a presentation of an $A$-semigroup.   Then the rooted Cayley graph $(\Cay(S,A),I)$ is presented as an $A$-automaton by $\langle \{ur=ur'\mid u\in A^*, r=r'\in R\}\rangle$.
\end{Prop}

\begin{Def}[Loop automaton]
A pointed automaton $(\mathscr A,I)$ is said to be a \emph{loop automaton} if it has a presentation of the form $\langle L\cup D\rangle$ where all relations in $L$ are so-called \emph{loop relations} of the form $uv=u$  and all relations in $D$ are \emph{dead-end} relations $w=\square$ where $u,v,w\in A^*$.
\end{Def}

We now prove that automata with the unique simple path property are loop automata.

\begin{Prop}\label{churchrosser}
Let $(\mathscr A,I)$ be a partial $A$-automaton with the unique simple path property.  Then
\begin{align*}
\mathscr A=\langle &\{\ell(\slp(e))= \ell([I,\tau(e)])\mid e\in \pv E(\Gamma)\} \cup {}\\ &
 \{\ell([I,v])a=\square \mid va\ \text{is undefined}, v\in V(\Gamma), a\in A\}\rangle
\end{align*}
where $\ell\colon E(\Gamma)\to A$ is the labeling function.  Thus $(\mathscr A,I)$ is a loop automaton.
\end{Prop}
\begin{proof}
Clearly $\mathscr A$ satisfies all these relations.  We show all other relations are consequences of these.  Let $\sim$ be the equivalence relation on $A^*\cup \{\square\}$ associated to the presentation above.  First we claim that if $Iu$ is defined for $u\in A^*$, then $u\sim \ell([I,Iu])$.  The proof goes by induction on the number of bold arrows in the path $p_u$ read by $u$ from $I$ to $Iu$.  If there are no bold arrows, then $p_u=[I,Iu]$ and there is nothing to prove. Else we can factor $p_u=vew$ where $e$ is the first bold arrow used by $p_u$.   If $q=\tau(v)$, then  $v=[I,q]$ and $\slp(e) = [I,q]e=ve$. See Figure~\ref{Fig:reducesloop}.
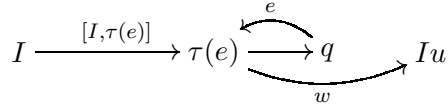
\begin{figure}[thbp]
\[\SelectTips{cm}{}
\xymatrix{I\ar[rr]^{[I,\tau(e)]}&&\tau(e)\ar[r]\ar@/_1pc/[rr]_w&q\ar@/_1pc/[l]_e&Iu}\]
\caption{Reducing to the base\label{Fig:reducesloop}}
\end{figure}
Thus $\ell(ve)\sim \ell([I,\tau(e)])$ and so $u=\ell(vew)\sim \ell([I,\tau(e)])\ell(w) = \ell([I,\tau(e)]w)$.  But $[I,\tau(e)]w$ contains strictly fewer bold arrows than $p_u$ and hence by induction $u\sim \ell([I,\tau(e)]w)\sim \ell([I,\tau(w)]) = \ell([I,Iu])$.

Suppose now that $u,v\in A^*$ and $Iu=Iv$ with both sides defined.  Then by the claim $u\sim \ell([I,Iu])= \ell([I,Iv])\sim v$.  If $u\in A^*$ and $Iu$ is undefined, then we can factor $u=vaw$ uniquely with $Iv$ defined, $a\in A$ and $Iva$ undefined.  Then by the claim $v\sim \ell([I,Iv])$.  Hence $u\sim \ell([I,Iv])aw\sim \square$ using the relation $\ell([I,Iv])a=\square$.  This completes the proof that $\mathscr A$ is given by the above presentation.
\end{proof}

The above proof provides a Church-Rosser type property~\cite{ChurchRosser} for pointed automata with the unique simple path property.  Namely, we have shown that $u\sim \square$ or $u\sim \ell([I,Iu])$ can be derived using only $\Rightarrow^*$.  Hence if we form a rewriting system with rules
\begin{alignat*}2
\ell(\slp(e))&\rightarrow \ell([I,\tau(e)])& \text{for $e\in \pv E(\Gamma)$,}\\
\ell([I,v])a&\rightarrow\square & \text{if $va$ is undefined,}\\
\square a&\rightarrow \square &
\end{alignat*}
then we obtain a confluent terminating prefix rewriting system (where der\-i\-va\-tions are defined via $\Rightarrow$).   The unique irreducible form derivable from $w\in A^*$ is $\square$ if $Iw$ is not defined and otherwise is $\ell([I,Iw])$, which from now on we denote $\red(w)$.  Here we consider $\square$ to be irreducible, whereas a word $w\in A^*$ is irreducible if it has no prefix which is the left hand side of a rule.

Let us formalize this.

\begin{Def}[Elementary loop automaton]
A loop automaton $\mathscr A$ is said to be an \emph{elementary loop automaton} if it can be given a loop automaton presentation $\langle L\cup D\rangle$ with the property that distinct irreducible elements for the prefix rewriting system
\begin{alignat*}2
uv&\rightarrow u &\quad \text{such that}\ uv=u\in L,\\u&\rightarrow \square &\quad \text{such that}\ u=\square\in D,\\ \square a&\rightarrow \square & a\in A
\end{alignat*}
represent distinct classes of $\sim$.
\end{Def}

It turns out that elementary loop automata are exactly the automata with unique simple path property.
\begin{Prop}
A pointed $A$-automaton $(\mathscr A,I)$ has the unique simple path property if and only if it is an elementary loop automaton.
\end{Prop}
\begin{proof}
The proof of Proposition~\ref{churchrosser} shows that if $(\mathscr A,I)$ has the unique simple path property, then it is an elementary loop automaton with the presentation in that proposition. We prove the converse.  Suppose $(\mathscr A,I)$ is an elementary loop automaton with loop presentation $\langle L\cup D\rangle$ and let $N$ be the set of irreducible elements of $A^*$; notice that it is prefix-closed. Since each element of $N$ gives a unique state of $\mathscr A$ and $N$ is prefix-closed, it follows that subgraph of $\mathscr A$ spanned by the paths labeled by the elements of $N$ from $I$ is a directed rooted tree isomorphic to the induced subgraph of the Cayley graph of $A^*$ with vertices the elements of $N$.  Moreover, it must be a spanning tree since if $q$ is a state of $\mathscr A$, then we can find a word $w\in A^*$ with $Iw=q$.  Replacing $w$ by the unique irreducible element to which it can be reduced shows that $T$ is a spanning tree.  We claim that each edge $e$ of $\mathscr A$ that does not belong to $T$ satisfies $\tau(e)\in [I,\iota(e)]$.  Proposition~\ref{defineuspp} will then imply that $(\mathscr A,I)$ has the unique simple path property.  So suppose $u\in N$, $ua\notin N$ with $a\in A$ and $Iua$ defined.  Since $ua$ is not irreducible and cannot be reduced to $\square$, we can write $ua=xyw$ where $xy=x\in L$.  Since $u$ is irreducible, we must have that $w$ is empty and $y$ ends in $a$.  That is, $u=xy'$ where $y=y'a$ and $ua=xy$.  But then $ua=xy\sim x$ and $x$ is a prefix of $u$.  Thus if $e$ is the edge labeled by $a$ at $Iu$, then $\tau(e)=Ix$, which is a vertex of $[I,Iu]$.  This completes the proof.
\end{proof}

\subsection{The standard Kleene expression}

We now aim to define a unionless Kleene expression for the language accepted by a finite trim pointed acceptor $(\mathscr A,I,\{q\})$ with the unique simple path property and a single terminal state $q$.  This expression turns out to be good for pumping arguments and has the advantage of being unionless.  The intuition is that for applications to complexity, one wants to associate to the automaton a unionless Kleene expression and then replace each occurrence of the Kleene star $\ast$ by $\omega$ in order to obtain an element of the free aperiodic $\omega$-algebra~\cite{Mc-nf}.

Let $\mathscr A=(\Gamma,\ell)$. We assume that a geometric rank function $r$ on $\mathscr A$ has been fixed.  It follows from Proposition~\ref{straightlineinuspp} and $(\mathscr A,I,\{q\})$ being trim that $\mathscr A$ is linear.  Clearly the vertex $q$ belongs to the bottomost strong component and $I$ belongs to the topmost one.  The Kleene expression $\mathcal K(\mathscr A,I,q,r)$ is defined via the \emph{Principal of Induction} (or PI), which is based on induction on the pair $(|\mathbf E(\Gamma)|,|V(\Gamma)|)$  where $\mathbb N\times \mathbb N$ is ordered lexicographically.  Let $C_0>C_1>\cdots> C_n$ be the chain of strong components of $\Gamma$ and let $p_i$ and $q_i$ be the respective entry and exit points of $C_i$ where we take $p_0=I$.  Let $e_i\colon q_{i-1}\to p_i$, for $i=1,\ldots,n$, be the $i^{th}$ transition edge.  See Figure~\ref{Fig:definekleene}.
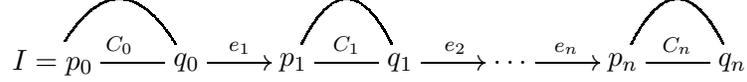
\begin{figure}[thbp]
\[\SelectTips{cm}{}
\xymatrix{I=p_0\ar@{-}@/^2pc/[r]\ar@{-}[r]^{C_0} & q_0\ar[r]^{e_1}&p_1\ar@{-}@/^2pc/[r]\ar@{-}[r]^{C_1}& q_1\ar[r]^{e_2}&\ldots\ar[r]^{e_n}&p_n\ar@{-}@/^2pc/[r]\ar@{-}[r]^{C_n}  &q_n}\]
\caption{Trim automaton with the unique simple path property\label{Fig:definekleene}}
\end{figure}
Notice that each $(C_i,p_i)$ has the unique simple path property.  It is possible that $p_i=q_i$ and some of the $C_i$ may be trivial.

\subsubsection{Base of the induction}
If $(\mathscr A,I)$ has no bold arrows, then it consists simply of the geodesic $[I,q]$ and we define
\begin{equation}\label{Kleene1}
\mathcal K(\mathscr A,I,q,r)=\ell([I,q])
\end{equation}
where $\ell$ is the function labeling the edges of $\mathscr A$.

\subsubsection{Elementary induction}\label{elementaryinduction}
If $n\geq 1$, then we define
\begin{equation}\label{Kleene2}
\mathcal K(\mathscr A,I,q,r) = \mathcal K\left(\mathscr A(q_{n-1}),I,q_{n-1},r|_{\mathscr A(q_{n-1})}\right)\ell(e_n)\mathcal K\left(C_n,p_n,q,r|_{C_n}\right)
\end{equation}
where $\mathscr A(q_{n-1})$ is the full subautomaton of $\mathscr A$ whose vertices are greater than or equal to $q_{n-1}$ in the accessibility order (cf.~Proposition~\ref{straightlineinuspp}).  This make sense since $(\mathscr A(q_{n-1}),I)$ and $(C_n,p_n)$ have the unique simple path property and each have strictly fewer vertices than $\mathscr A$ and no more bold arrows.

\subsubsection{Loop case}\label{ss:loopcase}
Suppose that $n=0$ (so there is only one strong component) and that $\mathscr A$ has at least one bold arrow. Let $e$ be the bold arrow ending at $I$ with largest geometric rank.  Let $\mathscr B$ be the automaton obtained from $\mathscr A$ by removing the edge $e$ and taking the strong component of $I$.  Then $(\mathscr B,I)$ has the unique simple path property and has fewer bold arrows then $\mathscr A$.  Also $\cut(e)$ has the unique simple path property from $I$ by Propositions~\ref{cutkeepsuspp} and is linear by Proposition~\ref{cutislinear}.  Finally, we observe that $\ov{([I,q],I)}$ has the unique simple path property.  Moreover, every vertex of $\ov{([I,q],I)}$ is on a path from $I$ to $q$ by Proposition~\ref{closurepreservesfilters}.  Thus $\ov{([I,q],I)}$ is trim.  It also has fewer bold arrows than $\mathscr A$ since the graph obtained by removing the bold arrows ending at $I$ is closed with respect to $I$ and contains $[I,q]$.  Thus we may define
\begin{equation}\label{Kleene3}
\begin{split}
\mathcal K(\mathscr A,I,q,r) = \mathcal K\left(\mathscr B,I,I,r|_{\mathscr B}\right)&\left[\mathcal K\left(\cut(e),I,I',r|_{\cut(e)}\right) \mathcal K\left(\mathscr B,I,I,r|_{\mathscr B}\right)\right]^*\\ &\cdot \mathcal K\left(\ov{([I,q],I)},I,q,r|_{\ov{([I,q],I)}}\right)
\end{split}
\end{equation}
where $r|_{\cut(e)}$ makes sense as each bold arrow of $\cut(e)$ is a bold arrow of $\mathscr A$.

Before verifying that the Kleene expression yields the correct language, we compute some examples.  In the first few examples there will be no multiple bold arrows ending at the same vertex and so we do not need to give the geometric rank function.

\begin{exmp}
Suppose the automaton $\mathscr A$ is given by \[\SelectTips{cm}{}
\xymatrix{I\ar[r]^\Delta &q\ar@(r,u)_{\alpha}}\]  Then $\mathcal K(\mathscr A,I,q,r)= \Delta\alpha^*$.
\end{exmp}

\begin{exmp}
Suppose the automaton $\mathscr A$ is given by
\[\SelectTips{cm}{}
\xymatrix{I\ar[r]^\Delta& p\ar@/_1pc/[r]_{\alpha_1}&q\ar@/_1pc/[l]_{\alpha_2}}\] Then $\mathcal K(\mathscr A,I,q,r)= \Delta(\alpha_1\alpha_2)^*\alpha_1$.
\end{exmp}

\begin{exmp}
Let the automaton $\mathscr A$ be given by
\[\SelectTips{cm}{}
\xymatrix{I\ar[r]^\Delta& q\ar[r]^{a}&\bullet\ar[r]^b&\bullet\ar@/^1pc/[l]^{d}\ar@/_2pc/[ll]_c}\]
Then \[\cut(c)= \SelectTips{cm}{}
\xymatrix{q\ar[r]^{a}&\bullet\ar[r]^b&\bullet\ar@/^1pc/[l]^{d}\ar[r]^c &q'}\]
A computation similar to the previous example shows that \[\mathcal K(\cut(c), q,q',r) =  a(bd)^*bc.\]
Thus $\mathcal K(\mathscr A,I,q,r)= \Delta(a(bd)^*bc)^*$.
\end{exmp}

Our next example modifies the previous one by changing the terminal state $q$.

\begin{exmp}
Suppose the automaton $\mathscr A$ is given by
\[\SelectTips{cm}{}
\xymatrix{I\ar[r]^\Delta& p\ar[r]^{a}&\bullet\ar[r]^b&q\ar@/^1pc/[l]^{d}\ar@/_2pc/[ll]_c}\]
The only new ingredient is to compute $\ov{([p,q],p)}$, which is \[\SelectTips{cm}{}
\xymatrix{p\ar[r]^{a}&\bullet\ar[r]^b&\bullet\ar@/^1pc/[l]^{d}}\] and so has Kleene expression \[\mathcal K\left((\ov{[p,q],p}),p,q,r\right)= a(bd)^*b.\]  Thus  $\mathcal K(\mathscr A,I,q,r)= \Delta(a(bd)^*bc)^*a(bd)^*b$.
\end{exmp}

In our next example, we will make use of the geometric rank function.
\begin{exmp}
Let the automaton  $\mathscr A$ be given by
\[\SelectTips{cm}{}
\xymatrix{I\ar[r]^\Delta &q\ar@(r,u)_{\alpha}\ar@(r,d)^{\beta}}\]
and let us suppose that $\beta$ has greater geometric rank than $\alpha$.  Then  the Kleene expression is $K(\mathscr A,I,q,r)=\Delta\alpha^*(\beta\alpha^*)^*$.
\end{exmp}

A slight variation on the previous example is the following.
\begin{exmp}
Let the automaton  $\mathscr A$ be given by
\[\SelectTips{cm}{}
\xymatrix{     &                                     &\bullet\ar@(r,u)_{\gamma}\ar@/_1pc/[ld]_{\alpha_2}\\
I\ar[r]^\Delta &q\ar@/_1pc/[ru]_{\alpha_1}\ar@(r,d)^{\beta}&}\]
and let us suppose that $\beta$ has greater geometric rank than $\alpha_2$.  Then
\[\cut(\alpha_2) = \SelectTips{cm}{}
\xymatrix{q\ar[r]^{\alpha_1}&\bullet\ar@(ur,ul)_{\gamma}\ar[r]^{\alpha_2}&q'}\]
and so $K(\mathscr A,I,q,r)=\Delta(\alpha_1\gamma^*\alpha_2)^*\left[\beta(\alpha_1\gamma^*\alpha_2)^*\right]^*$.
\end{exmp}

Our final example mixes several ingredients.

\begin{exmp}
Suppose the automaton  $\mathscr A$ is given by
\[\SelectTips{cm}{}
\xymatrix{     &   q\ar@/_/[d]_{\alpha_3}                                  &\bullet\ar@(r,u)_{\gamma}\ar@/_/[l]_{\alpha_2}\\
I\ar[r]^\Delta &p\ar@/_1.5pc/[ru]_{\alpha_1}\ar@(r,d)^{\beta}&}\]
and let us suppose that $\alpha_3$ has greater geometric rank than $\beta$.  Then
\[\cut(\alpha_3) = \SelectTips{cm}{}
\xymatrix{p\ar[r]^{\alpha_1}&\bullet\ar@(ur,ul)_{\gamma}\ar[r]^{\alpha_2}&q\ar[r]^{\alpha_3}&p'}\] and
\[\ov{([p,q],p)} = \SelectTips{cm}{}
\xymatrix{p\ar[r]^{\alpha_1}&\bullet\ar@(ur,ul)_{\gamma}\ar[r]^{\alpha_2}&q}\]
Thus $K(\mathscr A,I,q,r)=\Delta\beta^*\left[(\alpha_1\gamma^*\alpha_2\alpha_3)\beta^*\right]^*\alpha_1\gamma^*\alpha_2$.
\end{exmp}

We now prove that the Kleene expression accepts the language recognized by the automaton.

\begin{Thm}
Let $(\mathscr A,I,\{q\})$ be a finite trim pointed $A$-acceptor with the unique simple path property and with a single terminal state $q$ where $\mathscr A=(\Gamma,\ell)$.   Fix a geometric rank function $r$ on $(\mathscr A,I)$.  Then $\mathcal K(\mathscr A,I,q,r)$ is a Kleene expression for the language of words accepted by $\mathscr A$ with initial state $I$ and terminal state $q$ that does not use union.
\end{Thm}
\begin{proof}
The proof goes by induction on the pair $(|\pv E(\Gamma)|,|V(\Gamma)|)$ where we order $\mathbb N\times \mathbb N$ lexicographically.  It is clear that if $\mathscr A$ has no bold arrows, then $\mathcal K(\mathscr A,I,q,r)$ is unionless and accepts the same language as $\mathscr A$.  Similarly, in the case of the elementary induction~\ref{elementaryinduction}, one easily verifies that $\mathcal K(\mathscr A,I,q,r)$ is a unionless Kleene expression accepting the language of $\mathscr A$ using the induction hypothesis.   The only difficult case is the loop case~\ref{ss:loopcase}.

Let us retain the notation of \ref{ss:loopcase}.  By induction we have $\mathscr B$ with initial and terminal state both set to $I$ accepts precisely $\mathcal K(\mathscr B,I,I,r|_{\mathcal B})$, whereas $\mathcal K\left(\cut(e),I,I',r|_{\cut(e)}\right)$ accepts all strings reading from $I$ to $I'$ in $\cut(e)$ and $\mathcal K\big(\ov{([I,q],I)},I,q,r|_{\ov{([I,q],q)}}\big)$ accepts all words reading in  $\ov{([I,q],I)}$ from $I$ to $q$   Then trivially, $\mathcal K(\mathscr A,I,q,r)$ is accepted by $\mathscr A$ with initial state $I$ and final state $q$ and is a unionless Kleene expression using the induction and \eqref{Kleene3}.  The converse direction requires some proof.

Suppose $w$ labels a path from $I$ to $q$.    We can factor $w=uv$ with $u,v\in A^*$ so that $Iu=I$ and the path labeled by $v$ from $I$ to $q$ does not revisit $I$. First we claim that $v$ is accepted by $\ov{([I,q],I)}$ with initial state $I$ and terminal state $q$.  If $q=I$, then $v$ is empty and there is nothing to prove.  Otherwise, we proceed by induction on the number of applications of $\Rightarrow$ needed to reduce $v$ to $\ell([I,q])$ in the rewriting system appearing after Proposition~\ref{churchrosser}.  If $v=\ell([I,q])$, then there is nothing to prove.  Suppose $v\Rightarrow x$.  Then we can write $v=\ell(\slp(f))z$ for some bold arrow $f$ with $x=\ell([I,\tau(f)])z$.  By choice of $v$, the edge $f$ does not end at $I$.  By induction, $x$ is accepted by  $\ov{([I,q],I)}$.  Since $f$ is a bold arrow with $\tau(f)$ visited by $x$ on its run from $I$, it follows from the definition of being closed with respect to $I$ that $f\in  \ov{([I,q],I)}$.  Thus $\lp(f)\subseteq \ov{([I,q],I)}$ by Proposition~\ref{loopin}.  Therefore, $\slp(f) = [I,\tau(f)]\lp(f)\subseteq  \ov{([I,q],I)}$ and so $v$ is accepted by  $\ov{([I,q],I)}$ with terminal state $q$.  Hence $v\in \mathcal K\left(\ov{([I,q],I)},I,q,r|_{\ov{([I,q],q)}}\right)$.

It thus remains to show that
\begin{equation*}\label{bigmesstoshow}
u\in  \mathcal K\left(\mathscr B,I,I,r|_{\mathscr B}\right)\left[\mathcal K\left(\cut(e),I,I',r|_{\cut(e)}\right) \mathcal K\left(\mathscr B,I,I,r|_{\mathscr B}\right)\right]^*.
\end{equation*}
Let $s$ be the path read by $u$ from $I$ to $I$ and factor $p=s_0es_1e\cdots s_{m-1}es_m$ where no $s_i$ uses the edge $e$ and where we interpret the case $m=0$ as $p=s_0$.  Then $\ell(s_m)$ is accepted by $\mathscr B$ with initial and terminal state $I$.  We may further factor each $s_i$ with $0\leq i\leq m-1$ as $s_i=t_iu_i$ so that $t_i\colon I\to I$ and so that $u_i\colon I\to \iota(e)$ does not revisit $I$.  Then $\ell(t_i)$ is accepted by $\mathscr B$ with initial and terminal state $I$ for all $0\leq i\leq m-1$.  We claim that $\ell(u_i)a$ is accepted by $\cut(e)$ with intial state $I$ and terminal state $I'$.  If $I=\iota(e)$ (that is, $e$ is a loop edge), then each $u_i$ is empty and $\cut(e) = I\xrightarrow{\, a\,} I'$ and so there is nothing to prove.  So assume $I\neq \iota(e)$.
We show by induction on the number of applications of $\Rightarrow$ needed to reduce $\ell(u_i)$ to $\ell([I,\iota(e)])$ in the rewriting system after Proposition~\ref{churchrosser} that $\ell(u_i)$ reads in $\cut(e)$ from $I$ to $\iota(e)$.  Since $a$ labels an edge from $\iota(e)$ to $I'$ in $\cut(e)$, this will establish the claim.  Recall that $\cut(e)$ is obtained by cutting $\ov{(\lp(e),I)}$ at $I$.  If $u_i=[I,\iota(e)]$ there is nothing to prove since $[I,\iota(e)]\subseteq \ov{\lp(e)}$ and doesn't use the edge $e$, and hence ``is'' the geodesic from $I$ to $\iota(e)$ in $\cut(e)$.  Thus $\ell(u_i)$ reads from $I$ to $\iota(e)$ in $\cut(e)$.  Next suppose that $\ell(u_i)\Rightarrow x$ where $x$ reads a path from $I$ to $\iota(e)$ in $\cut(e)$.   By definition of $\Rightarrow$, we can write $\ell(u_i) = \ell(\slp(f))z$ for some bold arrow $f$ so that $x=\ell([I,\tau(f)])z$.  Since $u_i$ does not revisit $I$, it follows that $f$ does not end at $I$.  Then since $x$ labels a path in $\ov{(\lp(e),I)}$ from $I$ to $\iota(e)$, it follows by the definition of a closed subgraph at $I$ that $f\in \ov{(\lp(e),I)}$ and hence, by Proposition~\ref{loopin}, we must have $\lp(f)\subseteq \ov{(\lp(e),I)}$.  Consequently, $\slp(f)=[I,\tau(f)]\lp(f)$ is contained in $\ov{(\lp(e),I)}$.  Because $u_i$ does not revisit $I$, it follows that $\ell(u_i)=\ell(\slp(f))z$ is readable from $I$ to $\iota(e)$ in $\cut(e)$.

Thus we now have (using the inductive hypothesis)
\begin{align*}
u&=[\ell(t_0)(\ell(u_0)a)]\cdots [\ell(t_{m-1})(\ell(u_{m-1})a)]\ell(s_m)\\ &\in \left[\mathcal K\left(\mathscr B,I,I,r|_{\mathscr B}\right)\mathcal K\left(\cut(e),I,I',r|_{\cut(e)}\right)\right]^{m-1}\mathcal K\left(\mathscr B,I,I,r|_{\mathscr B}\right)\\
&\subseteq \mathcal K\left(\mathscr B,I,I,r|_{\mathscr B}\right)\left[\mathcal K\left(\cut(e),I,I',r|_{\cut(e)}\right) \mathcal K\left(\mathscr B,I,I,r|_{\mathscr B}\right)\right]^*
\end{align*}
as required.  This completes the proof of the theorem.
\end{proof}

\section{Semigroup expansions}

\subsection{Semigroups}\label{sec:semigroups}

Once again we fix an alphabet $A$ (usually assumed finite).
In later sections we often require semigroups to have certain
finiteness properties such as being finite, or at least finite
$\J$-above.  A semigroup $S$ is \emph{finite $\J$-above} if there are
only finitely many elements in $S$ which are $\J$-above (i.e., $\geq_{\J}$) any particular
element of $S$.  We also need the following equivalent
characterization.  In this paper we allow ideals to be empty.

\begin{lem}\label{lem:fja}
A semigroup $S$ is finite $\J$-above if and only if there exists a
family of co-finite ideals whose intersection is empty.
\end{lem}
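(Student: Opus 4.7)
The plan is to prove both directions by using the correspondence between ideals and downsets in the $\leq_{\J}$ quasi-order. The key observation is that a subset $I \subseteq S$ is an ideal if and only if $I$ is a downset in the $\leq_{\J}$ ordering (i.e., $t \in I$ and $s \leq_{\J} t$ imply $s \in I$). This follows directly from the definition, since $s \leq_{\J} t$ means $s \in S^1 t S^1$, and $I$ being an ideal means $S^1 I S^1 \subseteq I$.

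For the forward direction, I would assume $S$ is finite $\J$-above and, for each $s \in S$, define
\[
I_s = \{t \in S : s \notin S^1 t S^1\} = \{t \in S : t \not\geq_{\J} s\}.
\]
The complement of $I_s$ is precisely the set of elements $\J$-above $s$, which is finite by hypothesis, so $I_s$ is cofinite. I would then check that $I_s$ is an ideal using the observation above: if $t \in I_s$ and $u \leq_{\J} t$, then $S^1 u S^1 \subseteq S^1 t S^1$, so $s \notin S^1 u S^1$, giving $u \in I_s$. Finally, $\bigcap_{s \in S} I_s = \emptyset$ because every element $t$ satisfies $t \geq_{\J} t$ and hence $t \notin I_t$.

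For the reverse direction, I would suppose $\{I_\alpha\}_{\alpha \in A}$ is a family of cofinite ideals with empty intersection and fix an arbitrary $s \in S$. Since $\bigcap_\alpha I_\alpha = \emptyset$, there exists some $\alpha_0$ with $s \notin I_{\alpha_0}$. I then claim that $\{t \in S : t \geq_{\J} s\} \subseteq S \setminus I_{\alpha_0}$: indeed, if $t \geq_{\J} s$ and $t \in I_{\alpha_0}$, then $s \in S^1 t S^1 \subseteq I_{\alpha_0}$ (since $I_{\alpha_0}$ is an ideal), contradicting the choice of $\alpha_0$. Since $S \setminus I_{\alpha_0}$ is finite, only finitely many elements are $\J$-above $s$.

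I do not foresee any significant obstacle here; the main subtlety is just keeping the direction of $\leq_{\J}$ straight (ideals being downsets, $\J$-above elements sitting in the finite complement) and remembering to use $S^1$ rather than $S$ in the definition of $\leq_{\J}$ so that $s \geq_{\J} s$ holds for every $s \in S$, which is what makes the intersection in the forward direction empty.
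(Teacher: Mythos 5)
Your proof is correct and follows essentially the same route as the paper: the same cofinite ideals (elements not $\J$-above a fixed element/$\J$-class) in the forward direction, and the same downset argument in the reverse direction. Your observation that $s\notin I_s$ by reflexivity of $\geq_{\J}$ makes the emptiness of the intersection immediate and lets you avoid the paper's separate treatment of the finite and infinite cases; note only that, as the paper remarks, one must allow ideals to be empty for this family to make sense when some $I_s=\emptyset$.
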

\begin{proof}
If such a family of ideals exists, then for each $s\in S$ there exists
an ideal $I$ in the family with $s\notin I$. Since $S\setminus I$ is finite
and contains all the elements of $S$ that are $\J$-above $s$, it follows that $S$ is
finite $\J$-above.  Conversely, if $S$ is finite then the empty ideal
is co-finite and the result is immediate. If $S$ is infinite and
finite $\J$-above, then for each $\J$-class $J$ let $I_J$ denote the
elements in $S$ which are not $\J$-above $J$.  Clearly these ideals
are co-finite and we will show that their intersection is empty.
Since $S$ is infinite, for each $s\in S$ there is an element $t$ which
is not $\J$-above $s$.  Because $s$ is not in the ideal corresponding
to the $\J$-class of $st$, evidentally $s$ is not in the intersection of all such
ideals and thus the intersection of these ideals is empty.
\end{proof}

\begin{Def}[Categories]
In Table~\ref{tab:cat} we list the categories commonly used throughout
the article and the notations used to describe them.  When the objects
are $A$-semigroups, the morphisms are restricted to $A$-morphisms.
\end{Def}

\begin{table}
\begin{tabular}{|c|c|}
\hline {\bf Notation} & {\bf Description}\\
\hline $\SG$ & semigroups\\
\hline $\FS$ & finite semigroups\\
\hline $\FJ$ & finite $\J$-above semigroups\\
\hline $\SGA$ & $A$-semigroups\\
\hline $\FSA$ & finite $A$-semigroups\\
\hline $\FJA$ & finite $\J$-above $A$-semigroups\\
\hline
\end{tabular}
\vspace*{1em}
\caption{Commonly used categories\label{tab:cat}}
\end{table}

\subsection{Straightline automata}\label{sec:automata}
In this subsection we define two finite-state automata for each finite
$\J$-above $A$-semigroup $S$ and each word $w \in A^+$, one which
depends on the word $w$ (the straightline automaton), and a second
which only depends on the element in $S$ which $w$ represents (the
Cayley automaton). These two types of automata will be crucial to all
of the results that follow.

Straightline automata play a critical role in the solution to the word problem for Burnside semigroups~\cite{Mc91}.

\begin{Def}[Straightline automata]\label{def:str}
Let $(\mathscr A,I)$ be a pointed $A$-au\-to\-ma\-ton and $w$ a word in $A^+$. The
\emph{straightline automaton of $w$ with respect to $\mathscr A$}, denoted
$\str^\mathscr A(w)$, has an underlying directed graph which is a subgraph of $\mathscr A$.  The directed
graph underlying the automaton $\str^\mathscr A(w)$ is the union of the path
from $I$ labeled by $w$, and the strong components of each of the vertices
that this path passes through. This labeled directed graph is made into
an acceptor by specifying the vertex $I$ as its initial state and $Iw$ as its terminal state.

If $S$ is an $A$-semigroup and $w$ is a word in $A^+$,  then the
\emph{straightline automaton of $w$ with respect to $S$} is by definition
$\str^S(w)=\str^{\Cay(S,A)}(w)$. Observe that its
vertex set consists of all of the vertices that represent elements of
$S^I$ which are $\R$-equivalent to elements of $S^I$ represented
by initial segments of $w$.
\end{Def}

\begin{Rmk}
If $(\mathscr A,I)$ is a finite pointed linear $A$-automaton and $w\in A^+$ reads from $I$ to a vertex in the bottommost strong component of $\mathscr A$, then $\mathscr A=\str^\mathscr A(w)$.
\end{Rmk}

The following properties of $\str^S(w)$ follow easily from the
definition.

\begin{lem}[$\str^S(w)$]\label{lem:str-props}
If $S$ is an arbitrary finite $\J$-above $A$-semigroup and $w$ is word
in $A^+$, then $\str^S(w)$ (with initial state $I$ and terminal state $[w]_S$) is a trim, deterministic finite-state
acceptor which is linear and its strong components are
Sch\"utzenberger graphs of $\Cay(S,A)$.  In addition, a word $x\in
A^+$ is readable on $\str^S(w)$ starting at the initial state if and
only if $x$ is $\R$-equivalent to an initial segment of $w$.
\end{lem}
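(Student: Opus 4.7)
Most of the statements unpack the construction of $\str^S(w)$ as the subgraph of $\Cay(S,A)$ obtained by adjoining the Schützenberger graphs of each vertex on the $w$-path, so the main work is extracting each property cleanly. Determinism is immediate: $\str^S(w)$ is a subautomaton of the deterministic automaton $\Cay(S,A)$, so its restricted labeling is a directed immersion and hence a partial deterministic $A$-automaton. For finiteness, I would observe that the finite $\J$-above hypothesis forces each $\R$-class of $S$ to be finite (any two $\R$-equivalent elements are mutually $\J$-above each other), so each Schützenberger graph is finite; since only $|w|+1$ vertices lie on the $w$-path, only finitely many Schützenberger graphs contribute vertices to $\str^S(w)$, giving finiteness of the whole automaton.

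For the structural claims, let $I=s_0,s_1,\ldots,s_n=[w]_S$ be the vertices traversed by the $w$-path. Using the correspondence between the accessibility order on $\Cay(S,A)$ and $\leq_{\R}$, each step $s_j\xrightarrow{a_{j+1}}s_{j+1}$ either lies in the $\R$-class of $s_j$ (an internal edge of the Schützenberger graph) or strictly decreases the accessibility order (a transition edge of $\Cay(S,A)$). Consolidating consecutive identical $\R$-classes yields a strictly decreasing chain $C_0>C_1>\cdots>C_k$ of Schützenberger graphs, and every edge of $\str^S(w)$ lies inside some $C_i$ or is a transition edge on the $w$-path connecting $C_{i-1}$ to $C_i$. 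It follows that the strong components of $\str^S(w)$ are exactly the $C_i$, they form a chain, and the transition edges form a chain; hence $\str^S(w)$ is linear with strong components equal to the listed Schützenberger graphs. Trimness then falls out: an arbitrary vertex $v\in C_i$ can be connected to the entry point of the $w$-path into $C_i$ (and, symmetrically, to its exit point from $C_i$) using strong connectivity in $C_i$, and prefixing and suffixing with the relevant portions of the $w$-path produces a directed path $I\to v\to [w]_S$ inside $\str^S(w)$.

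Only the readability characterization remains. The forward direction is immediate: any path in $\str^S(w)$ starting at $I$ ends at a vertex of $\str^S(w)$, and by the construction of $\str^S(w)$ the vertex set consists precisely of the elements of $S^I$ whose $\R$-class is represented by some initial segment of $w$. The reverse direction is the main obstacle: given $[x]_S\mathrel{\R}[w_i]_S$, one must verify that the unique path labeled $x$ from $I$ in $\Cay(S,A)$ remains inside $\str^S(w)$. My plan is induction on the length of $x$, exploiting the chain structure of Schützenberger graphs established above together with the inequality $[x_1\cdots x_j]_S\geq_{\R}[x]_S\mathrel{\R}[w_i]_S$ for every prefix $x_1\cdots x_j$; the delicate step will be to argue that each intermediate $\R$-class on the $x$-path actually coincides with some $C_i$ in the chain, rather than sitting as an $\R$-class of $\Cay(S,A)$ off the chain that would cause the path labeled $x$ to escape from $\str^S(w)$.
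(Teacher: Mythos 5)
The paper offers no argument for this lemma at all (it is asserted to ``follow easily from the definition''), so the only question is whether your proposal is complete and correct. Your treatment of the structural claims is fine: finiteness via finiteness of $\R$-classes in a finite $\J$-above semigroup, partial determinism by restriction from $\Cay(S,A)$, the consolidation of the $w$-path into a strictly descending chain of Sch\"utzenberger graphs joined by transition edges, linearity, and trimness all go through as you describe. The gap is exactly where you flagged it: the reverse direction of the readability biconditional is left as a ``plan,'' and the ``delicate step'' you defer cannot be carried out, because that direction is false for an arbitrary finite $\J$-above $S$. Take $S$ to be the free semilattice on $A=\{a,b\}$ and $w=ab$. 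The prefixes of $w$ represent $I$, $a$, $ab$, all in singleton $\R$-classes, so $\str^S(w)$ has vertices $\{I,a,ab\}$ and the only edge leaving $I$ is labeled $a$. The word $x=ba$ satisfies $[x]_S=[ab]_S=[w]_S$, hence is $\R$-equivalent to an initial segment of $w$, yet $x$ is not readable from $I$: the unique path it reads in $\Cay(S,A)$ passes through the vertex $b$, whose $\R$-class is not represented by any prefix of $w$ and therefore lies outside $\str^S(w)$.

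The obstruction is precisely the one you anticipated (an intermediate $\R$-class of the $x$-path sitting off the chain), together with a second one you only implicitly touch: even if every intermediate $\R$-class did lie on the chain, $\str^S(w)$ contains only the particular transition edges traversed by $w$, so the $x$-path would also have to descend between components along those same edges. Both obstructions vanish exactly when $S$ is stable under the reverse Karnofsky--Rhodes expansion (for instance when $S=S^{\RB}$); compare Lemma~\ref{lem:KR-closed} and Corollary~\ref{cor:subwords}, which is where the paper restates and actually uses the readability equivalence in a setting where it holds. So for the general statement you should either prove only the forward implication (readable from $I$ implies $\R$-equivalent to a prefix of $w$, which is immediate from the description of the vertex set) or add a stability hypothesis; no amount of induction on $|x|$ will close the reverse direction as stated.
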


The transition edges of the linear graph $\str^S(w)$ are
exactly those edges whose endpoints represent elements of $S$ which
are not $\R$-equivalent to each other, in other words, edges which
represent descents in the $\R$-order.  Thus every edge in $\str^S(w)$
either is a transition edge or it lies in a unique Sch\"utzenberger
graph.  Let the vertices $p_i$ and $q_i$ be defined as in
Definition~\ref{def:exits} and let $s_i$ and $t_i$ denote the
elements of $S$ which corresponding to these vertices.  In addition, if
$\str^S(w)$ has exactly $k+1$ strong components, we define $p_0$ to be
the initial state of $\str^S(w)$ and $q_k$ to be its terminal state.  For
$i=0,1,\ldots,k$, the vertex $p_i$ will be the first vertex in the
path $w$ which lies in the $i^{th}$ strong component and $q_i$ will be
the last such vertex.  Thus, $p_i$ and $q_i$ can also be identified
with specific initial segments of $w$.  Under this scheme $p_0=q_0$
corresponds to the empty prefix and $q_k$ corresponds to word $w$
itself.  Notice that the vertices $p_0$ and $q_0$ will always
correspond since the Cayley graph used to define $\str^S(w)$ is that
of $S^I$.

\begin{exmp}
Figure~\ref{fig:str} illustrates the structure of a typical
straightline automaton.  In this example there are $k+1$ strong
components.  The $0^{th}$ strong component is trivial, as it always
is, and the entry and exit points are equal in the third strong
component.
\end{exmp}

\psfrag{u0}{$p_0=q_0$}
\psfrag{u1}{$p_1$}
\psfrag{u2}{$p_2$}
\psfrag{u3}{$p_3=q_3$}
\psfrag{uk}{$p_k$}
\psfrag{v1}{$q_1$}
\psfrag{v2}{$q_2$}
\psfrag{vk}{$q_k$}

\psfrag{schu1}{$\sch^T(s_1)$}
\psfrag{schu2}{$\sch^T(s_2)$}
\psfrag{schu3}{$\sch^T(s_3)$}
\psfrag{schuk}{$\sch^T(s_k)$}
\psfrag{schun}{$\sch^T(s_n)$}
\begin{figure}[ht]
\includegraphics{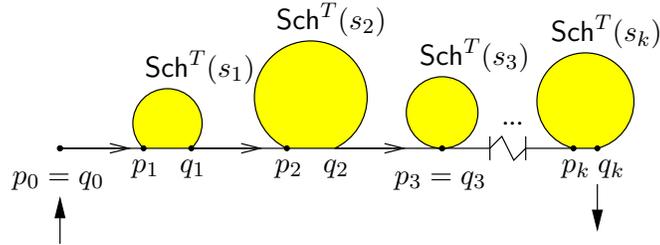}
\caption{A straightline automaton}\label{fig:str}
\end{figure}

Of course, there is an obvious analogue of Lemma~\ref{lem:str-props} for straightline automata associated to finite pointed automata.

\begin{Def}[Cayley automata]\label{def:cayley}
Let $S$ be an $A$-semigroup and let $w$ be a word in $A^+$.  The
\emph{Cayley automaton of $w$ with respect to $S$} is the full
subgraph of $\Cay(S,A)$ on the set of vertices which represent
elements of $S^I$ $\R$-above $[w]_S$.  The initial state and terminal state
are defined as the vertices $I$ and $[w]_S$, as before.  The Cayley
automaton of $w$ will be denoted $\Cay^S(w)$.  Notice that $\Cay^S(w)$
actually only depends on the element of $S$ that $w$ represents and
not on the word itself.  Thus, following the same convention as for
Sch\"utzenberger graphs, if $[w]_S = s$, we might write $\Cay^S(s)$
instead.
\end{Def}

The following lemmas records some elementary properties of $\Cay^S(w)$.

\begin{lem}[$\Cay^S(w)$]\label{lem:cay-props}
If $S$ is a finite $\J$-above $A$-semigroup and $w$ is a word in $A^+$,
then $\Cay^S(w)$ (with initial state $I$ and terminal state $[w]_S$) is a trim, deterministic, finite-state automaton
which accepts the language of words equivalent to $w$ in $S$.  In
addition, the finite-state automaton $\str^S(w)$ is a subautomaton of
$\Cay^S(w)$.  Moreover, a word $x\in A^+$ is readable on
$\Cay^S(w)$ if and only if $[x]_S \geq_{\mathscr J} [w]_S$, it is readable on
$\Cay^S(w)$ starting at the initial state if and only if $[x]_S \geq_{\mathscr R}
[w]_S$, and it is readable on $\Cay^S(w)$ ending at $[w]_S$ if and
only if $[x]_S \geq_{\eL} [w]_S$.
\end{lem}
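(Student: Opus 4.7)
The plan is to reduce everything to one simple readability criterion and then read off each claim directly. Determinism of $\Cay^S(w)$ is inherited from $\Cay(S,A)$, and the finite-state property follows from the finite $\J$-above hypothesis (only finitely many elements of $S^I$ are $\geq_{\R} [w]_S$), so the vertex set $V := \{s \in S^I : s \geq_{\R} [w]_S\}$ is finite.

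Key observation: since $\Cay^S(w)$ is the \emph{full} subgraph on $V$, a word $x = x_1 \cdots x_n$ is readable starting at a vertex $s\in V$ if and only if $s[x]_S$ again lies in $V$. Only the ``if'' direction requires argument: each intermediate vertex $s[x_1\cdots x_i]_S$ satisfies $(s[x_1\cdots x_i]_S)\cdot [x_{i+1}\cdots x_n]_S = s[x]_S \geq_{\R} [w]_S$, hence lies in $V$, and all required edges are present because the subgraph is full.

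From this single criterion I would deduce the remaining assertions in order. \emph{Trimness:} any $s\in V$ is reached from $I$ by reading a word for $s$, and then reaches $[w]_S$ via any word representing some $v\in S^I$ with $sv = [w]_S$, which exists by $s\geq_{\R} [w]_S$. \emph{Language:} if $[x]_S=[w]_S$, the reading of $x$ from $I$ stays in $V$ by the criterion since $I,[w]_S \in V$, so $x$ is accepted; conversely determinism of $\Cay(S,A)$ forces $[x]_S=[w]_S$ for any accepting run. \emph{Inclusion} $\str^S(w)\subseteq \Cay^S(w)$: every vertex of $\str^S(w)$ is $\R$-equivalent to a prefix of $w$, hence $\geq_{\R}[w]_S$, and both are full subgraphs of $\Cay(S,A)$.

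For the three readability statements, the criterion gives: readable from $I$ iff $[x]_S\in V$, i.e.\ $[x]_S\geq_{\R}[w]_S$; readable ending at $[w]_S$ iff some $s\in V$ satisfies $s[x]_S=[w]_S$, which is precisely $[x]_S\geq_{\eL}[w]_S$ (with $s$ automatically in $V$ since $s\cdot [x]_S = [w]_S$ already witnesses $s\geq_{\R}[w]_S$); and readable anywhere iff there exist $s, s[x]_S\in V$, which gives $[w]_S\in s[x]_S\cdot S^I\subseteq S^I[x]_S S^I$ so $[x]_S\geq_{\mathscr J}[w]_S$, while conversely a factorization $[w]_S=u[x]_S v$ with $u,v\in S^I$ lets one start reading $x$ at the vertex $u$ (or at $I$ when $u=I$), noting that $u\cdot [x]_S v=[w]_S$ places $u$ in $V$ automatically. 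I do not foresee any real obstacle; the whole argument is an exercise in translating between Green's preorders on $S^I$ and paths in $\Cay(S,A)$, with the only care being the consistent bookkeeping of the adjoined identity $I$.
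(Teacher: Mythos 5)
Your proof is correct. The paper itself states this lemma without proof (it is introduced as ``recording some elementary properties''), so there is no argument to compare against; your reduction of everything to the single criterion ``$x$ is readable from $s\in V$ iff $s[x]_S\in V$'' is exactly the right way to fill in the details, and each of the six assertions does follow from it as you indicate. The only blemish is the parenthetical claim that \emph{both} $\str^S(w)$ and $\Cay^S(w)$ are full subgraphs of $\Cay(S,A)$: by its definition $\str^S(w)$ is the union of the path read by $w$ with the relevant strong components, and in general it omits transition edges between those components other than the ones traversed by $w$, so it need not be full. This does not affect your containment argument, which only needs that every vertex of $\str^S(w)$ lies in $V$ and that $\Cay^S(w)$, being full on $V$, contains every edge of $\Cay(S,A)$ between vertices of $V$ --- in particular every edge of $\str^S(w)$.
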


\begin{exmp}
Figure~\ref{fig:cay} illustrates the structure of a possible Cayley
automaton.  Notice that the induced order on strong components is this
example is not linear, and that transition edges between two
particular strong components are not always unique.  On the other
hand, the automaton $\str^S(w)$ shown in Figure~\ref{fig:str} is
visible as a subautomaton of $\Cay^S(w)$.
\end{exmp}

\begin{figure}[ht]
\includegraphics{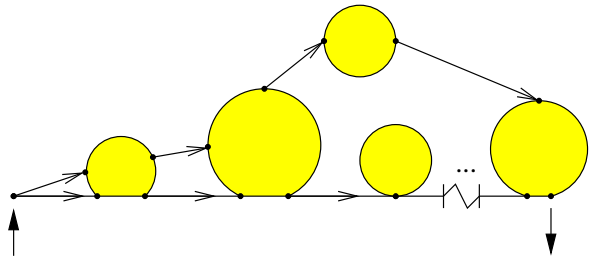}
\caption{A Cayley automaton}\label{fig:cay}
\end{figure}

The situation where $\str^S(w)$ and $\Cay^S(w)$ coincide is
particularly nice.

\begin{lem}[Equal]\label{lem:str=cay}
Let $S$ be a finite $\J$-above $A$-semigroup and let $w$ be a word in
$A^+$.  If $\str^S(w) = \Cay^S(w)$, then this trim, deterministic
finite state acceptor --- which depends only on the element $w$
represents --- both is linear and accepts the language of words
equivalent to $w$ in $S$.  In addition, every element $\R$-above
$[w]_S$ is $\R$-equivalent to an initial segment of $w$.
Similarly, if the left-handed versions of $\str^S(w)$ and $\Cay^S(w)$
are identical, then every element $\eL$-above $[w]_S$ is
$\eL$-equivalent to a final segment of $w$.
\end{lem}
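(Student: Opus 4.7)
The plan is to observe that every assertion in the lemma is an immediate consequence of the previously established properties of $\str^S(w)$ and $\Cay^S(w)$ once they are identified, so the proof amounts to reading off each conclusion from the appropriate preceding lemma.

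First I would handle the structural statements. By Lemma~\ref{lem:str-props}, $\str^S(w)$ is always trim, deterministic, finite-state, and \emph{linear}; by Lemma~\ref{lem:cay-props}, $\Cay^S(w)$ accepts precisely the language of words equivalent to $w$ in $S$, and (by Definition~\ref{def:cayley}) depends only on the element $[w]_S$. If the two automata coincide, then the common automaton inherits all of these properties simultaneously. In particular, linearity comes from the straightline side, the language characterization comes from the Cayley side, and the ``depends only on $[w]_S$'' statement comes from the Cayley side as well.

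Next I would deduce the $\R$-above claim. By Definition~\ref{def:cayley}, the vertex set of $\Cay^S(w)$ consists of exactly those elements of $S^I$ which are $\R$-above $[w]_S$. By Definition~\ref{def:str} (or equivalently by Lemma~\ref{lem:str-props}), the vertex set of $\str^S(w)$ consists of those elements which are $\R$-equivalent to some initial segment of $w$. When the two automata are equal, these two vertex sets coincide, which is precisely the statement that every element $\R$-above $[w]_S$ is $\R$-equivalent to an initial segment of $w$.

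For the left-handed version, the argument is identical after reversing roles: the left-handed $\str^S(w)$ has vertex set consisting of elements $\eL$-equivalent to some final segment of $w$, while the left-handed $\Cay^S(w)$ has vertex set consisting of elements $\eL$-above $[w]_S$, so equality of the two yields the claim. There is really no obstacle here; the only mildly non-trivial aspect is making sure that the hypothesis $\str^S(w)=\Cay^S(w)$ is interpreted at the level of labeled directed graphs together with initial and terminal states, so that the vertex set identification is legitimate, but this is built into the definitions.
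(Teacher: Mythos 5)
Your proposal is correct and matches the paper's (implicit) treatment: the paper states this lemma without proof, as each conclusion is read off directly from Lemma~\ref{lem:str-props}, Lemma~\ref{lem:cay-props}, and the vertex-set descriptions in Definitions~\ref{def:str} and~\ref{def:cayley}, exactly as you do. Nothing is missing.
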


Finally, we record a fact about Sch\"utzenberger graphs for later use.  Recall that an element $s$ of a semigroup $S$ is \emph{regular} if $s=sts$ for some $t\in S$.  For instance, idempotents are regular.

\begin{lem}[$\sch^S(w)$]
Suppose $S$ is an $A$-semigroup, $w$ and $x$ are words in $A^+$, and
$w$ represents a regular element of $S$.  Then $x$ is readable on
$\sch^S(w)$ if and only if $[x]_S \geq_{\mathscr J} [w]_S$.
\end{lem}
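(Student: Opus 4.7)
The plan is to unpack ``$x$ is readable on $\sch^S(w)$'' into an existence statement about $\R$-classes and then prove each direction separately, with regularity playing its role only in the converse.

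First I would observe that $\sch^S(w)$ is the full subgraph of $\Cay(S,A)$ on the $\R$-class of $[w]_S$ in $S^I$, so a word $x=a_1\cdots a_n$ is readable on $\sch^S(w)$ precisely when there is an element $s\in S^I$ with $s\R [w]_S$ and $s\cdot [x]_S\R [w]_S$. The point is that the intermediate vertices come for free: along the chain
\[
s\geq_{\R} sa_1\geq_{\R}sa_1a_2\geq_{\R}\cdots\geq_{\R}s[x]_S,
\]
if the two endpoints are $\R$-equivalent to $[w]_S$ then transitivity of $\geq_{\R}$ forces every intermediate vertex to be $\R$-equivalent to $[w]_S$ as well; hence the whole path lies in $\sch^S(w)$.

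For the forward direction, I would argue that if such an $s$ exists then $s[x]_S\in S^I[x]_S S^I$, so $[x]_S\geq_{\J} s[x]_S\J [w]_S$, giving $[x]_S\geq_{\J}[w]_S$.

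For the converse, this is where I would use regularity. Writing $[x]_S\geq_{\J}[w]_S$ as $[w]_S=u[x]_S v$ with $u,v\in S^I$, I pick $t\in S$ with $[w]_S=[w]_S t[w]_S$ (by regularity) and set $s=[w]_S tu\in S$. Then a direct computation gives
\[
s[x]_S v \;=\; [w]_S t u[x]_S v \;=\; [w]_S t[w]_S \;=\; [w]_S,
\]
so $s[x]_S\geq_{\R}[w]_S$. On the other hand $s=[w]_S tu\in [w]_S S^I$ yields $[w]_S\geq_{\R} s\geq_{\R} s[x]_S$, and chaining these inequalities shows $s\R s[x]_S\R [w]_S$. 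By the first paragraph, this witnesses that $x$ is readable on $\sch^S(w)$.

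I do not expect any real obstacle; the only subtlety is the one flagged at the outset, namely that readability is a condition on every vertex along the path, not just the endpoints, and this is what makes the lemma more than a trivial consequence of $s[x]_S\in S^I[x]_S S^I$. The construction $s=[w]_S tu$ is the standard ``absorb the prefix into the regular element'' trick and is the only place where regularity is essential; without it one can only conclude $[x]_S\geq_{\J}[w]_S$ implies that \emph{some} ideal relation holds, not that the corresponding path can be realized inside a single $\R$-class.
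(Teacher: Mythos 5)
Your proof is correct and follows essentially the same route as the paper's: both directions reduce readability to exhibiting a single witness vertex $s$ in the $\R$-class of $[w]_S$ with $s[x]_S\R [w]_S$, and your construction $s=[w]_S t u$ is exactly the element $[wry]_S$ that the paper obtains by reading the word $wryxz$ (with $[wrw]_S=[w]_S$ and $[yxz]_S=[w]_S$) in $\Cay^S(w)$. The only cosmetic difference is that you work with semigroup elements while the paper works with representing words and determinism of the Cayley automaton; the sandwiching observation about intermediate vertices is used implicitly there as well.
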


\begin{proof}
Consider the automaton $\Cay^S(w)$ and recall that $\sch^S(w)$ is the
strong component of the vertex $[w]_S$ in $\Cay^S(w)$.  If $x$ is
readable on $\sch^S(w)$, then since the graph is strongly connected,
there exist words $y, z\in A^*$ so that $yxz$ is readable as a
loop starting and ending at $[w]_S$.  But this means that $wyxz$ is
accepted by $\Cay^S(w)$ and hence $[wyxz]_S=[w]_S$.  In particular,
$[x]_S \geq_{\mathscr J} [w]_S$.  Conversely, if $[x]_S$ is $\J$-above $[w]_S$
then there exist $y,z\in A^*$ such that $[yxz]_S=[w]_S$. Suppose that $[wrw]_S=[w]_S$ with $r\in A^+$, using regularity of $[w]_S$.  Then this
means that $[wryxz]_S=[wrw]_S=[w]_S$.  Since $\Cay^S(w)$ is partial
deterministic this forces $ryxz$ to be readable as a loop in
$\sch^S(w)$ based at $[w]_S$ and, in particular, $x$ is readable in
$\sch^S(w)$, as $[wry]_S\R [w]_S$.
\end{proof}

\subsection{Expansions}\label{sec:expansions}
In this subsection we review the general notion of an expansion and
record some of their chief properties.

\begin{Def}[Expansions]\label{def:exps}
Let $\category{C}$ be a subcategory of $\SG$.  An \emph{expansion on
$\category{C}$} is a functor $\exa\colon \category{C}\to \category{C}$
together with a surjective natural transformation $\eta$ from $\exa$ to the
identity functor $1_\category{C}$ on $\category{C}$; see~\cite{Mac-CWM} for more on functors and natural transformations.  More explicitly, an expansion assigns to every
semigroup $S$ in $\category{C}$ a semigroup $S^\exa$ and a surjective morphism
$\eta_S\colon S^\exa\twoheadrightarrow S$, and to every morphism $f\colon S\to T$ in
$\category{C}$ it assigns a morphism $f^\exa\colon S^\exa\to T^\exa$ in
$\category{C}$ so that the diagram
\[\xymatrix{S^\exa\ar[r]^{f^\exa}\ar@{->>}[d]_{\eta_S}&T^\exa\ar@{->>}[d]^{\eta_T}\\ S\ar[r]^f& T}\]
commutes. The semigroups and morphisms assigned must remain in
$\category{C}$.  One must assign identity morphisms to identity morphisms and composition must be preserved.
If $S^\exa$ remains finite whenever $S$ is finite, we
say that $\exa$ \emph{preserves finiteness}.   Suppose that $\exa$ and $\exb$ are expansions.
The semigroup $(S^\exa)^\exb$ is denoted $S^{\exa.\exb}$.

Most of the expansions defined in this article are expansions on a
category of $A$-semigroups.  Notice that such an expansion is uniquely
determined by the semigroups assigned since $A$-morphisms such as
$f^\exa\colon S^\exa \to T^\exa$ and $\eta_S\colon S^\exa\twoheadrightarrow S$ are uniquely
determined by their domain and range whenever they exist.  Thus only
the existence of morphisms such as $f^\exa$ and $\eta_S$ need to be
checked: all of the remaining functoriality requirements follow automatically.  We
also encounter situations where the objects can be expanded but the
morphisms cannot.  In other words, for every $S$, there is an object
$S^\exa$ and a map $S^\exa\twoheadrightarrow S$, but for maps $S\to T$ there may or
may not exist a map $S^\exa\to T^\exa$.  In this situation we call
$S\mapsto S^\exa$ an \emph{object expansion}.
\end{Def}

The next lemma essentially shows that an expansion which preserves finiteness also
preserves the property of being finite $\J$-above.

\begin{lem}[Preserving finiteness]\label{lem:exp-fja}
If $\exa$ is an expansion on $\SGA$ which preserves finiteness and for
all semigroups $S \in \SGA$ and for all ideals $I \subseteq S$, there
is a map $(S/I)^\exa \twoheadrightarrow (S^\exa/J)$ where $J =
\eta_S^{-1}(I)$, then $\exa$ applied to a finite $\J$-above semigroup
remains finite $\J$-above.
\end{lem}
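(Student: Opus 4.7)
The plan is to verify the equivalent characterization of finite $\J$-above semigroups given in Lemma~\ref{lem:fja}, namely the existence of a family of co-finite ideals with empty intersection. So assume $S$ is a finite $\J$-above $A$-semigroup. By Lemma~\ref{lem:fja}, there is a family $\{I_\alpha\}_{\alpha \in \Lambda}$ of co-finite ideals of $S$ with $\bigcap_\alpha I_\alpha = \emptyset$. I want to transport this family through $\exa$ to produce a corresponding family of co-finite ideals of $S^\exa$ whose intersection is empty, and then invoke Lemma~\ref{lem:fja} in the other direction to conclude that $S^\exa$ is finite $\J$-above.

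For each $\alpha$, set $J_\alpha = \eta_S^{-1}(I_\alpha)$. Since $\eta_S\colon S^\exa \twoheadrightarrow S$ is a semigroup homomorphism, $J_\alpha$ is an ideal of $S^\exa$ (using the convention, recalled in the paper just before Lemma~\ref{lem:fja}, that ideals are allowed to be empty). By the hypothesis of the lemma, there is a surjective $A$-morphism $(S/I_\alpha)^\exa \twoheadrightarrow S^\exa/J_\alpha$. Because $I_\alpha$ is co-finite in $S$, the Rees quotient $S/I_\alpha$ is finite, and then the hypothesis that $\exa$ preserves finiteness implies that $(S/I_\alpha)^\exa$ is finite. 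Hence $S^\exa/J_\alpha$ is finite as a quotient of a finite semigroup, which means that $J_\alpha$ is co-finite in $S^\exa$.

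It remains to check that $\bigcap_\alpha J_\alpha = \emptyset$. Since preimage commutes with intersection,
\[
\bigcap_{\alpha \in \Lambda} J_\alpha \;=\; \bigcap_{\alpha \in \Lambda} \eta_S^{-1}(I_\alpha) \;=\; \eta_S^{-1}\Bigl(\bigcap_{\alpha \in \Lambda} I_\alpha\Bigr) \;=\; \eta_S^{-1}(\emptyset) \;=\; \emptyset.
\]
Applying the ``if'' direction of Lemma~\ref{lem:fja} to this family of co-finite ideals of $S^\exa$ yields that $S^\exa$ is finite $\J$-above, completing the proof.

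The argument is essentially bookkeeping; there is no serious obstacle. The only subtle point is that one must accept empty ideals in the setup, so that the pullback of a shrinking family of ideals down to the empty intersection behaves correctly; this is precisely the convention the authors introduce in the paragraph preceding Lemma~\ref{lem:fja}. The real content lies entirely in the two hypotheses: preservation of finiteness gives the co-finiteness of each $J_\alpha$, while the existence of the surjection $(S/I_\alpha)^\exa \twoheadrightarrow S^\exa/J_\alpha$ is what lets one bound $|S^\exa \setminus J_\alpha|$ from above by $|(S/I_\alpha)^\exa|$.
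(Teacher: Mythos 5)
Your proof is correct and follows exactly the paper's argument: pull back the family of co-finite ideals with empty intersection along $\eta_S$, use the surjection $(S/I)^\exa \twoheadrightarrow S^\exa/J$ together with preservation of finiteness to get co-finiteness of each $J$, and apply Lemma~\ref{lem:fja} in both directions. No gaps.
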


\begin{proof}
Let $S$ be a finite $\J$-above $A$-semigroup and let $\script{F}$ be a
family of co-finite ideals in $S$ whose intersection is empty
(Lemma~\ref{lem:fja}).  Consider the inverse images of these ideals
under $\eta_S\colon S^\exa\to S$.  The intersection of these ideals in
$S^\exa$ is clearly empty, and they are also co-finite since for each
$I \in \script{F}$, $S^\exa/J$ is the image of the semigroup
$(S/I)^\exa$ which is finite by hypothesis.
\end{proof}

For later use we briefly review some of the most common expansions.

\begin{Def}[Common expansions]\label{def:common-exps}
Let $S$ be an $A$-semigroup.  Roughly speaking, the \emph{reverse or right
  Rhodes expansion} remembers the exit points as the path $w$ travels
through $\Cay^S(w)$, while the \emph{reverse or right Karnofsky-Rhodes}
variation remembers the actual transition edges used (i.e., exit point,
entry point and particular edge traversed).  Note that the reverse
Karnofsky-Rhodes expansion of $S$ clearly maps onto the reverse Rhodes
expansion of $S$.  The reverse Rhodes expansion is thus a fairly small
expansion in which $\Cay^T(w)$ is quasilinear for all $w\in A^+$, and
the \emph{reverse Karnofsky-Rhodes expansion}, is the smallest
expansion such that $\Cay^T(w)$ is linear for all $w\in A^+$.
We denote the reverse Rhodes expansion by $\rRh$ and the reverse
Karnofsky-Rhodes by $\rKR$.

The original versions of these two expansions were left-handed
versions which we denote $\Rh$ and $\KR$.  In other words, the
statements above hold when we define left-handed versions of
$\str^S(w)$ and $\Cay^S(w)$ using the left Cayley graph and we read
words from right-to-left instead of left-to-right. The
\emph{Birget-Rhodes expansion}, denoted $\BR$, is the limit
$A$-semigroup obtained by iterating the Rhodes and the reverse Rhodes
expansions until the result stabilizes.  All five of these expansions
preserve finiteness.  See~\cite{BR--exp} for a precise definition of
the Birget-Rhodes expansion, and~\cite{Eilenberg},~\cite{Elston} or~\cite{RhodesWeil} for precise definitions of the Rhodes expansion, the
Karnofsky-Rhodes expansion, and their reverses.
\end{Def}

For the reader's convenience we include here the definition of the right Karnofsky-Rhodes expansion (cut to the generating set $A$) so that the reader gets the idea.

\begin{Def}[Karnofsky-Rhodes expansion]
If $S$ is an $A$-semigroup, define a congruence on $A^+$ by putting $u\equiv v$ if:
\begin{enumerate}
\item $[u]_S=[v]_S$;
\item The paths read by $u$ and $v$ from $I$ to $[u]_S=[v]_S$ in $\Cay(S,A)$ use the same transition edges.
\end{enumerate}
The quotient semigroup $A^+/{\equiv}$ is denoted $S^{\rKR}$.  It is immediate from (1) that there is a surjective morphism $\eta_S\colon S^{\rKR}\to S$.  If $f\colon S\to T$ is a homomorphism of $A$-semigroups, then $f(s)\xrightarrow{a}f(s)a$ a transition edge of $\Cay(T,A)$ forces $s\xrightarrow{a}sa$ to be a transition edge of $\Cay(S,A)$.  It follows that there exists a map $f^{\rKR}\colon S^{\rKR}\to T^{\rKR}$ and so we have defined an expansion on the category of $A$-semigroup.  It is easy to see from (1) and (2) that $\rKR$ preserves finiteness.
\end{Def}

\begin{Rmk}
It is known that the projection  $\eta_S\colon S^{\rKR}\to S$ is universal amongst maps from $A$-semigroups to $S$ with derived semigroupoid dividing a locally trivial category~\cite{Elston,RhodesStein}.
\end{Rmk}

When $S=S^\ex$ we say that $S$ is \emph{stable under the expansion
  $\ex$}.  The following result is an immediate consequence of
Definition~\ref{def:common-exps}.

\begin{lem}\label{lem:KR-closed}
Let $S$ be a finite $\J$-above $A$-semigroup.  If $S$ is stable under
the reverse Rhodes expansion $\rRh$, then $\str^S(w)$ and $\Cay^S(w)$
have the same vertex set for all words $w\in A^+$ and if $S$ is stable
under the reverse Karnofsky-Rhodes expansion, $\rKR$, then $\str^S(w)
= \Cay^S(w)$ for all words $w\in A^+$ and all of the properties listed
in Lemma~\ref{lem:str=cay} must hold.
Similar results hold for the left-handed versions when $S$ is stable
under $\Rh$ or $\KR$.
\end{lem}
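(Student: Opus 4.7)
The plan is to peel back the definitions of the expansions $\rRh$ and $\rKR$. The reverse Rhodes expansion identifies two words $u,v\in A^+$ exactly when $[u]_S=[v]_S$ and the paths they read from $I$ in $\Cay(S,A)$ visit the same sequence of $\R$-classes (equivalently, the same sequence of exit points); the reverse Karnofsky-Rhodes expansion refines this by demanding the same multiset of transition edges. Stability of $S$ under one of these expansions therefore means that the corresponding data, read off the $w$-path in $\Cay(S,A)$, depends only on $[w]_S$, not on the particular word $w$. The inclusion $V(\str^S(w))\subseteq V(\Cay^S(w))$ always holds by Lemma~\ref{lem:str-props} and Lemma~\ref{lem:cay-props}, so only the reverse inclusion (and the analogous statement for edges) has content.

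First, assume $S=S^{\rRh}$, and let $s\in S^I$ be $\R$-above $[w]_S$. Pick a word $u$ with $[u]_S=s$ (taking $u$ empty if $s=I$) and a word $x$ with $[ux]_S=[w]_S$, which exists because $s\cdot [x]_S=[w]_S$ for some $[x]_S\in S^I$. Then $w$ and $ux$ represent the same element of $S=S^{\rRh}$, so the paths they label from $I$ in $\Cay(S,A)$ visit the same sequence of $\R$-classes. The prefix $u$ of $ux$ ends at $s$, so the $\R$-class of $s$ is among those visited by the $w$-path, and hence $s$ is $\R$-equivalent to an initial segment of $w$. By Lemma~\ref{lem:str-props} this places $s$ in $V(\str^S(w))$, proving $V(\str^S(w))=V(\Cay^S(w))$.

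Now assume $S=S^{\rKR}$, which in particular forces stability under $\rRh$, so the vertex sets already agree. Every edge inside a strong component of $\Cay^S(w)$ lies in the Sch\"utzenberger graph of that component and hence in $\str^S(w)$ by construction, so it suffices to handle a transition edge $e\colon s\to sa$ of $\Cay^S(w)$. One checks that the strong components of $\Cay^S(w)$ coincide with the $\R$-classes of $S^I$ intersected with $V(\Cay^S(w))$ (an $\R$-class containing a vertex $\geq_\R [w]_S$ is connected by a path that stays in $V(\Cay^S(w))$), so $e$ is also a transition edge of $\Cay(S,A)$. Choose $u$ with $[u]_S=s$ and $x$ with $[uax]_S=[w]_S$, which exists since $sa\geq_{\R}[w]_S$. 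By $\rKR$-stability, $w$ and $uax$ use the same transition edges of $\Cay(S,A)$ from $I$; the path $uax$ traverses $e$ at the letter $a$, so the $w$-path traverses $e$ as well, giving $e\in\str^S(w)$. Thus $\str^S(w)=\Cay^S(w)$, and the remaining properties follow immediately from Lemma~\ref{lem:str=cay}.

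The left-handed statements are proved by the word-for-word dual argument, using the left Cayley graph, final segments in place of initial segments, and the $\eL$-order in place of the $\R$-order. The only point requiring real attention is the routine verification that strong components of $\Cay^S(w)$ really are full $\R$-classes of $S^I$, which is what legitimates identifying transition edges of $\Cay^S(w)$ with transition edges of $\Cay(S,A)$; beyond that, the proof is just the appropriate unpacking of the $\rRh$- and $\rKR$-congruences.
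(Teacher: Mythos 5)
Your proof is correct and follows essentially the same route as the paper's: unpack the $\rRh$/$\rKR$ congruences and use stability to transfer the $\R$-class and transition-edge data from an arbitrary representative of an element $\R$-above $[w]_S$ to the word $w$ itself. You are in fact somewhat more thorough than the paper, which treats only the $\rKR$ case and reduces everything to showing that each $\R$-class $\R$-above $[w]_S$ contains an initial segment of $w$; your separate verification that every transition edge of $\Cay^S(w)$ is actually traversed by the $w$-path fills in the edge-set comparison that the paper leaves implicit.
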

\begin{proof}
We just handle the case of the reverse Karnosfky-Rhodes expansion as the other case is similar.   Since $\str^S(w)$ is always a subautomaton of $\Cay^S(w)$, it suffices to show the reverse inclusion.  To do this, it suffices by the definitions to show that each $\R$-class of elements $\R$-above $[w]_S$ contains an initial segment of $w$.  Suppose that $[u]_S\geq_{\R}[w]_S$.  Then $[w]_S=[uv]_S$ for some $v\in A^*$.  We can factor $u=u'u''$ so that reading $u''$ from $[u']_S$ stays in a Sch\"utzenberger graph, i.e., the last letter of $u'$ labels the last transition edge $e$ read by $u$ from $I$ in $\Cay(S,A)$.  The since $S=S^{\rKR}$, it follows that $w$ uses the same transition edges as $uv$ and so, in particular, uses $e$.  Thus $w=w'w''$ where the last edge read by $w'$ from $I$ is $e$.  Then $[w']_S=[u']_S\R [u'u'']_S=[u]_S$.  This establishes that each $\R$-class of elements $\R$-above $[w]_S$ contains an initial segment of $w$, as was required.
\end{proof}

For expansions closed under iteration, there is a general stability
result.

\begin{lem}[Stability]\label{lem:stable}
Let $\category{C}$ be a subcategory of $\SGA$ and let $\exa$ and
$\exb$ be expansions on $\category{C}$.  If, for every $A$-semigroup
$S$ in $\category{C}$, $S^{\exa.\exa} = S^\exa$ and there exists a map
$S^\exa \to S^\exb$, then for every $S$ in $\category{C}$, $S^\exa =
S^{\exa.\exb} = S^{\exb.\exa}$.  In other words, $S^\exa$ is stable
under $\exb$.  Conversely, if $S^\exa = S^{\exa.\exb}$ or $S^\exa =
S^{\exb.\exa}$, then there is a map from $S^\exa\to S^\exb$.
\end{lem}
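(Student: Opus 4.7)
The plan rests on one crucial observation: in $\SGA$ there is at most one $A$-morphism between any two given $A$-semigroups, since such a morphism is forced by its action on $A$, and when it exists it is automatically surjective because the codomain is $A$-generated. Thus to establish that two $A$-semigroups are equal (in the sense of being canonically isomorphic as $A$-semigroups) it will suffice to exhibit $A$-morphisms in both directions; uniqueness then automatically forces the two compositions to be identities.

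For the forward direction I would assume both $S^{\exa.\exa} = S^\exa$ and the existence of an $A$-morphism $\alpha_S\colon S^\exa\to S^\exb$ for each $S$. To establish $S^\exa = S^{\exb.\exa}$, I apply the functor $\exa$ to $\alpha_S$ to obtain an $A$-morphism $S^{\exa.\exa}\to S^{\exb.\exa}$, which by idempotence of $\exa$ reads as $S^\exa\to S^{\exb.\exa}$; in the opposite direction I apply the functor $\exa$ to the surjective natural map $S^\exb\to S$ to obtain $S^{\exb.\exa}\to S^\exa$. To establish $S^\exa = S^{\exa.\exb}$, I instantiate the hypothesis at the semigroup $S^\exa$, producing an $A$-morphism $S^{\exa.\exa}\to S^{\exa.\exb}$, which is $S^\exa\to S^{\exa.\exb}$ by idempotence; the opposite map is simply the natural projection $S^{\exa.\exb}\to S^\exa$.

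For the converse, suppose first $S^\exa = S^{\exa.\exb}$. Applying the functor $\exb$ to the natural projection $S^\exa\to S$ yields an $A$-morphism $S^{\exa.\exb}\to S^\exb$, which under the assumed equality becomes the desired $A$-morphism $S^\exa\to S^\exb$. If instead $S^\exa = S^{\exb.\exa}$, the natural projection $S^{\exb.\exa}\to S^\exb$ is itself already an $A$-morphism $S^\exa\to S^\exb$ after identifying its domain via the assumed equality.

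The only real bookkeeping challenge will be distinguishing consistently between the two mechanisms that produce $A$-morphisms here — applying an expansion as a functor to an already-given morphism, as against invoking the natural transformation $\eta$ built into the definition of an expansion — but once this distinction is maintained, no serious obstacle remains, and the proof reduces to the four short arrow chases above.
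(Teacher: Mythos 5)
Your proposal is correct and follows essentially the same route as the paper: both rely on the fact that $A$-morphisms between $A$-semigroups are unique and surjective when they exist, so maps in both directions force an isomorphism, and both obtain the four needed arrows by applying the expansion functors to the given maps $S^\exa\to S^\exb\to S$ and by instantiating the hypothesis at $S^\exa$ in place of $S$. No gaps.
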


\begin{proof}
If we apply the $\exa$-expansion to the maps $S^\exa\to S^\exb\to S$
we get maps in each direction between $S^\exa$ and $S^{\exb.\exa}$ as
a result of the hypothesis that $S^{\exa.\exa} = S^\exa$.  Within the
category of $A$-semigroups this implies they are isomorphic.
Similarly there is a map $S^{\exa.\exb}\to S^\exa$ since $\exb$ is an
expansion and a map $S^{\exa.\exa}\to S^{\exa.\exb}$ since the
hypothesis of the theorem can be applied to the $S^\exa$ instead of
$S$.  Once again, maps in both directions implies they are isomorphic.
The final assertion is easy since $S^{\exa.\exb}\to S^\exb$ is the
$\exb$-expansion applied to map $S^\exa\to S$ and $S^{\exb.\exa}\to
S^\exb$ is the $\exa$-expansion of semigroup $S^\exb$ projected back
to $S^\exb$.
\end{proof}

We conclude this section by reviewing the notion of a relational
morphism, its relationship with expansions, and a remark on the most
common ways to construct expansions.

\begin{Def}[Relational morphisms]
Let $S$ and $T$ be semigroups and recall that a \emph{relational
  morphism from $S$ to $T$} is a subsemigroup \mbox{$R \subseteq S\times T$}
such that the natural projection from $R$ to $S$ is onto.  If
$\p\colon R\to S$ and $\psi\colon R\to T$ denote the natural projections
restricted to $R$, then the ``relation'' alluded to in the name
relational morphism is the relation $\psi \circ \p^{-1}$.  A
relation-based definition of a relational morphism also exists, but
the equivalent definition given above is easier to work with in
practice.  We thus refer to the subsemigroup $R \subseteq S\times T$ as
the relational morphism from $S$ to $T$ even though technically, it
merely encodes the corresponding relation.

In the category of $A$-semigroups, the concept is further simplified
since there is a \emph{canonical relational morphism} between any two $A$-semigroups, which automatically projects onto both the domain and the range.  It is defined as follows. Let $S$ and $T$ be $A$-semigroups with functions
$f\colon A\to S$ and $g\colon A\to T$.  Their \emph{product} in the category $\SGA$ (sometimes called their \emph{product cut-to-generators}) is
the subsemigroup of $S\times T$ generated by the
ordered pairs $(f(a),g(a))$, $a \in A$.  Call this $A$-subsemigroup
$R$ and let $\p\colon R\twoheadrightarrow S$ and $\psi\colon R\twoheadrightarrow T$ be the $A$-morphisms
derived from the projection maps.  The canonical relational morphism
between $S$ and $T$ is the relation $\psi \p^{-1}$. The reader should be aware that the composition of two canonical relational morphisms need not be canonical.
\end{Def}

The concept of a relational morphism is critically important in finite
semigroup theory (see, for example,~\cite{Eilenberg,Kernel,Slice,Tilson,Cats2,Pinbook}).  Moreover, given a semigroup variety
$\V$ and a finite $A$-semigroup $S$, it is also important to have
detailed information about the relational morphisms from $S$ to the
$A$-semigroups $V\in \V$, particularly when $V \in \V$ is itself
finite.

\begin{Rmk}[Relational morphisms and expansions]
The notion of a relational morphism is very closely tied to that of an
expansion and the problem of constructing relational morphisms can
often be reduced to the problem of constructing finite expansions of
finite $A$-semigroups.  Let $\exa$ be an expansion on $\FSA$.
Starting from an $A$-semigroup $S$ and a variety $\V$, the semigroup
$S^\exa$ can be sent to its maximal image $V\in\V$ (which exists since
$\V$ is a variety).  Moreover, since $S^\exa$ is finite, $V$ is
finite.  Then, \[\xymatrix{S^\exa\ar@{>>}[d]\ar[r]&V\\ S&}\]
is a relational morphism.  Conversely, given a systematic method for constructing
relational morphisms
 \[\xymatrix{R\ar@{>>}[d]\ar[r]&V\\ S&}\]
 where $S$ is an arbitrary
finite semigroup, $V$ is a finite member of $\V$ (depending on $S$),
and all semigroups and morphisms are $A$-semigroups and $A$-morphisms,
the assignment $S \mapsto R$ is often an expansion on $\SGA$.  See
\cite{BR--exp} and~\cite{Eilenberg} for a more detailed discussion.
\end{Rmk}

\begin{Rmk}[Constructing expansions]
Four of the most common ways to construct expansions on $\FSA$ are:
\begin{enumerate}
\item Ramsey Theory;
\item Zimin words and uniformly recurrent sequences;
\item Semidirect product expansions;
\item Mal'cev expansions.
\end{enumerate}

The first method was applied brilliantly by Ash in~\cite{Ash} to
solve the Type-II Conjecture (see~\cite{GrRoSp80} for a good general
reference on Ramsey theory and see~\cite{ash1,Ash,BMR,HMPR,flows} for other applications to
semigroup theory).  The second method relies heavily on classical
universal algebra and semigroup theory, mixed with combinatorics
(the article~\cite{KhSa95} contains an excellent survey of this approach). The third method, which includes the Karnofsky-Rhodes expansion, is exposited in~\cite{Elston} (see also~\cite{RhodesStein}). The
fourth method is the one used in this paper and is described in the
next section; see also~\cite{Elston,RhodesStein}.

The last two methods of constructing expansions are encompassed by the notion of relatively free relational morphisms with respect to a variety of relational morphisms~\cite[Chapter 3]{qtheor}.
\end{Rmk}

\subsection{Mal'cev expansions}\label{sec:malcev}
In this subsection we define the Mal'cev expansion of a semigroup with
respect to a variety.  The results in this section are in general
well-known but they provide a language and a context for the newer
results contained in the later sections.

\begin{Def}[Mal'cev kernels]\label{def:m-ker}
The \emph{Mal'cev kernel} of a semigroup morphism $\p\colon S\to T$ is the
collection of inverse images of idempotents. In other words, it is the
set of subsemigroups $\{\p^{-1}(e)\}$ where $e$ is an idempotent in
$T$.  If each semigroup in the Mal'cev kernel is contained in a
variety $\V$ we say the \emph{Mal'cev kernel lies in $\V$}.
\end{Def}

The main result we need involving Mal'cev kernels is Brown's theorem.
Recall that a variety is called \emph{locally finite} if its finitely
generated free objects remain finite (or equivalently its finitely generated members are all finite).

\begin{thm}[Brown~\cite{Brown}]\label{thm:brown}
Let $\p\colon S\to T$ be a map between two $A$-semigroups where $A$ is
finite.  If $T$ is finite and the Mal'cev kernel of $\p$ lies in a
locally finite variety $\V$, then $S$ is also finite.
\end{thm}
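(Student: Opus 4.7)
The plan is to exhibit $S$ as a divisor of a finite semigroup built as a wreath product of $T$ with a finitely generated free object of $\V$; local finiteness of $\V$ then bounds the size of this wreath product, and hence of $S$.

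Set $B = T^I \times A$ (a finite set) and let $V = F_{\V}(B)$ be the free semigroup in $\V$ on $B$.  Since $B$ is finite and $\V$ is locally finite, $V$ is finite.  Form the wreath product $W = (V^I)^{T^I} \rtimes T$, where $T$ acts on $T^I$ on the right by multiplication; as all ingredients are finite, $W$ is finite.  Define a map on generators by $\psi(a) = (f_a,\p(a))\in W$ for $a\in A$, where $f_a\colon T^I \to V^I$ sends $t\mapsto (t,a)\in B\subseteq V$.  This extends uniquely to a semigroup homomorphism $\psi\colon A^+ \to W$, computed by an easy induction as
\[\psi(a_1 a_2 \cdots a_n) = \bigl(h,\p(a_1\cdots a_n)\bigr),\qquad h(t) = (t,a_1)(t\p(a_1),a_2)\cdots (t\p(a_1\cdots a_{n-1}),a_n).\]

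The next step is to show that the canonical surjection $\eta\colon A^+ \twoheadrightarrow S$ factors through $\psi$; that is, that $\psi(u)=\psi(v)$ forces $[u]_S = [v]_S$.  Once this is known, $S$ is a homomorphic image of the subsemigroup $\psi(A^+)\subseteq W$, whence $|S|\leq |W|<\infty$ and we are done.

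The main obstacle is verifying this factorization.  Suppose $\psi(u)=\psi(v)$: projecting on the second coordinate gives $\p(u)=\p(v)$, and projecting on the first coordinate at each $t\in T^I$ gives equalities of words in the free $\V$-semigroup $V$, each being an identity valid in every $\V$-semigroup.  The hypothesis that the Mal'cev kernels $\p^{-1}(e)\in \V$ means precisely that these identities hold inside every Mal'cev kernel, and reassembling the local identities along the action of $T$ (classically via Tilson's derived category of $\p$, whose local semigroups are finitely generated subsemigroups of the Mal'cev kernels, hence lie in $\V$ and are therefore finite by local finiteness) forces $[u]_S=[v]_S$.  This reassembly is the standard wreath product decomposition for morphisms with Mal'cev kernel in a variety; the construction above is a concrete $A$-generated model of it, made finite by the local finiteness of $\V$.
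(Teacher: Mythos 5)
Your reduction to a wreath product embedding does not work, and the failure is exactly at the step you yourself flag as the main obstacle: the implication $\psi(u)=\psi(v)\Rightarrow[u]_S=[v]_S$ is false. Take $A=\{a\}$, $S=\langle a\mid a^2=a^4\rangle=\{a,a^2,a^3\}$, and $T=\Z_2$ with $\p(a)=g$ the generator. The only idempotent of $T$ is $1$ and $\p\inv(1)=\{a^2\}$ is a trivial semigroup, so the Mal'cev kernel lies in the locally finite variety $\Triv$ and all hypotheses of the theorem are satisfied. But then $V=F_{\Triv}(B)$ is the one-element semigroup, every function occurring in your formula for $\psi$ is constant, and $\psi(a)=\psi(a^3)$ because $\p(a)=g=g^3=\p(a^3)$, whereas $a\neq a^3$ in $S$. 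Hence $\eta$ does not factor through $\psi$, and $S$ does not divide your $W$.

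The root of the problem is the parenthetical claim that the local semigroups of the derived category of $\p$ are subsemigroups of the Mal'cev kernels: they are not. What governs the division $S\prec V\wr T$ is the derived category, whose hom-sets and local semigroups are built from preimages of stabilizers $\{x\in T^I\mid tx=t\}$ and from identifications of coterminal arrows, not from preimages of idempotents. Varietally, your argument would require an inclusion of the shape $\V\malce\mathcal W\subseteq\V\ast\mathcal W$, which fails in general; the inclusion that does hold goes the other way (the paper itself invokes $\mathcal V\ast\mathcal W\subseteq(\LC\malce\mathcal V)\malce\mathcal W$), and repairing your argument by enlarging $\V$ to a ``local'' version destroys local finiteness. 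This is precisely why Brown's theorem has genuine content. Note also that the paper gives no proof of this statement: it cites Brown's combinatorial argument and the later algebraic proofs, which proceed by induction on the ideal structure of $T$ together with a Rees--Sch\"utzenberger-type analysis rather than by a wreath product embedding.
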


Algebraic proofs of Brown's theorem can be found in~\cite{idempotentstabilizers} and~\cite[Chapter 4]{qtheor}; see also~\cite{SimBr2}.  Mal'cev expansions were considered in~\cite{Elston} (a profinite analogue for pseudovarieties was introduced in~\cite{RhodesStein}).

\begin{Def}[Mal'cev expansions]\label{def:malcev}
Let $S$ be an $A$-semigroup and let $\V$ be a variety. The
\emph{Mal'cev expansion of $S$ by $\V$} is the largest $A$-semigroup
which maps to $S$ with Mal'cev kernel in $\V$.  More precisely, for
each $A$-morphism $T\to S$ whose Mal'cev kernel lies in $\V$, define
$\sim_T$ as the congruence on the free semigroup $A^+$ which produces
$T$, i.e., the congruence which corresponds to the $A$-morphism $A^+
\to T$.  The Mal'cev expansion of $S$ by $\V$ is then defined as the
semigroup $A^+/{\sim}$ where $\sim$ is the intersection of all of these
congruences $\sim_T$ on $A^+$.  The Mal'cev expansion of $S$ by $\V$
will be denoted $S^\V$.   See~\cite{BR--exp,Elston} for a more detailed
discussion of Mal'cev expansions.
\end{Def}

Some properties of Mal'cev expansions are immediate from the
definition.

\begin{lem}\label{lem:malcev}
Let $S$ be an $A$-semigroup and let $\V$ be a variety.  The semigroup
$S^\V$ is the largest $A$-semigroup which maps to $S$ with Mal'cev
kernel in $\V$. More precisely the canonical $A$-morphism $\eta_S\colon S^\V \to
S$ has Mal'cev kernel lying in $\V$ and if $\psi\colon T\to S$ is another such
$A$-morphism, then there is a (unique) $A$-morphism $\Psi\colon S^\V\to T$ so that
\[\xymatrix{S^\V\ar[rr]^{\Psi}\ar@{->>}[rd]_{\eta_{S}}&&T\ar[ld]^{\psi}\\ &S&}\]
commutes.
\end{lem}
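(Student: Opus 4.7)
The plan is to verify both clauses of the lemma directly from the construction of $S^\V$ as $A^+/{\sim}$, where $\sim$ is the intersection of all congruences $\sim_T$ on $A^+$ arising from $A$-morphisms $\psi_T\colon T \to S$ whose Mal'cev kernel lies in $\V$. The universal property is essentially tautological from the definition of $\sim$ as an intersection; the substantive content is checking that $\eta_S\colon S^\V \to S$ itself has Mal'cev kernel in $\V$.

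First I would dispatch the universal property. Given an $A$-morphism $\psi\colon T\to S$ with Mal'cev kernel in $\V$, the congruence $\sim_T$ is by definition one of those being intersected, so $\sim\,\subseteq\,\sim_T$. This yields a canonical surjective $A$-morphism $\Psi\colon S^\V \twoheadrightarrow T$ induced by factoring through the smaller quotient, and the triangle commutes because $\psi\circ \Psi$ and $\eta_S$ are both $A$-morphisms $S^\V\to S$ that agree on the generating set $A$. Uniqueness of $\Psi$ is automatic: any $A$-morphism between $A$-semigroups is determined by its values on $A$.

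The heart of the argument is the claim about the Mal'cev kernel of $\eta_S$. Let $C$ denote the family of congruences being intersected; this is a \emph{set}, being a subfamily of $2^{A^+\times A^+}$. The natural diagonal homomorphism
\[ \Delta\colon S^\V\;\longrightarrow\;\prod_{c\in C} A^+/c,\qquad [w]_\sim \;\longmapsto\; ([w]_c)_{c\in C}, \]
is injective precisely because $\sim$ is the intersection of the $c\in C$. For an idempotent $e\in S$, an element $[w]_\sim$ lies in $\eta_S^{-1}(e)$ if and only if $\psi_c([w]_c)=e$ for every $c\in C$, so $\Delta$ restricts to an embedding $\eta_S^{-1}(e)\hookrightarrow \prod_{c\in C}\psi_c^{-1}(e)$. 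By hypothesis each factor $\psi_c^{-1}(e)$ is a subsemigroup belonging to $\V$; since $\V$ is closed under arbitrary direct products and under subsemigroups, the target lies in $\V$, and therefore so does $\eta_S^{-1}(e)$.

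The main obstacle, such as it is, is purely set-theoretic bookkeeping: one must observe that the collection of congruences indexed by $C$ really is a set (so that the intersection and the product make sense), and that the relevant notion of variety is closed under products of arbitrary cardinality (which holds by the Birkhoff HSP characterization). Once these foundational points are acknowledged, the proof is entirely formal manipulation of the definition of $S^\V$.
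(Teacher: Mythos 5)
Your proof is correct, but it takes a different route from the paper. The paper offers no argument for this lemma at all --- it is introduced with ``Some properties of Mal'cev expansions are immediate from the definition'' --- and the real substantiation comes in the following Remark~\ref{rmk:apresentation}, which re-constructs $S^\V$ by an explicit presentation: impose $u(w_1,\ldots,w_n)=v(w_1,\ldots,w_n)$ for every identity $u=v$ in a basis for $\V$ and all words $w_i$ mapping to a common idempotent of $S$, then check that this presented semigroup has Mal'cev kernel in $\V$ and satisfies the universal property. That route leans on the equational (``P'' of HSP) description of $\V$ and has the side benefit of making functoriality of $S\mapsto S^\V$ transparent, which the paper exploits in Theorem~\ref{thm:malcev}. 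You instead work directly with the intersection-of-congruences definition: the universal property falls out of $\sim\,\subseteq\,\sim_T$, and the Mal'cev-kernel claim follows from the diagonal embedding $S^\V\hookrightarrow\prod_{c\in C}A^+/c$, which carries $\eta_S^{-1}(e)$ into $\prod_c\psi_c^{-1}(e)$, a member of $\V$ by closure under products and subalgebras. Your version is the more honest justification of the lemma as literally stated (and you are right to flag that $C$ is a set, so the product exists, and that closure under arbitrary products is what is needed); the one thing it does not give you for free is the presentation, which the paper needs later anyway. A trivial point you could add: the family $C$ is nonempty because $\mathrm{id}_S$ has Mal'cev kernel consisting of trivial semigroups, so $\eta_S$ really is the surjection claimed.
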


\begin{Rmk}[A presentation of $S^\V$]\label{rmk:apresentation}
One can alternatively construct $S^\V$ as follows.  Let $E$ be a basis of identities for the variety $\V$.  Define an $A$-semigroup $S'$ with presentation consisting of all relations of the form $u(w_1,\ldots,w_n)=v(w_1,\ldots, w_n)$ where $u=v$ is an identity from $E$ in $n$-variables and $w_1,\ldots, w_n\in A^+$ all map to the same idempotent of $S$.  By construction, $S'\to S$ is a well-defined $A$-morphism with Mal'cev kernel in $\V$.   To verify the universal property of Lemma~\ref{lem:malcev}, notice that if $T$ is any $A$-semigroup with $T\to S$ having Mal'cev kernel in $\V$ and $u,v,w_1,\ldots,w_n$ are as above, then since $[w_1]_T,\ldots, [w_n]_T$ generate a semigroup in $\V$ they satisfy $[u(w_1,\ldots,w_n)]_T=[v(w_1,\ldots, w_n)]_T$ and so there is a morphism $S'\to T$ of $A$-semigroups.  Thus $S'=S^\V$.

Notice that the functoriality of the assignment $S\mapsto S^\V$ is clear from this presentation.  Indeed, if $f\colon S\to T$ is a homomorphism of $A$-semigroups and $w_1,\ldots, w_n\in A^+$ map to the same idempotent of $S$, then also $w_1,\ldots,w_n$ map to the same idempotent of $T$.  Thus $T^\V$ satisfies all the defining relations of $S^\V$ and so there is a morphism $f^{\V}\colon S^\V\to T^\V$.
\end{Rmk}

One of the main goals of geometric semigroup theory is to determine
the structure of $S^{\V}$ for various locally finite varieties $\V$.
Note, however, that even taking a Mal'cev expansion with respect to the trivial
variety is a very non-trivial operation.  If $S$ is a finite semigroup
and $A$ is the set of all elements in $S$, then Chris Ash studied, in
essence, the Mal'cev expansion $S^\Triv$ with respect to the
generating set $A$ in his solution to the Type II conjecture.  See
\cite{ash1,Ash,BMR} for details and
\cite{BR--exp} for an early version of this approach.

The following theorem establishes the key properties of Mal'cev
expansions, namely that they are indeed expansion in the sense of
Definition~\ref{def:exps}, and that when the variety $\V$ is locally
finite, they preserve finiteness properties.

\begin{thm}\label{thm:malcev}
For each variety $\V$, the assignment $S \mapsto S^\V$ defines an
expansion on the category $\SGA$.  Moreover, if $\V$ is locally
finite, then it preserves finiteness.  In particular, it restricts to
an expansion on $\FSA$ or $\FJA$.
\end{thm}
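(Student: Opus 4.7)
The plan is to read off the three required properties---functoriality with surjective natural transformation $\eta$, preservation of finiteness when $\V$ is locally finite, and preservation of the finite-$\J$-above property---directly from the presentation given in Remark~\ref{rmk:apresentation}, from Brown's theorem (Theorem~\ref{thm:brown}), and from Lemma~\ref{lem:exp-fja}, respectively. Essentially everything except the last step is already implicit in Lemma~\ref{lem:malcev} and the remark that follows it; the one substantive new argument needed is the construction of the comparison map required by Lemma~\ref{lem:exp-fja}.

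First I would establish functoriality using the explicit presentation. Given an $A$-morphism $f\colon S\to T$ and words $w_1,\ldots,w_n\in A^+$ representing a common idempotent $e$ of $S$, the $w_i$ represent the common idempotent $f(e)$ in $T$; hence every defining relation $u(w_1,\ldots,w_n)=v(w_1,\ldots,w_n)$ of $S^\V$ holds in $T^\V$, yielding a unique $A$-morphism $f^\V\colon S^\V\to T^\V$. Because $A$-morphisms are unique when they exist, the compatibility $\eta_T\circ f^\V=f\circ\eta_S$ and preservation of identities and composition are automatic, and $\eta_S$ is surjective because its image is an $A$-subsemigroup of $S$. For the finiteness claim, assume $\V$ is locally finite and $S$ is finite. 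Lemma~\ref{lem:malcev} gives that $\eta_S\colon S^\V\to S$ has Mal'cev kernel in $\V$ (equivalently, for each idempotent $e\in S$ any tuple in $\eta_S^{-1}(e)$ can be represented by words mapping to $e$, and therefore satisfies every identity of $\V$); since $A$ and $S$ are finite, Brown's theorem directly yields that $S^\V$ is finite.

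The main work is the finite-$\J$-above case, for which I would invoke Lemma~\ref{lem:exp-fja}. Fix an ideal $I\subseteq S$ and set $J=\eta_S^{-1}(I)$, an ideal of $S^\V$ since $\eta_S$ is a homomorphism. The task is to produce a surjective $A$-morphism $(S/I)^\V\twoheadrightarrow S^\V/J$. The composite $S^\V\xrightarrow{\eta_S}S\twoheadrightarrow S/I$ annihilates $J$ and hence descends to a surjective $A$-morphism $\phi\colon S^\V/J\twoheadrightarrow S/I$; by the universal property in Lemma~\ref{lem:malcev} it is enough to show that $\phi$ has Mal'cev kernel in $\V$. The hard part is exactly this verification, which I would handle by cases. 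For a non-zero idempotent $\bar e$ of $S/I$, pick the unique idempotent $e\in S\setminus I$ with $\bar e=e$; then $\eta_S^{-1}(e)\cap J=\emptyset$, so $\phi^{-1}(\bar e)$ may be identified with $\eta_S^{-1}(e)$, which lies in $\V$ by the previous paragraph. For the collapsed zero $\bar 0$ of $S/I$ (if $I\neq\emptyset$), the preimage $\phi^{-1}(\bar 0)$ is the singleton $\{0\}\subseteq S^\V/J$, which is trivially in $\V$. The resulting $A$-morphism $(S/I)^\V\to S^\V/J$ then feeds into Lemma~\ref{lem:exp-fja} to give preservation of the finite-$\J$-above property, completing the proof.
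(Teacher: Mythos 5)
Your proposal is correct and follows essentially the same route as the paper: functoriality from the presentation in Remark~\ref{rmk:apresentation}, finiteness from Brown's theorem, and the finite-$\J$-above case via Lemma~\ref{lem:exp-fja} by observing that the Mal'cev kernel of $S^\V/J\to S/I$ consists of the fibres over non-zero idempotents (already in $\V$) plus possibly the trivial semigroup over the zero. Your case analysis just spells out what the paper compresses into one sentence.
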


\begin{proof}
Remark~\ref{rmk:apresentation} shows that $S\mapsto S^\V$ is an expansion.

To show the second assertion we need to show that if $S$ is finite,
then $S^\V$ is finite, and if $S$ is finite $\J$-above, then $S^\V$ is
finite $\J$-above.  When $S$ is finite, the assertion follows
immediately from Brown's theorem (Theorem~\ref{thm:brown}). When $S$
is finite $\J$-above we will use Lemma~\ref{lem:exp-fja}.  Let $I$ be
an ideal in $S$ and let $J$ be the inverse image of $I$ under the
$A$-morphism $S^\V\to S$.  Notice that the Mal'cev kernel of $S^\V/J
\to S/I$ is a subfamily of the Mal'cev kernel of $S^\V\to S$ with the
possible addition of the trivial semigroup, and thus lies in $\V$.  It
now follows from Lemma~\ref{lem:malcev} that there is an $A$-morphism
from $(S/I)^\V$ to $S^\V/J$ and by Lemma~\ref{lem:exp-fja} the proof
is complete.
\end{proof}

A list of some locally finite varieties is given in
Table~\ref{tab:locally-finite}.

\begin{table}[thbp]
\begin{tabular}{|c|c|c|}
\hline {\bf Notation} & {\bf Description} & {\bf Equations} \\
\hline  \Triv & trivial & $x=1$\\
\hline \SL & semilattices & $xy=yx, x^2=x$ \\
\hline \RZ & right-zero semigroups & $xy=y$ \\
\hline \LZ & left-zero semigroups & $xy=x$ \\
\hline \Ba & bands & $x^2=x$ \\
\hline \RB & rectangular bands & $x^2=x, xzy=xwy$ \\
\hline \LC & left constants & $xy=xz$\\
\hline \RC & right constants & $yx=zx$ \\
\hline \Co & $2$-sided constants & $xzy=xwy$ \\
\hline \Dk & $k$-delay & $x_1\cdots x_k = x_0x_1\cdots x_k$ \\
\hline \Null & null semigroups & $xy=0$\\
\hline \Nilk & nilpotent of class $k$ & $x_1\cdots x_k = 0$ \\
\hline \Commn &
  \begin{tabular}{l}
  commutative semigroups\\
  satisfying $x^m=x^{m+n}$
  \end{tabular} &
  $xy=yx, x^m=x^{m+n}$   \\
\hline $\langle S \rangle$ &
  \begin{tabular}{l}
  variety generated by a\\
  finite semigroup $S$
  \end{tabular} &
  $\textrm{Eqs}(S)$ \\
\hline \Zp &
  \begin{tabular}{l}
  vector spaces over $\mathbb{Z}_p$,\\
  $p$ prime
  \end{tabular} &
  \begin{tabular}{l}$xy=yx, x^p=1$
  \end{tabular} \\
\hline
\end{tabular}
\vspace*{1em}
\caption{Locally finite varieties\label{tab:locally-finite}}
\end{table}

The lattice of locally finite varieties is closed under taking joins, subvarieties and semidirect products.
The concept of a Mal'cev product can also be extended to
form a (nonassociative) product on the set of varieties.

\begin{Def}[Mal'cev products of varieties]
If $\U$ and $\V$ are varieties of semigroups, the \emph{Mal'cev
  product} of $\U$ and $\V$ (denoted $\U \malce  \V$) is the variety
generated by all semigroups $S$ admitting a homomorphism $\p\colon S\to V$ with $V\in \V$ so that the Mal'cev kernel of $\p$ belongs to $\U$.  It follows from Brown's theorem that the Mal'cev product of two locally
finite varieties is also a locally finite variety, and, as a result,
this table can be extended by taking Mal'cev products.  Some care must
be exercised with parentheses, since Mal'cev products are not
necessarily associative.  The inclusion \[\mathcal U\malce (\V\malce \mathcal W)\subseteq (\mathcal U\malce \mathcal V)\malce \mathcal W\] always
holds, but a strict inclusion is possible.
\end{Def}

Numerous relations exist between the locally finite varieties listed
in the table with respect to the Mal'cev product. For example  $\Ba = \Ba
\malce  \Ba$, as is immediate from the definition.  It is a well-known consequence of Green-Rees structure theory that $\Ba=\RB\malce \SL$.    In
contrast, if we replace $\Ba$ by $\bar\Ba=\script{C} \malce  \Ba$,
then finite iterated Mal'cev products of $\bar\Ba$, bracketed in the larger way,
contain all finite aperiodic semigroups by the two-sided Prime Decomposition
Theorem~\cite{Kernel,qtheor}.

As an illustration of the power of Lemma~\ref{lem:malcev} we prove that
the Mal'cev expansion with respect to the variety of rectangular bands
applied to the free semilattice over $A$ is the free band over $A$.
Actually, with additional work, one could prove that the Birget-Rhodes
expansion of the free semilattice is already equal to free band, but
that fact is harder to derive using these techniques.

\begin{lem}\label{lem:fb-sl}
Let $\V$ and $\mathcal W$ be varieties.  Then the free semigroup on $A$ in $\V\malce \mathcal W$ is $F_{\mathcal W}(A)^{\V}$ where $F_{\mathcal W}(A)$ is the free semigroup on $A$ in $\mathcal W$.  In particular, the rectangular band expansion of the free semilattice is the free band.
\end{lem}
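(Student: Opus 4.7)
I will verify directly that $F_{\mathcal W}(A)^{\V}$ satisfies the universal property of the free semigroup on $A$ in $\V\malce \mathcal W$. Membership is immediate: the canonical projection $\eta\colon F_{\mathcal W}(A)^{\V}\to F_{\mathcal W}(A)$ has Mal'cev kernel in $\V$ by definition of the Mal'cev expansion, and $F_{\mathcal W}(A)\in \mathcal W$, which exhibits $F_{\mathcal W}(A)^{\V}$ as one of the generators of the variety $\V\malce \mathcal W$. Hence $F_{\mathcal W}(A)^{\V}\in \V\malce \mathcal W$.

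For the universal property, let $S$ be an arbitrary $A$-generated semigroup in $\V\malce \mathcal W$. Using the presentation of $F_{\mathcal W}(A)^{\V}$ given in Remark~\ref{rmk:apresentation}, it suffices to show that each defining relation $u(w_1,\ldots,w_n)=v(w_1,\ldots,w_n)$ (where $u=v$ lies in a fixed basis $E$ of $\V$ and the $w_i\in A^+$ all map to the same idempotent of $F_{\mathcal W}(A)$) holds in $S$. Both sides are fixed words in $A^+$, so the equation is really an identity in the letters of $A$; since identity satisfaction is preserved by the variety operations, it is enough to verify it on each generator $T$ of $\V\malce \mathcal W$, where I have an $A$-morphism $\phi\colon T\to V$ with $V\in \mathcal W$ and $\phi^{-1}(e)\in \V$ for every idempotent $e\in V$. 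Given any substitution $\sigma\colon A\to T$, the composition $\phi\sigma\colon A\to V$ factors through $F_{\mathcal W}(A)$ by freeness in $\mathcal W$, so $\phi\sigma(w_1)=\cdots=\phi\sigma(w_n)=e'$ for a common idempotent $e'\in V$. Thus $\sigma(w_1),\ldots,\sigma(w_n)\in \phi^{-1}(e')\in \V$, and the identity $u=v$ of $\V$ yields $u(\sigma(w_1),\ldots,\sigma(w_n))=v(\sigma(w_1),\ldots,\sigma(w_n))$ in $T$. This produces the desired surjective $A$-morphism $F_{\mathcal W}(A)^{\V}\to S$, completing the universal property. The ``in particular'' assertion then follows by taking $\mathcal W=\SL$, $\V=\RB$ and invoking the Green--Rees identity $\Ba=\RB\malce \SL$ that the authors have already recalled.

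The key technical point, which I expect to be the main obstacle, is the reduction from ``$S\in \V\malce \mathcal W$ arbitrary'' to ``$S$ a generator.'' It hinges on recognizing that the condition ``$w_1,\ldots,w_n$ map to a common idempotent of $F_{\mathcal W}(A)$'' is stable under every substitution into any member of $\mathcal W$---precisely because $F_{\mathcal W}(A)$ is the free object in $\mathcal W$---so that the defining relations of $F_{\mathcal W}(A)^{\V}$ become genuine identities of $\V\malce \mathcal W$. Once that observation is in place, the rest is a short chase through the factorization $A^+\to F_{\mathcal W}(A)\to V$.
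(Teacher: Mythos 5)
Your proof is correct, but it follows a genuinely different route from the paper's. The paper argues categorically: it writes an arbitrary $T\in\V\malce\mathcal W$ as a quotient of a semigroup $S$ admitting $\theta\colon S\to W$ with Mal'cev kernel in $\V$, lifts the map $A\to T$ to $S$, invokes the universal property of Lemma~\ref{lem:malcev} to get $W^{\V}\to S$, and then uses functoriality of $(-)^{\V}$ applied to $F_{\mathcal W}(A)\to W$ to produce $F_{\mathcal W}(A)^{\V}\to W^{\V}\to S\to T$. (That first step silently uses that the generating class of the Mal'cev product is closed under subsemigroups and products, so every member is a homomorphic image of a generator.) You instead work from the explicit presentation of Remark~\ref{rmk:apresentation} and prove the conceptually nice fact that each defining relation $u(w_1,\ldots,w_n)=v(w_1,\ldots,w_n)$ is an \emph{identity} of the whole variety $\V\malce\mathcal W$ — the point being that ``the $w_i$ have a common idempotent image'' is preserved under any substitution into any $V\in\mathcal W$ precisely because $F_{\mathcal W}(A)$ is free in $\mathcal W$ — and then you only need to check identities on generators, which is automatic for varieties. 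Your approach trades the paper's reliance on functoriality for reliance on the presentation, and it isolates a reusable observation about which relations become identities; the paper's is shorter given the machinery already in place. Two cosmetic remarks: your $\phi\colon T\to V$ on a generator is just a semigroup morphism, not an $A$-morphism (a harmless slip), and for the full free-object property one restricts an arbitrary map $A\to S$ to the $A$-generated subsemigroup of $S$, which is still in the variety — you implicitly do this and it is fine.
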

\begin{proof}
Suppose that $\sigma\colon A\to T$ is a map with $T\in \V\malce \mathcal W$.  Then we can write $T$ as a quotient $\pi\colon S\to T$ where $S$ admits a morphism $\theta\colon S\to W$ with the Mal'cev kernel of $\theta$ in $\V$.  Let $\tau\colon A\to S$ be a map with $\sigma=\pi\tau$.  Clearly to extend $\sigma$ to $F_{\mathcal W}(A)^{\V}$ it suffices to extend $\tau$.  Without loss of generality we may assume that $\theta$ is onto and that $\tau(A)$ generates $S$.  Then there is an $A$-morphism $W^{\V}\to S$ by the universal property in Lemma~\ref{lem:malcev}.  But there is an $A$-morphism $F_{\mathcal W}(A)\to W$ since $W\in \mathcal W$, and so by functoriality we have an $A$-morphism $F_{\mathcal W}(A)^{\V}\to W^{\V}\to S$, completing the proof.
\end{proof}

\subsection{Rectangular bands}\label{sec:bands}
In this subsection we investigate the Mal'cev expansion with respect to
the variety of rectangular bands and the closely related varieties of
left zero/right zero semigroups.  Let $S$ be an $A$-semigroup and
consider the semigroups $S^\RB$, $S^\LZ$, and $S^\RZ$.  Since left and
right zero semigroups are examples of rectangular bands, it is
immediate from the definition that there are maps $S^\RB\to S^\LZ$ and
$S^\RB\to S^\RZ$.  We begin by establishing properties of the
projection $S^\RB\to S$.  Recall that if $\mathscr K$ is any of Green's relations, then a morphism $\p\colon S\to T$ is a \emph{$\mathscr K'$-map} if $s,s'\in S$ regular and $\p(s)=\p(s')$ implies that $s\mathrel{\mathscr K} s'$.  See~\cite[Chapter 8]{Arbib} or~\cite[Chapter 4]{qtheor}.



\begin{lem}\label{lem:rb-map}
If $S$ is an $A$-semigroup then the map $\p\colon S^\RB\to S$ is
one-to-one on each subgroup of $S^\RB$.  Moreover, if $S$ is finite $\J$-above, then the inverse image of each
regular $\J$-class of $S$ contains a single regular $\J$-class in
$S^\RB$.  In particular, $\p$ is a $\J'$-map.
\end{lem}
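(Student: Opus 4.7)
The single most important fact driving the proof is that, by Lemma~\ref{lem:malcev} applied to $\V=\RB$, the Mal'cev kernel of $\p$ lies in $\RB$: for each idempotent $e\in S$ the fibre $\p\inv(e)\subseteq S^{\RB}$ is a subsemigroup satisfying the rectangular-band identities. Two standard consequences of being a rectangular band drive the proof. First, a rectangular band is a band, so every element is idempotent and any subgroup it contains is trivial (an element $h$ of a subgroup with $h^2=h$ must equal the identity). Second, a rectangular band is a single $\J$-class: the identity $xyx=x$ forces $x$ to lie in $S^{\RB}\cdot y\cdot S^{\RB}$ for any $x,y$ in the band.

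For the injectivity on subgroups, I would fix a subgroup $G\leq S^{\RB}$ with identity $f$, set $e=\p(f)$, and observe that $\p|_G\colon G\to \p(G)$ is a group homomorphism (since $\p(G)$ inherits the identity $e$ and inverses from $G$). The kernel $G\cap\p\inv(e)$ is then a subgroup of the band $\p\inv(e)$, hence trivial, and $\p|_G$ is injective.

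For the second claim, fix a regular $\J$-class $J$ of $S$. First, $\p\inv(J)$ contains a regular element: if $e\in J$ is idempotent and $w\in A^+$ is any word with $[w]_S=e$, the relation $w^2=w$ is a defining relation of $S^{\RB}$ (see Remark~\ref{rmk:apresentation}), so $[w]_{S^{\RB}}$ is an idempotent projecting into $\p\inv(J)$. Second, any regular $s\in\p\inv(J)$ is $\J$-equivalent in $S^{\RB}$ to the idempotent $e'=st$, where $t$ is an inverse of $s$: indeed $s\R e'$ in $S^{\RB}$ (so $s\J e'$), and $\p(e')\R\p(s)\in J$ in $S$, placing $e'$ in $\p\inv(J)$. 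The proof therefore reduces to showing any two idempotents $f_1,f_2\in\p\inv(J)$ are $\J$-equivalent in $S^{\RB}$.

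Here the rectangular-band fibres do the work, and this is the step I expect to be the main obstacle. Writing $\p(f_i)=e_i$ and using $e_1\J e_2$ in $S$, I would pick $x,y\in S^I$ with $e_2=xe_1y$ and lift them to $[u],[v]\in (S^{\RB})^I$ using surjectivity of $\p$ (with the formal identity interpreted as the identity of $(S^{\RB})^I$ when $x$ or $y$ equals $I$). The element $g=[u]f_1[v]$ then satisfies $\p(g)=e_2$, so $g$ lies in the rectangular band $\p\inv(e_2)$ alongside $f_2$; since that band is a single $\J$-class, $g\J f_2$ in $S^{\RB}$. On the other hand $g\leq_{\mathscr J}f_1$ by construction, whence $f_2\leq_{\mathscr J}f_1$ in $S^{\RB}$. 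The symmetric lift yields $f_1\leq_{\mathscr J}f_2$ and so $f_1\J f_2$. The $\J'$-assertion follows at once: if $s,s'\in S^{\RB}$ are regular with $\p(s)=\p(s')=j$, then $j$ is regular in $S$, lies in some regular $\J$-class $J$, and $s,s'\in\p\inv(J)$ are both regular, hence $\J$-equivalent by what was just shown. The hypothesis that $S$ is finite $\J$-above is not literally used in the argument, but by Theorem~\ref{thm:malcev} it ensures that $S^{\RB}$ is also finite $\J$-above, which is the ambient setting in which these $\J$-class arguments are most natural.
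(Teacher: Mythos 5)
Your proof is correct and follows essentially the same route as the paper's: both parts rest on the fact that fibres over idempotents are rectangular bands, so that subgroups of such fibres are trivial (giving injectivity on subgroups) and conjugating one idempotent into the fibre over the image of another lands it in a single $\J$-class (giving the statement about regular $\J$-classes). Your additional remarks — that $\p\inv(J)$ actually contains an idempotent, that regular elements reduce to idempotents, and that finite $\J$-above is not really needed — are accurate but supplementary to the same core argument.
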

\begin{proof}
Let $G$ be a subgroup of $S^\RB$ and let $e$ be its identity.  Since
$\p$ restricted to $G$ is a group homomorphism, it is injective on
$G$ if and only if $\ker \p|_G=\{e\}$.  On the
other hand, $e$ and $\p(e)$ are idempotents, so the inverse image
$\p^{-1}(\p(e))$ is a rectangular band by definition.  Since the
only idempotent in a group is its identity, we can conclude the
restriction is indeed one-to-one.
Next, let $J$ be a regular $\J$-class of $S$ and let $x$ and $y$ be
idempotents of $\p^{-1}(J)$.  Since $\p(x)$ and $\p(y)$ are
$\J$-equivalent (and $\p$ is onto) there exists $u$ and $v$ in
$S^\RB$ such that $\p(uxv)=\p(y)=e$, where $e$ is an idempotent of
$J$.  On the other hand, $\p^{-1}(e)$ is a rectangular band by
definition, so $uxv$ and $y$ are $\J$-equivalent in $S^\RB$.  Thus
$x\geq_{\J} y$ and a similar argument shows $y\geq_{\J} x$.  It
follows that $\p^{-1}(J)$ contains only a single regular
${\J}$-class.
\end{proof}

\begin{Rmk}
One can similarly show that $S^{\LZ}\to S$ is an $\eL'$-map and $S^{\RZ}\to S$ is an
$\R'$-map~\cite[Chapter 4]{qtheor}.
\end{Rmk}

Because $S^\RB\to S$ is a $\J'$-map which is injective on subgroups, when $S$ is finite the techniques and results from~\cite{RhodesWeil,qtheor} can be applied. In particular, $S^\RB$ and $S$ have the same complexity~\cite{Eilenberg,qtheor}.  In the
next lemma, we show that all three expansions are stable under
iteration.


\begin{lem}[Stable under iteration]\label{lem:iteration}
A map $\theta\colon S\to T$ has Mal'cev kernel in $\LZ$ ($\RZ$) if and only if the
inverse image of each left zero (right zero) subsemigroup in $T$ is a left zero (right zero)
subsemigroup in $S$.  Similarly, $\theta$  has Mal'cev kernel in
$\RB$ if and only if the inverse image of each rectangular band in $T$
is a rectangular band in $S$.  As a consequence, $S^{\LZ.\LZ} = S^\LZ$,
$S^{\RZ.\RZ} = S^\RZ$, and $S^{\RB.\RB} = S^\RB$.
\end{lem}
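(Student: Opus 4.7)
The lemma has two logically distinct parts: the three ``iff'' characterizations of maps with Mal'cev kernel in $\LZ$, $\RZ$, and $\RB$, and the stability consequence for the corresponding iterated expansions. The characterizations amount to direct computations inside preimages, and the consequence then drops out by applying the universal property of the Mal'cev expansion (Lemma~\ref{lem:malcev}). In each ``iff'', the backward direction is immediate: for any idempotent $e\in T$ the singleton $\{e\}$ is a left zero, right zero, and rectangular band subsemigroup of $T$, and its preimage under $\theta$ is by definition a piece of the Mal'cev kernel.

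\textbf{Forward directions.} Assume the Mal'cev kernel of $\theta$ lies in $\LZ$ and let $B\subseteq T$ be a left zero subsemigroup. Every $b\in B$ is idempotent, so each preimage $\theta^{-1}(b)$ is a LZ semigroup; in particular every $x\in\theta^{-1}(B)$ is idempotent in $S$. For $x,y\in\theta^{-1}(B)$ one has $\theta(xy)=\theta(x)\theta(y)=\theta(x)$, so $x$ and $xy$ lie together in the LZ semigroup $\theta^{-1}(\theta(x))$; the LZ identity inside this preimage gives $x\cdot xy=x$, while evaluating in $S$ using $x^2=x$ gives $x\cdot xy=xy$. Hence $xy=x$. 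The $\RZ$ case is symmetric, yielding $xy=y$. For $\RB$ the trick is to use the symmetric product $xyx$ in place of $xy$: when $B$ is a rectangular band, the RB identity $aba=a$ in $B$ forces $\theta(xyx)=\theta(x)$, so $x$ and $xyx$ lie together in the RB $\theta^{-1}(\theta(x))$. Inside this RB, the identity $abc=ac$ gives $x\cdot xyx\cdot x=x\cdot x=x$, while in $S$ the idempotence $x^2=x$ gives $x\cdot xyx\cdot x=xyx$. So $xyx=x$ in $S$, and together with idempotence of every element of $\theta^{-1}(B)$ this implies the full RB identity $xyz=xz$ (both $(xz)(xyz)(xz)=xz$ by $aba=a$ and $(xz)(xyz)(xz)=xyz$ after two applications of $aba=a$ to collapse $xzx\to x$ and $zxz\to z$).

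\textbf{Stability consequence.} Fix $\V\in\{\LZ,\RZ,\RB\}$ and set $\eta:=\eta_{S^\V}\colon(S^\V)^\V\to S^\V$. For each idempotent $e\in S$, the subsemigroup $L_e:=\eta_S^{-1}(e)\subseteq S^\V$ lies in $\V$ since $\eta_S$ has Mal'cev kernel in $\V$. The forward direction just proved, applied to $\eta$ (whose Mal'cev kernel is also in $\V$), then gives $\eta^{-1}(L_e)\in\V$. But $\eta^{-1}(L_e)$ is precisely the preimage of $e$ under the composition $\eta_S\circ\eta\colon(S^\V)^\V\to S$, so this composition has Mal'cev kernel in $\V$. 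The universal property of $S^\V$ (Lemma~\ref{lem:malcev}) therefore produces a unique $A$-morphism $\Psi\colon S^\V\to(S^\V)^\V$ with $\eta_S\circ\eta\circ\Psi=\eta_S$, and the uniqueness clause applied to $\eta_S$ itself forces $\eta\circ\Psi=1_{S^\V}$; since $\eta$ is surjective, $\Psi$ is its two-sided inverse, yielding $(S^\V)^\V=S^\V$. The most delicate step in the whole argument is the $\RB$ forward direction: one must see that it is the three-factor product $x\cdot xyx\cdot x$, rather than the two-factor $x\cdot xy$, that makes the evaluations ``inside $\V$'' and ``in $S$'' collide.
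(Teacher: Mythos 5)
Your proof is correct, and the characterization half takes a genuinely different route from the paper's. The paper only treats the $\RB$ case explicitly (declaring $\LZ$ and $\RZ$ "similar"), and it argues structurally: the preimage of a rectangular band is a band mapping onto a rectangular band with Mal'cev kernel in $\RB$, so by Lemma~\ref{lem:rb-map} it has a single $\J$-class, and a simple band is a rectangular band by Green--Rees structure theory. You instead give a purely equational argument: you pin down an element ($xy$, respectively $xyx$) that is forced by the identities of $B$ to land in the same kernel class as $x$, and then evaluate a suitable product once using the variety's identity inside that kernel class and once using idempotence in $S$, forcing $xy=x$ (resp.\ $xyx=x$, from which $xyz=xz$ follows by your sandwich computation). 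This is more elementary and self-contained --- it avoids Lemma~\ref{lem:rb-map} and the Green--Rees theorem entirely, and it handles all three varieties uniformly --- at the cost of being identity-specific rather than conceptual. The stability consequence is argued essentially as in the paper: the composite $(S^\V)^\V\to S^\V\to S$ has Mal'cev kernel in $\V$ by the first part, Lemma~\ref{lem:malcev} supplies a map back, and maps in both directions between $A$-semigroups force an isomorphism (your appeal to "$\eta$ surjective" at the very end is superfluous --- the clean statement is that any $A$-endomorphism of an $A$-semigroup is the identity, so both composites are identities --- but this does not affect correctness).
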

\begin{proof}
We just handle the case of rectangular bands, as the other cases are similar (and this result is any event well known).  Clearly, it suffices to handle the case of a surjective morphism $\theta\colon S\to T$ of bands with $T$ a rectangular band and $\theta$ a morphism with Mal'cev kernel in $\RB$.  By Lemma~\ref{lem:rb-map}, $S$ has a single $\J$-class.  It follows that $S$ is a rectangular band.
Finally, to see the last assertion, let $\exa$ denote either $\LZ$,
$\RZ$, or $\RB$ and consider the composition of maps $S^{\exa.\exa}\to
S^\exa\to S$.  An idempotent in $S$ pulls back to a left zero
semigroup/right zero semigroup/rectangular band in $S^\exa$ which
we now know pulls back to a semigroup of the same type in
$S^{\exa.\exa}$.  Thus, by Lemma~\ref{lem:malcev} there exists a map
$S^\exa\to S^{\exa.\exa}$. As usual, maps in both directions implies
they are isomorphic.
\end{proof}

One consequence of being stable under the $\RB$-expansion is that the
semigroup remembers the first and last letters of the words
representing an element in the following sense.

\begin{lem}[Vertex labels]\label{lem:rb-labels}
If $S$ is a non-trivial finite $\J$-above $A$-se\-mi\-group which is
stable under the $\RB$-expansion and $u$ and $v$ are words in $A^+$
which represent the same element of $S$ then they have the same first
letter and the same last letter.  As a consequence, there is a unique
edge starting at the initial state of $\str^S(v)$ and for every vertex
$v$ in $\Cay(S,A)$, all of the edges ending at $v$ have identical
labels.
\end{lem}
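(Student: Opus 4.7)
The plan is to exhibit an $A$-semigroup homomorphism $\pi\colon S \to B_A$, where $B_A$ denotes the free rectangular band on $A$, namely $B_A = A \times A$ with multiplication $(a,b)(c,d) = (a,d)$ and generating map $a \mapsto (a,a)$. Once such a $\pi$ is in hand, the composite $A^+ \to S \to B_A$ must agree with the evident $A$-morphism $a_1 \cdots a_n \mapsto (a_1, a_n)$, since both morphisms agree on the generating set $A$. Thus whenever $[u]_S = [v]_S$ with $u, v \in A^+$, applying $\pi$ records that $u$ and $v$ share both their first and last letters.

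To construct $\pi$, I would form the product $R$ of $S$ and $B_A$ cut-to-generators, namely the $A$-subsemigroup of $S \times B_A$ generated by $\{([a]_S,(a,a)) : a \in A\}$. The first projection $\rho\colon R \twoheadrightarrow S$ is a surjective $A$-morphism, and I claim its Mal'cev kernel lies in $\RB$: for any idempotent $e \in S$, the preimage $\rho^{-1}(e)$ is a subsemigroup of $\{e\} \times B_A \cong B_A$, and a subsemigroup of a rectangular band is itself a rectangular band. Applying Lemma~\ref{lem:malcev} together with the stability hypothesis $S = S^\RB$ then yields an $A$-morphism $S \to R$, and composing with the second projection $R \to B_A$ produces the desired $\pi$.

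The consequences follow by elementary bookkeeping. If two edges of $\Cay(S,A)$ end at a common vertex and carry labels $a$ and $b$ respectively, then there exist $s, s' \in S^I$ with $sa = s'b$ in $S^I$; representing $s$ and $s'$ by words (using the empty word when one of them is $I$), we obtain two elements of $A^+$ with last letters $a$ and $b$ representing the same element of $S$, so the main assertion forces $a = b$. For the uniqueness claim about $\str^S(v)$, the observation is that $I$ is a formally adjoined identity which is not represented by any element of $A^+$, so the strong component of $I$ in $\Cay(S,A)$ is trivial; consequently, the only edges of $\str^S(v)$ emanating from $I$ lie on the distinguished path labeled by $v$, and that path visits $I$ only at its beginning.

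The only real obstacle is recognizing at the outset that ``remembering the first and last letter'' is precisely the information encoded by a map into the free rectangular band, so that the $\RB$-stability hypothesis can be pressed into service through Lemma~\ref{lem:malcev}; once the target $B_A$ and the mediating product $R$ are identified, the remainder of the argument is a direct application of the universal property of the Mal'cev expansion.
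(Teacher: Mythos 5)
Your proof is correct, and it reaches the same pivotal fact as the paper --- that an $\RB$-stable $A$-semigroup admits an $A$-morphism onto the free rectangular band on $A$, where equality is detected by first and last letters --- but by a different mechanism. The paper simply applies functoriality of the $\RB$-expansion to the collapsing morphism $S\to\{1\}$, obtaining $S=S^\RB\twoheadrightarrow \{1\}^\RB$, and then cites Lemma~\ref{lem:fb-sl} to identify $\{1\}^\RB$ with the free rectangular band. You instead build the product cut-to-generators $R\leq S\times B_A$, check directly that the fibers over idempotents of $S$ are subsemigroups of $\{e\}\times B_A$ and hence rectangular bands, and invoke the universal property of Lemma~\ref{lem:malcev} to factor $S=S^\RB\to R\to B_A$. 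Your route trades the abstract identification of $\{1\}^\RB$ for an explicit model of $B_A$ and a concrete Mal'cev-kernel computation; it is slightly longer but more self-contained, and it illustrates the canonical-relational-morphism technique the paper advertises in Section~\ref{sec:expansions}. The bookkeeping for the two consequences is also fine; note only that your argument for the uniqueness of the edge leaving $I$ in $\str^S(v)$ uses just the triviality of the strong component of $I$ (which holds for any $A$-semigroup), not the first-letter claim --- that is harmless, since the statement is still established.
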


\begin{proof}
Let $S\to \{1\}$ be the collapsing morphism.  Viewing $\{1\}$ as an $A$-generated semigroup via this map, functoriality yields a surjective morphism $S^\RB\to \{1\}^{\RB}$.  Since $\{1\}^{\RB}$ is the free rectangular band on $A$ by Lemma~\ref{lem:fb-sl}, and hence two words are equal in it if and only if they have the same first and last letter,  the result follows.
\end{proof}

A similar statement and proof shows that $S^\LZ$ remembers the
first letter and $S^\RZ$ remembers the last letter.  The $\LZ$, $\RZ$ and $\RB$-expansions are also
stable under many of the expansions defined in
Definition~\ref{def:common-exps}.

\begin{figure}[htbp]
\[\xymatrix{
S^\LZ\ar[d] & S^\RB\ar[l]\ar[r]\ar[dd] & S^\RZ\ar[d]\\
S^\rKR\ar[d]      &                           & S^\KR\ar[d]\\
S^\rRh & S^\BR\ar[l]\ar[r] & S^\Rh}\]
\caption{Relations between various expansions.\label{fig:expansions}}
\end{figure}

\begin{thm}[Stable under other expansions]\label{thm:stable}
For each $A$-semigroup $S$, there exist $A$-morphisms as shown in
Figure~\ref{fig:expansions}.  As a consequence, $S^\LZ$ is stable
under $\rKR$ and $\rRh$, $S^\RZ$ is stable under $\KR$ and $\Rh$, and
$S^\RB$ is stable under all of the expansions shown.
\end{thm}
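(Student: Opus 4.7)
The plan is to split the task into producing each arrow in Figure~\ref{fig:expansions} and then deducing the stability assertions uniformly from the Stability Lemma (Lemma~\ref{lem:stable}) together with the idempotency of the $\LZ$-, $\RZ$- and $\RB$-expansions (Lemma~\ref{lem:iteration}). Several arrows are immediate: $S^{\RB}\to S^{\LZ}$ and $S^{\RB}\to S^{\RZ}$ come from the universal property of the Mal'cev expansion in Lemma~\ref{lem:malcev}, using $\LZ,\RZ\subseteq\RB$ so that $S^{\LZ}$ and $S^{\RZ}$ automatically have Mal'cev kernel in $\RB$; $S^{\rKR}\to S^{\rRh}$ and $S^{\KR}\to S^{\Rh}$ are recorded directly in Definition~\ref{def:common-exps}; and $S^{\BR}\to S^{\rRh}$ and $S^{\BR}\to S^{\Rh}$ come with the construction of $S^{\BR}$ as the limit of iterating $\Rh$ and $\rRh$.

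The main technical step is to construct $S^{\LZ}\to S^{\rKR}$; its left/right dual then produces $S^{\RZ}\to S^{\KR}$. By Lemma~\ref{lem:malcev} it is enough to check that the projection $\eta_S\colon S^{\rKR}\to S$ has Mal'cev kernel in $\LZ$. Fix an idempotent $e\in S$ and take $[u]_{\rKR},[v]_{\rKR}$ lying over $e$, so $[u]_S=[v]_S=e$. The goal is $[uv]_{\rKR}=[u]_{\rKR}$; trivially $[uv]_S=e^2=e=[u]_S$, so by the definition of the reverse Karnofsky-Rhodes expansion the remaining content is that $u$ and $uv$ label paths from $I$ in $\Cay(S,A)$ traversing the same transition edges. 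After reading $u$ one arrives at $e$, and the subsequent reading of $v=v_1\cdots v_k$ is a directed circuit from $e$ back to $e$. At each intermediate vertex $ev_1\cdots v_i$ of this circuit one has $ev_1\cdots v_i\R e$: indeed $ev_1\cdots v_i\leq_{\mathscr R} e$ trivially, while the identity $e=(ev_1\cdots v_i)(v_{i+1}\cdots v_k)$ forces $e\leq_{\mathscr R}ev_1\cdots v_i$. Therefore the circuit sits inside the Sch\"utzenberger graph of $e$ and traverses no transition edges, so $uv$ uses exactly the transition edges of $u$ as required. This Sch\"utzenberger-graph observation is the main obstacle of the proof.

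It then remains to produce $S^{\RB}\to S^{\BR}$ and to harvest the stability conclusions. Composing the already-established arrows $S^{\RB}\to S^{\LZ}\to S^{\rKR}\to S^{\rRh}$ with $S^{\RB.\RB}=S^{\RB}$ (Lemma~\ref{lem:iteration}) via the Stability Lemma yields $S^{\RB.\rRh}=S^{\RB}$; the symmetric chain $S^{\RB}\to S^{\RZ}\to S^{\KR}\to S^{\Rh}$ gives $S^{\RB.\Rh}=S^{\RB}$. Because $S^{\BR}$ is by construction the limit of alternately iterating $\Rh$ and $\rRh$, this two-sided stability of $S^{\RB}$ forces $(S^{\RB})^{\BR}=S^{\RB}$, and applying the $\BR$-functor to $\eta_S\colon S^{\RB}\to S$ now yields the last arrow $S^{\RB}=(S^{\RB})^{\BR}\to S^{\BR}$. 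Finally, the stability assertions are automatic: for every arrow $S^{\exa}\to S^{\exb}$ in Figure~\ref{fig:expansions} with $\exa\in\{\LZ,\RZ,\RB\}$, Lemma~\ref{lem:iteration} gives $S^{\exa.\exa}=S^{\exa}$ and then Lemma~\ref{lem:stable} delivers $S^{\exa}=S^{\exa.\exb}=S^{\exb.\exa}$, which is exactly what the theorem claims.
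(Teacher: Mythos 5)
Your proposal is correct and follows essentially the same route as the paper: verify that $\eta_S\colon S^{\rKR}\to S$ has Mal'cev kernel in $\LZ$ (your Sch\"utzenberger-graph argument that the circuit reading $v$ stays in the strong component of $e$ is precisely the detail the paper elides with ``it follows that''), invoke Lemma~\ref{lem:malcev} for the arrows out of the Mal'cev expansions, and then harvest stability from Lemmas~\ref{lem:iteration} and~\ref{lem:stable}, treating $\BR$ last via two-sided stability under $\Rh$ and $\rRh$. The only difference is one of explicitness: the paper cites ``well-known properties'' of the Rhodes-type expansions for most arrows, whereas you construct each one.
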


\begin{proof}
The maps that are not obvious are consequences of well-known properties of the Rhodes, Karnofsky-Rhodes and Birget-Rhodes expansions~\cite{BR--exp,Elston,RhodesStein}.  For instance, we verify the projection $\eta_S\colon S^{\rKR}\to S$ has Mal'cev kernel in $\LZ$.  Suppose $v,w\in A^+$ map to an idempotent $e$ in $S$.  Then since $[vw]_S=e^2=e=[v]_S$, it follows that $v$ and $vw$ use the same transition edges from $I$ in $\Cay(S,A)$ in the paths they read to $e$.  Thus $vw=v$ in $S^{\rKR}$.  It follows that $\eta_S\inv(e)$ is a left zero semigroup.

By Lemma~\ref{lem:iteration} the expansions $\LZ$, $\RZ$ and $\RB$ are
closed under iteration, so the stability properties of $S^\LZ$ and
$S^\RZ$ follow by Lemma~\ref{lem:stable}.  Similarly, $S^\RB$ is stable
under all of these with the possible exception of $\BR$.  However,
once $S^\RB$ is stable under $\Rh$ and $\rRh$, it is also stable under
$\BR$ and by Lemma~\ref{lem:stable}, there is a map $S^\RB\to S^\BR$.
\end{proof}

Combining Lemma~\ref{lem:KR-closed} and Lemma~\ref{lem:iteration}
proves the following.

\begin{cor}\label{cor:rb-str}
If $S$ is a finite $\J$-above $A$-semigroup and $T$ is either $S^\LZ$
or $S^\RB$, then $\str^T(w) = \Cay^T(w)$ is a trim, deterministic
finite state acceptor which depends only on the element $w$
represents, it is linear and it accepts the language of words
equivalent to $w$ in $T$.  In addition, every element $\R$-above
$[w]_T$ is $\R$-equivalent to an initial segment of $w$.
Similar properties hold for the left-handed versions when $T$ is
either $S^\RZ$ or $S^\RB$.
\end{cor}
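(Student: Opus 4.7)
The plan is to deduce everything by reducing to Lemma~\ref{lem:KR-closed} via the stability results established in the preceding subsection. The key observation is that when $T=S^{\LZ}$ or $T=S^{\RB}$, Theorem~\ref{thm:stable} supplies an $A$-morphism $T\to T^{\rKR}$, and since $T^{\rKR}\to T$ is always an expansion, these compose to give $T=T^{\rKR}$. (This in turn uses that $\LZ$ and $\RB$ are closed under iteration as Mal'cev expansions, Lemma~\ref{lem:iteration}, together with the general Stability Lemma~\ref{lem:stable}.) Moreover, both $\LZ$ and $\RB$ are locally finite varieties, so by Theorem~\ref{thm:malcev} the semigroup $T$ remains finite $\J$-above whenever $S$ is.

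Once $T=T^{\rKR}$ is in hand, Lemma~\ref{lem:KR-closed} applies directly and yields $\str^T(w)=\Cay^T(w)$ for every $w\in A^+$, together with the conclusions of Lemma~\ref{lem:str=cay}: the common acceptor is trim, deterministic, finite-state, linear, depends only on the element that $w$ represents in $T$, accepts exactly the $T$-class of $w$, and has the property that every element $\R$-above $[w]_T$ is $\R$-equivalent to an initial segment of $w$. This gives the entire first assertion.

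For the second (left-handed) assertion, I would run the symmetric argument. When $T=S^{\RZ}$ or $T=S^{\RB}$, Theorem~\ref{thm:stable} gives a map $T\to T^{\KR}$, and Lemma~\ref{lem:iteration} together with Lemma~\ref{lem:stable} forces $T=T^{\KR}$. The left-handed analogues of the straightline and Cayley automata are defined using the left Cayley graph and words read right-to-left, and the left-handed versions of Lemmas~\ref{lem:KR-closed} and~\ref{lem:str=cay} (which are stated explicitly in those lemmas) then give the desired properties.

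There is essentially no obstacle here: the entire content is packaged in Lemmas~\ref{lem:KR-closed}, \ref{lem:iteration}, \ref{lem:stable} and Theorem~\ref{thm:stable}, and the only thing to verify is that the stability $T=T^{\rKR}$ (resp.\ $T=T^{\KR}$) actually holds, which is one line from those lemmas. The subtlest point to keep straight is the left/right handedness: one must pair $\LZ$ with $\rKR$ (since left zero Mal'cev kernels remember exit behavior read left-to-right) and $\RZ$ with $\KR$, while $\RB$ is symmetric and handles both sides at once.
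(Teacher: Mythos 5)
Your proposal is correct and follows essentially the same route as the paper, whose entire proof is the remark that the corollary follows by combining Lemma~\ref{lem:KR-closed} with Lemma~\ref{lem:iteration}; you simply make explicit the intermediate steps (stability $T=T^{\rKR}$ via Theorem~\ref{thm:stable}/Lemma~\ref{lem:stable}, and preservation of finite $\J$-above via Theorem~\ref{thm:malcev}) that the paper leaves implicit. The left/right pairing of $\LZ$ with $\rKR$ and $\RZ$ with $\KR$ is also exactly as in Figure~\ref{fig:expansions}.
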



%
%
%
%

We thus have
the following result, which shows that we can use straightline automata
(and in particular the factors of its base) to investigate Green's
relations in $S^\RB$.  This is a first small step toward converting
algebraic information into geometric form.

\begin{cor}[Green's relations]\label{cor:subwords}
Let $S$ be an $A$-semigroup, let $T=S^\RB$ and let $u$ and $v$ be
words in $A^+$.  Then $[u]_T \geq_{\mathscr J} [v]_T$ if and only if $u$ is
readable in $\str^T(v)$.
Moreover, $[u]_T \geq_{\mathscr R}
[v]_T$ if and only if $u$ is readable in $\str^T(v)$ starting at the
initial state if and only if $[u]_T$ is $\R$-equivalent to an element
represented by an initial segment of $v$, and $[u]_T \geq_{\mathscr L} [v]_T$
if and only if $u$ is readable in $\str^T(v)$ ending at the terminal state
if and only if $[u]_T$ is $\eL$-equivalent to an element represented
by a final segment of $v$.
\end{cor}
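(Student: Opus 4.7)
The corollary is essentially a repackaging of the preceding results specialized to $T = S^{\RB}$, and the plan is to assemble it from Corollary~\ref{cor:rb-str} (in both its right-handed and left-handed versions) together with the readability characterizations in Lemma~\ref{lem:cay-props}. The key opening observation is that, by Corollary~\ref{cor:rb-str}, the automata $\str^T(v)$ and $\Cay^T(v)$ coincide, so every readability statement on one automaton transfers to the other. With this identification in hand, I would handle the three Green relations one at a time.

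For the $\J$-statement I would simply combine $\str^T(v) = \Cay^T(v)$ with the first clause of Lemma~\ref{lem:cay-props}, which says $u$ is readable on $\Cay^T(v)$ if and only if $[u]_T \geq_{\J} [v]_T$. For the $\R$-statement I would prove the three-way equivalence by cycling through the implications. The biconditional between $[u]_T \geq_{\R} [v]_T$ and readability of $u$ from the initial state is again immediate from Lemma~\ref{lem:cay-props} plus the identification of the two automata. To close the cycle with the ``$\R$-equivalent to an initial segment'' condition, I would invoke the concluding assertion of Corollary~\ref{cor:rb-str}: every element $\R$-above $[v]_T$ is $\R$-equivalent to an initial segment of $v$. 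The reverse direction is immediate, since for any initial segment $v'$ of $v$ one has $[v']_T \geq_{\R} [v']_T [v'']_T = [v]_T$, so $\R$-equivalence with $[v']_T$ forces $[u]_T \geq_{\R} [v]_T$.

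The $\eL$-statement proceeds identically, but using the left-handed version of Corollary~\ref{cor:rb-str}, which is available because $S^{\RB}$ is stable under both the left-handed and right-handed Karnofsky--Rhodes expansions by Theorem~\ref{thm:stable}. That version gives both $\str^T(v) = \Cay^T(v)$ in the left-handed sense and the fact that every element $\eL$-above $[v]_T$ is $\eL$-equivalent to a final segment of $v$. Combining this with the third clause of Lemma~\ref{lem:cay-props} ($u$ readable on $\Cay^T(v)$ ending at $[v]_T$ iff $[u]_T \geq_{\eL} [v]_T$) closes the three-way cycle in exactly the same pattern as for $\R$.

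There is no genuine obstacle in this argument: all of the structural work was done in Corollary~\ref{cor:rb-str}, and before that in Theorem~\ref{thm:stable}, which guaranteed that the $\RB$-expansion is stable under $\rKR$ and $\KR$ and thus that $\str^T(v)$ is linear and equal to $\Cay^T(v)$. The content of the corollary is really the conceptual point that, after passing to $S^{\RB}$, the $\J$-, $\R$-, and $\eL$-orders on words translate cleanly into purely geometric conditions on the single finite acceptor $\str^T(v)$: readability, readability from the root, and readability into the terminal state, respectively.
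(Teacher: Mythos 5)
Your proposal is correct and follows exactly the route the paper intends: the corollary is stated without an explicit proof precisely because it is the combination of the identification $\str^T(v)=\Cay^T(v)$ (and its left-handed analogue) from Corollary~\ref{cor:rb-str} with the readability characterizations of Lemma~\ref{lem:cay-props}, which is what you assemble. The only cosmetic quibble is that in the $\eL$-case the passage from $\Cay^T(v)$ to $\str^T(v)$ for the ``readable ending at the terminal state'' clause uses the \emph{right}-handed identification you already established, while the left-handed version is needed only for the ``final segment'' clause; your argument has both available, so nothing is missing.
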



Another immediate corollary of these results is that loops in
straightline automata can be ``read backwards'' in the following
sense.

\begin{cor}[Reading backwards]\label{cor:read-back}
Let $S$ be an $A$-semigroup stable under the $\RB$-expansion and let
$w$ and $u$ be words in $A^+$.  If $wu$ and $w$ represent the same
element of $S$, then $w$ is readable in $\str^S(w)$ starting at $I$
and $u$ is readable as a loop starting and ending at the vertex
$[w]_S$.  Moreover, there is a partition of $w = w_1 w_2$, with $w_i
\in A^*$ such that $w_2$ is $\eL$-equivalent to $u$.  In particular,
there exists words $v$ and $v'$ such that $[v'u]_S = [w_2]_S$ and
$[vw_2]_S = [u]_S$ and the entire configuration of paths shown in
Figure~\ref{fig:loop} can be read in $\str^S(w)$ starting at $I$.
\end{cor}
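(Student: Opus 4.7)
The plan is to unpack the identity $[wu]_S=[w]_S$ into geometric data inside $\str^S(w)$, using throughout that $\str^S(w)=\Cay^S(w)$ by Corollary~\ref{cor:rb-str}, and the Green's relations translation from Corollary~\ref{cor:subwords}. The word $w$ is readable from $I$ by the very definition of $\str^S(w)$. For the loop claim, note that $[wu]_S=[w]_S$ means the path labeled $u$ from $[w]_S$ in $\Cay(S,A)$ returns to $[w]_S$. For any prefix $u'$ of $u$, write $u=u'u''$; then $[wu']_S\cdot [u'']_S=[w]_S$, so $[w]_S\geq_{\R}[wu']_S\geq_{\R}[w]_S$, giving $[wu']_S\mathrel{\R}[w]_S$. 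Thus every vertex visited by the loop lies in the Sch\"utzenberger graph of $[w]_S$, which is the bottommost strong component of the linear automaton $\str^S(w)$.

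Next I would extract the partition. The equation $[wu]_S=[w]_S=[w]_S\cdot[u]_S$ gives $[w]_S\in S^I[u]_S$, so $[u]_S\geq_{\eL}[w]_S$. Applying the $\eL$-part of Corollary~\ref{cor:subwords} to the semigroup $T=S^{\RB}=S$ (using that $S$ is stable under $\RB$), I conclude that $[u]_S$ is $\eL$-equivalent to an element represented by a final segment of $w$, i.e.\ there is a factorisation $w=w_1w_2$ with $w_i\in A^*$ and $[u]_S\mathrel{\eL}[w_2]_S$.

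Having this $\eL$-equivalence, choose $a,a'\in S^I$ with $a[w_2]_S=[u]_S$ and $a'[u]_S=[w_2]_S$, then lift to words $v,v'\in A^*$ with $[vw_2]_S=[u]_S$ and $[v'u]_S=[w_2]_S$ (taking the empty word when the required element of $S^I$ is the adjoined identity). This gives the desired $v,v'$. It remains to check that the whole configuration of Figure~\ref{fig:loop} is actually drawable inside $\str^S(w)$ starting at $I$: the spine $w=w_1w_2$ is there by construction; the loop $u$ at $[w]_S$ was handled in the first paragraph; the word $v'u$ ends at $[v'u]_S=[w_2]_S$ and, since $[w_2]_S\mathrel{\J}[w]_S$ (it is $\eL$-equivalent to $[u]_S$, which is $\J$-equivalent to $[w]_S$), the $\J$-part of Corollary~\ref{cor:subwords} tells us $v'u$ is readable in $\str^S(w)$ starting at $I$. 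The same reasoning handles $vw_2$.

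The only piece requiring care is the last one: one has to verify that every intermediate vertex traversed by $v'u$ and $vw_2$ (not merely the endpoints) really lies in $\str^S(w)=\Cay^S(w)$, and this is precisely the content of Corollary~\ref{cor:subwords}, which is why stability under $\RB$ is essential. I expect this verification to be the main (if modest) obstacle, since the intuitive picture of the figure is entirely clear once one knows that reading any word $\J$-above $[w]_S$ keeps one inside $\str^S(w)$.
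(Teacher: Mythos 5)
Your proposal is correct and follows essentially the same route as the paper: the paper's (very terse) proof likewise observes that the first statement is clear, derives $[u]_S\geq_{\eL}[w]_S$ from $[wu]_S=[w]_S$, invokes Corollary~\ref{cor:subwords} to obtain $w=w_1w_2$ with $[u]_S\mathrel{\eL}[w_2]_S$, and dismisses the rest as straightforward; you simply supply the details. One tiny caveat: your parenthetical claim that $[u]_S$ is $\J$-equivalent to $[w]_S$ need not hold (only $[u]_S\geq_{\eL}[w]_S$ is guaranteed), but this is harmless since readability in $\str^S(w)$ only requires $[v'u]_S=[w_2]_S\geq_{\J}[w]_S$, which follows from $w_2$ being a suffix of $w$.
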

\begin{proof}
The first statement is clear.  By Corollary~\ref{cor:subwords}, since $[u]_S\geq_{\eL} [w]_S$, we can write $w=w_1w_2$ with $[u]_S\eL [w_2]_S$.  The remaining statements of the corollary are straightforward.
\end{proof}

\begin{figure}[ht]
\begin{tabular}{cc}
\begin{tabular}{c}
\psfrag{1}{$1$}
\psfrag{alpha}{$w$}
\psfrag{beta}{$u$}
\includegraphics[scale=.8]{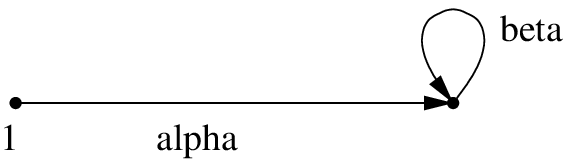}
\end{tabular}
&
\begin{tabular}{c}
\psfrag{alpha1}{$w_1$}
\psfrag{alpha2}{$w_2$}
\psfrag{beta}{$u$}
\psfrag{x}{$v$}
\psfrag{y}{$v'$}
\psfrag{1}{$1$}
\includegraphics[scale=.8]{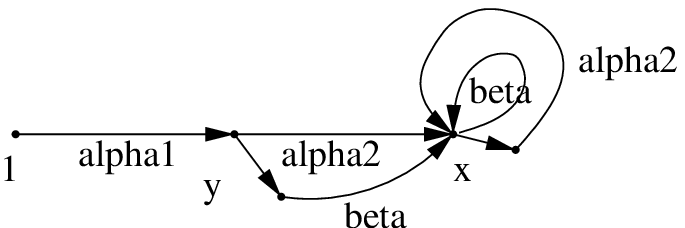}
\end{tabular}
\end{tabular}
\caption{A standard loop before and after factoring.\label{fig:loop}}
\end{figure}

Corollary~\ref{cor:read-back} takes on added significance once we
prove that $S^\RB$ can be presented using only relations of the form
$uv=u$.

\begin{Def}[Loop relations]\label{def:loops}
A \emph{loop relation} is one of the form $uv = u$ for words $u,v \in
A^*$.  An $A$-semigroup has a \emph{loop presentation} if it has
presentation in which each relation is a loop relation.  When focusing
on the $\eL$-order instead of the $\R$-order, we consider
\emph{reverse loop relations} $uv=v$, which lead to \emph{reverse loop
presentations}.  Notice that Proposition~\ref{relationsforcayley} immediately implies that the Cayley graph of a semigroup with a loop presentation is a loop automaton.
\end{Def}

\begin{lem}\label{lem:rb-loop-pres}
For each $A$-semigroup $S$, the semigroup $S^\RB$ can be given a loop
presentation.  In particular, $S^\RB$ can be defined by adding the
relation $uv=u$ whenever the words $u,v \in A^+$ represent the same
idempotent $e$ in $S^\RB$.  The analogous reverse loop presentation is
another presentation of $S^\RB$.
\end{lem}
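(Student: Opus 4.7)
The plan is to introduce $T = A^+/\sim'$, where $\sim'$ is the congruence on $A^+$ generated by the loop pairs $R = \{(uv,u) \mid u,v\in A^+,\ [u]_{S^{\RB}}=[v]_{S^{\RB}}\text{ is an idempotent of }S^{\RB}\}$, and then prove that $T\cong S^{\RB}$ as $A$-semigroups. This yields the loop presentation of $S^{\RB}$; the reverse loop presentation then follows by a symmetric argument using the pairs $R'=\{(uv,v):[u]_{S^{\RB}}=[v]_{S^{\RB}}\text{ idempotent}\}$. First I would check that each relation in $R$ holds in $S^{\RB}$: if $[u]_{S^{\RB}}=[v]_{S^{\RB}}=e$ with $e^2=e$, then $[uv]_{S^{\RB}}=e\cdot e=e=[u]_{S^{\RB}}$, so $\sim'\subseteq\sim_{S^{\RB}}$ and there is a surjective $A$-morphism $\pi\colon T\twoheadrightarrow S^{\RB}$; composing with the canonical projection $S^{\RB}\to S$ gives an induced $A$-morphism $T\to S$.

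The main work is to verify that $T\to S$ has Mal'cev kernel in $\RB$, i.e., that for each idempotent $f\in S$ the preimage $M_f\subseteq T$ satisfies the rectangular band identities $x^2=x$ and $xyx=x$. For $x^2=x$, take $u\in A^+$ with $[u]_S=f$; then $[u]_{S^{\RB}}$ lies in the rectangular band fibre over $f$ in $S^{\RB}$ and is therefore idempotent, so applying $R$ to the pair $(u,u)$ gives $uu\sim' u$. For $xyx=x$, take $u,v$ with $[u]_S=[v]_S=f$; then inside the rectangular band fibre, $[uvu]_{S^{\RB}}=[u]_{S^{\RB}}[v]_{S^{\RB}}[u]_{S^{\RB}}=[u]_{S^{\RB}}$, so $u$ and $uvu$ represent the same idempotent of $S^{\RB}$; applying $R$ to the pair $(u,uvu)$ yields $u\cdot uvu\sim' u$, while on the other hand $u\cdot uvu=(uu)\cdot vu\sim' u\cdot vu=uvu$ by the previous step, whence $uvu\sim' u$. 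Thus $M_f\in\RB$.

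Since the Mal'cev kernel of $T\to S$ lies in $\RB$, the universal property of $S^{\RB}$ from Lemma~\ref{lem:malcev} produces a unique $A$-morphism $\sigma\colon S^{\RB}\to T$ over $S$; because $A$-morphisms between $A$-semigroups are uniquely determined by the images of the generators, the compositions $\sigma\pi$ and $\pi\sigma$ must both be the identity, and so $T\cong S^{\RB}$. The reverse loop argument differs only in the $xyx=x$ step, where one instead applies $R'$ to $(uvu,u)$ to obtain $uvu\cdot u\sim'' u$, which simplifies to $uvu\sim'' u$ via $u^2\sim'' u$. The main obstacle is the $xyx=x$ verification: since $[uv]_{S^{\RB}}$ is usually distinct from $[u]_{S^{\RB}}$ inside the rectangular band fibre, no single loop relation between $u$ and $v$ directly produces $uvu\sim' u$. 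The key trick is to apply the loop relation with the second coordinate $uvu$ rather than $v$, exploiting the fibre identity $[uvu]_{S^{\RB}}=[u]_{S^{\RB}}$; once this is observed, $x^2=x$ finishes the job.
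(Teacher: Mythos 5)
Your proof is correct, and it is a more careful version of what the paper leaves implicit. The paper's entire proof is a citation of Remark~\ref{rmk:apresentation}, which presents $S^{\V}$ by instantiating a basis of identities for $\V$ at tuples of words mapping to the same idempotent of $S$; choosing the basis $\{x^2=x,\ xyx=x\}$ for $\RB$, each instance $w_1w_2w_1=w_1$ is a loop relation $u(vu)=u$, which gives the first sentence of the lemma at once. However, that generating set is indexed by idempotents of $S$, whereas the ``in particular'' clause of the lemma uses pairs of words representing the same idempotent of $S^{\RB}$, and the two sets of defining relations are not literally the same (e.g.\ $[w_2w_1]_{S^{\RB}}$ is generally a different element of the fibre than $[w_1]_{S^{\RB}}$, so $w_1w_2w_1=w_1$ is not on its face one of the lemma's relations). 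Your argument closes exactly this gap: you work directly with the relation set named in the statement, and the key step --- deriving $uvu\sim' u$ by applying the loop relation to the pair $(u,uvu)$, which does represent the idempotent $[u]_{S^{\RB}}$, and then cancelling the extra $u$ via $u^2\sim' u$ --- is the observation the paper's one-line proof glosses over. Both arguments ultimately rest on the same universal property (Lemma~\ref{lem:malcev}), so the overall strategy is the paper's; what your write-up buys is an explicit verification that the congruence generated by the lemma's specific loop relations already forces each fibre over an idempotent of $S$ to satisfy $x^2=x$ and $xyx=x$.
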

\begin{proof}
This is immediate from Remark~\ref{rmk:apresentation}.
\end{proof}

The fact that $S^\RB$ has a loop presentation has important
consequences for its straightline automata.  In particular, they are
all loop automata.

\begin{lem}\label{lem:loop-pres->aut}
If $S$ is a finite $\J$-above $A$-semigroup with a loop presentation,
then, for all words $w\in A^+$, $\Cay^S(w)$ is a loop automaton.
\end{lem}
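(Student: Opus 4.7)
The plan is to construct an explicit loop presentation for $\Cay^S(w)$ by starting from a loop presentation of $\Cay(S,A)$ and adjoining dead-end relations. Since $S$ has a loop presentation $\langle A \mid R\rangle$ whose relations all take the form $uv = u$, Proposition~\ref{relationsforcayley} presents $\Cay(S,A)$ by the family of relations $xuv = xu$ with $x \in A^*$ and $(uv = u) \in R$. Each such relation is itself a loop relation, so $\Cay(S,A)$ is already a loop automaton, as remarked after Definition~\ref{def:loops}.

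Let $X = \{\, s \in S^I \mid s \geq_{\R} [w]_S \,\}$ denote the vertex set of $\Cay^S(w)$. I would then take as candidate presentation the loop relations above together with the dead-end relations
\[ D = \{\, ua = \square \mid u \in A^*,\ a \in A,\ [u]_S \in X,\ [ua]_S \notin X \,\}. \]
Let $\sim$ denote the right-congruence on $A^* \cup \{\square\}$ induced by these relations, and define $\phi \colon A^* \cup \{\square\} \to V(\Cay^S(w)) \cup \{\square\}$ by $\phi(u) = I \cdot u$; explicitly, $\phi(u) = [u]_S$ if every prefix of $u$ lies in $X$, and $\phi(u) = \square$ otherwise. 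The dead-end relations hold under $\phi$ by construction, and each loop relation $xuv = xu$ holds because $[xuv]_S = [xu]_S$ in $S$, so this common element lies in $X$ on one side if and only if it does on the other, yielding $\phi(xuv) = \phi(xu)$. Thus $\phi$ factors through a surjective $\mathfrak V$-algebra morphism $\bar\phi \colon (A^* \cup \{\square\})/{\sim} \twoheadrightarrow V(\Cay^S(w)) \cup \{\square\}$.

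The remaining task is to prove $\bar\phi$ is injective, and I would split into two cases. If $\phi(u) = \phi(v) = \square$, then $u$ has some prefix leaving $X$; I would factor $u = u'au''$ with $u'$ the maximal prefix satisfying $[u']_S \in X$, apply the dead-end relation $u'a = \square$, and then propagate using $\square a = \square$ to conclude $u \sim \square$, and symmetrically for $v$. If $\phi(u) = \phi(v)$ is a common element $s \in X$, then $[u]_S = [v]_S$ in $S$, so $u$ and $v$ are connected by a finite chain of applications of relations from $R$ inside $A^+$; by Proposition~\ref{relationsforcayley} each step of this chain is already realized in the loop presentation of $\Cay(S,A)$, hence also in $\sim$, giving $u \sim v$. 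The one subtle point is this second case, since intermediate words in the rewriting chain can have images in $S$ that lie outside $X$; however, the loop relations rewrite words purely syntactically, without reference to $X$, so the derivation is not obstructed. Apart from this observation the argument is a routine unpacking of the definitions, so this syntactic-versus-semantic distinction is the only step that requires any care.
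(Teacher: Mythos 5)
Your proof is correct and follows essentially the same route as the paper, which simply asserts that $\Cay^S(w)$ is presented by the loop relations inherited from $\Cay(S,A)$ (via Proposition~\ref{relationsforcayley}) together with the dead-end relations $ua=\square$ for $[u]_S\geq_{\R}[w]_S$, $[ua]_S\ngeq_{\R}[w]_S$. You have merely supplied the verification the paper leaves implicit; your ``subtle point'' about intermediate words leaving $X$ is in fact vacuous, since $X$ is upward closed under $\geq_{\R}$ and all words in the rewriting chain represent the same element of $X$, so all their prefixes stay in $X$.
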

\begin{proof}
We already observed that $\Cay(S,A)$ is a loop automaton.  Clearly a loop presentation of $\Cay^S(w)$ can be obtained by using the loop relations from the presentation of $\Cay(S,A)$ and by adding the dead-end relations $ua=\square$ where $[u]_S\geq_{\R}[w]_S$, $a\in A$ and $[ua]_S\ngeq_{\R}[w]_S$.
\end{proof}

Actually, more is true.  For each specific word $w$ only a finite
number of the loop relations are needed in order to build its
straightline automaton.

\begin{lem}\label{lem:rb-loop-aut}
If $T$ is a finite $\J$-above $A$-semigroup with  $T=T^\RB$, and $w$ is a
word in $A^+$, then the automaton $\str^T(w)$ is a loop automaton
defined by a finite number of loop relations.
\end{lem}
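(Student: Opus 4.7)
The plan is to combine the fact that $\str^T(w)$ is finite with the (a priori infinite) loop presentation supplied by Lemma~\ref{lem:loop-pres->aut}, and then extract a finite sub-presentation by exploiting the finiteness of the state set.

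First I would invoke Corollary~\ref{cor:rb-str} to see that $\str^T(w)=\Cay^T(w)$ is a trim, deterministic, linear $A$-acceptor, and its vertex set $V := V(\str^T(w))$ is finite because $T$ is finite $\J$-above and every vertex represents an element $\R$-above $[w]_T$. By Lemma~\ref{lem:loop-pres->aut}, $\str^T(w)$ is a loop automaton with some (possibly infinite) presentation $\langle L \cup D\rangle$ where $L$ consists of loop relations $uv=u$ and $D$ of dead-end relations $u=\square$; write $\sim$ for the congruence on $A^*\cup\{\square\}$ generated by $L\cup D$. For each $q\in V$, fix a word $u_q\in A^*$ reading from $I$ to $q$ in $\str^T(w)$ (possible by trimness), taking $u_I$ to be the empty word.

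Next I would extract the finite subset. Let $D_0 = \{u_q a = \square \mid q\in V,\ a\in A,\ qa \text{ undefined}\}$; this is finite since $|V|$ and $|A|$ are. For each directed edge $q\xrightarrow{a}q'$ of $\str^T(w)$, both $u_q a$ and $u_{q'}$ read to $q'$ in $\str^T(w)$, so $u_q a \sim u_{q'}$; this identity is witnessed by a finite chain of one-step derivations $\Rightarrow$ (in either direction), each invoking a single relation from $L\cup D$. Collect the finitely many relations used into $L_{q,a}\subseteq L$ and $D_{q,a}\subseteq D$. Set $L_0 := \bigcup_{(q,a)}L_{q,a}$ and $D_1 := D_0\cup\bigcup_{(q,a)}D_{q,a}$; these are finite because the edge set of $\str^T(w)$ is. Write $\sim_0$ for the congruence on $A^*\cup\{\square\}$ generated by $L_0\cup D_1$.

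The main verification is that $\langle L_0\cup D_1\rangle$ still presents $\str^T(w)$. Since $L_0\cup D_1\subseteq L\cup D$, we have $\sim_0\,\subseteq\,\sim$, so there is a well-defined surjective $\mathfrak V$-morphism $\pi\colon (A^*\cup\{\square\})/{\sim_0}\twoheadrightarrow(A^*\cup\{\square\})/{\sim}\cong V\cup\{\square\}$. A routine induction on word length shows that, starting from $[I]_{\sim_0}$, acting by an $a\in A$ either lands in $[\square]_{\sim_0}$ (via the appropriate relation in $D_0$) or in $[u_{q'}]_{\sim_0}$ where $q' = qa$ (via the identifications witnessed by $L_{q,a}\cup D_{q,a}$). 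Hence the $\mathfrak V$-subalgebra generated by $[I]_{\sim_0}$ contains at most $|V|+1$ classes, namely $\{[u_q]_{\sim_0}\mid q\in V\}\cup\{[\square]_{\sim_0}\}$, and $\pi$ sends these to distinct elements of $V\cup\{\square\}$. Thus $\pi$ restricts to a bijection on this subalgebra, i.e., $\langle L_0\cup D_1\rangle$ and $\langle L\cup D\rangle$ determine the same pointed automaton, namely $\str^T(w)$.

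The main obstacle is simply ensuring that the finite, edge-by-edge choices of derivations are jointly sufficient to realize the $A$-action on the representatives $u_q$; once this is arranged, the state-counting argument closes everything without further calculation.
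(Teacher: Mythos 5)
Your proof is correct, but it takes a genuinely different route from the paper's. You treat the problem as a pure compactness statement: $\str^T(w)$ is a finite one-generated $\mathfrak V$-algebra, Lemma~\ref{lem:loop-pres->aut} hands you a (possibly infinite) loop presentation, and you extract a finite sub-presentation by fixing a representative word $u_q$ for each of the finitely many states, harvesting the finitely many relations used in derivation chains realizing each transition $u_qa\sim u_{q'}$, and then closing with a counting argument ($\pi$ is a surjection between sets of the same finite cardinality, hence a bijection). This is sound: the quotient by your $\sim_0$ is generated by the class of the empty word, so your induction really does bound the whole state set by $|V|+1$, and the resulting isomorphism of pointed $\mathfrak V$-algebras is an isomorphism of pointed automata. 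The paper instead argues quantitatively via Ramsey theory: after passing to a Rees quotient by a cofinite ideal to make $T$ finite, every word longer than $n=[3]^2_{|T|}$ contains consecutive factors $u,v$ with $[u]_T=[v]_T=[uv]_T$ idempotent, so the loop relations $uv=u$ with $|u|,|v|\leq n$ already reduce every word to length at most $n$, and finitely many further relations among short words finish the job. The trade-off is clear: your argument is more general (it shows that \emph{any} finite loop automaton admitting a loop presentation is defined by finitely many loop relations, with no input from the semigroup structure beyond finiteness of the state set) and avoids the reduction to a finite Rees quotient, while the paper's argument is effective, producing an explicit a priori bound on the lengths of the words appearing in the defining relations, which your derivation-chain extraction does not provide.
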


\begin{proof}
By Corollary~\ref{cor:rb-str}, Lemma~\ref{lem:rb-loop-pres} and Lemma~\ref{lem:loop-pres->aut},
$\str^T(w)$ is a loop automaton, but we must prove that it requires
only a finite number of loop relations.  There is a cofinite ideal $I$ of $T$ so that $\str^T(w)=\str^{T/I}(w)$ so we may assume without loss of generality that $T$ is finite.

By Ramsey theory~\cite{GrRoSp80}, if a word $W\in A^+$ is sufficiently long relative to
the size of $T$, then $W$ can be partitioned into factors so that $W
= w_1 u v w_2$ with $w_i\in A^*$ and where $u$, $v$ and $uv$ all represent
the same element of $T$ (which is necessarily an idempotent).  In the
notation from~\cite{GrRoSp80} we need $|W| > [3]^2_m$ where $m=|T|$; call this lower bound $n$.
Indeed, if $W=a_1\cdots a_n$ with the $a_i$ letters, then consider the complete graph $\Gamma$ on $\{0,\ldots, n\}$.  We color the edges of $\Gamma$ by elements of $T$.  More precisely, if $i<j$ are elements of $\{0,\ldots,n\}$, then the edge between $i$ and $j$ is colored by $[a_{i+1}\cdots a_j]_T$.  Ramsey's theorem~\cite{GrRoSp80} provides a monochromatic triangle.  Such a triangle corresponds to $i<j<k$ with \[[a_{i+1}\cdots a_j]_T = [a_{j+1}\cdots a_k]_T = [a_{i+1}\cdots a_k]_T\] and so taking $u=a_{i+1}\cdots a_j$, $v=a_{j+1}\cdots a_k$ gives the required partition.

Thus by applying a loop relation of the form $uv=u$, with $|u|, |v|
\leq n$, and $u$ and $v$ representing the same idempotent in $T$, all words
are equivalent to words of length at most $n$.  For this finite number
of short words, a finite number of loop relations of the form
$u_iv_i=u_i$ with $u_i,v_i$ equal to the same idempotent in $T$ are needed to derive whatever further equalities are
needed.  This gives a \emph{finite} list of defining loop relations
for $\str^T(w)$.
\end{proof}

Next, we show that $S^\RB$ is almost never a monoid, but before doing
so, we note that when $S$ is the trivial semigroup generated by $A =
\{a\}$, then $S^\RB$ is still trivial.  This is because the entire
semigroup must be a rectangular band and a one-generated rectangular
band is trivial.  On the other hand, this turns out to be the only
situation where $S^\RB$ is a monoid.

\begin{lem}[Almost never a monoid]\label{lem:not-monoid}
If $S$ is an $A$-semigroup then $S^\RB$ is not a monoid unless $|A|=1$ and $S=\{1\}$.
\end{lem}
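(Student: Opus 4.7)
The plan is to split into two cases according to $|A|$.

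\textbf{Case 1 ($|A|\geq 2$).} Applying the functor $(-)^\RB$ to the collapsing $A$-morphism $S \to \{1\}$ yields a surjective $A$-morphism $S^\RB \twoheadrightarrow \{1\}^\RB$. The universal property in Lemma~\ref{lem:malcev} identifies $\{1\}^\RB$ as the free rectangular band on $A$: since every element of $\{1\}^\RB$ maps to the unique idempotent of $\{1\}$, the Mal'cev kernel condition forces all of $\{1\}^\RB$ to be a rectangular band, so $\{1\}^\RB$ is the largest $A$-generated rectangular band, namely $A\times A$ with operation $(a,b)(c,d)=(a,d)$. When $|A|\geq 2$, this semigroup has no identity: an identity $(x,y)$ would have to satisfy $(x,y)(c,d)=(c,d)$ for every $c\in A$, which forces $x=c$ for all $c\in A$. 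Since a surjective homomorphism sends an identity to an identity, $S^\RB$ cannot be a monoid.

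\textbf{Case 2 ($|A|=1$ and $S\ne\{1\}$).} Write $A=\{a\}$, so every word in $A^+$ is a power of $a$. I work directly from the presentation of $S^\RB$ in Remark~\ref{rmk:apresentation}: $S^\RB=A^+/{\sim}$, where $\sim$ is the congruence generated by relations $w^2=w$ and $w_1w_2w_3=w_1w_3$ obtained by substituting words mapping to a common idempotent of $S$ into the rectangular band identities $x^2=x$ and $xyz=xz$. The key observation is that $a$ itself is not an idempotent of $S$: if $a\cdot a=a$ in $S$, then by induction $a^n=a$ in $S$ for every $n\geq 1$, forcing $S=\{a\}\cong\{1\}$, contrary to hypothesis. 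Consequently every $a^j$ whose image in $S$ is idempotent has $j\geq 2$, and so both sides of every defining relation are words of length at least $2$. Since the congruence propagates only by replacing subwords that are sides of defining relations, no relation can be applied to the length-one word $a$. Hence the $\sim$-class of $a$ is the singleton $\{a\}$, and in particular $a\ne a^m$ in $S^\RB$ for every $m\geq 2$. If $S^\RB$ were a monoid with identity $e$, then since $a$ generates $S^\RB$ we would have $e=a^k$ for some $k\geq 1$, and $e\cdot a=a$ would read $a^{k+1}=a$ in $S^\RB$ with $k+1\geq 2$, a contradiction.

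\textbf{Main obstacle.} Case 1 is essentially immediate from the universal property once one recognizes $\{1\}^\RB$ as the free rectangular band on $A$. The delicate part is Case 2: the whole argument pivots on the observation that $S\ne\{1\}$ is precisely the hypothesis needed to prevent $a$ from being an idempotent of $S$, and this in turn is what forces every defining relation to have both sides of length at least $2$, leaving $a$ untouched in its own $\sim$-class.
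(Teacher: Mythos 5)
Your proof is correct, and the two halves relate to the paper's argument differently. Case 1 is essentially the paper's: both reduce to the surjection $S^\RB\twoheadrightarrow \{1\}^\RB$ onto the free rectangular band on $A$ (the paper invokes Lemma~\ref{lem:fb-sl}, you rederive the identification from the universal property and exhibit $A\times A$ explicitly). Case 2 is genuinely different. The paper first disposes of the sub-case where $S$ is not a monoid, reduces the remaining case to $S$ a finite cyclic group of order $n\geq 2$, and then produces an explicit non-monoid quotient of $S^\RB$, namely $T=\langle a\mid a^2=a^{2+n}\rangle$, by checking that $T\to S$ has trivial Mal'cev kernel and invoking the universal property of Lemma~\ref{lem:malcev}. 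You instead work directly with the presentation of $S^\RB$ from Remark~\ref{rmk:apresentation}: since $S\neq\{1\}$ forces $a$ to be non-idempotent in $S$, every defining relation has both sides of length at least $2$, so the congruence class of the length-one word $a$ is a singleton and $a$ admits no left identity. Your route is more uniform (no case split on whether $S$ is a monoid, no structure theory of cyclic semigroups, no Mal'cev-kernel verification for an auxiliary quotient), at the modest cost of reasoning about the generated congruence on $A^+$ rather than just mapping onto a small concrete witness; both are sound, and yours is arguably the more elementary of the two.
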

\begin{proof}
If $|A|\geq 2$, then the collapsing morphism $S\to \{1\}$ yields a surjective morphism $S^{\RB}\to\{1\}^{\RB}$ (where $\{1\}$ is viewed as $A$-generated).  Since $\{1\}^{\RB}$ is the free rectangular band on $A$, and hence not a monoid, it follows that $S^{\RB}$ is not a monoid.  So we may suppose that $|A|=1$, i.e., $S$ is cyclic.  If $S$ is not a monoid, then clearly $S^{\RB}$ is also not a monoid.  So we may assume that $S$ is a finite cyclic group of order $n\geq 2$ generated by $a$.  Let $T$ be the semigroup given by the presentation $\langle a\mid a^2=a^{2+n}\rangle$.  Then the canonical morphism $T\to S$ has trivial Mal'cev kernel.  Thus $T$ is a non-monoid homomorphic image of $S^{\RB}$ and so $S^{\RB}$ is not a monoid.
\end{proof}

As a consequence of Lemma~\ref{lem:not-monoid}, our decision to use
$S^I$ rather than only adding an identity when one is not already
present is irrelevant once $S$ is stable under the $\RB$-expansion.
%

\subsection{Improving stabilizers}\label{sec:stab}
In this short subsection we first consider the effect of the rectangular
band expansion on stabilizers, and then we show how an expansion due
to Le Saec, Pin and Weil can further simplify their structure.

\begin{Def}[Stabilizers]\label{def:stab}
Let $S$ be a semigroup.  The \emph{right stabilizer} of $s\in S$, is
the subsemigroup $S_s=\{t\mid st=s\}$.  Similarly, there are
\emph{left stabilizers} ${}_sS = \{t\mid ts=s\}$ and \emph{double
  stabilizers} \[{}_{s'}S_s = {}_{s'}S \cap S_s = \{t\mid ts=s \textrm{
  and } s't=s'\}.\]
\end{Def}

The next lemma allows us to equate right stabilizers with loops
accepted by Sch\"utzenberger graphs, thus turning the study of right
stabilizers into an investigation into the properties of various
automata.

\begin{lem}[Loops in $\sch^S(s)$]\label{lem:stab=loops}
Let $S$ be an $A$-semigroup and let $s$ be an element of $S$.  The
elements of $S$ represented by words accepted by $\sch^S(s)$ are
precisely the right stabilizers of $s$, i.e., the elements in $S_s$.
\end{lem}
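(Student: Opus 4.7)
The plan is to unpack the definitions on both sides and then exploit determinism of $\Cay(S,A)$. A word $w = a_1\cdots a_n \in A^+$ is accepted by $\sch^S(s)$ (with initial and terminal state both equal to the vertex labeled $s$) precisely when there is a directed path in the full subgraph $\sch^S(s)$ labeled by $w$ that begins and ends at $s$. Since $\sch^S(s)$ is a full subgraph of the deterministic automaton $\Cay(S,A)$, readability in $\sch^S(s)$ agrees with readability in $\Cay(S,A)$ along edges that happen to land in $\sch^S(s)$; and by determinism there is at most one path in $\Cay(S,A)$ starting at $s$ with label $w$, namely $s\to sa_1\to sa_1a_2\to\cdots\to s[w]_S$ (interpreting the vertex labels inside $S^I$).

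First I would prove the easy direction. If $w$ is accepted by $\sch^S(s)$, then by reading this path inside $\Cay(S,A)$ and using determinism, its terminal vertex is simultaneously $s[w]_S$ and (by acceptance) $s$; hence $s[w]_S = s$, that is, $[w]_S \in S_s$.

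For the converse, suppose $[w]_S \in S_s$, so $s[w]_S = s$. The unique path in $\Cay(S,A)$ starting at $s$ with label $w$ visits the vertices $v_i := s\cdot a_1\cdots a_i$ for $i = 0,\ldots,n$, with $v_0 = v_n = s$. The one thing to verify is that every intermediate vertex $v_i$ actually lies in the strong component $\sch^S(s)$, so that the path is a path in $\sch^S(s)$. This is immediate from Green's relations: $v_i = s\cdot(a_1\cdots a_i) \leq_{\mathscr{R}} s$ in $S^I$, while $s = v_n = v_i\cdot(a_{i+1}\cdots a_n) \leq_{\mathscr{R}} v_i$; hence $v_i \mathrel{\mathscr{R}} s$ and $v_i \in V(\sch^S(s))$. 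Consequently the entire path, together with all its edges (which are edges of $\Cay(S,A)$ with both endpoints in $\sch^S(s)$, hence edges of the full subgraph $\sch^S(s)$), is a loop in $\sch^S(s)$ at $s$, and so $w$ is accepted.

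There is no real obstacle here; the one point that could trip a reader up is the need to check, in the converse direction, that the path stays inside the strong component rather than merely returning to $s$ inside the ambient Cayley graph, and the Green's relation calculation above takes care of that. The argument works uniformly for any $A$-semigroup $S$, with no finiteness hypothesis, since it only uses determinism of $\Cay(S,A)$ and the fact that $\sch^S(s)$ is the full subgraph on the $\mathscr{R}$-class of $s$ in $S^I$.
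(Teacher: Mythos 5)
Your proof is correct and follows essentially the same route as the paper's: both directions rest on determinism of the Cayley graph plus the observation that the intermediate vertices $v_i$ satisfy $v_i\leq_{\mathscr R} s\leq_{\mathscr R} v_i$ and hence stay in the $\R$-class of $s$. If anything, you make explicit the Green's-relations check that the paper's proof passes over when it asserts the loop is read "in the strong component of $[u]_S$," so nothing is missing.
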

\begin{proof}
If $u$ is a word representing $s$ and $v$ is a word accepted by
$\sch^S(s)$, then $uv$ is accepted by $\Cay^S(u)$ so that $uv$ and $u$
both represent $s$, whence $s\cdot [v]_S = [uv]_S = [u]_S = s$, and $[v]_S$
is a right stabilizer of $s$.  Conversely, if $t$ is a right
stabilizer of $s$ and $u$ and $v$ represent $s$ and $t$ respectively, then $[uv]_S
= [u]_S$ and $uv$ is accepted by $\Cay^S(u)$.  Moreover, since
$\Cay^S(u)$ is deterministic, $v$ is read as a loop based at $[u]_S$
in the strong component of $[u]_S$, but this is the Sch\"utzenberger
graph $\sch^S(s)$ and $v$ is reading an accepted loop.
\end{proof}

In a left cancellative semigroup, a semigroup $S$ acts on the left of its Cayley graph by injective functions.  In the next proposition, we show that if $S$ is stable under the $\RZ$-expansion, then the left action of $S$ is injective when restricted to Sch\"utzenberger graphs.  Recall the a partial order is said to be \emph{unambiguous} if $x\leq y,z$ implies $y,z$ are comparable.

\begin{thm}\label{localcancellativity}
Let $S$ be a finite $\J$-above $A$-semigroup stable under the $\RZ$-expansion.  Then the $\eL$-order on $S$ is unambiguous.    Moreover, if $r,s,t\in S$ with $r\R t$ and $sr=st$, then $r=t$.  Consequently, if $s,t\in S$, then the natural map $\sch^S(t)\to \sch^S(st)$ induced by left multiplication by $s$ is an injective morphism of $A$-automata.
\end{thm}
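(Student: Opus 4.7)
The theorem has three assertions: unambiguity of $\leq_{\eL}$ (Part 1), left-cancellation on $\R$-classes (Part 2), and injectivity of the induced automaton morphism (Part 3). Part 1 is essentially immediate, Part 3 follows formally from Part 2, and the real content is Part 2.

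For Part 1, I would invoke the $\eL$-analogue of Corollary~\ref{cor:subwords}, valid for $T = S^\RZ$ by the remark following Corollary~\ref{cor:rb-str}: every element $\eL$-above $[w]_S$ is $\eL$-equivalent to some final segment of $w$. Since final segments of $w$ form a $\leq_{\eL}$-chain (a longer suffix is a left-multiple of a shorter one and hence $\eL$-below it), the $\eL$-upset of each element is itself a chain, which is unambiguity. For Part 3, the map $x \mapsto sx$ sends $x \R t$ to $sx \R st$ and preserves labels (an edge $x\xrightarrow{a} xa$ maps to $sx\xrightarrow{a}(sx)a$), so it is a morphism of $A$-automata; such a morphism is injective iff injective on vertices, which is precisely Part 2 applied to $(r, t) = (x, y)$.

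For Part 2, I would first use Green's $\R$-lemma to write $r = tu$ and $t = rv$ with $u, v \in S^I$; the hypothesis $sr = st$ then says $u$ and $v$ right-stabilize $sr = st$. Part 1 places both $r$ and $t$ in the $\eL$-chain above $sr$, and combined with $r \R t$ and Green--Rees stability for finite $\J$-above semigroups, this forces $r \HH t$; let $H$ denote their common $\HH$-class. I would then invoke Corollary~\ref{cor:read-back} --- whose proof uses only the $\eL$-half of Corollary~\ref{cor:subwords} and therefore applies under mere $\RZ$-stability --- to the stabilizer identities $(st)u = st$ and $(sr)v = sr$. A case analysis of the resulting suffix-factorizations, paired with a Green's-lemma choice placing $u, v$ inside $H$, yields relations of the form $r = tyt$ and $t = rzr$; substituting back gives $r = r(zryrz)r$ and $t = t(ytzty)t$, so $r$ and $t$ are regular, and hence $H$ is a group.

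Inside the group $H$, write $t = rh$ with $h \in H$, so the hypothesis becomes $(sr)h = sr$: the element $h$ right-stabilizes $sr$. By the Green--Rees theorem, left multiplication by $s$ on a group $\HH$-class is either bijective --- in which case $sr = st$ immediately forces $r = t$ --- or collapses $H$ to a single element strictly $\J$-below $H$. The hard part will be ruling out this collapse when $S = S^\RZ$. My plan is to construct an $A$-semigroup $T$ strictly dominating $S$ by a surjective $A$-morphism $T \twoheadrightarrow S$ whose Mal'cev kernel lies in $\RZ$, directly contradicting $S = S^\RZ$. The $T$ I have in mind blows up the collapsed image $sr$ into a family $\{c_h : h \in H\}$ indexed by $H$, declares $s \cdot h := c_h$, and extends multiplication minimally so that $T$ remains $A$-generated, projects onto $S$, and has right-zero fibers over the idempotents of $S$; since the collapse places $sr$ strictly $\J$-below $H$, the element $sr$ is not itself idempotent, so fibers over genuine idempotents of $S$ remain singletons (as in $S$) and condition on right-zero fibers is automatic. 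The substantive verification is associativity of the extended multiplication, which is enabled precisely by the constancy of $s$ on $H$. The main obstacle is this construction-and-verification of $T$; every other step is routine bookkeeping on top of the paper's existing infrastructure.
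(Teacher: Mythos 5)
Parts 1 and 3 of your plan are sound: unambiguity of the $\eL$-order does follow from the left-handed version of Corollary~\ref{cor:subwords} (suffixes of a word form an $\eL$-chain), and Part 3 is formal once Part 2 is known. The gap is in Part 2, and it sits exactly where you locate the difficulty. First, the reduction to ``$H$ is a group'' is not established: from $r=tu$, $t=rv$ and $sr=st$ you only learn that $u$ and $v$ right-stabilize $st$, and Corollary~\ref{cor:read-back} then tells you that $u$ (resp.\ $v$) is $\eL$-equivalent to a suffix of a word representing $st$; I see no route from there to identities of the form $r=tyt$ and $t=rzr$, and nothing in the hypotheses forces $H_r$ to contain an idempotent. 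Second, even granting a group $\HH$-class, the dichotomy you invoke is false as stated: if $\lambda_s$ is non-injective on a group $H$ with identity $f$, its fibers are the cosets of the stabilizer subgroup $K=\{h\in H\mid sfh=sf\}$, which need not be all of $H$. Third and decisively, the step that would actually finish the proof --- building an $A$-generated semigroup $T\twoheadrightarrow S$ with $|T|>|S|$ and Mal'cev kernel in $\RZ$ by ``blowing up'' the fiber over $sr$ --- is not carried out, and it is precisely the hard part: one must define all products involving the new elements $c_h$, verify associativity, verify that $T$ is still generated by the images of $A$, and verify that the fibers over \emph{all} idempotents of $S$ remain right-zero. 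Nor can you shortcut this by citing Theorem~\ref{thm:right-stab} ($\R$-triviality of right stabilizers) to kill a nontrivial $h$ with $srh=sr$, since that theorem is deduced from the statement you are trying to prove.

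For comparison, the paper's proof avoids all of this. It proves the dual statement for $\LZ$-stable $S$, using $S=S^{\rKR}$ (Theorem~\ref{thm:stable}): since $[uw]_S=[vw]_S$, the words $uw$ and $vw$ read the same chain of transition edges from $I$; either $[uw]_S\R[u]_S$, in which case Green's Lemma applied to right multiplication by $[w]_S$ on the common $\eL$-class gives $[u]_S=[v]_S$ at once, or both paths exit the Sch\"utzenberger graph of $[u]_S$ through the \emph{same} transition edge $e$, whence $[uw']_S=[vw']_S=\iota(e)$ for the longest prefix $w'$ of $w$ staying in that Sch\"utzenberger graph, and Green's Lemma applies with $w'$ in place of $w$. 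No regularity of the $\HH$-class and no maximality-of-expansion argument is needed.
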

\begin{proof}
It is conceptually easier to establish the dual result when $S$ is stable under the $\LZ$-expansion, which we proceed to do.  We use that $S=S^{\wedge\ell\pv 1}$.  Suppose that $r,t$ have a common lower bound in the $\R$-order on $S$.  Let $u,v,w,z\in A^+$ satisfy $[u]_S=r$, $[v]_S=t$ and $[uw]_S=[vz]_S$.  We prove that $r$ and $t$ are comparable in the $\R$-order.  By hypothesis, the paths read by $uw$ and $vz$ from $I$ in $\Cay(S,A)$ use the same set $\{e_1,\ldots,e_m\}$ of transition edges.  Since $\Cay(S,A)$ is linear, we may assume the indices are chosen so that $e_1\succ e_2\cdots \succ e_m$.   Hence the set of transition edges used by $u$  in its run from $I$ is of the form $\{e_1,\ldots, e_k\}$ for some $k\leq m$ and similarly the set used by $v$ is of the form $\{e_1,\ldots, e_r\}$ for some $r\leq m$.  Without loss of generality we may assume that $k\leq r$. It then follows that $[u]_S\geq_{\R} [v]_S$.

Next suppose that $r,t\in S$ satisfy $r\eL t$ and $rs=ts$.  Then since $rs=ts$ is a common lower bound to $r$ and $t$ in the $\R$-order, they are comparable  by the previous paragraph.   It follows that $[u]_S\HH [v]_S$ in $S$.    Choose $u,v,w\in A^+$ such that $[u]_S=r$, $[v]_S=t$ and $[w]_S=s$. There are two cases.  Suppose first $[uw]_S\R [u]_S$.  Then since $[uw]_S=[vw]_S$, Green's Lemma implies $[u]_S=[v]_S$, as required.  Next suppose that $[u]_S>_{\R} [uw]_S$.  As $[uw]_S=[vw]_S$, both $uw$ and $vw$ use the same transition edge $e$ when exiting the Sch\"utzenberger graph of $[u]_S$. Also note that if $w'$ is an initial segment of $w$, then $[uw']_S\eL [vw']_S$ and hence \[[uw']_S\R [u]_S\iff [uw']_S\J [u]_S\iff [vw']_S\J [u]_S\iff [vw']_S\R [u]_S\] as $[v]_S\HH [u]_S$.  It now follows that if $w'$ is the longest initial segment of $w$ so that $[uw']_S\R [u]_S$, then it is also the longest initial segment so that $[vw']_S\R [u]_S$ and moreover $[uw']_S=[vw']_S$ is the initial vertex of the transition edge $e$ (and hence belongs to the Sch\"utz\-en\-ber\-ger graph of $[u]_S$).  Then again Green's Lemma implies $[u]_S=[v]_S$.
\end{proof}

As a corollary, we obtain a structural result due to the second author
\cite{IIM2}.

\begin{thm}[Right stabilizers]\label{thm:right-stab}
If $S$ is a finite $\J$-above $A$-semigroup stable under the $\RZ$-expansion, then each
right stabilizer $S_s$ is finite $\R$-trivial and an $\eL$-chain in $S$, but not necessarily within the
subsemigroup $S_s$.
\end{thm}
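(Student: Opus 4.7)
The plan is to prove the three parts---finiteness of $S_s$, $\R$-triviality of $S_s$, and the $\eL$-chain property in $S$---each as a reasonably direct consequence of results already in hand. Finiteness is immediate: every $t\in S_s$ satisfies $s=st\in St$, so $s\leq_{\eL} t$ and hence $t\geq_{\J} s$, and by Lemma~\ref{lem:fja} the set of elements $\J$-above $s$ is finite.

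For $\R$-triviality within $S_s$, suppose $t,t'\in S_s$ are $\R$-related inside $S_s$, so there exist $u,v\in S_s\cup\{1\}$ with $tu=t'$ and $t'v=t$. These equations hold a fortiori in $S$, so $t\R t'$ in $S$. Since $st=s=st'$, the left-cancellation-within-$\R$-classes clause of Theorem~\ref{localcancellativity} (applied with the multiplier $s$) forces $t=t'$.

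The $\eL$-chain claim is the step that requires the most care, chiefly because the $\RZ$-stability hypothesis has its most transparent consequences in the dual, left-handed direction. The first sub-step is to upgrade the hypothesis: by Theorem~\ref{thm:stable}, $S^{\RZ}$ is stable under the left Karnofsky--Rhodes expansion $\KR$, and via $S\cong S^{\RZ}$ this transfers to $S$ itself. Fix any word $w\in A^+$ with $[w]_S=s$; the left-handed analogue of Lemma~\ref{lem:KR-closed} (equivalently, the dual half of Corollary~\ref{cor:rb-str}) then gives that every element of $S$ which is $\eL$-above $s$ is $\eL$-equivalent to some final segment of $w$. Any $t\in S_s$ satisfies $t\geq_{\eL}s$ (again because $s=st\in St$), so it is $\eL$-equivalent to a final segment of $w$. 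The final segments of $w$ are totally ordered by $\leq_{\eL}$ in $S$---a longer final segment is $\leq_{\eL}$ a shorter one, witnessed by left multiplication by the intervening prefix---so any two elements of $S_s$ are $\eL$-comparable in $S$. The qualifier ``not necessarily within the subsemigroup $S_s$'' reflects exactly that these witnessing left multipliers live in $S^1$ rather than in $S_s^1$. If any step is the ``main obstacle,'' it is this third one: one must bootstrap from $\RZ$-stability to $\KR$-stability via Theorem~\ref{thm:stable} and then translate to the correct dual/left-handed form of Corollary~\ref{cor:rb-str}, after which the actual algebraic content is light.
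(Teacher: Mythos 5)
Your proof is correct. The finiteness and $\R$-triviality arguments coincide with the paper's: both derive finiteness from $t\in S_s\Rightarrow t\geq_{\eL}s\Rightarrow t\geq_{\J}s$ together with the finite $\J$-above hypothesis (this is really just the definition of finite $\J$-above rather than Lemma~\ref{lem:fja}, but that is cosmetic), and both get $\R$-triviality from the cancellation clause of Theorem~\ref{localcancellativity} applied to $st=s=st'$. Where you genuinely diverge is the $\eL$-chain claim. The paper disposes of it in one line by citing the \emph{first} assertion of Theorem~\ref{localcancellativity}, namely that the $\eL$-order on $S$ is unambiguous: since every element of $S_s$ lies $\eL$-above the common lower bound $s$, unambiguity immediately makes $S_s$ an $\eL$-chain in $S$. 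You instead bootstrap $\RZ$-stability to $\KR$-stability via Theorem~\ref{thm:stable}, invoke the left-handed half of Lemma~\ref{lem:KR-closed} to see that every $t\geq_{\eL}s=[w]_S$ is $\eL$-equivalent to a final segment of $w$, and observe that final segments are totally $\eL$-ordered. This is valid, and it is in fact a transparent unpacking of essentially the same transition-edge mechanism that the paper uses (in dual form) to \emph{prove} unambiguity inside Theorem~\ref{localcancellativity}; what it costs you is an extra pass through the expansion machinery to establish a special case of a fact the paper has already recorded in full generality, and what it buys is a self-contained, concretely combinatorial witness (the suffixes of $w$) for the chain structure. Either route is fine; citing unambiguity directly is the shorter one.
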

\begin{proof}
Since each element of $S_s$ is $\eL$-above $s$, it is finite by assumption on $S$ and is an $\eL$-chain in $S$ by unambiguity of the $\eL$-order (Theorem~\ref{localcancellativity}).  Moreover, if $r,t$ are right stabilizers of $s$ with $r\R t$, then $sr=s=st$ implies $r=t$ by Theorem~\ref{localcancellativity}. Thus ${}_sS$ is $\R$-trivial, completing the proof.
\end{proof}

If $S=S^{\RB}$, then $S$ has unambiguous $\eL$- and $\R$-orders, a property already acheived by $S^{\BR}$~\cite{BirgetSyn2}.

\begin{Cor}
If $S$ is a finite $\J$-above $A$-semigroup stable under the $\RB$-expansion, then the $\eL$- and $\R$-orders on $S$ are unambiguous.
\end{Cor}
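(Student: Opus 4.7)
The plan is to reduce the Corollary directly to the $\RZ$-version of Theorem~\ref{localcancellativity} and its left–right dual. The key observation is that stability under the $\RB$-expansion forces stability under both the $\LZ$- and $\RZ$-expansions, at which point the unambiguity of both Green's orders is immediate from Theorem~\ref{localcancellativity} applied on each side.

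More concretely, I would first invoke Theorem~\ref{thm:stable} to get the $A$-morphisms $S^{\RB}\to S^{\LZ}$ and $S^{\RB}\to S^{\RZ}$ that exist for every $A$-semigroup. Combined with Lemma~\ref{lem:iteration}, which asserts $S^{\RB.\RB}=S^{\RB}$, the stability criterion in Lemma~\ref{lem:stable} (taking $\exa=\RB$ and $\exb\in\{\LZ,\RZ\}$) then yields $S^{\RB}=S^{\RB.\LZ}=S^{\LZ.\RB}$ and similarly for $\RZ$. In particular, applying this to our given $S=S^{\RB}$, we conclude that $S$ is itself stable under both $\LZ$ and $\RZ$.

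With that in hand, Theorem~\ref{localcancellativity} immediately gives that the $\eL$-order on $S$ is unambiguous (this is the conclusion of that theorem under $\RZ$-stability). The left–right dual statement, obtained by applying Theorem~\ref{localcancellativity} to the opposite semigroup $S^{op}$ (which inherits $\LZ$-stability from $S$'s $\RZ$-stability under the obvious duality, or equivalently by repeating its proof verbatim with left and right exchanged, exactly as the proof of Theorem~\ref{localcancellativity} itself suggests), yields that the $\R$-order on $S$ is unambiguous.

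The only substantive step is verifying that the hypotheses of Lemma~\ref{lem:stable} apply, and this is essentially bookkeeping since all the needed ingredients (the maps from $\RB$ to $\LZ$ and $\RZ$, and the closure-under-iteration statement) are already proven in Theorem~\ref{thm:stable} and Lemma~\ref{lem:iteration}. There is no genuine obstacle; the corollary is really a packaging of work already done. The only minor point to be careful about is making sure the duality invocation for the $\R$-order is legitimate, which it is because the definition of the $\RB$-expansion is symmetric under left–right reversal, so $S^{op}$ is stable under $\RB$ whenever $S$ is, and hence $S^{op}$ is stable under $\RZ$, from which Theorem~\ref{localcancellativity} applied to $S^{op}$ gives the unambiguity of the $\R$-order on $S$.
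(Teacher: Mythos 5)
Your proof is correct and follows exactly the route the paper intends: the corollary is stated without proof precisely because $S=S^{\RB}$ is stable under both $\LZ$ and $\RZ$ (Theorem~\ref{thm:stable} together with Lemmas~\ref{lem:iteration} and~\ref{lem:stable}), and then Theorem~\ref{localcancellativity} and its left--right dual give unambiguity of the $\eL$- and $\R$-orders respectively. The only blemish is the parenthetical claim that $S^{op}$ ``inherits $\LZ$-stability from $S$'s $\RZ$-stability'' (the directions are swapped there), but your final sentence states the duality correctly, so the argument stands.
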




Although it is not needed later, we record a two-sided version of
Theorem~\ref{thm:right-stab} (also from~\cite{IIM2}) which is
available whenever $S$ is stable under the $\RB$-expansion.

\begin{thm}[Double stabilizers]\label{thm:double-stab}
If $S$ is a finite $\J$-above $A$-semigroup stable under the $\RB$-expansion, then each
double stabilizer $U={}_{s}S_{s'}$ is finite $\J$-trivial and an $\eH$-chain in $S$ but not necessarily within
$U$.  
\end{thm}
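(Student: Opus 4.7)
\emph{Setup.} The plan is to transfer Theorem~\ref{thm:right-stab} and its left-right dual to the double stabilizer $U={}_sS\cap S_{s'}$. Since $S=S^{\RB}$, Theorem~\ref{thm:stable} also yields $S=S^{\LZ}=S^{\RZ}$, so both the right-handed stabilizer theorem and its left-handed dual are available. A direct check shows that $U$ is a subsemigroup of $S$: for $u,v\in U$ one has $s(uv)=(su)v=sv=s$ and $(uv)s'=u(vs')=us'=s'$. Moreover, $U$ is finite because it is contained in the finite right stabilizer $S_{s'}$ (finite by Theorem~\ref{thm:right-stab}).

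\emph{Applying the one-sided theorems.} Theorem~\ref{thm:right-stab} applied to $S_{s'}$ shows that $U\subseteq S_{s'}$ is $\R$-trivial in $S$ and an $\eL$-chain in $S$. Dually, applied to ${}_sS$, $U\subseteq {}_sS$ is $\eL$-trivial in $S$ and an $\R$-chain in $S$. Consequently, any two elements of $U$ are comparable in both $\leq_{\eL}$ and $\leq_{\R}$ in $S$, while distinct elements of $U$ are neither $\eL$-equivalent nor $\R$-equivalent in $S$.

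\emph{$\J$-triviality and $\eH$-chain.} For $\J$-triviality, suppose $u,v\in U$ satisfy $u\J v$ in $S$; by the $\eL$-chain property we may assume $u\leq_{\eL}v$. The finite $\J$-above hypothesis makes the $\J$-class of $v$ finite (it sits inside the finite upper cone of $v$), so the classical stability theorem for finite semigroups applies and upgrades $u\leq_{\eL}v$ together with $u\J v$ to $u\eL v$; the $\eL$-triviality of $U$ then forces $u=v$. For the $\eH$-chain property, given $u,v\in U$ we use the two chain properties to arrange $u\leq_{\eL}v$, and then split on the $\R$-chain: if $u\leq_{\R}v$, then $u\leq_{\eH}v$ directly, while if instead $v\leq_{\R}u$, then combining with $u\leq_{\eL}v$ yields $u\J v$, and the $\J$-triviality just proved forces $u=v$, a trivial case of $u\leq_{\eH}v$. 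The main obstacle is the appeal to stability, i.e., collapsing $\leq_{\eL}$ to $\eL$ within a $\J$-class; this reduces cleanly to the classical finite-semigroup result via the finiteness of each individual $\J$-class under the finite $\J$-above hypothesis, so no new technique beyond that reduction is required.
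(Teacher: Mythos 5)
Your proof is correct and takes essentially the same route as the paper, whose entire argument is to apply Theorem~\ref{thm:right-stab} and its dual after noting that ${}_{s}S_{s'}\subseteq {}_{ss'}S_{ss'}$ (a reduction your direct intersection of $S_{s'}$ with ${}_sS$ renders unnecessary); your explicit derivation of $\J$-triviality and the $\eH$-chain property from the four inherited one-sided properties fills in a step the paper leaves implicit. One small repair: invoking the classical stability theorem for finite semigroups on the grounds that the $\J$-class of $v$ is finite is not literally valid, since stability is not a statement about a $\J$-class in isolation --- the witnesses $a,b,t$ with $v=aub$ and $u=tv$ need not lie in that class. What is actually needed is that a finite $\J$-above semigroup is itself stable, which is true by the usual computation: $v=(at)^n v b^n$ for all $n$, every power $(at)^n$ lies $\J$-above $v$ and hence some power is an idempotent $e$ with $ev=v$, giving $v\leq_{\eL}u$ and therefore $u\eL v$.
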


\begin{proof}
Since $S$ is stable under $\RB$, it is also stable under $\LZ$ and
$\RZ$ (Theorem~\ref{thm:stable}). Furthermore, ${}_{s}S_{s'}$ is a
subsemigroup of ${}_{ss'}S_{ss'}$ so without loss of generality we can
take $s=s'$ and apply Theorem~\ref{thm:right-stab} and its dual.
\end{proof}

\begin{Rmk}[Fixing stabilizers]\label{rem:fix-stab}
If $S$ is a finite $A$-semigroup stable under the $\RZ$-expansion and
the right stabilizers are not $\eL$-chains within themselves, then one
way to remedy this is by repeatedly applying the Henckell-Sch\"utz\-en\-ber\-ger
expansion to $S$.  In fact, Henckell, in his thesis, rediscovered the
Sch\"utzenberger product in order to prove this result.  The precise
statement goes as follows.  If $S$ is a finite $A$-semigroup, then the Henckell-Sch\"utzenberger expansion
$S^{\wedge_k}$ of $S$ with $k = 2^{|S|}$ has the property that right
stabilizers of $S^{\wedge_k}$ mapped into $S$ are $\eL$-chains within
themselves.  This was exploited by the second and third authors to show that stabilizers in certain relatively free profinite semigroups are $\eL$-chains within themselves. See~\cite{BR--exp,Henckellstable} for details.
\end{Rmk}

Another way to improve the properties of the right stabilizers is to
use the Mal'cev expansion with respect to the locally finite variety
$\Zp$, first studied by Le Saec, Pin and Weil in~\cite{idempotentstabilizers}. See
also~\cite{Elston}.

\begin{thm}[Idempotent stabilizers]\label{thm:zp-stab}
Let $S$ be a finite $A$-semigroup stable under the $\RZ$-expansion,
let $p$ be a prime, and let $T= S^\Zp$.  If $p$ is sufficiently large
relative to the size of $S$, then each right stabilizer $T_t$ is an
$\R$-trivial band.  In other words, each subsemigroup $T_t$ satisfies
the identities $xyx=xy$ and $x^2=x$.
\end{thm}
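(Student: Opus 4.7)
The plan is to exploit the projection $\eta\colon T\to S$, whose Mal'cev kernel lies in $\Zp$ by Lemma~\ref{lem:malcev}, so each fiber over an idempotent of $S$ is an abelian group of exponent $p$. Fix $x\in T_t$ and set $\bar t=\eta(t)$, $\bar x=\eta(x)$. From $tx=t$ we get $\bar t\bar x=\bar t$, hence $\bar x\in S_{\bar t}$. By Theorem~\ref{thm:right-stab} (applicable since $S$ is stable under $\RZ$), $S_{\bar t}$ is finite and $\R$-trivial, so $\bar x^n=\bar x^{n+1}=e$ for some $n\le |S|$ and some idempotent $e\in S$. Let $G_e=\eta^{-1}(e)$, an abelian group of exponent $p$ with identity $f$.

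Next I would establish a periodicity statement inside $T$. For every $k\ge 0$ one has $\bar x^{n+k}=e$ (since $S_{\bar t}$ is aperiodic), so $x^{n+k}\in G_e$. In particular right multiplication by $x$ carries $G_e$ into itself, and on the orbit $\{x^{n+k}\}_{k\ge 0}$ acts as translation by a fixed element $h\in G_e$. Because $G_e$ has exponent $p$, $h^p=f$, yielding the key equation $x^{n+p}=x^n$ in $T$, and more generally $x^{n+jp}=x^n$ for all $j\ge 1$. Taking $j$ with $jp\ge n$ we also obtain $x^{jp}=f$.

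The heart of the argument is to promote this periodicity to the identity $x^2=x$. I would do this via a Ramsey-theoretic analysis (cf.\ the proof of Lemma~\ref{lem:rb-loop-aut}): choose $p$ larger than the Ramsey bound $[3]^2_{|S|}$, and apply Ramsey to a sufficiently long power $w^N$ of a word $w\in A^+$ representing $x$ in order to locate many indices $i_1<\cdots<i_p$ such that every subword $w^{i_{a+1}-i_a}$ maps to the same idempotent of $S$. The relations built into the Mal'cev kernel of $\eta$ allow one to use the identity $x_1\cdots x_p=1$ of $\Zp$ to collapse the product of these $p$ segments to the identity $f$ of the relevant group fiber, and the $\RZ$-stability of $S$ (which forces, via Theorem~\ref{thm:right-stab} and Lemma~\ref{lem:KR-closed}, the left context $t$ to be absorbed) ensures that $w$ and $w^2$ become identified in $T$. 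The precise combinatorial bookkeeping of how the $\Zp$-identifications cascade, and verifying that the choice of $p$ forces the reduction all the way down to index $1$ rather than merely to index $n$, is the main obstacle.

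Finally, for $x,y\in T_t$ one checks directly that $xy,xyx\in T_t$; the identity $xyx=xy$ then follows from the same Ramsey scheme applied to words of the form $u(wz)^Nw$ (with $u,w,z$ representing $t,x,y$), collapsing via $\Zp$ to obtain the desired equality. $\R$-triviality of $T_t$ is easy once the band structure is in place: if $x\R y$ in $T_t$ then $\bar x\R \bar y$ in $S_{\bar t}$, so $\bar x=\bar y$ by Theorem~\ref{thm:right-stab}; one then argues inside the fiber, where $x$ and $y$ are two idempotents of the same abelian $p$-group $G_e$ that both right-fix $t$, hence both equal $f$, giving $x=y$.
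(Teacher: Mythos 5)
There is a genuine gap here, and you have in fact located it yourself. First, for context: the paper offers no proof of this theorem at all --- it is quoted from Le Saec, Pin and Weil \cite{idempotentstabilizers} --- so there is no internal argument to measure your route against; the proposal must stand on its own. What you actually establish is only the periodicity $x^{n+p}=x^{n}$ with $n\le |S|$. That part is correct: the fiber $G_e=\eta^{-1}(e)$ is an elementary abelian $p$-group, $e\bar x=e$ shows $h=fx\in G_e$, and right translation by $h$ in $G_e$ has order dividing $p$. But this statement holds for \emph{every} prime $p$, uses nothing about the size of $p$, and uses the $\RZ$-stability of $S$ only through the aperiodicity of $S_{\bar t}$. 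The theorem asserts something far stronger: $x^2=x$, which forces simultaneously that $\bar x$ is already idempotent in $S$ (i.e.\ that $n=1$) and that $x$ equals the identity $f$ of its fiber group (i.e.\ that the period is $1$). Neither point is touched by the periodicity computation, and the place where the hypothesis $tx=t$ in $T$ (as opposed to merely $\bar t\,\bar x=\bar t$ in $S$) and the largeness of $p$ must be exploited is exactly here.

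The Ramsey sketch you offer for this step does not obviously converge to the right conclusion. Ramsey applied to a long power $w^N$ produces consecutive blocks with a common idempotent image in $S$, and the defining relations of $S^{\Zp}$ (commutation and $p$-th power absorption among words with a common idempotent image, per Remark~\ref{rmk:apresentation}) let you permute and delete $p$-fold repetitions of such blocks; but the natural output of such manipulations is again a periodicity statement of the form $x^{m+p}=x^{m}$, not $x^2=x$, and you give no mechanism by which the left context $u$ (representing $t$) acts on the exponent of $w$ itself. The assertions that $\RZ$-stability ``ensures that $w$ and $w^2$ become identified in $T$'' and that $xyx=xy$ follows from ``the same Ramsey scheme'' state the conclusion rather than derive it. (Your reduction of $\R$-triviality to the band property is fine --- $xy\R xyx$ holds in any band, so an $\R$-trivial band satisfies $xyx=xy$ --- but it is conditional on the unproved identity $x^2=x$.) As written, the proposal is a correct reduction of the theorem to its hardest step together with an acknowledgement that this step is open; to complete it you would need to carry out the Le Saec--Pin--Weil counting argument (or a substitute for it) in which the choice of $p$ large relative to $|S|$ genuinely enters.
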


Combining Theorem~\ref{thm:zp-stab} with Theorem~\ref{thm:right-stab},
we have the following,  the major improvements being that all of the
right stabilizers are now idempotents, the stabilizers form an
$\eL$-chain within themselves, and they satisfy an additional equation
which is useful later in the article.

\begin{thm}[Improved stabilizers]\label{thm:improved-stab}
If $S$ is a finite $A$-semigroup, $p$ is a sufficiently large prime relative
to the size of $S^\RB$ and $T=S^{\RB.\Zp.\RB}$, then each right
stabilizer subsemigroup $T_t$ is an $\R$-trivial (also called left regular) band, i.e., its
satisfies the identities $xyx=xy$ and $x^2=x$.  Moreover, $T_t$ forms
an $\eL$-chain within itself so that for any two elements $r,s \in
T_t$, one is $\eL$-above the other, say $r\geq_{\mathscr L} s$, where
$\geq_{\mathscr L}$ now represents the $\eL$-order in $T_t$ rather than $T$.
In addition, whenever $r$ and $s$ are right $t$ stabilizers and
$r\geq_{\mathscr L} s$, the equation $sr = s$ also holds.  As a consequence,
the $\eL$-classes of $T_t$ are left zero semigroups.
\end{thm}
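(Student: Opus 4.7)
The plan is to split the desired properties of $T_t$ across the three Mal'cev factors of $T = S^{\RB.\Zp.\RB}$: the outer $\RB$ will enable Theorem~\ref{thm:right-stab}, the middle $\Zp$ will supply idempotency via Theorem~\ref{thm:zp-stab}, and the inner $\RB$ together with a short case analysis on the $\eL$-chain will yield the left regular band identity $xyx = xy$.

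First I would verify that $T$ is stable under $\RB$ (and hence under $\RZ$). By Lemma~\ref{lem:iteration}, $T^\RB = S^{\RB.\Zp.\RB.\RB} = S^{\RB.\Zp.\RB} = T$; since $\RZ$ is a subvariety of $\RB$ there are $A$-morphisms between $T$ and $T^\RZ$ in both directions, forcing $T = T^\RZ$. Theorem~\ref{thm:right-stab} then tells us that each right stabilizer $T_t$ is finite, $\R$-trivial, and forms an $\eL$-chain in the ambient semigroup $T$. Next I would establish $r^2 = r$ for every $r \in T_t$. Set $U = (S^\RB)^\Zp$ and let $\pi \colon T = U^\RB \to U$ be the canonical projection, whose Mal'cev kernel lies in $\RB$. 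Since $S^\RB$ is $\RZ$-stable by Theorem~\ref{thm:stable}, Theorem~\ref{thm:zp-stab} applied to $S^\RB$ (for $p$ sufficiently large) makes each $U_{\pi(t)}$ an $\R$-trivial band. Hence $\pi(r)$ is idempotent, so $r$ and $r^2$ lie in the single fiber $\pi\inv(\pi(r))$; that fiber is a rectangular band, and rectangular bands are bands, so $r^2 = r$.

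The remaining identities then drop out by combining the $T$-wide $\eL$-chain with idempotency. Given $x, y \in T_t$, the pair $x, xy$ is $\eL$-comparable in $T$. If $x \geq_\eL xy$, write $xy = wx$ with $w \in T^I$; then $xyx = wx^2 = wx = xy$. Otherwise $xy \geq_\eL x$, and since $xy \leq_\eL y$ always, this forces $x \leq_\eL y$; writing $x = vy$ with $v \in T^I$ and using $y^2 = y$ yields $xy = vy^2 = vy = x$, so again $xyx = x^2 = x = xy$. This verifies the left regular band identities. For the chain within $T_t$ and the equation $sr = s$: given $r, s \in T_t$ with $r \geq_\eL s$ in $T$, write $s = xr$ with $x \in T^I$; then $sr = xr^2 = xr = s$, which both supplies the equation and exhibits $s$ as a $T_t$-multiple of $r$, so $r \geq_\eL s$ within $T_t$. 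The left-zero structure of the $\eL$-classes is immediate, since $r \eL s$ within $T_t$ forces $rs = r$ and $sr = s$ simultaneously.

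The only real difficulty is bookkeeping: one must ensure that the two $\RB$-layers of $T$ are being used for different purposes---the inner one so that Theorem~\ref{thm:zp-stab} can be applied to an $\RZ$-stable semigroup, the outer one to lift idempotency from $\pi(r)$ back to $r^2 = r$ via rectangular band fibers---and that the second case of the $\eL$-chain analysis genuinely depends on the identity $y^2 = y$ established in the previous step.
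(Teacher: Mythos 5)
Your proof is correct and follows the paper's decomposition exactly: Theorem~\ref{thm:zp-stab} applied to the $\RZ$-stable semigroup $S^\RB$ makes the right stabilizers of $S^{\RB.\Zp}$ bands, the rectangular-band fibers of the projection $T\to S^{\RB.\Zp}$ lift idempotency back to $T_t$, and Theorem~\ref{thm:right-stab} supplies $\R$-triviality and the $\eL$-chain in $T$. The one place you genuinely diverge is the identity $xyx=xy$: the paper obtains it from the standard equivalence of $\R$-trivial bands with left regular bands (in a band $xy\R xyx$, since $xyx\cdot y=(xy)^2=xy$, and $\R$-triviality then collapses them), whereas you derive it from the $\eL$-chain by a case analysis on whether $x\geq_{\eL} xy$ or $xy\geq_{\eL} x$ --- a correct and equally elementary alternative that never uses $\R$-triviality of $T_t$ for this identity. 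Your explicit computation that $s=xr$ and $r^2=r$ force $sr=s$, and hence comparability within $T_t$, is precisely the content of the paper's closing one-liner that the $\eL$-order restricted to idempotents is independent of the ambient semigroup; you have simply unpacked it.
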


\begin{proof}
Since $S^\RB$ is stable under the $\RZ$ expansion, the right
stabilizer subsemigroups of $S^{\RB.\Zp}$ are $\R$-trivial bands.  Let
$T'=S^{\RB.\Zp}$ and let $t'$ denote the image of $t\in T$ under the
natural map $T\to T'$. Once it is noticed that $T_t$ is contained in the inverse image
of $T'_{t'}$ under this map and that the Mal'cev kernel of this map
lies in $\RB$ by definition, it is clear that $T_t$ consists solely of
idempotents.  Moreover, it is $\R$-trivial and an $\eL$-chain in $T$
by Theorem~\ref{thm:right-stab}.  Since the $\eL$-relation restricted to idempotents is independent of the ambient semigroup, this completes the proof.
\end{proof}

\section{The McCammond expansion revisited}
Let us investigate what happens when we apply the McCammond expansion to straightline automata and Cayley graphs.

\begin{Thm}
Let $(\mathscr A,I)$ be a pointed $A$-automaton.  Then, for $w\in A^+$, the pointed $A$-automaton $(\str^{\mathscr A^\Mac}(w),I)$ has the unique simple path property and is linear.
\end{Thm}
\begin{proof}
This follows immediately from Proposition~\ref{straightlineinuspp}.
\end{proof}

\begin{exmp}[Expanding straightline au\-to\-ma\-ta]
Suppose that $S$ is an $A$-semi\-group and $w$ is a word in
$A^+$.  If $\str^S(w)$ is the au\-to\-ma\-ton on the left in
Figure~\ref{fig:mc} (with the edge labels suppressed) and $w$ is the
path which passes through vertices $1,p,q,r,s,t$, and $u$ in that
order, then $\str^{(\str^S(w))^{\Mac}}(w)$ is the automaton on the right, where we have
simplified the vertex labels as well as leaving out the edge labels.
In particular, the labels on the right simply indicate the vertex in
$\str^S(w)$ to which it is sent under the projection map.  Similarly,
if $w'$ is the path which passes through vertices $1,p,q,r',s,t$, and
$u$ in that order, then $\str^S(w') = \str^S(w)$, but $\str^{(\str^S(w'))^{\Mac}}(w')\neq
\str^{(\str^S(w))^{\Mac}}(w)$.  Specifically, the automaton $\str^{(\str^S(w))^{\Mac}}(w)$ would be altered
by having the edge connecting $t$ to $u$ starting instead at the other
vertex labeled $t$ to obtain $\str^{(\str^S(w'))^{\Mac}}(w')$.
\end{exmp}

\begin{figure}[ht]
\psfrag{1}{$1$}
\psfrag{p}{$p$}
\psfrag{q}{$q$}
\psfrag{r}{$r$}
\psfrag{r'}{$r'$}
\psfrag{s}{$s$}
\psfrag{t}{$t$}
\psfrag{u}{$u$}
\begin{tabular}{cc}
\begin{tabular}{c}
\includegraphics{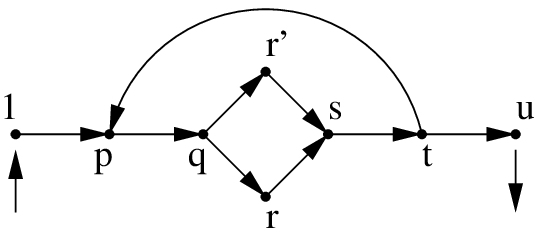}
\end{tabular}
&
\begin{tabular}{c}
\includegraphics{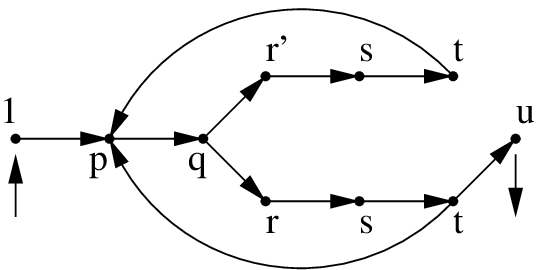}
\end{tabular}
\end{tabular}
\caption{Expansion of a straightline automaton.\label{fig:mc}}
\end{figure}

\begin{Def}[The $\Mac$-expansion of a semigroup]
If $S$ is a $A$-semigroup, then $S^{\Mac}$ will denote the transition semigroup of $\Cay(S,A)^\Mac$.  This is an object expansion on $A$-generated semigroups which preserves finiteness.  Note that $\Cay(S^{\Mac},A)$ is not $\Cay(S,A)^\Mac$.  In general, we will never be interested in $\Cay(S^{\Mac},A)$ but rather in the action of $S^\Mac$ on $\Cay(S,A)^\Mac$.
\end{Def}

\begin{exmp}[Not a Cayley graph]
Add example that $\Cay(S^{\Mac},A)$ is not $\Cay(S,A)^\Mac$.
\end{exmp}


\subsection{Properties of the $\Mac$-expansion}\label{sec:mc2}
In this section we prove that for each $A$-semigroup $S$, the $\Mac$-expansion of $S$ has Mal'cev kernel contained in a locally finite variety of aperiodic semigroups and so in particular preserves being finite $\J$-above (Theorem~\ref{thm:mc-divides}).  The key
step will be to show that the $\Mac$-expansion of the right zero
$A$-semigroup is a finite band.

\begin{Def}[Right zero semigroup]\label{const:constant-maps}
For each finite set $A$, let $A^r$ denote the right zero semigroup on
$A$ (i.e., $a\cdot b = b$ for all $a,b\in A$) and let $\Gamma_{A^r} =
\Cay(A^r,A)$.  Recall that by convention $\Cay(S,A)$ is the
Cayley graph for $S^I$ rather than $S$ itself, so that $\Gamma_{A^r}$ has
$|A|+1$ vertices.  If we use $1$ to denote the root vertex of
$\Gamma_V$ and define $A^1 = A\cup \{1\}$, then the edges of
$\Gamma_{A^r}$ can be described as follows.  For each $a\in A^1$ and for
each $b\in A$ there is an edge labeled $b$ from $a$ to $b$.  Viewed as
a function, $\cdot b$ is the constant function from $A^1$ to $A^1$
sending everything to $b$.  See Figure~\ref{fig:rightzero}.
\end{Def}

\begin{figure}[ht]
\psfrag{1}{$1$}
\psfrag{a}{$a$}
\psfrag{b}{$b$}
\psfrag{c}{$c$}
\includegraphics{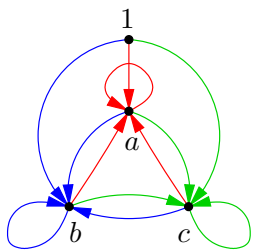}
\caption{The automaton $\Gamma_{A^r} = \Cay(A^r,A)$ when
  $A=\{a,b,c\}$.\label{fig:rightzero}}
\end{figure}

The simple paths starting at $1$ in $\Gamma_{A^r}$ have a particularly
easy description: a word $w\in A^*$ can be read as a simple path
starting at $1$ if and only if no letter occurs more than once in $w$.
We call these words \emph{distinct letter words}.  Thus the vertices
in $(\Gamma_{A^r})^\Mac$ are in one-to-one correspondence with the
distinct letter words.

\begin{figure}[ht]
\psfrag{1}{$1$}
\psfrag{a}{$a$}
\psfrag{b}{$b$}
\psfrag{c}{$c$}
\includegraphics{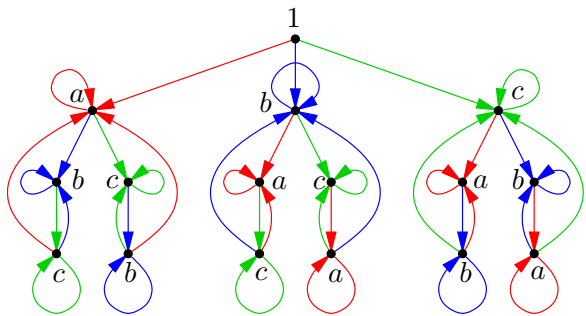}
\caption{The automaton $(\Gamma_{A^r})^\Mac$ when
  $A=\{a,b,c\}$.\label{fig:mc-rightzero}}
\end{figure}

\begin{Def}[Reduction]
Every word $w\in A^*$ can be read in $(\Gamma_{A^r})^\Mac$ starting at $1$
and ending at some vertex, say $v$.  By construction there is a unique
simple path in this graph from $1$ to $v$.  The word read by this
simple path is called the \emph{reduction of $w$} and denoted
$\red(w)$.
\end{Def}

For example, if $A =\{a,b,c,d,e\}$ then $\red(aba)=a$, $\red(abc) =
abc$ and $\red(abcdce)=abce$.  Here are two easy observations about
reductions.

\begin{lem}[Recurrence]\label{lem:recur}
Suppose $w\in A^+$ is a word with first letter $a$.  If $a$ recurs in
$w$, that is, $w = auav$ for possibly empty words $u, v \in A^*$,
then $\red(w) = \red(av)$.
\end{lem}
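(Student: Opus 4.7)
The plan is to compute reductions directly by tracking the action of letters on vertices of $(\Gamma_{A^r})^{\Mac}$, exploiting the very explicit description of this graph given in the previous subsection. Recall that a vertex is a simple path $(b_1,\dots,b_n)$ from $1$ (equivalently, a distinct-letter word), and reading a letter $b$ from such a vertex yields $(b_1,\dots,b_n,b)$ if $b\notin\{b_1,\dots,b_n\}$, and otherwise reduces to the prefix $(b_1,\dots,b_j)$ where $j$ is the unique index with $b_j=b$. By definition, $\red(x)$ is the vertex reached when $x$ is read starting at the root $1$.

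First I would record the following invariant: if a non-empty word $x$ begins with the letter $a$, then $\red(x)$ is a simple path whose first vertex is $a$. This is proved by an easy induction on $|x|$, since the only way the vertex-sequence can shrink is to a non-empty prefix of itself (reductions go to $p[\tau(e)]$, which keeps the initial letter as long as the current path is non-empty), and every such prefix still begins with $a$.

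With this invariant in hand, the computation is direct. Starting at $1$ and reading $au$, we reach $\red(au)$, which is a simple path $(a,c_1,\dots,c_k)$ starting with $a$ by the invariant. Now reading the next letter $a$: since $a$ occurs in this path (as its very first entry), the transition reduces to the initial segment ending at the vertex labeled $a$, namely $(a)$. Thus after reading $aua$ we are at the same vertex $(a)$ as after reading just $a$. Reading $v$ from this common vertex therefore yields the same endpoint in both cases, i.e.\ $\red(auav)=\red(av)$, which is the desired equality.

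No step is genuinely hard; the only thing to be careful about is the boundary case where $u$ is empty, in which $\red(au)=(a)$ outright, and the argument proceeds unchanged. The main conceptual point to articulate cleanly is the invariant that the first letter $a$ is never erased while reading any continuation of $a$, which is what makes the second occurrence of $a$ collapse the computation back to the state $(a)$.
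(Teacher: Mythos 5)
Your proof is correct and follows essentially the same route as the paper's: the paper likewise observes that the path read by $w$ only visits vertices whose label starts with $a$, so the recurrence of $a$ returns the path to the vertex labeled $a$, after which reading $v$ gives the same endpoint as for $av$. Your explicit formulation of the invariant (and the remark that no reduction can collapse to the root, since no edge of $\Gamma_{A^r}$ ends at $1$) just makes the paper's first sentence precise.
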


\begin{proof}
Reading one letter at a time it is easy to see that the path
corresponding to $w$ only passes through vertices whose label starts
with $a$.  Moreover, when $a$ recurs in $w$, the path $w$ returns to
the vertex labeled $a$.  Thus the paths corresponding to $w$ and to
$av$ end at the same vertex and hence have the same reduction.
\end{proof}

The most efficient use of Lemma~\ref{lem:recur} would, of course,
focus on the last occurence of $a$ in $w$ so that $a$ does not occur
in $v$.

\begin{lem}[No recurrence]\label{lem:no-recur}
Suppose $w\in A^+$ is a word with first letter $a$.  If $a$ does not
recur in $w$, then $w = av$ for some $v\in (A\setminus \{a\})^*$ and
$\red(w) = a\red(v)$.
\end{lem}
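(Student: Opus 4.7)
The plan is to reduce the claim to a simple structural observation about how reading words interacts with an initial letter that never recurs. First, that $w = av$ with $v \in (A \setminus \{a\})^*$ is immediate from the hypothesis, so the only real content is the identity $\red(w) = a \, \red(v)$.

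By definition, $\red(w)$ is the unique simple path (word) obtained by reading $w$ starting at the root $1$ in $(\Gamma_{A^r})^{\Mac}$. Reading the first letter $a$ carries us from the empty path at $1$ to the vertex of $(\Gamma_{A^r})^{\Mac}$ corresponding to the length-one simple path $(a)$; this is an extension in the sense of Definition~\ref{def:simple-act}. So it remains to analyze what happens when we read $v$ starting from the vertex $(a)$, and compare this to reading $v$ starting from $1$.

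The key lemma I would prove is: for every word $v' \in (A \setminus \{a\})^*$, if reading $v'$ from $1$ in $(\Gamma_{A^r})^{\Mac}$ ends at the simple path $\red(v')$, then reading $v'$ from $(a)$ ends at the simple path $a \cdot \red(v')$ (juxtaposition of words / concatenation of the initial edge labeled $a$ with the simple path $\red(v')$). This is proved by induction on $|v'|$. The base case $v' = \varepsilon$ is trivial. For the inductive step with $v' = v''b$, write $p = \red(v'')$ (a simple path from $1$) and, by induction, note that reading $v''$ from $(a)$ lands at $ap$. Now inspect the action of $b$ as in Definition~\ref{def:simple-act}, using that in $\Gamma_{A^r}$ every $b$-edge terminates at the vertex $b$. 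There are two cases: (i) if $b$ does not occur in $p$, then $b \neq a$ implies $b$ does not occur in $ap$ either, so the action is an extension on both sides, giving $\red(v'b) = pb$ and $ap \cdot b = apb = a \, \red(v'b)$; (ii) if $b$ occurs in $p$, then it occurs at the same position in $ap$ (shifted by one, and not equal to the leading $a$ since $b \neq a$), and the action is a reduction to the initial segment ending at $b$ in each case, yielding $a \cdot \red(v'b)$ on the shifted side. Either way the induction advances.

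Applying the lemma to $v$, reading $v$ from $(a)$ ends at $a \cdot \red(v)$, which by construction equals $\red(w) = \red(av)$. The main (minor) obstacle is just the bookkeeping in case (ii), namely verifying that the initial-segment truncation in $(\Gamma_{A^r})^{\Mac}$ commutes with prepending the letter $a$; this is where the hypothesis that $a$ does not recur in $w$ is crucial, as it guarantees that no truncation ever reaches back to the initial $a$ and strips it off.
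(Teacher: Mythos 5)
Your proof is correct. The paper disposes of this lemma in one line: writing $B = A\setminus\{a\}$, it observes that there is a label-preserving embedding of $(\Gamma_{B^r})^{\Mac}$ into $(\Gamma_{A^r})^{\Mac}$ sending the root to the vertex labeled $a$, and the identity $\red(av)=a\,\red(v)$ is then immediate. Your induction on $|v|$ is the hands-on verification of exactly that observation: the two cases in your inductive step (extension when $b$ does not occur in $p$, reduction to the prefix ending at $b$ when it does) are precisely the check that the map $p\mapsto ap$ on distinct-letter words over $B$ intertwines the monodromy actions of all letters $b\neq a$, i.e.\ that prepending $a$ is a morphism of $B$-automata carrying the root to the vertex $(a)$. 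So the mathematical content is the same; your version is more explicit and self-contained, whereas the paper's is shorter but leaves both the equivariance and the identification of reduction computed in $(\Gamma_{B^r})^{\Mac}$ with reduction computed in $(\Gamma_{A^r})^{\Mac}$ on words over $B$ to the reader. You also correctly isolate the one place the hypothesis is used: since $b\neq a$, a reduction step truncates $ap$ at an occurrence of $b$ strictly past the leading $a$, so the prefix $a$ is never stripped off.
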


\begin{proof}
Let $B$ denote $A\setminus \{a\}$ and notice that there is an
label-preserving embedding of $(\Gamma_{B^r})^\Mac$ into $(\Gamma_{A^r})^\Mac$
which sends the root in the domain to the vertex labeled $a$ in the
range.  The result is an immediate consequence of this observation.
\end{proof}

\begin{exmp}
Using Lemma~\ref{lem:recur} and Lemma~\ref{lem:no-recur} it is easy to
calculate the reduction of any word.  For example, if $w =
abacdabdbccebgfdf$ then
\[\begin{array}{ccll}
\red(abacdabdbccebgfdf) &=& \red(abdbccebgfdf) & \textrm{Lemma~\ref{lem:recur}}\\
&=& a\red(bdbccebgfdf) & \textrm{Lemma~\ref{lem:no-recur}}\\
&=& a\red(bgfdf) & \textrm{Lemma~\ref{lem:recur}}\\
&=& ab\red(gfdf) & \textrm{Lemma~\ref{lem:no-recur}}\\
&=& abg\red(fdf) & \textrm{Lemma~\ref{lem:no-recur}}\\
&=& abg\red(f)  & \textrm{Lemma~\ref{lem:recur}}\\
&=& abgf & \textrm{Lemma~\ref{lem:no-recur}}
\end{array}\]
\end{exmp}

We can now show that $(A^r)^\Mac$ is a band.

\begin{lem}[Band]\label{lem:band}
If $A$ is a finite set, then the $\Mac$-expansion of the right zero
semigroup $A^r$ is a finite band.
\end{lem}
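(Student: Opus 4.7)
\emph{Plan.} Two things must be shown: that $(A^r)^\Mac$ is finite and that each of its elements is idempotent. Finiteness is immediate: the vertex set of $(\Gamma_{A^r})^\Mac$ is the set $\Simple(\Gamma_{A^r},1)$ of simple paths starting at the root, and by the observation following Definition~\ref{const:constant-maps} this set is in bijection with the distinct-letter words over $A$, of which there are only finitely many. Hence $(\Gamma_{A^r})^\Mac$ is a finite automaton and its transition semigroup $(A^r)^\Mac$ is finite.

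To prove the band property I must check that for every $w\in A^+$ the transformation $f_w$ of the vertex set satisfies $f_w\circ f_w=f_w$. Since $v\cdot w=\red(vw)$ for every distinct-letter word $v$ (using $1\cdot v=v$ and associativity of the action), this reduces to the statement that $\red(uw)=u$ whenever $u:=\red(vw)$. The main step is to interpret $\red$ as a left-to-right scan: iterating Lemmas~\ref{lem:recur} and~\ref{lem:no-recur} shows that $\red(s)$ is computed by starting at position $p_1=1$, and at step $r$ appending the letter $s_{p_r}$ to the output and advancing to $p_{r+1}=j_r+1$, where $j_r$ is the position of the last occurrence of $s_{p_r}$ in the suffix $s_{p_r}\cdots s_{|s|}$; the procedure halts when $p_{r+1}>|s|$.

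Running this scan on $s=vw$ yields positions $p_1<\cdots<p_k$, jumps $j_1,\ldots,j_k$, and the output $u=u_1\cdots u_k$ with $u_r=(vw)_{p_r}$. I will then run the same scan on $uw=u_1\cdots u_k w_1\cdots w_n$ and show, by induction on $r$, that it successively appends $u_1,\ldots,u_k$. The key observations are: (i) since the letters of $u$ are distinct, $u_r$ occurs in the $u$-segment of $uw$ only at position $r$; (ii) since the portion of $vw$ after position $|v|$ is exactly $w$, the last occurrence of $u_r$ inside the $w$-segment of $uw$ is completely determined by $j_r$. A case split according to whether $j_r\leq|v|$ (the jump stays in $v$) or $j_r>|v|$ (the jump enters $w$) shows that after step $r$ the scan on $uw$ sits at position $r+1$ in the former case and $k+(j_r-|v|)+1$ in the latter, and in both cases the letter at that position is $u_{r+1}$; after step $k$ the scan terminates with output $u$, as required.

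The only nontrivial point is the position bookkeeping in this induction, which forces the case split above; once the left-to-right description of $\red$ is in hand the remaining verification is essentially mechanical.
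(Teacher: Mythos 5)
Your proof is correct, but it takes a genuinely different route from the paper's. The paper works with the equivalent identity $\red(wu)=\red(wuu)$ (here $w$ is the distinct-letter word naming the vertex and $u$ the acting word) and disposes of it in a few lines: Lemma~\ref{lem:no-recur} strips off the leading letters of $w$ that do not occur in $u$, and then either $w$ is exhausted (one application of Lemma~\ref{lem:recur} gives $\red(uu)=\red(u)$) or the next letter of $w$ occurs in $u$, in which case applying Lemma~\ref{lem:recur} to its \emph{last} occurrence sends both $\red(wu)$ and $\red(wuu)$ to the reduction of one and the same suffix of (the final copy of) $u$. You instead prove the fixed-point formulation $\red(uw)=u$ for $u=\red(vw)$, by first packaging the two lemmas into an explicit left-to-right scan computing $\red$ and then matching, step by step, the scan of $uw$ against the scan of $vw$. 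Your reduction of idempotency to this statement is legitimate (it uses $1\cdot x=x$ for distinct-letter $x$ and $\red(xy)=\red(\red(x)y)$, both of which hold), and the position bookkeeping does close: in the case $j_r\le|v|$ the letter $u_r$ is absent from $w$, so its unique occurrence in $uw$ is at position $r$ and the scan advances to $r+1$; in the case $j_r>|v|$ the last occurrence of $u_r$ in $uw$ is at $k+(j_r-|v|)$ whether the scan currently sits in the $u$-segment or the $w$-segment, and the letter found at the next position is $(vw)_{j_r+1}=u_{r+1}$; finally the last jump of the scan on $vw$ necessarily lands at the last position of $w$, which forces the scan on $uw$ to halt at the same step. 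What the paper's argument buys is brevity — no normal-form algorithm and no case analysis on where jumps land. What yours buys is an explicit algorithmic description of $\red$ and a direct proof that reduced words are fixed points of right multiplication by $w$; the cost is that the "essentially mechanical" verification you defer is in fact the bulk of the proof and deserves to be written out.
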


\begin{proof}
Note that it suffices to check that for each distinct letter word
$w\in A^*$ and for each word $u\in A^+$, $\red(wu) = \red(wuu)$ since
this shows that the functions $\cdot u$ and $\cdot uu$ agrees on each
vertex of $(\Gamma_{A^r})^\Mac$.  If $w$ is non-empty and the first
letter of $w$ does not occur in $u$, then applying
Lemma~\ref{lem:no-recur} to both $wu$ and $wuu$ reduces the problem to
a similar problem with a shorter word $w$.  Continuing in this way, we
may assume that either $w$ is the empty word or that the first letter
of $w$ occurs in $u$.  When $w$ is empty, a single application of
Lemma~\ref{lem:recur} shows that $\red(uu) = \red(u)$ and we are done.
Similarly, when $w$ is not empty and the first letter of $w$, say $a$,
occurs in $u$ then applying Lemma~\ref{lem:recur} to the last
occurence of $a$ in $wu$ and in $wuu$ produces the same result in each
case.  Thus they have the same reduction, which completes the proof.
\end{proof}

The following example shows that, in contrast with the Cayley graphs
of right zero semigroups, the Mal'cev kernel of $S(\mathscr A^\Mac)\to
S(\mathscr A)$, does not, in general, consist of bands.

\begin{exmp}[Not always a band]\label{exmp:not-band}
Let $\mathscr A$ denote the $A$-automaton shown
on the left-hand side of Figure~\ref{fig:band} and root the graph at
the vertex labeled $1$.  It is easy to check that $abc$ and $(abc)^2$ act the same on each vertex.  In particular,
they both send $1$ and $r$ to $r$ and they both fail when starting at
$p$ or $q$.  Thus $abc$ is an idempotent in $S(\mathscr A)$.
On the other hand, in $\mathscr A^\Mac$, shown on the right-hand side of
Figure~\ref{fig:band}, $abc$ and $(abc)^2$ act differently
since $1\cdot abc = r'$ and $1 \cdot (abc)^2 = r$.  Thus $abc$ no
longer represents an idempotent in $S(\mathscr A^\Mac)$.
\end{exmp}

\begin{figure}[ht]
\begin{tabular}{ccc}
\psfrag{1}{$1$}
\psfrag{a}{$a$}
\psfrag{b}{$b$}
\psfrag{c}{$c$}
\psfrag{p}{$p$}
\psfrag{q}{$q$}
\psfrag{r}{$r$}
\begin{tabular}{c}
\includegraphics{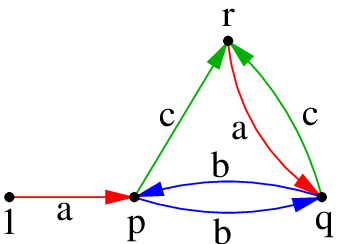}
\end{tabular}
& \hspace{1cm} &
\psfrag{1}{$1$}
\psfrag{a}{$a$}
\psfrag{b}{$b$}
\psfrag{c}{$c$}
\psfrag{p}{$p$}
\psfrag{q}{$q$}
\psfrag{q'}{$q'$}
\psfrag{r}{$r$}
\psfrag{r'}{$r'$}
\begin{tabular}{c}
\includegraphics{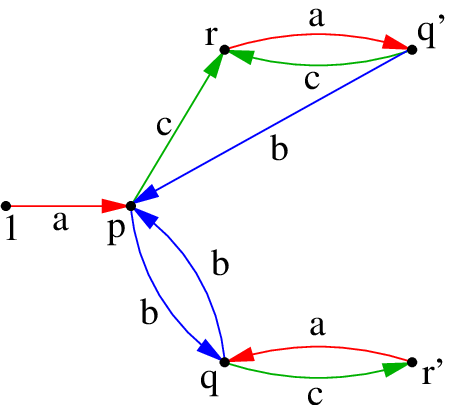}
\end{tabular}
\end{tabular}
\caption{An $\A$-automaton $\mathscr A$ and its expansion
  $\mathscr A^\Mac$.}\label{fig:band}
\end{figure}

Despite the fact that the Mal'cev kernel does not always lie in $\Ba$,
it is quite restricted.  Since we only intend to apply this expansion
to $A$-semigroups stable under the $\RB$-expansion, nothing is lost if
we restrict our attention to automata in which the label on an edge is a
function of its terminal vertex (Lemma~\ref{lem:rb-labels}).

\begin{thm}[Bounding the Mal'cev kernel]\label{thm:mc-divides}
If $\mathscr A=(V,A)$ is a pointed $A$-au\-to\-ma\-ton with root $I$
in which the label on an edge is a function of its terminal vertex,
then there is a finite band $B$ acting faithfully on a set $X$ such that $S(\mathscr A^\Mac)$ divides
$(X,B) \wr (V,S(\mathscr A))$ where $\wr$ denotes the wreath product of partial transformation semigroups.
\end{thm}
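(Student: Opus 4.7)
The strategy is to use the McCammond expansion of the Cayley graph of the right zero semigroup $V^r$ on $V$ as the ``universal'' finite band factor. Set $B = S((\Gamma_{V^r})^\Mac)$ and $X = V((\Gamma_{V^r})^\Mac)$ where $\Gamma_{V^r} = \Cay(V^r,V)$; by Lemma~\ref{lem:band}, $B$ is a finite band acting faithfully on $X$. Vertices of $X$ are precisely the simple sequences $1, w_1,\ldots, w_m$ of distinct elements of $V$.

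The hypothesis that edge labels are determined by terminal vertices, combined with determinism of $\mathscr A$, ensures that an edge of $\mathscr A$ is determined by its source and terminal vertices; hence a simple path $p = I, v_1, \ldots, v_n$ in $\mathscr A$ is uniquely determined by its vertex sequence. Each such sequence, prepended by the root $1$ of $\Gamma_{V^r}$, gives a simple path $\wh p$ in $\Gamma_{V^r}$ starting at $1$, i.e., an element of $X$. This yields an injection $\iota\colon V(\mathscr A^\Mac) \hookrightarrow X\times V$ via $p \mapsto (\wh p, \tau(p))$.

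The core construction is a map $\phi\colon A \to B^V \rtimes S(\mathscr A)$ defined by $\phi(a) = (f_a, \bar a)$, where $\bar a$ is the image of $a$ in $S(\mathscr A)$ and $f_a\colon V\to B$ is the partial function sending $v$ to the generator of $B$ associated to $v\cdot a \in V^r$ when $v\cdot a$ is defined in $\mathscr A$, and undefined otherwise. The key equivariance to check is $\iota(p\cdot a) = \iota(p)\cdot \phi(a)$: writing $v_n = \tau(p)$ and $u = v_n\cdot a$, both sides have second coordinate $u$, while the first coordinate either extends $\wh p$ by $u$ (when $u$ is new) or truncates it back to the previous occurrence of $u$ --- the same rule governs the action of $u\in V^r$ on $\wh p$ in $(\Gamma_{V^r})^\Mac$, because in $\Gamma_{V^r}$ the $u$-labeled edge from any vertex terminates at $u$. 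Extending $\phi$ multiplicatively gives a homomorphism $A^+\to T$ onto a subsemigroup $T$ of the wreath product, and the equivariance of $\iota$ forces the natural surjection $A^+\twoheadrightarrow S(\mathscr A^\Mac)$ to factor through $T$, giving the required division.

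The main obstacle is the equivariance check, which ultimately reduces to the observation that the McCammond expansion's extension/truncation rule depends only on the identity of the target vertex and the set of vertices already visited --- exactly the data that $V^r$ and its expansion track, which is why $(V^r)^\Mac$ is ``universal'' for this purpose. Partiality must be handled consistently, but this is automatic: $\phi(a)$ is undefined on $(x,v)$ precisely when $v\cdot a$ is undefined in $\mathscr A$, which is precisely when the $a$-action on any simple path in $\mathscr A^\Mac$ ending at $v$ is undefined.
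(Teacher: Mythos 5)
Your proof is correct and follows essentially the same route as the paper's: the band $B$ is the $\Mac$-expansion of the right zero semigroup on $V$ acting on distinct-letter sequences, vertices of $\mathscr A^\Mac$ are embedded into $X\times V$ via their vertex sequences (with injectivity coming from the edge-label hypothesis), and each generator $a$ is sent to the wreath product element whose first coordinate records $v\mapsto \cdot(va)$, so that extension/truncation of simple paths in $\mathscr A^\Mac$ is simulated by the reduction rule in $(\Gamma_{V^r})^\Mac$. Your bookkeeping of the two cases (keeping the full visited-vertex sequence in the $X$-coordinate) is, if anything, slightly cleaner than the paper's.
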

\begin{proof}
Let $V$ denote the vertex set of $\mathscr A$ and let $B$ be the $\Mac$-expansion of the $V$-semigroup $V^r$.   By Lemma~\ref{lem:band}, $B$ is
a finite band.  Put $X=\simple(V^r)$.  At this point, we can define a function $f\colon \simple(\mathscr A)\to X\times V$ as follows.   Associate to each word $u=a_1\cdots a_n$ of
$\simple(\mathscr A)$ the pair $f(u)=((I,Ia_1,\ldots,Ia_1\cdots a_{n-1}),Iu)$ where elements of $V^*$ are written as strings.  The map $f$ is injective
precisely because the edge labels are determined by their terminal
vertices, so that the word $u$ can be completely recovered from the
sequence of vertices it visits.   To each $a\in A$, we associate the pair $\wh a=(g_a,[a]_{S(\mathscr A)})\in  (X,B) \wr (V,S(\mathscr A))$ where $g_a\colon Q\to B$ is given by $qg_a = \cdot (qa)$ if $qa$ is defined and is arbitrary otherwise.  If $u\in \simple(\mathscr A)$ is as above and $u\cdot a$ is defined in $\mathscr A^\Mac$, then there are two cases.  If $ua$ is simple, then
\begin{align*}
f(u)\wh a&=((I,Ia_1,\ldots,Ia_1\cdots a_{n-1}),Iu)\wh a\\ &= ((I,Ia_1,\ldots,Ia_1\cdots a_{n-1},Iu),Iua)= f(u\cdot a).
\end{align*}
Otherwise, $u\cdot a = I\cdot a_1\cdots a_j$ for the unique $1\leq j\leq n-1$ so that one has $Iua=I\cdot a_1\cdots a_j$.  But then $\red(I,Ia_1,\ldots,Ia_1\cdots a_{n-1},Iu) = (I,Ia_1,\ldots,Ia_1\cdots a_j)$.  Thus
\begin{align*}
f(u)\wh a&=((I,Ia_1,\ldots,Ia_1\cdots a_{n-1}),Iu)\wh a\\ &= (\red(I,Ia_1,\ldots,Ia_1\cdots a_{n-1},Iu),Iua)\\ &= ((I,Ia_1,\ldots,Ia_1\cdots a_j),Iua)=f(u\cdot a).
\end{align*}

It now follows that $S(\mathscr A^\Mac)\prec (X,B) \wr (V,S(\mathscr A))$ (c.f.~\cite{Eilenberg}).
\end{proof}

Recall that if $\mathcal V$ and $\mathcal W$ are varieties, then the semidirect products $\mathcal V\ast \mathcal W$ is contained in the Mal'cev product $(\LC\malce \mathcal V)\malce \mathcal W$ where we recall that
 $\LC$ is the locally finite variety defined by
the identity $xy=xz$~\cite[Chapter 2]{qtheor}. Thus by Theorem~\ref{thm:mc-divides}, the projection
$S^\Mac\to S$ has a Mal'cev kernel which lies in the locally finite aperiodic variety $\LC\malce \mathcal B$ if $S$ is stable under the $\RB$-expansion.  Finally,
Lemma~\ref{lem:malcev} and Lemma~\ref{lem:rb-labels} complete the
proof of the following corollary of Theorem~\ref{thm:mc-divides}.

\begin{cor}[Bounding $S^\Mac$]\label{cor:mc-divides}
If $S$ is an $A$-semigroup stable under the $\RB$-expansion, then the
Mal'cev kernel of $S^\Mac\to S$ lies in the locally finite aperiodic variety $\LC\malce \mathcal B$. As a result, there
is an $A$-morphism \mbox{$S^{\LC\malce \mathcal B}\to S^\Mac$}.
\end{cor}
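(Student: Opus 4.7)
The plan is to combine Theorem~\ref{thm:mc-divides} with the standard containment of semidirect products in iterated Mal'cev products, and then invoke the universal property of the Mal'cev expansion (Lemma~\ref{lem:malcev}).

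First I would verify that $\mathscr A=\Cay(S,A)$ satisfies the hypothesis of Theorem~\ref{thm:mc-divides}: since $S$ is stable under the $\RB$-expansion, Lemma~\ref{lem:rb-labels} guarantees that in $\Cay(S,A)$ the label on each edge is determined by its terminal vertex.  Thus Theorem~\ref{thm:mc-divides} applies and we conclude that $S^\Mac=S(\Cay(S,A)^\Mac)$ divides a wreath product $(X,B)\wr (V,S)$ for some finite band $B$ and some finite set $X$.

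Next I would translate this division into varietal language.  The transition semigroup of a wreath product $(X,B)\wr (V,S)$ lies in the semidirect product variety $\mathcal B\ast \langle S\rangle$ (where $\langle S\rangle$ denotes the variety generated by $S$), since $B\in \mathcal B$.  Using the containment $\mathcal V\ast \mathcal W\subseteq (\LC\malce \mathcal V)\malce \mathcal W$ recalled just before the statement of the corollary, we obtain $S^\Mac\in (\LC\malce \mathcal B)\malce \langle S\rangle$.  By the definition of the Mal'cev product, this precisely says that the canonical $A$-morphism $\eta_S\colon S^\Mac\to S$ has Mal'cev kernel in $\LC\malce \mathcal B$.  Moreover $\LC\malce \mathcal B$ is locally finite (this is exactly the content of Brown's theorem, Theorem~\ref{thm:brown}, applied to the locally finite varieties $\LC$ and $\mathcal B$) and aperiodic, as both $\LC$ and $\mathcal B$ consist of aperiodic semigroups.

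Finally, the universal property of the Mal'cev expansion (Lemma~\ref{lem:malcev}) applied to $\eta_S\colon S^\Mac\to S$ produces a unique $A$-morphism $S^{\LC\malce \mathcal B}\to S^\Mac$ completing the triangle with $\eta_S\colon S^{\LC\malce \mathcal B}\to S$, as required.  The only substantial content is Theorem~\ref{thm:mc-divides}, whose own proof relied on the concrete description of simple paths in the right-zero automaton and the resulting encoding $f\colon \simple(\mathscr A)\to X\times V$; the present corollary is essentially bookkeeping linking division by a wreath product, semidirect product varieties, and the universal Mal'cev expansion, so there is no real obstacle left to overcome here.
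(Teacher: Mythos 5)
Your proof is correct and follows essentially the same route as the paper: verify the hypothesis of Theorem~\ref{thm:mc-divides} via Lemma~\ref{lem:rb-labels}, pass from the wreath-product division to the containment $\mathcal B\ast\langle S\rangle\subseteq(\LC\malce\mathcal B)\malce\langle S\rangle$ to bound the Mal'cev kernel, and conclude with the universal property of Lemma~\ref{lem:malcev}. Your explicit appeals to Brown's theorem for local finiteness and to aperiodicity of $\LC$ and $\mathcal B$ are just elaborations of steps the paper leaves implicit.
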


Finally, we note that even though the inverse images of idempotents is
not always a band (Example~\ref{exmp:not-band}), the torsion in the
Mal'cev kernel is quite controlled.

\begin{lem}[Bounding torsion]
If $S$ is an $A$-semigroup stable under the $\RB$-expansion, then the
semigroups in the Mal'cev kernel of $S^\Mac\to S$ satisfy the identity
$x^2=x^3$.  As a result, if $S$ satisfies the identity $x^m=x^{m+n}$
for constants $m$ and $n$, then $S^\Mac$ satisfies the identity
$x^{m+1}=x^{(m+1)+n}$ for the same constants $m$ and $n$.
\end{lem}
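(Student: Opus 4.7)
The plan is to derive the two assertions separately, proving the first directly from Corollary~\ref{cor:mc-divides} and then using it, together with the structure provided by Theorem~\ref{thm:mc-divides}, to obtain the second.

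For the first assertion, Corollary~\ref{cor:mc-divides} places the Mal'cev kernel of $\pi\colon S^\Mac\to S$ inside the variety $\LC\malce\mathcal{B}$, so it suffices to verify that an arbitrary $T\in\LC\malce\mathcal{B}$ satisfies $x^2=x^3$. Let $\pi_T\colon T\to B'$ witness this, with $B'$ a band and Mal'cev kernel of $\pi_T$ in $\LC$, and fix $x\in T$. Since $B'$ is a band, $\pi_T(x)=\pi_T(x)^2=\pi_T(x)^3$, so the elements $x$, $x^2$ and $x^3$ all lie in the single fiber $F=\pi_T^{-1}(\pi_T(x))$, which is left constant. Applying the identity $uy=uz$ in $F$ with $u=y=x$ and $z=x^2$ yields $x^2=x^3$, and this applies in particular to each Mal'cev kernel component $\pi^{-1}(e)$ of $S^\Mac\to S$.

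For the second assertion, I would use the wreath-product embedding of Theorem~\ref{thm:mc-divides}: $S^\Mac$ divides $(X,B)\wr(V,S)$ with $B=(V^r)^\Mac$ a band. It therefore suffices to verify the identity $x^{m+1}=x^{m+1+n}$ in $B\wr S$. For $(f,s)\in B\wr S$, write $(f,s)^k=(h_k,s^k)$ with $h_k(v)=f(v)f(vs)\cdots f(vs^{k-1})$. The hypothesis $s^m=s^{m+n}$ immediately gives $s^{m+1}=s\cdot s^m=s\cdot s^{m+n}=s^{m+1+n}$, so the second coordinates of $(f,s)^{m+1}$ and $(f,s)^{m+1+n}$ agree. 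For the first coordinates, the same hypothesis yields $f(vs^{m+i})=f(vs^{m+i+n})$ for $i\geq 0$, so the extra factors in $h_{m+1+n}(v)$ form one full traversal of the $n$-element periodic cycle $f(vs^m),f(vs^{m+1}),\ldots,f(vs^{m+n-1})$ appended after $h_{m+1}(v)$, with a final factor coinciding with the last factor of $h_{m+1}(v)$. Using the band identity $b^2=b$ in $B$ together with the specific $\Mac$-structure of $(V^r)^\Mac$ (where additional letters from an already traversed set do not alter the underlying simple-path state), this appended cycle is absorbed, giving $h_{m+1}(v)=h_{m+1+n}(v)$ in $B$.

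The main obstacle is the first-coordinate computation in the wreath product: a generic band does not absorb an arbitrary trailing cycle of length $n$, and one must genuinely exploit the fact that $B$ arises as the $\Mac$-expansion of the right zero semigroup $V^r$, whose transformations track simple paths in $(\Gamma_{V^r})^\Mac$ and reduce predictably under revisits. An attractive alternative avoiding the wreath-product bookkeeping is to argue purely inside the Mal'cev product $(\LC\malce\mathcal{B})\malce\langle S\rangle$ to which $S^\Mac$ belongs: combine the first assertion with the periodicity $s^{m+1}\cdot s^n=s^{m+1}$ in $S$ and use the left constant structure of the fibers of the defining map to a band in $\LC\malce\mathcal{B}$ to propagate the equality from the idempotent layer of $S$ down to the identity $y^{m+1}=y^{m+1+n}$ for every lift $y\in S^\Mac$.
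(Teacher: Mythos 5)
Your proof of the first assertion is correct and is essentially the paper's own argument: by Corollary~\ref{cor:mc-divides} the Mal'cev kernel lies in $\LC\malce\mathcal B$, and any semigroup mapping onto a band with fibers in $\LC$ satisfies $x\cdot x=x\cdot x^2$ because $x$ and $x^2$ have the same idempotent image in the band. (One should add the standard remark that an identity holds throughout a variety once it holds on a generating class, since a general member of $\LC\malce\mathcal B$ need not itself admit such a map; the paper elides this too.)

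The second assertion is where the gap lies, and note that the paper's own proof stops after the first assertion, so you are attempting to supply an argument the paper does not give. Your reduction ``it suffices to verify the identity in $B\wr S$'' is a dead end: the wreath product does \emph{not} satisfy $x^{m+1}=x^{(m+1)+n}$. Your own computation shows that the first coordinates agree exactly when the band $B=(V^r)^{\Mac}$ absorbs the appended cycle, i.e.\ when $b_0b_1\cdots b_{n-1}b_0=b_0$ for the relevant factors, and this fails in $(V^r)^{\Mac}$ itself, not merely in an abstract band. For instance, with $n=2$ and $b_0,b_1$ represented by the two-letter words $\alpha v_1$ and $\beta v_0$ over $V$ (four distinct vertices, as occurs for a word $w$ of length $2$ cycling $v_0\to\alpha\to v_1\to\beta\to v_0$), the state $x=\beta$ gives $\red(\beta\,\alpha v_1)=\beta\alpha v_1$ but $\red(\beta\,\alpha v_1\,\beta v_0\,\alpha v_1)=\beta v_0\alpha v_1$: re-encountering a traversed letter truncates the state, and you only return to where you started if everything in between was appended \emph{after} that letter, which is false here. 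The division $S(\mathscr A^{\Mac})\prec (X,B)\wr(V,S(\mathscr A))$ only transports identities holding on all of $X\times V$, and the offending states (like $x=\beta$, which does not record a simple path arriving at the current vertex) lie outside the image of the embedding $f$; so a correct argument must work with the reachable states, i.e.\ directly with the action of $w^k$ on simple paths of $\Cay(S,A)$, at which point the wreath-product framing buys nothing.

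Your proposed alternative cannot work even in principle: membership in $(\LC\malce\mathcal B)\malce\langle S\rangle$ together with $x^m=x^{m+n}$ holding in the base does not imply $x^{m+1}=x^{(m+1)+n}$ upstairs. Take $T=\langle y\mid y^{10}=y^{15}\rangle$ mapping onto the cyclic group of order $5$ (which satisfies $x=x^{1+5}$): the unique Mal'cev kernel member is $\{y^5,y^{10}\}$, a two-element null semigroup lying in $\LC$, yet $y^2\neq y^7$ in $T$. Hence no variety-theoretic ``propagation'' from the first assertion yields the second; if the second assertion is to be proved, it requires a genuinely geometric argument (after $m$ applications of $w$ the endpoint is trapped in a strong component $C$ and the prefix of the simple path outside $C$ is frozen; what remains is to show that one further application of $w$ places the portion of the path inside $C$ into an $n$-periodic configuration), and neither your write-up nor the paper supplies it.
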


\begin{proof}
It suffices to observe that the variety $\LC\malce \mathcal B$ clearly satisfies the identity $x^2=x^3$ since in any band $x$ and $x^2$ map to the same idempotent and so by definition of $\LC$ we have $xx=xx^2$.
\end{proof}
\section{Algebraic rank function}\label{sec:rank}
In this section we begin by defining an algebraic rank function.  For
Kleene expressions the rank is the nested star-height, for loop
automata it is the maximum number of loops within loops. In the case
of the Burnside semigroups, $\burnside(m,n)$ for $m\geq 6$ and $n\geq
1$, the first author proved that the automata, Kleene and algebraic
definition all agree for $\str(w)$ in~\cite{Mc91}.  He also proved
that these semigroups are finite $\J$-above. One main idea of
geometric semigroup theory is to run this procedure backwards.  That
is, to start with a finite $\J$-above semigroup, define the rank
algebraically and then prove it is the automata rank in many cases.
This also has strong ties with the holonomy theorem for semigroups
\cite{Rh91}.

\begin{Def}[Algebraic rank function]
Let $S$ be a finite $\J$-above $A$-semigroup and let $s$ be
one of its element.  Since $S$ is torsion, there is a unique
idempotent in the cyclic subsemigroup generated by $s$ which we denote
$s^\omega$.  Moreover, since $S$ is finite $\J$-above, there are only
a finite number of idempotents in $S$ which are $\J$-above $s^\omega$.
We define the \emph{algebraic rank of $s$} to be the length of the
longest strictly increasing $\J$-chain of idempotents starting at
$s^\omega$.  This defines a map $\rank_S\colon S\to \{0,1,2,\cdots\} = \N$.
More specifically, \[\rank_S(s) = \max\{r \mid s^\omega = e_0 <_{\mathscr J} e_1
<_{\mathscr J} \cdots <_{\mathscr J} e_r\}\] where all $e_i$ are idempotents in $S$.
\end{Def}

When defining a geometric rank function on $\Cay(S,A)^\Mac$, we will always assume that the ordering on the loops at a vertex refines the algebraic rank of the images in $S$ of the words labeling the loops.

The following lemma records some elementary properties of the
algebraic rank function.

\begin{lem}[Elementary properties]\label{lem:alg-rank}
Let $S$ be a finite $\J$-above $A$-semi\-group and let
$\rank_S\colon S\to \N$ denote its algebraic rank function.  If $s$ and $t$
are elements of $S$ and $e$ and $f$ are idempotents in $S$, then:
\begin{itemize}
\item $\rank_S(s)= \rank_S(s^k)$ for all $k\geq 1$;
\item $\rank_S(e)\geq \rank_S(f)$ whenever $e$ is $\J$-above $f$;
\item $\rank_S(e)=\rank_S(f)$ wherever $e$ and $f$ are $\J$-equivalent;
\item $\rank_S(e)=\rank_S(f)$ and $e \geq_{\mathscr J} f$ implies $e =_{\mathscr J} f$;
\item $\rank_S(st)=\rank_S(ts)$ for all $s,t\in S$.
\end{itemize}
\end{lem}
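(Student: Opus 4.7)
The plan is to reduce the final claim to the third bullet via a standard algebraic identity. First observe that $\rank_S(u)$ depends only on the $\mathscr J$-class of $u^\omega$: the definition counts strictly $\mathscr J$-increasing chains of idempotents starting at $u^\omega$, and such a chain is determined entirely by its underlying sequence of $\mathscr J$-classes together with the bottom $\mathscr J$-class $[u^\omega]_{\mathscr J}$. In particular, writing $\rank_S(u)=\rank_S(u^\omega)$ (which also follows directly from $(u^\omega)^\omega = u^\omega$), it suffices to prove that $(st)^\omega$ and $(ts)^\omega$ lie in the same $\mathscr J$-class; then the third bullet will give
\[\rank_S(st)\;=\;\rank_S((st)^\omega)\;=\;\rank_S((ts)^\omega)\;=\;\rank_S(ts).\]

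The key ingredient is the identity
\[(st)^{n+1}\;=\;s\,(ts)^n\,t\]
valid for all $n\geq 1$ in any semigroup, proved by a one-line induction (or by regrouping the product $(st)(st)\cdots (st)$). Since $S$ is torsion, fix $n$ large enough that $(st)^{n+1}=(st)^\omega$ and simultaneously $(ts)^n=(ts)^\omega$; substituting then gives $(st)^\omega = s\,(ts)^\omega\,t$, which witnesses $(st)^\omega \leq_{\mathscr J}(ts)^\omega$. Swapping the roles of $s$ and $t$ in the identity yields $(ts)^\omega\leq_{\mathscr J}(st)^\omega$, and hence $(st)^\omega\J (ts)^\omega$, which completes the reduction.

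No serious obstacle is anticipated. The entire argument rests on the elementary power identity relating $st$ and $ts$, together with the third bullet asserting that $\mathscr J$-equivalent idempotents carry the same rank; the only care needed is to invoke torsion in order to replace $(st)^{n+1}$ and $(ts)^n$ by their idempotent powers simultaneously, which poses no difficulty.
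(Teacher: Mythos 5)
Your overall strategy is the same as the paper's: the first four bullets are immediate, and the fifth reduces to the earlier ones once one knows $(st)^\omega \J (ts)^\omega$ (the paper simply asserts this last fact). However, your justification of that fact contains a genuine error. You ``fix $n$ large enough that $(st)^{n+1}=(st)^\omega$ and simultaneously $(ts)^n=(ts)^\omega$,'' but no such $n$ need exist: the exponents realizing $u^\omega$ in a finite cyclic semigroup $\langle u\rangle$ are exactly those $n$ at least the index of $u$ and divisible by its period, and the identity $(st)^{n+1}=s(ts)^nt$ shows that $st$ and $ts$ have the same period $p$ (for large exponents). Your choice therefore requires $p\mid n$ and $p\mid n+1$, which forces $p=1$. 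Already in a two-element group with $t$ the identity and $s$ the generator, $(st)^{n+1}=(st)^\omega$ needs $n+1$ even while $(ts)^n=(ts)^\omega$ needs $n$ even, so the step fails whenever $S$ is not aperiodic --- and the lemma is stated for arbitrary finite $\J$-above semigroups.

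The repair is short and keeps your identity. Choose $N\geq 1$ with $(ts)^N=(ts)^\omega$. Then
\[(st)^{N+1}\;=\;s\,(ts)^N\,t\;=\;s\,(ts)^\omega\,t\;\leq_{\mathscr J}\;(ts)^\omega.\]
On the other hand, $\langle (st)^{N+1}\rangle$ is a finite subsemigroup of $\langle st\rangle$, so it contains an idempotent, which must be the unique idempotent $(st)^\omega$ of $\langle st\rangle$; hence $(st)^\omega$ is a positive power of $(st)^{N+1}$ and $(st)^\omega\leq_{\mathscr J}(st)^{N+1}\leq_{\mathscr J}(ts)^\omega$. Interchanging $s$ and $t$ gives the reverse inequality, so $(st)^\omega\J(ts)^\omega$, and the rest of your reduction (via $\rank_S(u)=\rank_S(u^\omega)$ and the third bullet) goes through unchanged. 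The moral is that one should never try to synchronize the exponents on the two sides; one only needs that every positive power of $st$ is $\J$-above $(st)^\omega$.
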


\begin{proof}
The only part which is not immediate is the last one, but this follows
from the first statement once it is noticed that $(st)^\omega$ is always
$\J$-equivalent to $(ts)^\omega$.
\end{proof}


The next lemma studies the effect of $\J'$-maps on the rank.

\begin{lem}[Rank and $\J'$-maps]
If $S\to T$ is an onto $\J'$-map between finite semigroups, then
$\rank_S(s)=\rank_T(\p(s))$ for all $s\in S$.
\end{lem}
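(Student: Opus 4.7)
My plan is to reduce both inequalities to a single technical fact about idempotents: for idempotents $e, f \in S$, one has $e \mathrel{\mathscr J} f$ in $S$ if and only if $\varphi(e) \mathrel{\mathscr J} \varphi(f)$ in $T$. Granting this, both directions of the rank equality follow by transporting a maximal strict $\mathscr J$-chain of idempotents from one semigroup to the other, using that $\varphi(s^\omega) = \varphi(s)^\omega$ because $\varphi(s^\omega)$ is the unique idempotent in the cyclic subsemigroup generated by $\varphi(s)$.

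The key fact is proved as follows. The forward direction is automatic because $\varphi$ is a homomorphism. For the converse, suppose $\varphi(f) = a\varphi(e)b$ with $a, b \in T^1$, and lift $a, b$ to $\alpha, \beta \in S^1$ using surjectivity. Set $g = (\alpha e \beta)^\omega$, which is an idempotent with $g \leq_{\mathscr J} e$ (since any positive power of $\alpha e \beta$ contains $e$ as a factor) and $\varphi(g) = \varphi(\alpha e \beta)^\omega = \varphi(f)^\omega = \varphi(f)$. Because $g$ and $f$ are both regular (being idempotents) and share the same image under $\varphi$, the $\mathscr J'$-hypothesis forces $g \mathrel{\mathscr J} f$; hence $f \leq_{\mathscr J} e$, and the symmetric argument yields $e \mathrel{\mathscr J} f$.

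With the key fact in hand, the inequality $\rank_S(s) \leq \rank_T(\varphi(s))$ is immediate: apply $\varphi$ to a maximal strict $\mathscr J$-chain $s^\omega = e_0 <_{\mathscr J} \cdots <_{\mathscr J} e_r$ in $S$, and use the contrapositive of the key fact to conclude that strictness is preserved. For the reverse inequality, start with a maximal strict chain $\varphi(s)^\omega = f_0 <_{\mathscr J} \cdots <_{\mathscr J} f_r$ in $T$, set $e_0 = s^\omega$, and lift each $f_i$ (for $i \geq 1$) to an idempotent $e_i \in S$ by choosing any preimage and taking its $\omega$-power. The main obstacle lies here: a priori the lifted $e_i$ need not be $\mathscr J$-comparable. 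This is handled by the very $\omega$-power trick used to prove the key fact: applied to the witness relation $f_i <_{\mathscr J} f_{i+1}$, it produces an idempotent $g_i = (\alpha_i e_{i+1} \beta_i)^\omega \leq_{\mathscr J} e_{i+1}$ with $\varphi(g_i) = f_i = \varphi(e_i)$, so $g_i \mathrel{\mathscr J} e_i$ by the $\mathscr J'$-hypothesis, and therefore $e_i \leq_{\mathscr J} e_{i+1}$. Strictness then follows from the key fact, since $\varphi(e_i) = f_i \neq_{\mathscr J} f_{i+1} = \varphi(e_{i+1})$.
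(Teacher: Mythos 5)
Your proof is correct, and at its core it rests on the same fact the paper uses: a surjective $\J'$-map induces an isomorphism between the posets of regular $\J$-classes of $S$ and $T$, so chains of idempotents transfer in both directions. The difference is one of self-containedness. The paper simply asserts, as a known structural property of $\J'$-maps, that the preimage of a regular $\J$-class is a single regular $\J$-class together with possibly some null elements above it, and concludes immediately; note that this assertion does not follow verbatim from the definition given (which only concerns regular elements with \emph{equal} images, not merely $\J$-equivalent ones), so some argument is genuinely needed. Your $\omega$-power trick --- lifting a witness $\varphi(f)=a\varphi(e)b$ to $g=(\alpha e\beta)^\omega\leq_{\J}e$ with $\varphi(g)=\varphi(f)$ and then invoking the $\J'$-hypothesis on the regular elements $g$ and $f$ --- is exactly the standard way to supply that argument, and you reuse it correctly both to show that the idempotent $\J$-order is reflected (hence strictness is preserved downward) and to repair the a priori incomparability of the lifted idempotents $e_i$ when pulling a chain back from $T$ to $S$. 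The one point worth stating explicitly is that $\varphi(s^\omega)=\varphi(s)^\omega$ anchors both chains at the right base point, which you do. So: same route, but you prove the lemma the paper only cites.
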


\begin{proof}
By assumption the map $\p$ has the property that $\p^{-1}$ of a
regular $\J$-class consists of one regular $\J$-class plus maybe some
null elements above it.  Hence the restriction of the $\J$-order to
the regular $\J$-classes results in the same poset for both $S$ and
$T$.  The fact that $\rank_S=\rank_T$ now follows immediately.
\end{proof}

On the other hand, the rank function can fall an arbitrarily large
amount under other types of maximal proper surmorphisms~\cite{qtheor,Kernel,RhodesWeil} that are not $\J'$-maps.

\begin{Rmk}
For example, consider the semilattice $S=\{0,1,\cdots ,n\}$ with $\max$. Then
$\rank_S(i)=i$.  Now put $S' = S\cup \{n'\}$ where $n'j=jn'=n$ for $0\leq j\leq n$ and $n'n'=n'$; so $S'$ is obtained from $S$ by inflating $n$ to $\{n,n'\}$.  Then $\rank_{S'}(n')=0$, but its image under the natural map $S'\to S$ that identifies $n$ with $n'$ has rank $n$.  This map is a maximal proper surmorphism of class $III_{R >R}$ in the terminology of~\cite{RhodesWeil} and the Mal'cev kernel of the surmorphism is in
$\script{SL}$.
\end{Rmk}

\begin{lem}[Lifting Lemma]\label{lem:lifting}
Let $S$ be finite $\J$-above and let $\p\colon S\to T$ be an
onto map.  For each $t\in T$ there exists an $s \in S$ such that
$\p(s^\omega)=t^\omega$ and $\rank_S(s)\geq \rank_T(t)$.
\end{lem}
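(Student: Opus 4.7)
The plan is to prove the Lifting Lemma by induction on $r = \rank_T(t)$. The base case $r=0$ is immediate: pick any $s\in \pinv(t)$; since homomorphisms send the unique idempotent of the cyclic subsemigroup $\langle s\rangle$ to the unique idempotent of the cyclic subsemigroup $\langle t\rangle$ (using that both semigroups are torsion, with torsionness of $T$ following from surjectivity of $\p$), one has $\p(s^\omega)=t^\omega$, and $\rank_S(s)\geq 0=r$ trivially.

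For the inductive step, suppose $r>0$. Using the chain witnessing $\rank_T(t)=r$, namely $t^\omega = f_0 <_\J f_1 <_\J \cdots <_\J f_r$, I choose $f=f_1$. Then $f$ is idempotent and $\rank_T(f)\geq r-1$ (by the subchain $f_1 <_\J\cdots<_\J f_r$). Since $f>_\J t^\omega$, one actually gets $\rank_T(f)\leq r-1$, so the inductive hypothesis applies to $f\in T$, yielding $s'\in S$ with $\p(s'^\omega)=f$ and $\rank_S(s')\geq r-1$. Set $e_1=s'^\omega$, so $\p(e_1)=f$, and fix a chain $e_1<_\J e_2<_\J\cdots<_\J e_r$ of idempotents in $S$ of length $r-1$ starting at $e_1$.

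Now I build the desired $s$ by pushing $e_1$ down to land over $t^\omega$. Since $f\geq_\J t^\omega$, write $t^\omega = afb$ with $a,b\in T^I$, and choose lifts $\til a,\til b\in S^I$ with $\p(\til a)=a$, $\p(\til b)=b$. Put $s=\til a\, e_1\, \til b$. Then $\p(s)=afb=t^\omega$, and since $t^\omega$ is idempotent this forces $\p(s^\omega)=t^\omega$. Moreover $s^\omega \leq_\J s \leq_\J e_1$. The key point, and the step that makes the induction go through, is that this last inequality is strict: if we had $s^\omega\,\J\, e_1$ in $S$, applying $\p$ would give $t^\omega\,\J\, f$ in $T$, contradicting the strict relation $f>_\J t^\omega$. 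Hence $s^\omega <_\J e_1<_\J e_2<_\J\cdots<_\J e_r$ is a strictly increasing $\J$-chain of idempotents of length $r$ beginning at $s^\omega$, so $\rank_S(s)\geq r$, completing the induction.

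The only subtlety, and what I anticipate being the main point of care, is the strictness of $s^\omega <_\J e_1$; this is handled essentially for free by the contravariant fact that $\p$ preserves $\J$-equivalence, so any collapse of the chain in $S$ would force a collapse in $T$. Everything else is bookkeeping: existence of lifts $\til a,\til b$ (trivial, since $\p$ is onto and $T^I\to S^I$ can be chosen compatibly), and the fact that $\rank_S(s)=\rank_S(s^\omega)$ follows from the first item of Lemma \ref{lem:alg-rank}.
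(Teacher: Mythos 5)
Your proof is correct and is essentially the paper's argument: the paper lifts the chain $t^\omega = e_0 <_{\J} e_1 <_{\J}\cdots <_{\J} e_r$ by choosing, at each stage, an idempotent preimage of the next element inside the ideal generated by the previous lift (i.e., a preimage raised to the $\omega$ power), with strictness of the lifted chain coming from the fact that $\p$ preserves the $\J$-order. Your induction on $\rank_T(t)$ merely unrolls that iteration as a recursion --- the element $s^\omega=(\til a\,e_1\,\til b)^\omega$ is exactly the idempotent the paper selects in $S^I e_1 S^I$ over $t^\omega$ --- so the two proofs coincide in substance.
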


\begin{proof}
Suppose $t^\omega=e_0 <_{\mathscr J} e_1 <_{\mathscr J} \cdots <_{\mathscr J} e_r$, where $<_{\mathscr J}$
denotes the $\J$-order in $T$. First choose an idempotent $f_r$ in $S$
which maps to $e_r$, then in $Sf_rS$ choose an idempotent $f_{r-1}$
which maps to $e_{r-1}$, etc.  This yields idempotents
$f_0<\cdots< f_r$ in $S$ with $\p(f_i)=e_i$.  Setting
$s=s^\omega=f_0$ establishes the result.
\end{proof}

Recall that idempotents in a semigroup are partially ordered by putting $e\leq f$ if $ef=e=fe$, or equivalently, $e\leq_{\eH}f$.

\begin{lem}[Alternate definition]
Let $S$ be a finite $\J$-above $A$-semigroup.  The function $f\colon S\to
\N$ defined by \[f(t) = \max\{r\mid t^\omega\J e_0 < e_1 <\cdots <
e_r\},\] where each $e_i$ is an idempotent, is the same as the rank
function $\rank_S(t)$.
\end{lem}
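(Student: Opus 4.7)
The plan is to prove $f(t)=\rank_S(t)$ by establishing each inequality separately. For $f(t)\leq \rank_S(t)$, I would take any chain $t^\omega \J e_0 < e_1 < \cdots < e_r$ witnessing the $f$-side (where $<$ is the natural/idempotent order) and show each strict step $e_i<e_{i+1}$ is automatically a strict $\J$-step $e_i <_{\J} e_{i+1}$. Indeed, $e_i=e_ie_{i+1}=e_{i+1}e_i$ gives $e_i\leq_\eH e_{i+1}$, hence $e_i\leq_\J e_{i+1}$; and if $e_i\J e_{i+1}$ also held, then (applying the standard stability argument inside the finite $\J$-class of $e_{i+1}$, which is finite by the $\J$-above hypothesis) both idempotents would lie in a common group $\HH$-class, forcing $e_i=e_{i+1}$ and contradicting strictness. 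Prepending $t^\omega$ via $t^\omega\J e_0 <_\J e_1$ yields the $\rank$-chain $t^\omega <_\J e_1 <_\J \cdots <_\J e_r$ of length $r$, so $\rank_S(t)\geq r$ for every $r$ attained by $f$.

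For the reverse inequality $\rank_S(t)\leq f(t)$, I would reduce the entire problem to one key lifting lemma:

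\emph{Lifting Lemma.} If $e,f$ are idempotents in a finite $\J$-above semigroup with $e<_{\J} f$, then there exists an idempotent $e'\J e$ with $e'\leq f$ in the natural partial order (equivalently, $fe'=e'f=e'$).

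Granted this, given a $\rank$-chain $t^\omega=e_0<_\J e_1<_\J\cdots <_\J e_r$ I would construct an $f$-chain by descending induction. Set $e_r':=e_r$. Having produced an idempotent $e_{i+1}'\J e_{i+1}$, observe that $e_i <_\J e_{i+1}\J e_{i+1}'$, so the lifting lemma applied to the pair $(e_i,e_{i+1}')$ furnishes an idempotent $e_i'\J e_i$ with $e_i'\leq e_{i+1}'$. Strictness $e_i'<e_{i+1}'$ is automatic because $e_i'$ and $e_{i+1}'$ lie in distinct $\J$-classes, and by construction the bottom element satisfies $e_0'\J e_0=t^\omega$. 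Hence $f(t)\geq r=\rank_S(t)$.

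The main obstacle is the Lifting Lemma itself. I would prove it by Green--Rees analysis of the (regular) $\J$-class $J_e$: since $J_e$ is finite by the $\J$-above hypothesis and regular because it contains the idempotent $e$, its principal factor has Rees matrix structure $\MM^0(G,I,\Lambda,P)$; the hypothesis $e<_\J f$ forces $f$ to act nontrivially on this principal factor, which allows one to select Rees coordinates $(i,\lambda)$ so that the corresponding idempotent $e'\in J_e$ lies in $fSf$, giving $e'\leq f$. The argument is entirely local to $J_e$, so the finite-semigroup proof carries over verbatim to the finite $\J$-above case; alternatively one can simply cite \cite{qtheor} for this standard result. Once the Lifting Lemma is in hand, the rest of the proof is purely combinatorial chain-building, and the two estimates combine to give $f(t)=\rank_S(t)$.
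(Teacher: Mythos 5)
Your proof is correct and follows essentially the same route as the paper: everything reduces to the same key fact (for idempotents $e <_{\J} f$ there is an idempotent in the regular $\J$-class of $e$ lying below $f$ in the natural order), combined with the descending chain construction and the routine converse direction. The only divergence is how that key fact is proved: the paper uses a two-line explicit computation --- if $e'=uev$ is an idempotent $\J$-below the idempotent $e$, then $f=eve'ue$ is an idempotent with $f\J e'$ and $f<e$ --- whereas you invoke the Rees--Suschkewitsch structure of the principal factor (or a citation), which works but is heavier than necessary.
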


\begin{proof}
By an elementary lemma from~\cite{resultsonfinite}, if $J$ and $J'$ are regular
$\J$-classes in a finite semigroup with $J > J'$, then for each
idempotent $e \in J$, there exists an idempotent $f \in J'$ so that $e
>f$.  Indeed, if $e'=uev\in J'$ is an idempotent set $f=eve'ue$. Then $f^2=evuevueevuevue=eve'e'e'ue=eve'ue=f$ and $ufv= ueve'uev=e'e'e'=e'$; so $f\in J'$ is an idempotent with $f<e$.   This plus the fact that $\J$-equivalent idempotents have the
same rank proves the result.
\end{proof}

The following corollary is immediate.

\begin{Cor}[Subsemigroups]
If $S$ is finite $\J$-above  and $T$ is a subsemigroup of
$S$, then $T$ is finite $\J$-above and, for all $t\in T$, the inequality
$\rank_T(t)\leq \rank_S(t)$ holds.
\end{Cor}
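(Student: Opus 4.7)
The plan is to break the corollary into two independent assertions and to dispatch both using only the inclusion $T\subseteq S$ together with the alternate characterization of rank established just above. The main temptation to avoid is to attack the inequality directly from the original $\J$-chain definition of rank, which does not work: a strict $\J$-chain of idempotents in $T$ can collapse in $S$ because elements outside $T$ may create new $\J$-equivalences. Recognizing that the idempotent (natural partial) order does \emph{not} have this defect is the key observation; once spotted, the argument is short.

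For the first assertion, I would argue that finite $\J$-aboveness transfers to subsemigroups by a direct set-inclusion. If $t\in T$ and $t'\in T$ satisfies $t'\geq_\J t$ in $T$, then by definition $t\in T^I t' T^I\subseteq S^I t' S^I$, so $t'\geq_\J t$ in $S$ as well. Hence the set of elements of $T$ that are $\J$-above $t$ in $T$ embeds into the finite set of elements of $S$ that are $\J$-above $t$ in $S$, and is therefore finite.

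For the inequality $\rank_T(t)\leq \rank_S(t)$, I would invoke the preceding Lemma: $\rank(t)$ equals the maximum $r$ for which there exist idempotents $e_0,\ldots,e_r$ with $t^\omega\J e_0 < e_1 <\cdots < e_r$, where $<$ denotes the strict \emph{idempotent} order ($e<f$ iff $ef=fe=e$ and $e\neq f$). The point is that this order is absolute: the defining equations involve only products of the idempotents themselves, so a chain $e_0<e_1<\cdots<e_r$ in $T$ is automatically a chain in $S$. Moreover $t^\omega$ is computed entirely inside the cyclic subsemigroup generated by $t$, so it is the same element in $T$ and in $S$. Finally, by the first paragraph, $t^\omega\J_T e_0$ implies $t^\omega\J_S e_0$. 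Thus any witness of $\rank_T(t)\geq r$ is simultaneously a witness of $\rank_S(t)\geq r$, giving the desired inequality.

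The only potential obstacle is making sure the alternate definition is applied, not the original $\J$-chain definition; once one uses the idempotent-order version, no finite-semigroup lifting is required, and there are no subtleties to unpack. I would therefore present the proof in three short sentences: inclusion of $\J$-upsets, invocation of the alternate rank formula, and observation that both ingredients of the witnessing chain (the idempotent order and the single $\J$-relation $t^\omega\J e_0$) are preserved on passing from $T$ to $S$.
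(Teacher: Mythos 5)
Your proof is correct and follows the route the paper intends: the corollary is placed immediately after the ``Alternate definition'' lemma precisely because the natural partial order on idempotents and the formation of $t^\omega$ are intrinsic to the subsemigroup, so a witnessing chain in $T$ is a witnessing chain in $S$, while the $\J$-upset of $t$ in $T$ injects into its (finite) $\J$-upset in $S$. The paper states the corollary is immediate and gives no proof; your write-up supplies exactly the justification it has in mind, including the correct observation that the original $\J$-chain definition would not transfer directly.
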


The importance of the algebraic rank function is as follows.  Suppose that $S$ is an $A$-semigroup and let $\eta\colon \Cay(S,A)^{\Mac}\to \Cay(S,A)$ be the corresponding simple directed cover.  Let $v$ be a vertex of $\Cay(S,A)^{\Mac}$.  Then $[w]_S$ stabilizes $\eta(v)$.  Often $S$ will already have been expanded to have idempotent stabilizers.  Thus the loops at $v$, when mapped into $S$, will be an $\eL$-chain of idempotents and we want to choose a geometric rank function accordingly.  More formally, we shall always choose a geometric rank function $r$ for $\Cay(S,A)^{\Mac}$ so that the following occurs.  If $e,f$ are bold arrows ending at $v$ and $y,z$ are the labels of $\lp(e),\lp(f)$, respectively, then we shall assume that if $\rank_S(y)< \rank_S(z)$ then $r(e)<r(f)$.  In other words, the order on bold arrows at $v$ given by the geometric rank function will be a topological sorting of the algebraic rank of the labels of the loops (taken in $S$).

\bibliographystyle{abbrv}
\bibliography{standard2}

\def\malce{\mathbin{\hbox{$\bigcirc$\rlap{\kern-7.75pt\raise0,50pt\hbox{${\tt
  m}$}}}}}\def\cprime{$'$} \def\cprime{$'$} \def\cprime{$'$} \def\cprime{$'$}
  \def\cprime{$'$} \def\cprime{$'$} \def\cprime{$'$} \def\cprime{$'$}
  \def\cprime{$'$}
\begin{thebibliography}{10}

\bibitem{ash1}
C.~J. Ash.
\newblock Finite semigroups with commuting idempotents.
\newblock {\em J. Austral. Math. Soc. Ser. A}, 43(1):81--90, 1987.

\bibitem{Ash}
C.~J. Ash.
\newblock Inevitable graphs: a proof of the type {${\rm II}$} conjecture and
  some related decision procedures.
\newblock {\em Internat. J. Algebra Comput.}, 1(1):127--146, 1991.

\bibitem{berstelperrinreutenauer}
J.~Berstel, D.~Perrin, and C.~Reutenauer.
\newblock {\em Codes and automata}, volume 129 of {\em Encyclopedia of
  Mathematics and its Applications}.
\newblock Cambridge University Press, Cambridge, 2010.

\bibitem{BirgetSyn2}
J.-C. Birget.
\newblock Iteration of expansions---unambiguous semigroups.
\newblock {\em J. Pure Appl. Algebra}, 34(1):1--55, 1984.

\bibitem{BMR}
J.-C. Birget, S.~Margolis, and J.~Rhodes.
\newblock Semigroups whose idempotents form a subsemigroup.
\newblock {\em Bull. Austral. Math. Soc.}, 41(2):161--184, 1990.

\bibitem{BR--exp}
J.-C. Birget and J.~Rhodes.
\newblock Almost finite expansions of arbitrary semigroups.
\newblock {\em J. Pure Appl. Algebra}, 32(3):239--287, 1984.

\bibitem{ChurchRosser}
R.~V. Book and F.~Otto.
\newblock {\em String-rewriting systems}.
\newblock Texts and Monographs in Computer Science. Springer-Verlag, New York,
  1993.

\bibitem{Brown}
T.~C. Brown.
\newblock An interesting combinatorial method in the theory of locally finite
  semigroups.
\newblock {\em Pacific J. Math.}, 36:285--289, 1971.

\bibitem{universalalgebra}
S.~Burris and H.~P. Sankappanavar.
\newblock {\em A course in universal algebra}, volume~78 of {\em Graduate Texts
  in Mathematics}.
\newblock Springer-Verlag, New York, 1981.

\bibitem{deLuVa92}
A.~{de Luca} and S.~Varricchio.
\newblock On noncounting regular classes.
\newblock {\em Theoret. Comput. Sci.}, 100(1):67--104, 1992.

\bibitem{La92}
A.~P. {do Lago}.
\newblock On the {B}urnside semigroups {$x\sp n=x\sp {n+m}$}.
\newblock {\em Internat. J. Algebra Comput.}, 6(2):179--227, 1996.

\bibitem{EilenbergA}
S.~Eilenberg.
\newblock {\em Automata, languages, and machines. {V}ol. {A}}.
\newblock Academic Press, New York, 1974.
\newblock Pure and Applied Mathematics, Vol. 58.

\bibitem{Eilenberg}
S.~Eilenberg.
\newblock {\em Automata, languages, and machines. {V}ol. {B}}.
\newblock Academic Press, New York, 1976.
\newblock With two chapters (``Depth decomposition theorem'' and ``Complexity
  of semigroups and morphisms'') by Bret Tilson, Pure and Applied Mathematics,
  Vol. 59.

\bibitem{Elston}
G.~Z. Elston.
\newblock Semigroup expansions using the derived category, kernel, and {M}alcev
  products.
\newblock {\em J. Pure Appl. Algebra}, 136(3):231--265, 1999.

\bibitem{GrRoSp80}
R.~L. Graham, B.~L. Rothschild, and J.~H. Spencer.
\newblock {\em Ramsey theory}.
\newblock Wiley-Interscience Series in Discrete Mathematics and Optimization.
  John Wiley \& Sons Inc., New York, second edition, 1990.
\newblock A Wiley-Interscience Publication.

\bibitem{Gu93}
V.~S. Guba.
\newblock The word problem for the relatively free semigroup satisfying {$T\sp
  m=T\sp {m+n}$} with {$m\geq 3$}.
\newblock {\em Internat. J. Algebra Comput.}, 3(3):335--347, 1993.

\bibitem{Henckellstable}
K.~Henckell.
\newblock Stable pairs.
\newblock {\em Internat. J. Algebra Comput.}, to appear.

\bibitem{HMPR}
K.~Henckell, S.~W. Margolis, J.-E. Pin, and J.~Rhodes.
\newblock Ash's type {${\rm II}$} theorem, profinite topology and {M}al\cprime
  cev products. {I}.
\newblock {\em Internat. J. Algebra Comput.}, 1(4):411--436, 1991.

\bibitem{KhSa95}
O.~G. Kharlampovich and M.~V. Sapir.
\newblock Algorithmic problems in varieties.
\newblock {\em Internat. J. Algebra Comput.}, 5(4-5):379--602, 1995.

\bibitem{Arbib}
K.~Krohn, J.~Rhodes, and B.~Tilson.
\newblock {\em Algebraic theory of machines, languages, and semigroups}.
\newblock Edited by Michael A. Arbib. With a major contribution by Kenneth
  Krohn and John L. Rhodes. Academic Press, New York, 1968.
\newblock Chapters 1, 5--9.

\bibitem{Lawsonaut}
M.~V. Lawson.
\newblock {\em Finite automata}.
\newblock Chapman \& Hall/CRC, Boca Raton, FL, 2004.

\bibitem{idempotentstabilizers}
B.~{Le Sa{\"e}c}, J.-E. Pin, and P.~Weil.
\newblock Semigroups with idempotent stabilizers and applications to automata
  theory.
\newblock {\em Internat. J. Algebra Comput.}, 1(3):291--314, 1991.

\bibitem{Mac-CWM}
S.~{Mac Lane}.
\newblock {\em Categories for the working mathematician}, volume~5 of {\em
  Graduate Texts in Mathematics}.
\newblock Springer-Verlag, New York, second edition, 1998.

\bibitem{Mc91}
J.~McCammond.
\newblock The solution to the word problem for the relatively free semigroups
  satisfying {$T\sp a=T\sp {a+b}$} with {$a\geq 6$}.
\newblock {\em Internat. J. Algebra Comput.}, 1(1):1--32, 1991.

\bibitem{Mc-nf}
J.~P. McCammond.
\newblock Normal forms for free aperiodic semigroups.
\newblock {\em Internat. J. Algebra Comput.}, 11(5):581--625, 2001.

\bibitem{Pinbook}
J.-E. Pin.
\newblock {\em Varieties of formal languages}.
\newblock Foundations of Computer Science. Plenum Publishing Corp., New York,
  1986.
\newblock With a preface by M.-P. Sch{\"u}tzenberger, Translated from the
  French by A. Howie.

\bibitem{resultsonfinite}
J.~Rhodes.
\newblock Some results on finite semigroups.
\newblock {\em J. Algebra}, 4:471--504, 1966.

\bibitem{IIM2}
J.~Rhodes.
\newblock Infinite iteration of matrix semigroups. {II}. {S}tructure theorem
  for arbitrary semigroups up to aperiodic morphism.
\newblock {\em J. Algebra}, 100(1):25--137, 1986.
\newblock With an appendix by Jerrold R. Goodwin.

\bibitem{Rh91}
J.~Rhodes.
\newblock Monoids acting on trees: elliptic and wreath products and the
  holonomy theorem for arbitrary monoids.
\newblock In {\em Monoids and semigroups with applications (Berkeley, CA,
  1989)}, pages 371--389. World Sci. Publ., River Edge, NJ, 1991.

\bibitem{flows}
J.~Rhodes.
\newblock Flows on automata.
\newblock Technical report, Center for Pure \& Applied Mathematics, University
  of California, Berkeley, CA 94720-3840, 1995.

\bibitem{RhodesStein}
J.~Rhodes and B.~Steinberg.
\newblock Profinite semigroups, varieties, expansions and the structure of
  relatively free profinite semigroups.
\newblock {\em Internat. J. Algebra Comput.}, 11(6):627--672, 2001.

\bibitem{qtheor}
J.~Rhodes and B.~Steinberg.
\newblock {\em The {$q$}-theory of finite semigroups}.
\newblock Springer Monographs in Mathematics. Springer, New York, 2009.

\bibitem{Kernel}
J.~Rhodes and B.~Tilson.
\newblock The kernel of monoid morphisms.
\newblock {\em J. Pure Appl. Algebra}, 62(3):227--268, 1989.

\bibitem{RhodesWeil}
J.~Rhodes and P.~Weil.
\newblock Decomposition techniques for finite semigroups, using categories.
  {I}, {II}.
\newblock {\em J. Pure Appl. Algebra}, 62(3):269--284, 285--312, 1989.

\bibitem{SimBr2}
I.~Simon.
\newblock Locally finite semigroups and limited subsets of a free monoid.
\newblock Unpublished manuscript, 1978.

\bibitem{Slice}
B.~Steinberg.
\newblock On pointlike sets and joins of pseudovarieties.
\newblock {\em Internat. J. Algebra Comput.}, 8(2):203--234, 1998.
\newblock With an addendum by the author.

\bibitem{geoams}
B.~Steinberg.
\newblock Finite state automata: a geometric approach.
\newblock {\em Trans. Amer. Math. Soc.}, 353(9):3409--3464 (electronic), 2001.

\bibitem{Cats2}
B.~Steinberg and B.~Tilson.
\newblock Categories as algebra. {II}.
\newblock {\em Internat. J. Algebra Comput.}, 13(6):627--703, 2003.

\bibitem{Tilson}
B.~Tilson.
\newblock Categories as algebra: an essential ingredient in the theory of
  monoids.
\newblock {\em J. Pure Appl. Algebra}, 48(1-2):83--198, 1987.

\end{thebibliography}
\end{document}